\renewcommand{\tocsection}[3]{%
  \indentlabel{\@ifnotempty{#2}{\bfseries\ignorespaces#1 #2.\,\,}}\bfseries#3}
\renewcommand{\tocsubsection}[3]{%
  \indentlabel{\@ifnotempty{#2}{\ignorespaces#1 #2\quad}}#3}
\newcommand\@dotsep{4.5}
\def\@tocline#1#2#3#4#5#6#7{\relax
  \ifnum #1>\c@tocdepth 
  \else
    \par \addpenalty\@secpenalty\addvspace{#2}%
    \begingroup \hyphenpenalty\@M
    \@ifempty{#4}{%
      \@tempdima\csname r@tocindent\number#1\endcsname\relax
    }{%
      \@tempdima#4\relax
    }%
    \parindent\z@ \leftskip#3\relax \advance\leftskip\@tempdima\relax
    \rightskip\@pnumwidth plus1em \parfillskip-\@pnumwidth
    #5\leavevmode\hskip-\@tempdima{#6}\nobreak
    \leaders\hbox{$\m@th\mkern \@dotsep mu\hbox{.}\mkern \@dotsep mu$}\hfill
    \nobreak
    \hbox to\@pnumwidth{\@tocpagenum{\ifnum#1=1\bfseries\fi#7}}\par
    \nobreak
    \endgroup
  \fi}
\renewcommand\csname r@tocindent0\endcsname{0pt}
\def\l@subsection{\@tocline{2}{0pt}{2.5pc}{5pc}{}}
\DeclareFontFamily{U}{MnSymbolC}{}
\DeclareFontShape{U}{MnSymbolC}{m}{n}{
  <-5.5> MnSymbolC5
  <5.5-6.5> MnSymbolC6
  <6.5-7.5> MnSymbolC7
  <7.5-8.5> MnSymbolC8
  <8.5-9.5> MnSymbolC9
  <9.5-11.5> MnSymbolC10
  <11.5-> MnSymbolCb12
}{}
\newcommand{\lmss}[1]{\textrm{\normalfont{{\fontfamily{lmss}\selectfont #1}}}}
\newtheorem{theorem}{Theorem}
\newtheorem*{remarque}{Remark}
\theoremstyle{plain}
\newtheorem{proposition}[theorem]{Proposition}
\newtheorem*{theorem*}{Theorem}
\newtheorem{lemma}[theorem]{Lemma}
\theoremstyle{definition}
\newtheorem{definition}[theorem]{Definition}
\newcommand{\freeunitary}{{U^{\langle n \rangle}}}
\newcommand{\mfreeunitary}{{\lmss{U}^{\langle n \rangle}}}
\newcommand{\unitaryfd}{U^{\mathbb{K}}_{n,d}}
\newcommand{\unitaryfdr}{U^{\mathbb{R}}_{n,d}}
\newcommand{\unitaryfdc}{U^{\mathbb{C}}_{n,d}}
\newcommand{\unitaryfdh}{U^{\mathbb{H}}_{n,d}}
\newcommand{\munitaryfd}{\lmss{U}_{N}^{\mathbb{K}}}
\newcommand{\munitaryfdr}{\lmss{U}_{N}^{\mathbb{R}}}
\newcommand{\munitaryfdc}{\lmss{U}_{N}^{\mathbb{C}}}
\newcommand{\munitaryfdh}{\lmss{U}_{N}^{\mathbb{H}}}
\newcommand{\noisefree}{\lmss{W}}
\newcommand{\hermitianalg}{\mathcal{H}\langle n \rangle}
\newcommand{\udualgroup}{\mathcal{O}\langle n \rangle}
\newcommand{\tr}{\lmss{T}_{k}}
\newcommand{\pr}{\lmss{W}_{k}}
\newcommand{\trd}{\lmss{T}_{k,\lmss{d}}}
\newcommand{\prd}{\lmss{W}_{k,\lmss{d}}}
\newcommand{\trnm}{\lmss{T}_{k,n}}
\newcommand{\prnm}{\lmss{W}_{k,n}}
\newcommand{\trexc}{\lmss{T}^{\neq}_{k}}
\newcommand{\prexc}{\lmss{W}^{\neq}_{k}}
\newcommand{\trexcd}{\lmss{T}^{\neq}_{k,\lmss{d}}}
\newcommand{\prexcd}{\lmss{W}^{\neq}_{k,\lmss{d}}}
\newcommand{\trp}[1]{\lmss{T}^{+}_{k}(#1)}
\newcommand{\prp}[1]{\lmss{W}^{+}_{k}(#1)}
\newcommand{\trpnc}[1]{\lmss{T}^{+,\scalebox{0.6}{$\bullet$}}_{k}(#1)}
\newcommand{\prpnc}[1]{\lmss{W}^{+,\scalebox{0.6}{$\bullet$}}_{k}(#1)}
\newcommand{\trpnm}[1]{\lmss{T}^{+}_{k,n}(#1)}
\newcommand{\prpnm}[1]{\lmss{W}^{+}_{k,n}(#1)}
\newcommand{\brauer}{{\mathcal{B}_{k}}}
\newcommand{\brauerdf}[1]{\mathcal{B}_{k}^{#1}}
\newcommand{\Brauer}{{\mathcal{B}_{k}}}
\newcommand{\cbrauer}{\overset{\circ}{\mathcal{B}}_{k}}
\newcommand{\cbrauerd}{\overset{\circ}{\brauerd}}
\newcommand{\cbrauerf}[1]{\overset{\circ}{\mathcal{B}^{#1}_{k}}}
\newcommand{\brauerz}{\mathcal{B}^{{ \scalebox{0.6}{$\bullet$}}}_{k}}
\newcommand{\ncc}[1]{{#1}^{\scalebox{0.6}{$\bullet$}}}
\newcommand{\nccc}[1]{#1^{\scalebox{0.6}{$\bullet$}}}
\newcommand{\brauergen}{(\ncc{b},c_{b})}
\newcommand{\Twist}{\lmss{Tw}^{\scalebox{0.6}{$\bullet$}}}
\newcommand{\genbrauerz}{(\ncc{b},c_{b})}
\newcommand{\brauerd}{\mathcal{B}_{k}^{\lmss{d}}}
\newcommand{\trdiag}{{\lmss{T}_{k}^{=}}}
\newcommand{\prdiag}{{\lmss{W}_{k}^{=}}}
\newcommand{\trdiagd}{{\lmss{T}_{k,\lmss{d}}^{=}}}
\newcommand{\prdiagd}{{\lmss{W}_{k,\lmss{d}}^{=}}}
\newcommand{\trexcp}[1]{
\ensuremath{\lmss{T}_{k}^{{\neq},+}(#1)}}
\newcommand{\prexcp}[1]{
\ensuremath{\lmss{W}_{k}^{{\neq},+}(#1)}}
\newcommand{\trdiagp}[1]{
\ensuremath{\lmss{T}_{k}^{{=},+}(#1)}}
\newcommand{\cluster}{{\mathcal{C}\mathcal{R}\mathcal{O}\langle\lmss{d}\rangle}}
\newcommand{\runitaryalg}{\mathcal{R}\mathcal{O}\langle n \rangle}
\newcommand{\runitaryfd}{U^{\mathbb{K}}_{\lmss{d}}}
\newcommand{\rdunitaryfdN}{U^{\mathbb{K}}_{\{d_{N}\}}}
\newcommand{\runitaryfdN}{U^{\mathbb{K}}_{d_{N}}}
\newcommand{\runitaryfdNr}{U^{\mathbb{R}}_{d_{N}}}
\newcommand{\runitaryfdNc}{U^{\mathbb{C}}_{d_{N}}}
\newcommand{\runitaryfdNh}{U^{\mathbb{H}}_{d_{N}}}
\newcommand{\twist}{\lmss{Tw}^{\scalebox{0.6}{$\bullet$}}}
\def\Alg{{\rm Alg}}
\def\biMod{{\rm biMod}}
\def\freem{\mathrel{\dot\sqcup}}
\def\free{\mathrel{\sqcup}}
\def\id{{\rm id}}
\def\path{{\sf P}}
\def\bcyan{{\color{cyan} \scalebox{0.8}{$\bullet$}}}
\def\bmage{{\color{magenta} \scalebox{0.8}{$\bullet$}}}
\def\boran{{\color{orange} \scalebox{0.8}{$\bullet$}}}
\address{Sorbonne Universit\'e, Sorbonne Paris Cit\'e, CNRS, Laboratoire de Probabilit\'es Statistique et Mod\'elisation, LPSM, F-75005 Paris, France}
\email{nicolas.gilliers@sorbonne-universite.fr}
\title{Matricial approximations of higher dimensional master fields}
\author{Nicolas Gilliers}
\keywords{unitary Brownian motion, large N asymptotics, free probability theory, amalgamated freeness, free stochastic calculus, Zhang algebras, dual Voiculescu groups, master fields}
\subjclass[2010]{60B20,15B52,15B57,15B33,22C05,46L54b}
\begin{document}
\begin{abstract}
We study matricial approximations of master fields constructed in \cite{nico1}. These approximations (in non-commutative distribution) are obtained by extracting blocks of a Brownian unitary diffusion (with entries in $\mathbb{R}, \mathbb{C}$ or $\mathbb{K}$) and letting the dimension of these blocks to tend to infinity. We divide our study into two parts: in the first one, we extract square blocks while in the second one we allow rectangular blocks. In both cases, free probability theory and operator-valued free probability appear as the natural framework in which the limiting distributions are most accurately described.
\end{abstract}
\maketitle

\tableofcontents
\newpage
\section{Introduction}
In this work, we study convergence in non-commutative distribution of random matrices extracted from a unitary Brownian motion in high dimensions. We consider three cases: Brownian motions with real, complex and quaternionic entries and denote by $\mathbb{U}(\mathbb{K},N)$ the group of unitary matrices with entries in the division algebra $\mathbb{K}$. Brownian motion on $\mathbb{U}(N,\mathbb{K})$, as a non-commutative process, is studied for quite a long time. We make a short and non-exhaustive list of available results.
\par The story begins with the work of Wigner on Hermitian random matrices having, up to symmetries, independent and identically distributed entries. Under mild assumptions (satisfied for Gaussian distributed entries with a variance that scales as the square root inverse of the dimension), Wigner proved the convergence of this random matrix's moments (the non-commutative distribution) as the dimension tends to infinity. Later, the question of the convergence  in high dimensions of not only one random matrix, but of a process of Hermitian random matrices, the Hermitian Brownian motion  was addressed. The result of Wigner implies convergence of the one-dimensional marginals of this process. The convergence of multi-dimensional marginals is most easily expressed and understood using non-commutative probability theory and a new notion of independence between random variables, which is Voiculescu's freeness.
The notion of freeness (within the framework of operators algebras) was introduced by Voiculescu, but aroused probabilists' interest with the work of Speicher. Freeness is a non-commutative counterpart of classical independence between two random variables; it is a property of two non-commutative random variables that allows computation of the joint distribution from the individual distribution of the random variables. One significant result appearing in the work of Voiculescu (see \cite{voiculescu1991limit}) is the asymptotic freeness of two (classically) independent random matrices. This theorem implies asymptotic freeness of a Hermitian Brownian motion's increments, which leads in turn to the convergence of multidimensional marginals of this process. The limiting process was named semi-circular Brownian motion.
In the 1980's, Biane got interested in the stochastic exponential of a Hermitian Brownian motion, the unitary Brownian motion. He proved a similar result for this integrated version of Hermitian Brownian motion, namely the asymptotic freeness of the increments and convergence of the one-dimensional marginals. The limiting non-commutative process was named free unitary Brownian motion and is a solution of a free stochastic differential equation.
\par In the sequel, we will see that the processes mentioned above can be considered as being one dimensional, meaning that they can be seen to be members of a family of processes indexed by integers that can be approximated, in distribution, by random matrices. These processes are called higher dimensional version of the free unitary Brownian motion. Let us explain, with more details, this point. An approach for the construction of these higher dimensional free unitary Brownian motions is to extract square matrices and let the dimension tend to infinity while maintaining the number of such extractions constant. In other words, the ratio between the dimension of a block and the total dimension is kept constant. Our first result can be informally stated as follows.
\begin{theorem}
As the dimension tends to infinity, the non-commutative distribution of square blocks extracted from a unitary Brownian motion converges to the distribution of a free process.
\end{theorem}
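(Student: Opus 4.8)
\emph{Proof strategy.} The assertion is about convergence in non-commutative distribution, so the first step is to spell out what must be shown. Write $A_i(t)=p_iU_N(t)p_i$ for the $i$-th diagonal block of the $U(N,\mathbb{K})$-Brownian motion $U_N$, where $p_1,\dots,p_n$ are the fixed coordinate projections with $d_i/N\to\alpha_i$ and $\sum_i\alpha_i=1$ (square off-diagonal blocks $p_iU_N(t)p_j$ with $d_i=d_j$ are treated identically). It suffices to prove that for every word $w$ in the letters $A_{i_1}(t_1)^{\pm1},\dots,A_{i_m}(t_m)^{\pm1}$ the normalized trace $N^{-1}\operatorname{Tr}(w)$ converges as $N\to\infty$ --- more precisely, that the operator-valued moments of the family $\{A_i(t)\}$ over the diagonal subalgebra $D_N\cong\mathbb{C}^n$ converge --- and to identify the limit with the corresponding moment of the higher-dimensional free unitary Brownian motion of \cite{nico1}. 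Since each $A_i(t)$ is a fixed non-commutative polynomial in $U_N(t)$, $U_N(t)^*$ and the $p_i$, everything reduces to understanding the joint distribution of the whole Brownian motion together with the deterministic family $(p_i)_i$.

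The engine is asymptotic freeness. The heat-kernel measure on $U(N,\mathbb{K})$ is bi-invariant, hence conjugation-invariant, and this is compatible with the process: for fixed $V\in U(N,\mathbb{K})$ the tuple $(VU_N(t_1)V^*,\dots,VU_N(t_m)V^*)$ has the same law as $(U_N(t_1),\dots,U_N(t_m))$. By the convergence of the matrix unitary Brownian motion to the free unitary Brownian motion (Biane's theorem and its extensions to several times), the tuple $(U_N(t_1),\dots,U_N(t_m))$ converges in non-commutative distribution to $(u(t_1),\dots,u(t_m))$; combined with conjugation-invariance and the trivial convergence $p_i\to q_i$ in the commutative limit algebra $\mathbb{C}^n$ equipped with the weights $(\alpha_i)$, Voiculescu's asymptotic freeness theorem gives that $(u(t_\ell))_\ell$ is free from the commuting projections $(q_i)_i$. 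Alternatively --- and this is closer to the machinery of \cite{nico1}, which one may simply quote --- the same joint limit is obtained by deriving, via It\^o's formula, the Makeenko--Migdal / Schwinger--Dyson system of ODEs in the time variables satisfied by the moments of words in $U_N(t_\ell)^{\pm1}$ and the $p_i$, observing that this system closes and is $N$-independent in the limit, and identifying its solution.

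Granting the joint limit, $N^{-1}\operatorname{Tr}(w)\to\tau(\bar w)$, where $\bar w$ is $w$ with each $A_i(t)$ replaced by $q_iu(t)q_i$ inside the tracial von Neumann algebra generated by $(u(t))_{t\ge0}$ and the $q_i$, with $u$ free from $(q_i)$. It then remains to recognize the family $\{q_iu(t)q_i:1\le i\le n,\ t\ge0\}$, viewed over $D=\operatorname{span}(q_1,\dots,q_n)\cong\mathbb{C}^n$, as a $D$-free process --- i.e. as a free family of free compressions of $u$ --- and to check that it coincides with the process of \cite{nico1}. Concretely, I would compute the $D$-valued free cumulants of the $q_iu(t)q_i$, or equivalently condition the free stochastic differential equation for $u$ onto $D$ to obtain the free SDE satisfied by $q_iu(t)q_i$, and match the coefficients; for a single block this is Biane's description of a free compression of free unitary Brownian motion.

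The main obstacle is this last identification. Convergence of the moments to \emph{some} limit follows fairly directly from asymptotic freeness; the work is in disentangling the operator-valued structure over $D$ and proving that the compressed increments $q_iu(s)^{-1}u(t)q_i$ genuinely assemble into a free (amalgamated over $D$) process with the claimed generator, as opposed to an unstructured limit. Two further points require care. First, the multi-time bookkeeping: the increments of $U_N$ over disjoint intervals are classically independent whereas their limit has only free multiplicative increments, so the passage in the Schwinger--Dyson system (or the invocation of the joint-distribution result) must be handled carefully. Second, for $\mathbb{K}=\mathbb{R}$ and $\mathbb{K}=\mathbb{H}$ one must work with the $\mathbb{R}$-linear normalized trace and the appropriate symmetry class, so the asymptotic-freeness input has to be taken in its orthogonal/symplectic version and the limiting objects inherit the corresponding real structure.
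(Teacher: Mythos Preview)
Your strategy is genuinely different from the paper's, and it is worth spelling out the contrast. The paper does \emph{not} deduce the result from asymptotic freeness of $(U_N(t_\ell))_\ell$ from the deterministic projections $(p_i)$. Instead it introduces an algebra of \emph{coloured Brauer diagrams}, defines normalized statistics $\mathbb{m}_d^{\mathbb{K}}(b,w,t)$ whose range contains the distribution of the block process, shows that these statistics satisfy a finite linear ODE with generator $L_d^{\mathbb{K}}$, and lets $d\to\infty$ in the generator. The limit generator $L_n$ is then compared \emph{directly} with the generator $\mathcal{L}_n$ of the free $n$-dimensional unitary Brownian motion $U^{\langle n\rangle}$ computed in Section~3 from its defining free SDE; equality of generators (on the appropriate class of compatible word/diagram pairs) is what identifies the limit. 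Only \emph{after} this, for the multi-time marginals, does the paper invoke Collins--Sniady, and then only to prove asymptotic freeness of the \emph{increments} $U_{n,d}^{\mathbb{K}}(s_i,t_i)$ from one another---not freeness of $U_N$ from the $p_i$. So the combinatorial work you hope to avoid is exactly what carries the identification in the paper.

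Your route is cleaner conceptually, but the place you flag as ``the main obstacle'' is precisely where all the content lies, and you do not carry it out. Concretely: from Voiculescu/Collins--Sniady you get that the limit of $p_iU_N(t)p_j$ is $q_iu(t)q_j$ with $u$ a one-dimensional free unitary Brownian motion free from commuting projections $q_1,\dots,q_n$, $\tau(q_i)=1/n$. To finish you must show that the matrix $(q_iu(t)q_j)_{i,j}$, viewed in the compressed algebra with trace $n\tau$, satisfies the free SDE~\eqref{free_unitary_n} (equivalently, that its Sch\"urmann triple and hence its generator agree with those in Proposition~\ref{ulrich_triple}). This requires knowing that the compressions $q_k\,dx\,q_j$ of the driving free semicircular noise are, in the compressed picture, the free Hermitian $n$-dimensional noise $\mathsf{W}$ with the correct covariance $1/n$---a computation involving free compressions by free projections that is not automatic and that you only gesture at. The paper's Brauer-diagram proof sidesteps this entirely because the comparison of generators (equation~\eqref{generatorfree} versus $L_n$) is purely combinatorial.

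Two smaller points. First, in the paper's square-extraction theorem all blocks have the \emph{same} size $d$ (so $N=nd$ and every $\alpha_i=1/n$); your setup with arbitrary $\alpha_i$ and the operator-valued/amalgamated language over $D_N\cong\mathbb{C}^n$ is the framework of the paper's \emph{rectangular} section, not the square one, and the target there is an amalgamated free semi-group rather than the scalar-valued free process $U^{\langle n\rangle}$. Second, for the multi-time step your plan and the paper's coincide: both use bi-invariance of the heat kernel plus an asymptotic-freeness theorem; the novelty of your proposal is only at the level of one-dimensional marginals and the identification.
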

See Theorem \ref{maintheoremsquare} for a more precise statement.
The second part deals with a generalisation the readers may have already guessed. Why settle for square extractions? The answer may be that the framework of free non-commutative probability is not the good one to study asymptotics of products of rectangular blocks. Going from the square case to the rectangular case corresponds to an algebraic move, from the category of stellar algebras to the category of bi-module stellar algebras. We will not develop this point at the moment; we only underline that for rectangular extractions processes, the right framework for studying asymptotic as the dimension of the blocks tend to infinity is amalgamated free probability theory or rectangular free probability theory.
\begin{theorem}
Under the assumption that the ratios between the dimensions of the extracted blocks and the total dimension of the matrix tend to positive real numbers, we prove that normalised traces of product of rectangular extractions from an unitary Brownian motion at a fixed time converge. In addition, the time parametrized family of distributions is an amalgamated free semi-group.
\end{theorem}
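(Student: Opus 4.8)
The plan is to work inside an operator-valued probability space amalgamated over the algebra $B=\mathbb{C}^{n}$ spanned by the block projections. Let $p_{1},\dots,p_{n}\in M_{N}(\mathbb{K})$ be the deterministic projections onto the $n$ groups of coordinates, of sizes $d_{1}^{(N)},\dots,d_{n}^{(N)}$ with $d_{i}^{(N)}/N\to\alpha_{i}>0$, let $E_{N}\colon M_{N}(\mathbb{K})\to\langle p_{1},\dots,p_{n}\rangle$ be the block-diagonal conditional expectation, and note $\mathrm{tr}_{N}(x)=\sum_{i}\tfrac{d_{i}^{(N)}}{N}\,\mathrm{tr}_{d_{i}}(p_{i}xp_{i})$. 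A rectangular extraction is $p_{i}U_{N}(t)p_{j}$, and a normalised trace of a product of such extractions is, up to the weights, an evaluation of $E_{N}$ on an alternating word $p_{i_{0}}U_{N}(t)^{\varepsilon_{1}}p_{i_{1}}\cdots p_{i_{k-1}}U_{N}(t)^{\varepsilon_{k}}p_{i_{k}}$ with $i_{k}=i_{0}$ and $\varepsilon_{\ell}\in\{1,\ast\}$. So the programme is: (1) prove convergence of $\mathbb{E}\,E_{N}$ of all such words; (2) upgrade to almost sure convergence; (3) identify the limiting family of $B$-valued distributions.

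For (1), I would derive the operator-valued master-field equations. The process $U_{N}$ solves the left-invariant linear SDE driven by the appropriate Hermitian-type Brownian motion on $U(N,\mathbb{K})$, with covariance and It\^o drift constant depending on $\mathbb{K}$ --- the same input used for the square case (Theorem \ref{maintheoremsquare}, \cite{nico1}). Applying It\^o's formula to a word in the $p_{i}U_{N}(t)^{\varepsilon}p_{j}$ and then $\mathbb{E}\,E_{N}$, the quadratic-variation terms expand into the usual ``splitting'' of the word into pairs of sub-words; using the contractivity bound $\|p_{i}U_{N}(t)p_{j}\|\le 1$ together with concentration of normalised traces on $U(N,\mathbb{K})$ (log-Sobolev/Bakry--\'Emery for the heat kernel, or a second-moment It\^o computation giving variances $O(N^{-2})$), the cross terms asymptotically factorise. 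The limit is then a closed autonomous ODE system for the $B$-valued moments $m_{t}$, a $\mathbb{C}^{n}$-valued analogue of Biane's ODE, with initial datum $u(0)_{ij}=\delta_{ij}p_{i}$. Existence follows from the bounds; uniqueness from a Gronwall estimate on finite truncations of the system. Concentration then gives (2) (convergence in probability, and almost surely via Borel--Cantelli from the $O(N^{-2})$ variance).

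For (3), write $\mu_{t}$ for the $B$-valued distribution of the limiting block operator $u(t)$; the claim is $\mu_{s+t}=\mu_{s}\boxtimes_{B}\mu_{t}$, where $\boxtimes_{B}$ is amalgamated free multiplicative convolution. The mechanism is the factorisation $U_{N}(t)=U_{N}(s)\cdot\bigl(U_{N}(s)^{\ast}U_{N}(t)\bigr)$ with the two factors \emph{independent} and $U_{N}(s)^{\ast}U_{N}(t)\overset{d}{=}U_{N}(t-s)$ by stationarity of increments. I would show these factors become \emph{asymptotically free with amalgamation over} $B$. Rather than a direct combinatorial argument --- delicate because the $p_{i}$ are not unitarily invariant --- the efficient route is to repeat step (1) for the joint family $\{p_{i}U_{N}(s)^{\varepsilon}p_{j},\ p_{i}(U_{N}(s)^{\ast}U_{N}(t))^{\varepsilon}p_{j}\}$, obtain the corresponding ODE system, and verify that the object built from $B$-free factors with marginals $\mu_{s}$ and $\mu_{t-s}$ solves the same system with the same initial data; uniqueness then forces the asymptotic $B$-freeness. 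Since the $B$-valued distribution of a product of two $B$-free unitaries is the $\boxtimes_{B}$-convolution of their distributions, $\mu_{t}=\mu_{s}\boxtimes_{B}\mu_{t-s}$ follows; faithfulness of the limiting state $\sum_{i}\alpha_{i}\delta_{i}$ on $\mathbb{C}^{n}$ (this is where $\alpha_{i}>0$ enters) makes ``amalgamated free semigroup'' well posed, and the data may equivalently be packaged as a rectangular free convolution semigroup in the sense of Benaych-Georges.

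The \textbf{main obstacle} is step (3): one must either solve the operator-valued loop equations well enough to extract the $B$-valued $S$-transform (or subordination functions) of $\mu_{t}$, or establish asymptotic $B$-freeness of the increments --- both harder than in the scalar case $n=1$, because in the genuinely rectangular regime the $\alpha_{i}$ are distinct, so the trace is not block-symmetric and the weights must be carried through every step, and because products of rectangular blocks exist only for compatible dimensions, so all computations live in the bimodule/matricial structure rather than in one algebra. The remaining ingredients --- It\^o's formula, the contractivity bounds, the concentration estimates, the Gronwall uniqueness --- are routine once this bookkeeping is fixed.
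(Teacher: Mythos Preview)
Your high-level architecture is sound and recognisably in the same family as the paper's argument, but the execution differs in two substantive ways that are worth spelling out.

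\textbf{On step (1).} The paper does not work directly with It\^o's formula on words in the $p_iU_N(t)^{\varepsilon}p_j$. Instead it encodes every such word as an \emph{oriented coloured Brauer diagram} $(b,s)$ together with a word $w\in\lmss{M}_q(k)$, and defines a single statistic $\mathbb{m}^{\mathbb{K}}_{d_N}((b,s),w,\lmss{t})$ on this combinatorial space. The ODE system is then a linear, finite-dimensional system on $\mathbb{R}[\mathcal{O}\brauer]\otimes\mathbb{R}[\lmss{M}_q(k)]$, whose generator $L^{\mathbb{K}}_{i,N}$ (equations \eqref{genrh}--\eqref{genc}) is expressed as a sum over elementary diagrams (transpositions and projectors) acting by concatenation. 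Convergence of the generator as $N\to\infty$ is then a matter of reading off the exponents $\lmss{nc}(\ncc{b}\vee\ncc{r}_{ij})-\lmss{nc}(\ncc{b}\vee 1)-1$, and no concentration is invoked: the factorisation property $\mathbb{m}_{\lmss{r}}(b,w)=\prod_{V\in\ncc{b}\vee 1}\mathbb{m}_{\lmss{r}}(b_V,w_V)$ (equation \eqref{factorization}) is a \emph{structural} feature of the limiting generator, not a consequence of a variance estimate. Your It\^o-plus-concentration route would also work and is arguably more probabilistic in flavour, but the Brauer bookkeeping is what lets the paper treat $\mathbb{R},\mathbb{C},\mathbb{H}$ uniformly and keep the rectangular weights straight with no ad hoc effort. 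Note also that the paper proves convergence only in expectation; your almost-sure upgrade in step (2) is extra and not needed for the stated theorem.

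\textbf{On step (3).} This is where the two arguments genuinely diverge. You propose to verify asymptotic $B$-freeness of the increments by checking that the $B$-free product of the marginals satisfies the same joint ODE and invoking uniqueness. The paper takes the dual, cumulant-side route: it expands the limiting statistic $\mathbb{m}_{\lmss{r}}(b,w,\lmss{t})$ of the \emph{joint} process as a sum $\sum_{\gamma\le\pi}c_\gamma(\alpha,w,\lmss{t})$ over non-crossing partitions $\gamma$ below the cycle partition $\pi$ of $b$, by decomposing the exponential $\exp(\sum_i t_iL_{i,\lmss{r}})$ as a sum over increasing paths in the set $\lmss{R}^+_s(b,w)$ of elementary-diagram words. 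M\"obius inversion then identifies $c_{\mathbf{1}_k}$ with the limiting amalgamated cumulant, and Lemma~\ref{lemmacumulant} --- a purely combinatorial statement about which letters can appear in a path ending at $\mathbf{1}_k$ --- shows that $c_{\mathbf{1}_k}$ vanishes whenever the word $w$ mixes two distinct increments. This is Proposition~\ref{nullmixedcumulants} (vanishing of mixed cumulants $\Leftrightarrow$ amalgamated freeness) applied directly. Your ODE-uniqueness strategy is not wrong, but to carry it out you would still need to know what the joint $B$-valued moments of two $B$-free unitaries look like, which in practice means computing their cumulants --- so you end up doing the paper's calculation anyway, just packaged differently.

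\textbf{On where positivity of the $\alpha_i$ enters.} It is not, as you suggest, primarily a faithfulness issue. In the paper it is needed so that the normalisation function $\lmss{r}(\cdot,\cdot)$ of equation \eqref{brauerdimensionfunction} --- a product of ratios $d^{\lmss{fnc}_d(\cdot)-\lmss{fnc}_d(\cdot)}$ --- has a finite, nonzero limit; if some $\alpha_i=0$ the limiting generator would be ill-defined on part of the diagram space.
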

 We have been vague on a point: which products between these rectangular blocks are allowed? Do we consider all products that have a meaning, regarding the dimensions of the blocks? At this point, our work splits in two and the last theorem holds for the two possibilities.

\par We mention that the question related to the convergence of square extractions of a Brownian motion has already been addressed by Michael Ulrich in \cite{ulrich2015construction}. The present work extends this initial investigation in two directions. The first concerns the division algebras the matricial coefficients belong to, considering the three cases: complex, real and quaternionic, while the author in \cite{ulrich2015construction} focuses only on the complex case. The second direction of generalisation we explored concerns convergence of rectangular extractions.

\par We end this introduction with an indication on the method we used. It a variation of the one developed by Levy in \cite{levy} to study Browian motion on the orthogonal, unitary, and symplectic compact group. We start from the algebra of Brauer diagrams and add colors to the vertices of a diagram. We obtain a coloured Brauer algebra that proves helpful for the first part of our work concerning square extractions. This coloured Brauer algebra is however too small for the investigation we conduct in the second part: we need a central extension of this algebra.

\subsection{Outline}

In the first part of this work (Section \ref{unitary_matrices}), we make a brief reminder on unitary groups $\mathbb{U}(N,\mathbb{K})$ with $\mathbb{K}$ a division algebra equal either to the field of real numbers, complex numbers or to algebra of quaternions. We then introduce Brownian diffusions on such groups. A multidimensional counterpart of the free Brownian motion is introduced
in Section \ref{freemultbrown}. Our main combinatoric tool, the algebra of coloured Brauer diagram is introduced in Section \ref{schur_weyl}. Convergence in non-commutative distribution of square blocks extractions of a unitary, symmetric and symplectic Brownian motion is studied in Section \ref{squareextractions}. The case were rectangular blocks are allowed for extractions is exposed in Section \ref{rectangularextractions}.
\section{Brownian diffusions on unitary matrices}
\label{unitary_matrices}
\subsection{Unitary matrices over the three finite dimensional division algebras}
We let $\mathbb{K}$ be one of the three associative algebras $\mathbb{C},\mathbb{H}$ and $\mathbb{R}$. We denote by $\lmss{i},\lmss{j}$ and $\lmss{k}$ the linear real basis of $\mathbb{H}$:
$$
\lmss{i}^{2} = \lmss{j}^{2} = \lmss{k}^{2} = -1, \quad \lmss{ij}=\lmss{k}, \lmss{jk}=\lmss{i}, \lmss{ki} = \lmss{j}.
$$
The adjoint of element $x \in \mathbb{K}$ is denoted $x^{\star}$ and the adjoint of a matrix $M = \left(M_{ij}\right)_{1 \leq i,j\leq N}\in \mathcal{M}_{N}(\mathbb{K}),~N \geq 1$ is $M^{\star} = \left(M^{\star}_{ij}\right)_{ 1 \leq i,j \leq N} = \left( M^{\star}_{ji}\right)_{1 \leq i,j \leq N}$.
The group of unitary matrices with entries in $\mathbb{K}$ is the connected subgroup of $\mathcal{M}_{N}(\mathbb{K})$, which depends on an integer $N \geq 1$ and defined by
\begin{equation*}
	\mathbb{U}(N,\mathbb{K}) = \{M \in \mathcal{M}_{N}\left(\mathbb{K}\right),~MM^{\star} = M^{\star}M = 1 \}^{0}.
\end{equation*}
where the exponent ${ }^{0}$ means that we take the connected component of the identity (it is needed for the real case). If $K = \mathbb{R}$ the group $\mathbb{U}(N,\mathbb{R})$ is the group of special orthogonal matrices $SO(N,\mathbb{R})$ and for $K = \mathbb{C}$, $\mathbb{U}(N,\mathbb{C})$ is the group of unitary matrices. The Lie algebra $\mathfrak{u}(N,\mathbb{K})$ is given by
\begin{equation*}
	\mathfrak{u}(N,\mathbb{K}) = \{H \in \mathcal{M}_{N}(\mathbb{K}) : H^{\star} + H = 0 \}.
\end{equation*}
The real Lie algebra  of skew-symmetric matrices of size $N\times N$ is denoted $a_{N}$ and the vector space of symmetric matrices of size $N\times N$ is denoted $s_{N}$. As real Lie algebras, one has  the decompositions:
\begin{equation}
	\label{liealgebras}
	\mathfrak{so}_{N} = \mathfrak{a}_{N}, \quad \mathfrak{u}_{N} = \mathfrak{a}_{N} + \lmss{i}\mathfrak{s}_{N}, \quad \mathfrak{sp}_{N} = \mathfrak{a}_{N} + \lmss{i}\mathfrak{s}_{N} + \lmss{j} \mathfrak{s}_{N} + \lmss{k}\mathfrak{s}_{N},~ N \geq 1.
\end{equation}
It follows that, with $\beta = \textrm{dim}_{\mathbb{R}}(\mathbb{K})$,
$$
\textrm{dim}(\mathfrak{u}_{N}(\mathbb{K})) = \frac{N(N-1)}{2} + (\beta - 1) \frac{N(N + 1)}{2}, N \geq 1.
$$
Amongst the groups $\mathbb{U}(N,\mathbb{K}), ~\mathbb{K} = \mathbb{R},\mathbb{C}$ or $\mathbb{H}$, only $\mathbb{U}(N,\mathbb{C})$ has a non trivial center and is thus non simple. We shall add to the list the group of special unitary matrices $SU(N)$ defined as the subgroup of unitary matrices with complex entries with trace equal to one. The Lie algebra $\mathfrak{s}\mathfrak{u}(N,\mathbb{C})$ is the subalgebra of $\mathcal{M}_{N}\left(\mathbb{C} \right)$ of anti-Hermitian matrices with null trace.
Let $N \geq 1$. To define a Brownian motion on the group $\mathbb{U}(N,\mathbb{K})$ one needs to pick first a scalar product on the Lie algebra $\mathfrak{u}\left(N,\mathbb{K} \right)$. Since $U\left(N,\mathbb{K}\right)$ is simple, there exists up to a multiplication by a positive scalar only one scalar product on $\mathfrak{u}\left(N,\mathbb{K}\right)$ which is invariant by the adjoint action of $\mathbb{U}(N,\mathbb{K})$ on its Lie algebra
\par As the group $\mathbb{U}(N,\mathbb{K})$ is compact, the negative of the Killing form is an invariant scalar product. Since we are going to let the dimension $N$ tend to $+\infty$, we care about the normalization of the Killing form. Let $\langle \cdot, \cdot \rangle_{N}$ be the scalar product
\begin{equation*}
	\langle X,Y \rangle_{N} = \frac{\beta N }{2}\mathcal{R}e(\lmss{Tr}(X^{\star}Y)),~ X,Y \in \mathfrak{u}\left(N,\mathbb{K}\right)
\end{equation*}
The direct sums in the equations \eqref{liealgebras} are decompositions into mutually orthogonal summands for $\langle \cdot, \cdot \rangle_{N}$. Let $\{H^{N}_{k}\}$ be an orthonormal basis of $\mathfrak{u}\left(N,\mathbb{K}\right)$, the Casimir element $C_{\mathfrak{u}_{N}(\mathbb{K})}$ is a bivector in $ \mathfrak{u}_{N}(\mathbb{K}) \otimes_{\mathbb{R}} \mathfrak{u}_{N}(\mathbb{K})$ defined by the formula:
\begin{equation*}
	C_{\mathfrak{u}_{N}(\mathbb{K})} = \sum_{k = 1}^{\beta} H_{k} \otimes H_{k}.
\end{equation*}
We can cast the last formula for the Casimir element into a more concrete form by setting first
\begin{equation*}
	\lmss{P} = \sum_{ab} E_{ab} \otimes E_{ab} \in \mathcal{M}_{N}(\mathbb{K}) \otimes  \mathcal{M}_{N}(\mathbb{K}) \quad \lmss{T} = \sum_{a,b} E_{ab} \otimes E_{ba} \in \mathcal{M}_{N}(\mathbb{K}) \otimes  \mathcal{M}_{N}(\mathbb{K}),
\end{equation*}
then a simple calculation we shall not reproduce here for brevity shows that:
\begin{equation*}
C_{\mathfrak{a}_{N}} = -\lmss{T} + \lmss{P} \quad C_{\mathfrak{s}_{N}} = \lmss{T} + \lmss{P}.
\end{equation*}
The letters $\lmss{T}$ and $\lmss{P}$ stand for \emph{transposition} and \emph{projection}. Eventually, put $\lmss{I}(\mathbb{K}) = \{1,\lmss{i}, \lmss{j}, \lmss{k}\} \cap \mathbb{K}$ and for the needs of the quaternionic case, define
\begin{equation*}
\lmss{Re}^{\mathbb{K}} = \sum_{\gamma \in \lmss{I}(\mathbb{K})} \gamma \otimes \gamma^{-1} \in \mathbb{K} \otimes_{\mathbb{R}} \mathbb{K},~ \lmss{Co}^{\mathbb{K}} = \sum_{\gamma \in \lmss{I}(\mathbb{K})} \gamma \otimes \gamma \in \mathbb{K} \otimes_{\mathbb{R}} \mathbb{K}.
\end{equation*}
For the complex case, formulae for $\lmss{Re}^{\mathbb{C}}$ and $\lmss{Co}^{\mathbb{C}}$ are given below if these quantities are seen in the tensor product over the complex field of $\mathcal{M}_{N}(\mathbb{C})$ with itself, not over the real field as stated in the last equation.
\begin{lemma}
  The Casimir element of the real Lie algebra $\mathfrak{u}_{N}(\mathbb{K})$ is given by
  \begin{equation*}
	C_{\mathfrak{u}_{N}(\mathbb{K})} = \frac{1}{\beta N} \left( -\lmss{T} \otimes_{\mathbb{R}}\lmss{Re}^{\mathbb{K}} + \lmss{P} \otimes_{\mathbb{R}} \lmss{Co}^{\mathbb{K}} \right)
  \end{equation*}
\end{lemma}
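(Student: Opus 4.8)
The plan is to exploit that the Casimir bivector $C_{\mathfrak{u}_{N}(\mathbb{K})}=\sum_{k}H_{k}\otimes H_{k}$ is independent of the chosen orthonormal basis $\{H_{k}\}$, and to compute it using a basis adapted to the orthogonal decomposition \eqref{liealgebras}. Writing $\gamma$ for an imaginary unit in $\lmss{I}(\mathbb{K})\setminus\{1\}$, that decomposition reads $\mathfrak{u}_{N}(\mathbb{K})=\mathfrak{a}_{N}\oplus\bigoplus_{\gamma}\gamma\,\mathfrak{s}_{N}$, an \emph{orthogonal} direct sum for $\langle\cdot,\cdot\rangle_{N}$; hence a concatenation of orthonormal bases of the summands is an orthonormal basis of $\mathfrak{u}_{N}(\mathbb{K})$, and $C_{\mathfrak{u}_{N}(\mathbb{K})}$ is the sum of the Casimir elements of the summands, each regarded as a bivector in $\mathfrak{u}_{N}(\mathbb{K})^{\otimes_{\mathbb{R}}2}$. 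Throughout I identify $\mathcal{M}_{N}(\mathbb{K})^{\otimes_{\mathbb{R}}2}\cong\mathcal{M}_{N}(\mathbb{R})^{\otimes_{\mathbb{R}}2}\otimes_{\mathbb{R}}\mathbb{K}^{\otimes_{\mathbb{R}}2}$, so that $\lmss{T}$ and $\lmss{P}$ carry real matrix units and the scalars of $\mathbb{K}$ live in the last factor.

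First I would treat the summand $\mathfrak{a}_{N}$ (the case $\gamma=1$). Recall from the display preceding the lemma that $-\lmss{T}+\lmss{P}$ is the Casimir element of $\mathfrak{a}_{N}$, and $\lmss{T}+\lmss{P}$ that of $\mathfrak{s}_{N}$, for the scalar product $\tfrac{1}{2}\mathcal{R}e(\lmss{Tr}(X^{\star}Y))$ — this is the computation left out there, checked on $\{\tfrac{1}{\sqrt{2}}(E_{ab}-E_{ba}):a<b\}$ and on $\{E_{aa}\}\cup\{\tfrac{1}{\sqrt{2}}(E_{ab}+E_{ba}):a<b\}$ respectively. Since $\langle X,Y\rangle_{N}=\beta N\cdot\tfrac{1}{2}\mathcal{R}e(\lmss{Tr}(X^{\star}Y))$, rescaling an orthonormal basis by $(\beta N)^{-1/2}$ multiplies the Casimir by $(\beta N)^{-1}$; thus the $\mathfrak{a}_{N}$-contribution is $\tfrac{1}{\beta N}(-\lmss{T}+\lmss{P})=\tfrac{1}{\beta N}\bigl(-\lmss{T}\otimes_{\mathbb{R}}(1\otimes 1)+\lmss{P}\otimes_{\mathbb{R}}(1\otimes 1)\bigr)$.

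Next, for each $\gamma\in\{\lmss{i},\lmss{j},\lmss{k}\}\cap\mathbb{K}$, left multiplication $S\mapsto\gamma S$ is an isometry from $\mathfrak{s}_{N}$ onto $\gamma\,\mathfrak{s}_{N}$ for $\tfrac{1}{2}\mathcal{R}e(\lmss{Tr}(\cdot^{\star}\cdot))$, because $\gamma$ is central and $\gamma^{\star}\gamma=1$; moreover $(\gamma S)\otimes(\gamma S')=(S\otimes S')\otimes_{\mathbb{R}}(\gamma\otimes\gamma)$ under the identification above. Combining this with the $(\beta N)^{-1}$ rescaling and $C_{\mathfrak{s}_{N}}=\lmss{T}+\lmss{P}$, the $\gamma\,\mathfrak{s}_{N}$-contribution is $\tfrac{1}{\beta N}(\lmss{T}+\lmss{P})\otimes_{\mathbb{R}}(\gamma\otimes\gamma)$. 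Summing over all summands,
\[
C_{\mathfrak{u}_{N}(\mathbb{K})}=\frac{1}{\beta N}\Bigl(\lmss{P}\otimes_{\mathbb{R}}\!\!\sum_{\gamma\in\lmss{I}(\mathbb{K})}\!\!\gamma\otimes\gamma\;-\;\lmss{T}\otimes_{\mathbb{R}}(1\otimes 1)\;+\;\lmss{T}\otimes_{\mathbb{R}}\!\!\sum_{\gamma\in\lmss{I}(\mathbb{K}),\,\gamma\neq 1}\!\!\gamma\otimes\gamma\Bigr).
\]
The first term is already $\tfrac{1}{\beta N}\lmss{P}\otimes_{\mathbb{R}}\lmss{Co}^{\mathbb{K}}$. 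For the $\lmss{T}$-terms, use $\gamma^{-1}=-\gamma$ for $\gamma\in\{\lmss{i},\lmss{j},\lmss{k}\}$ (as $\gamma^{2}=-1$) together with $1^{-1}=1$, which gives $-(1\otimes 1)+\sum_{\gamma\neq 1}\gamma\otimes\gamma=-\sum_{\gamma\in\lmss{I}(\mathbb{K})}\gamma\otimes\gamma^{-1}=-\lmss{Re}^{\mathbb{K}}$, and the claimed formula follows.

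The argument is essentially bookkeeping, and the one genuinely delicate point is the normalisation: one must keep straight the factor $\beta N/2$ in $\langle\cdot,\cdot\rangle_{N}$ against the $\tfrac{1}{2}\mathcal{R}e(\lmss{Tr})$ normalisation implicit in the formulas for $C_{\mathfrak{a}_{N}}$ and $C_{\mathfrak{s}_{N}}$, as well as the sign produced by $\gamma^{-1}=-\gamma$. I would close with two sanity checks: for $\mathbb{K}=\mathbb{R}$ one has $\beta=1$ and $\lmss{I}(\mathbb{K})=\{1\}$, so the formula collapses to $C_{\mathfrak{so}_{N}}=\tfrac{1}{N}(-\lmss{T}+\lmss{P})$; and for $\mathbb{K}=\mathbb{C}$, reading $\lmss{Re}^{\mathbb{C}}$ and $\lmss{Co}^{\mathbb{C}}$ in $\mathcal{M}_{N}(\mathbb{C})^{\otimes_{\mathbb{C}}2}$ (where they become the scalars $2$ and $0$) recovers the classical $C_{\mathfrak{u}_{N}(\mathbb{C})}=-\tfrac{1}{N}\lmss{T}$, in agreement with the remark preceding the lemma.
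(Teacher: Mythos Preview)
Your argument is correct and is exactly the computation the paper sets up but does not write out: the lemma is stated without proof, the paper having already supplied the orthogonal decomposition \eqref{liealgebras} and the identities $C_{\mathfrak{a}_{N}}=-\lmss{T}+\lmss{P}$, $C_{\mathfrak{s}_{N}}=\lmss{T}+\lmss{P}$, so summing the Casimir contributions of the summands and reorganising via $\gamma^{-1}=-\gamma$ is precisely the intended route. Two cosmetic slips, neither affecting the outcome: the bases you list are orthonormal for $\mathcal{R}e(\lmss{Tr}(X^{\star}Y))$ rather than for $\tfrac{1}{2}\mathcal{R}e(\lmss{Tr}(X^{\star}Y))$ (drop the $1/\sqrt{2}$'s, or drop the $\tfrac{1}{2}$ in the inner product, to make the parenthetical consistent); and for $\mathbb{K}=\mathbb{H}$ the unit $\gamma$ is \emph{not} central in $\mathbb{H}$ --- what you actually use, and what does hold, is that $\gamma$ commutes with the real entries of $S$, whence $(\gamma S)^{\star}(\gamma S')=\gamma^{\star}\gamma\,SS'=SS'$.
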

\noindent
We agree with the author in \cite{levy}, for the complex case it is more natural to take tensor products over the complex field, not over the real field. In the sequel, the symbol $\otimes$ stands for the symbol $\otimes_{\mathbb{C}}$ if taking the tensor product of complex vector spaces and $\otimes_{\mathbb{R}}$ otherwise. With this convention, we can give simple formulae for $C_{\mathfrak{u}(N,\mathbb{C})}$ and $C_{\mathfrak{u}(N,\mathbb{R})}$:
\begin{equation*}
  C_{\mathfrak{u}_{N}} = -\frac{1}{N} T \in \mathcal{M}_{N}(\mathbb{C}) \otimes \mathcal{M}_{N} (\mathbb{C}),
 \quad C_{\mathfrak{so}_{N}} = -\frac{1}{N}\left( T-P \right) \in \mathcal{M}_{N}(\mathbb{R}) \otimes \mathcal{M}_{N} (\mathbb{R}).
\end{equation*}
Let $m : \mathcal{M}_{N}(\mathbb{K}) \otimes \mathcal{M}_{N}(\mathbb{K}) \rightarrow \mathcal{M}_{N}(\mathbb{K})$ be the multiplication map and let $c_{u_{N}(\mathbb{K})} = m(C_{u_{N}(\mathbb{K})})$.
\begin{lemma}
  If $\mathfrak{g}$ is one of the three Lie algebras at hand, then:
  \begin{equation*}
    c_{u_{N}(\mathbb{K})} = -1 + \frac{2-\beta}{\beta N}I_{N}.
  \end{equation*}
\end{lemma}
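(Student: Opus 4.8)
The plan is to compute $c_{\mathfrak{u}_N(\mathbb{K})}=m\bigl(C_{\mathfrak{u}_N(\mathbb{K})}\bigr)$ by substituting the closed form of $C_{\mathfrak{u}_N(\mathbb{K})}$ furnished by the previous lemma,
\[
C_{\mathfrak{u}_N(\mathbb{K})}=\frac{1}{\beta N}\left(-\lmss{T}\otimes_{\mathbb{R}}\lmss{Re}^{\mathbb{K}}+\lmss{P}\otimes_{\mathbb{R}}\lmss{Co}^{\mathbb{K}}\right),
\]
into the multiplication map $m\colon\mathcal{M}_N(\mathbb{K})\otimes_{\mathbb{R}}\mathcal{M}_N(\mathbb{K})\to\mathcal{M}_N(\mathbb{K})$. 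The structural point that makes this immediate is the isomorphism of real algebras $\mathcal{M}_N(\mathbb{K})\cong\mathcal{M}_N(\mathbb{R})\otimes_{\mathbb{R}}\mathbb{K}$, under which $m$ sends a pure tensor $(A\otimes x)\otimes(B\otimes y)$ to $AB\otimes xy$; consequently $m\bigl(\lmss{T}\otimes_{\mathbb{R}}\lmss{Re}^{\mathbb{K}}\bigr)=m_{\mathcal{M}_N(\mathbb{R})}(\lmss{T})\cdot m_{\mathbb{K}}(\lmss{Re}^{\mathbb{K}})$, and similarly with $\lmss{P}$ and $\lmss{Co}^{\mathbb{K}}$, the scalar produced on the $\mathbb{K}$ side being read inside $\mathcal{M}_N(\mathbb{K})$ via $\mathbb{R}\hookrightarrow\mathbb{K}$.

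The four elementary evaluations are then carried out separately. On the matrix side, $E_{ab}E_{cd}=\delta_{bc}E_{ad}$ gives $m(\lmss{T})=\sum_{a,b}E_{ab}E_{ba}=\sum_{a,b}E_{aa}=N\,I_N$ and $m(\lmss{P})=\sum_{a,b}E_{ab}E_{ab}=\sum_{a}E_{aa}=I_N$. On the division-algebra side, $\lmss{I}(\mathbb{K})$ has $\beta$ elements, each of which is unitary, so $m_{\mathbb{K}}(\lmss{Re}^{\mathbb{K}})=\sum_{\gamma\in\lmss{I}(\mathbb{K})}\gamma\gamma^{-1}=\beta$; and since $1^2=1$ while $\gamma^2=-1$ for each of the remaining $\beta-1$ basis elements among $\lmss{i},\lmss{j},\lmss{k}$ that belong to $\mathbb{K}$, we get $m_{\mathbb{K}}(\lmss{Co}^{\mathbb{K}})=\sum_{\gamma\in\lmss{I}(\mathbb{K})}\gamma^2=1-(\beta-1)=2-\beta$.

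Plugging these in yields
\[
c_{\mathfrak{u}_N(\mathbb{K})}=\frac{1}{\beta N}\left(-\beta N\,I_N+(2-\beta)\,I_N\right)=-I_N+\frac{2-\beta}{\beta N}\,I_N,
\]
which is the asserted identity once the scalar matrix $-I_N$ is written as $-1$. For $\mathbb{K}=\mathbb{R}$ and $\mathbb{K}=\mathbb{C}$ the same answer can be reached in one line by applying $m$ to the simplified Casimirs $C_{\mathfrak{u}_N}=-\tfrac1N T$ and $C_{\mathfrak{so}_N}=-\tfrac1N(T-P)$, giving $-1$ and $-1+\tfrac1N$ in accordance with $\beta=2$ and $\beta=1$. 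I do not expect a genuine obstacle here: the computation is routine, and the only places asking for a little care are the reshuffling of tensor factors implicit in expressing $m$ through $\mathcal{M}_N(\mathbb{K})\cong\mathcal{M}_N(\mathbb{R})\otimes_{\mathbb{R}}\mathbb{K}$, and, in the complex case, whether tensor products are taken over $\mathbb{R}$ or over $\mathbb{C}$ — a choice that does not alter the value of $m\bigl(C_{\mathfrak{u}_N(\mathbb{K})}\bigr)$.
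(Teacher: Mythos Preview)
Your proof is correct. The paper states this lemma without proof, treating it as an immediate consequence of the preceding formula for $C_{\mathfrak{u}_N(\mathbb{K})}$; your computation of $m(\lmss{T})=N I_N$, $m(\lmss{P})=I_N$, $m_{\mathbb{K}}(\lmss{Re}^{\mathbb{K}})=\beta$, and $m_{\mathbb{K}}(\lmss{Co}^{\mathbb{K}})=2-\beta$ is exactly the routine verification the author omits.
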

\subsection{Brownian motion on unitary groups.}
Let $N \geq 1$ be an integer and $\mathbb{K}$ one of the three division algebra $\mathbb{R}, \mathbb{C}$ and $\mathbb{H}$. Let $(B_{k})_{k \leq \textrm{dim}(\mathfrak{u}_{N}(\mathbb{K}))}$ be a $\lmss{dim}_{\mathbb{R}}(\mathfrak{u}_{N}(\mathbb{K}))$ dimensional Brownian motion and let $\left(H_{k}^{N}\right)_{1 \leq k \leq N}$ an orthonormal basis for $\mathfrak{u}(N,\mathbb{K})$, a Brownian motion $K = \left(K(t)\right)_{t\geq 0}$ with values in the Lie algebra $\mathfrak{u}(N,\mathbb{K})$ is
\begin{equation}
  \label{eq:brownianmotion_alg}
  K(t) = \sum_{k=1}^{\lmss{dim}_{\mathbb{R}}(\mathfrak{u}(N,\mathbb{K}))} B_{k}(t)H^{N}_{k},~ t \geq 0.
\end{equation}
If $d \leq 1$ and $n \geq 1$ are two integers such that $N = nd$, a matrix $A \in \mathcal{M}_{N}(\mathbb{K})$ is seen as an element of $\mathcal{M}_{N}(\mathbb{K}) \otimes_{\mathbb{R}}(\mathbb{K})$ through the identification $M \mapsto E^{i}_{j}\otimes M(i,j)$ if $M(i,j)$ is the matrix of size $d \times d$ in place $i,j$ in the matrix $M$, $1\leq i,j \leq N$.
The Brownian motion $\munitaryfd = (\munitaryfd(t))_{t \geq 0} $ on the unitary group $\mathbb{U}(N,\mathbb{K})$ is the solution of the following stochastic differential equation with values in the tracial algebra $\left( \mathcal{M}_{N}(\mathbb{K}), \lmss{tr} \right)$:
\begin{equation}
  \label{eqn:brownianmotion_gr}
\left\{
\begin{array}{l}
\textrm{d}\munitaryfd(t) =  \munitaryfd(t)\textrm{d}K(t) + \frac{c_{\mathfrak{u}_{N}(\mathbb{K})}}{2}\munitaryfd(t)\textrm{dt} \\[+2pt]
\munitaryfd(0) = I_{N}.
  \end{array}
  \right.
\end{equation}

For all $t\geq 0$, $\munitaryfd(t)$ is an unitary matrix, a random variable with values in the dual Voiculescu group $\mathcal{O}\langle n \rangle$ is defined by:
\begin{equation*}
\unitaryfd(t):
\left\{
	\begin{array}{ccc}
		\mathcal{O}\langle nd \rangle & \rightarrow & L^{\infty}\left(\Omega, \mathcal{A}, \mathcal{M}_{d}(\mathbb{C}), \mathbb{P} \otimes \lmss{tr}\right) \\
			u_{ij} & \mapsto & \munitaryfd(t)(i,j)\\
			u^{\star}_{ij} & \mapsto & \left(\munitaryfd(t)(i,j)\right)^{\star}.
	\end{array}
	\right.
\end{equation*}
In Section \ref{squareextractions}, we study the convergence in non-commutative distribution of $\unitaryfd$ as the dimension $d \rightarrow +\infty$ to the free unitary Brownian motion. A crucial step toward this goal is to give formulae for mean of polynomials in the matrix $\munitaryfd(t)$, $t\geq 0$.

\par In the following proposition, let $i,j,k$ be three integers such that $i,j\leq k$, to a tensor $A \in \mathcal{M}_{N}\left(\mathbb{K} \right) \otimes \mathcal{M}_{N}\left(\mathbb{K} \right)$ we associate the endomorphism $\iota_{ij}(A) \in \mathcal{M}_{N}(\mathbb{K})^{\otimes k}$ that acts as:
$\iota_{ij}(A)(v_{1}\otimes v_{i}\otimes \cdots \otimes v_{j} \otimes v_{k}) = v_{1} \otimes \cdots A^{(1)}(v_{i})\otimes \cdots \otimes A^{(2)}(v_{j}) \otimes \cdots \otimes v_{k}$, $v_{1}\cdots\otimes v_{k} \in (\mathbb{R}^{N})^{\otimes k}$ if we use the Sweedler notation.
For the complex case, mean of tensor product of $\munitaryfdc$ and its conjugate are also needed. We denote by $\bar{\lmss{M}}_{1}$ the free monoid generated by $x_{1}$ and $\bar{x}_{1}$. If $A\in \mathcal{M}_{N}(\mathbb{C})$ and $w \in \bar{\lmss{M}}_{1}$, then $w^{\otimes}(A)$ denotes the monomial in $\mathcal{M}_{N}(\mathbb{C})^{\otimes k}$ obtained via the substitution $x_{1} \to A$ and $\bar{x}_{1} \to \bar{A}$.
\begin{proposition}[\cite{levy}]
\label{meantensorlevy}
Let $\mathbb{K}$ be one the three division algebra $\mathbb{R}, \mathbb{C}$ or $\mathbb{H}$. Let $k \geq 1$ be an integer and $t \geq 0$ a time. We have
\begin{equation*}
	\label{formula_tensor_process}
	\mathbb{E}\left[\left(\munitaryfd(t)\right)^{\otimes k}\right] = \exp \left( kt\frac{c_{\mathfrak{g}}}{2} + t\sum_{1 \leq i<j \leq n} \iota_{ij}(C_{\mathfrak{g}}) \right).
\end{equation*}
For the complex case, let $w\in \bar{\lmss{M}}_{2}$, then:
\begin{equation}
\label{formula_tensor_process_c}
	\mathbb{E}\left[w^{\otimes}\left(\munitaryfdc\right)(t) \right] = \exp\left(-\frac{kt}{2} + \sum_{\substack{1 \leq i, j \leq k, \\ w_{i} \neq w_{j}}} \iota_{i,j}\left(\lmss{P}\right) - \sum_{\substack{1 \leq i,j\leq k, \\ w_{i} = w_{j}}} \iota_{i,j}\left(\lmss{T}\right) \right).
\end{equation}
\end{proposition}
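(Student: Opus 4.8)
The result is quoted from \cite{levy}; the plan is to reproduce the short Itô-calculus argument, which I now sketch. Write $U(t):=\munitaryfd(t)$ and regard $t\mapsto U(t)^{\otimes k}$ as a continuous semimartingale valued in the finite-dimensional real algebra $\mathcal M_N(\mathbb K)^{\otimes k}$. Since $U$ solves the Itô equation \eqref{eqn:brownianmotion_gr}, I would first apply the Leibniz rule of Itô calculus to the $k$-fold tensor product, which gives
\begin{equation*}
\mathrm d\big(U(t)^{\otimes k}\big)=\sum_{i=1}^{k}U(t)^{\otimes(i-1)}\otimes\big(U(t)\,\mathrm dK(t)\big)\otimes U(t)^{\otimes(k-i)}+\frac{k\,c_{\mathfrak g}}{2}\,U(t)^{\otimes k}\,\mathrm dt+\sum_{1\le i<j\le k}\iota_{ij}(C_{\mathfrak g})\big(U(t)^{\otimes k}\big)\,\mathrm dt .
\end{equation*}
Here the first sum is a local martingale; the second term gathers the drifts of the $k$ factors and uses that $c_{\mathfrak g}=c_{\mathfrak u_N(\mathbb K)}$ is a scalar, as computed in the lemma above; and the last sum is the contribution of the quadratic covariations between pairs of distinct factors, where I would use that by independence of the coordinate Brownian motions $(B_\ell)_\ell$ one has $\mathrm dK(t)\otimes\mathrm dK(t)=\big(\sum_{\ell}H^N_\ell\otimes H^N_\ell\big)\,\mathrm dt=C_{\mathfrak g}\,\mathrm dt$, inserted in slots $i$ and $j$.

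Next I would take expectations. As $U$ stays in the compact group $\mathbb U(N,\mathbb K)$, its entries are bounded, the local martingale is a genuine martingale and Fubini applies, so $m_k(t):=\mathbb E\big[U(t)^{\otimes k}\big]$ solves the linear ordinary differential equation $m_k'(t)=A\,m_k(t)$ with $m_k(0)=\mathrm{id}$, where $A:=\tfrac k2\,c_{\mathfrak g}+\sum_{1\le i<j\le k}\iota_{ij}(C_{\mathfrak g})$ is a fixed operator; hence $m_k(t)=\exp(tA)$, which is the first asserted formula. Plugging in $C_{\mathfrak u_N}=-\tfrac1N\lmss T$ and $c_{\mathfrak u_N}=-1$ recovers the stated specialisation for the complex field.

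For the last formula I would run the same computation on the monomial $w^{\otimes}(U(t))$, $w\in\bar{\lmss M}_2$, now distinguishing the factors carrying $U$ from those carrying $\bar U$: the drifts still sum to $\tfrac k2 c_{\mathfrak u_N}=-\tfrac k2$; the covariation of two factors of the same type contributes $\iota_{ij}(C_{\mathfrak u_N})$ as before, while the covariation of a $U$-factor with a $\bar U$-factor contributes $\iota_{ij}\big(\sum_{\ell}H^N_\ell\otimes\overline{H^N_\ell}\big)$, and a short computation with an explicit orthonormal basis of $\mathfrak u_N$ shows $\sum_{\ell}H^N_\ell\otimes\overline{H^N_\ell}$ is a multiple of $\lmss P$; solving the resulting linear ODE yields \eqref{formula_tensor_process_c}. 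The step I expect to be the real obstacle is not the calculus but the bookkeeping in the quaternionic case, where the tensor powers must be taken over $\mathbb R$ and the operators $\iota_{ij}$ understood through the factorisation $C_{\mathfrak u_N(\mathbb K)}=\tfrac1{\beta N}\big(-\lmss T\otimes_{\mathbb R}\lmss{Re}^{\mathbb K}+\lmss P\otimes_{\mathbb R}\lmss{Co}^{\mathbb K}\big)$ given above, and one must check that the exponential series lives in the correct associative algebra; all of this is carried out in \cite{levy}, which I would cite rather than repeat.
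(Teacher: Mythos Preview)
The paper gives no proof of this proposition; it is stated with attribution to \cite{levy} and used as a black box. Your It\^o-calculus sketch is correct and is exactly the argument one finds in that reference: differentiate the tensor power, identify the quadratic covariation $\mathrm dK\otimes\mathrm dK=C_{\mathfrak g}\,\mathrm dt$, take expectations to get a linear ODE with constant generator, and exponentiate.
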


\section{Higher dimensional free Brownian motion}
\label{freemultbrown}
Let $n$ be an integer greater than one. Let $(w_{ij}^{\lmss{i}})_{1 \leq i < j \leq n}, \left\{ w_{i},~i \leq n \right\}$ with $~\lmss{i}\in\{1,2\}$ be three mutually free families of free Brownian motions on a tracial von Neumann algebra $(\mathcal{A}, \tau)$. We define the algebra $\mathcal{H} \langle n \rangle $ as the real unital algebra freely generated by $n(n+1)$ elements $(h_{ij})_{1\leq i \leq j \leq n}$ and $(h^{\star}_{ij})_{1\leq i \leq j \leq n}$. We turn $\hermitianalg$ into a $\star$ algebra by defining the involutive antimorphism $\star$ as $(h_{ij})^{\star} = h^{\star}_{ij}$.
The complexification of $\hermitianalg$ is denoted $\mathcal{H}^{\mathbb{C}}\langle n \rangle$. The involution $\star$ is extended as an anti-linear anti-morphism of $\mathcal{H}^{\mathbb{C}}\langle n \rangle$. We prefer to work with the real algebra $\hermitianalg$ since the random variables we are interested in are valued into real algebras (and we do not want to complexify those algebras).
For each time $t\geq 0$, we define a free noise process, that is a quantum process $\noisefree$ by setting for each time $t\geq 0$, the random variable $\noisefree(t) : \hermitianalg \rightarrow (\mathcal{A}, \tau)$ equal to:
\begin{equation*}
  \begin{split}
	  \noisefree_{ii}{}(t) = w_{i}(t), \, \noisefree_{ij}{}(t) = \frac{1}{\sqrt{2}}\left( w_{ij}^{1}(t) + \lmss{i}w_{ij}^{2}(t) \right), \noisefree_{ji}{}(t) = \left(\noisefree_{ij}{}(t)\right)^{\star} \quad i < j,
\end{split}
\end{equation*}
where $\noisefree_{ij}(t) = \noisefree(t)(h_{ij})$. We refer to the matrix with non-commutative entries $\noisefree$ as the \textit{free}$n$-\textit{dimensional Hermitian Brownian motion}. We define further the dual unitary group $\udualgroup$, in the sense of Voiculescu. As a real algebra $\udualgroup$ is generated by $2n^{2}$ variables $(o_{ij})_{1 \leq i,j \leq n}$ and $(o^{\star}_{ij})_{1 \leq i,j \leq N} $ subject to the relations:
\begin{equation}
\label{defining_relation}
\sum_{k=1}^{n}o_{ik}o^{\star}_{jk} = \sum_{k=1}^{n}o_{ki}^{\star}o_{kj} = \delta_{ij},~1 \leq i,j \leq n.
\end{equation}
To define morphisms on $\udualgroup$, it is convenient to introduce the matrix $O$ with entries in $\udualgroup$ and defined by $O_{ij} = o_{ij}$,~$i,j \leq n$.
With the help of these notations, the dual Voiculescu group is turned into a free bialgebra by defining a coproduct $\Delta$ that takes values into the free product $\udualgroup \sqcup \udualgroup$ and satisfies the equation:
\begin{equation*}
\Delta(O) = O_{|1} O_{|2}.
\end{equation*}
The associated counit $\varepsilon: \udualgroup \rightarrow \mathbb{R}$ is subsequently defined by $\varepsilon(O) = I_{n} \in \udualgroup \otimes \mathcal{M}_{n}(\mathbb{R})$.
In addition, the morphism $S$ of $\mathcal{O}$ that takes the values $S(o_{ij}) = o^{\star}_{ji}$ on the generators of $\udualgroup$ is an antipode for the free bi-algebra $\udualgroup$ : $(S \sqcup 1)(o_{ij}) = (1 \sqcup S)(o_{ij}) = o_{ij}$ for all $i,j \leq n$. The tuple $(\udualgroup, \Delta, \varepsilon, S)$ is a Zhang algebra in the category of involutive algebra.
\par Our goal now is to define the higher dimensional analog of the free unitary Brownian motion. This process is a one parameter family of random variables on the Voiculescu dual unitary group denoted by $\freeunitary$ which values $U_{ij}(t) = U(t)(o_{ij})$ $i,j \leq n$ on the generators of $\udualgroup$ satisfy the following free stochastic differential system:
\begin{equation}
  \label{free_unitary_n}
  \left\{
  \begin{array}{l}
	  \textrm{d}\mfreeunitary(t)(i,j) = \frac{\lmss{i}}{\sqrt{n}} \sum_{k=1}^{n}(\textrm{d}\noisefree_{t}(i,k))\mfreeunitary(t)(k,j) - \frac{1}{2}\mfreeunitary(t)(i,j)\textrm{d}t \\[2pt]
	   \mfreeunitary(0) = I_{n}
\end{array}
\right.
\end{equation}
The following lemma states that for each time $t\geq 0$, the entries of the matrix $\mfreeunitary$ satisfy the defining relations of $\udualgroup$. In the next lemma, we use the fact the algebra $\mathcal{A} \otimes \mathcal{M}_{n}(\mathbb{R})$ is an involutive algebra if endowed with the tensor product of the star anti-morphisms of $\mathcal{A}$ and $\mathcal{M}_{n}(\mathbb{R})$ (the transposition).
\begin{lemma}
\par For each time $t\geq 0$, the matrix $\mfreeunitary = \left(\mfreeunitary(t)(i,j)\right)_{1 \leq i,j\leq n}$ is an unitary element of $\mathcal{A}\otimes \mathcal{M}_{n}(\mathbb{C})$.
\end{lemma}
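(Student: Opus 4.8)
The strategy is to show that the matrix $M(t) := \mfreeunitary(t) \in \mathcal{A} \otimes \mathcal{M}_n(\mathbb{C})$ satisfies $M(t)M(t)^\star = M(t)^\star M(t) = I_n$ for all $t \geq 0$, where $\star$ denotes the tensor product of the involution on $\mathcal{A}$ with the transpose on $\mathcal{M}_n(\mathbb{R})$ (equivalently, the adjoint on $\mathcal{M}_n(\mathbb{C})$ compatible with the $\lmss{i}$ from the quaternionic/complex conventions). Since $M(0) = I_n$ satisfies the relation trivially, it suffices to show that the processes $t \mapsto M(t)M(t)^\star$ and $t \mapsto M(t)^\star M(t)$ are constant. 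I would do this by computing their free stochastic differentials using the free It\^o formula and checking that all terms cancel.

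First I would derive, from \eqref{free_unitary_n}, the SDE satisfied by $M(t)^\star$. Taking adjoints in \eqref{free_unitary_n} and using that $\noisefree_t$ is (formally) self-adjoint as a matrix-valued free Brownian motion — i.e. $\noisefree_{ji}(t) = \noisefree_{ij}(t)^\star$, so that $M^\star$ is driven by $-\tfrac{\lmss{i}}{\sqrt n}$ times the same noise acting on the right — one gets
\begin{equation*}
\textrm{d}M(t)^\star = -\frac{\lmss{i}}{\sqrt n}\, M(t)^\star\, \textrm{d}\noisefree_t - \frac{1}{2} M(t)^\star\, \textrm{d}t.
\end{equation*}
Then I would apply the free It\^o product rule to $P(t) := M(t)M(t)^\star$:
\begin{equation*}
\textrm{d}P(t) = (\textrm{d}M(t))\,M(t)^\star + M(t)\,(\textrm{d}M(t)^\star) + \textrm{d}M(t)\,\textrm{d}M(t)^\star.
\end{equation*}
The two drift contributions from the $-\tfrac12$ terms sum to $-P(t)\,\textrm{d}t$. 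The martingale parts are $\tfrac{\lmss{i}}{\sqrt n}(\textrm{d}\noisefree_t)\,P(t)$ and $-\tfrac{\lmss{i}}{\sqrt n}\,P(t)(\textrm{d}\noisefree_t)$ — here one must track carefully that after multiplying by $M^\star$ on the right (resp. $M$ on the left) and using unitarity-in-progress is \emph{not} yet available, so these do not obviously cancel; rather they will be cancelled by the quadratic variation term. The key computation is the bracket $\textrm{d}M_t\,\textrm{d}M_t^\star$: using the definition of $\noisefree$ and the fact that the covariance structure of a free Hermitian Brownian motion in dimension $n$ contributes, via the free It\^o table $\textrm{d}w_{ij}\,\textrm{d}w_{kl}$, exactly the Casimir-type combination $\tfrac1n \sum_k \iota(\cdot)$ acting by $a\,\textrm{d}w\,b\,\textrm{d}w\,c = \tau(\text{middle})\cdot$ contractions, one finds $\textrm{d}M_t\,\textrm{d}M_t^\star = P(t)\,\textrm{d}t$ (the $\lmss{i}^2 = -1$ and the $\tfrac1n$ combining with $\sum_{k=1}^n$ to produce exactly $+P(t)\,\textrm{d}t$). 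Adding this to the drift $-P(t)\,\textrm{d}t$ yields $\textrm{d}P(t) = 0$, so $P(t) = P(0) = I_n$. The computation for $M(t)^\star M(t)$ is symmetric, using instead $\sum_k o_{ki}^\star o_{kj}$-type contractions.

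The main obstacle is the bookkeeping in the quadratic-covariation term: one must make precise what ``$\textrm{d}\noisefree_t$'' means as an $\mathcal{M}_n$-valued free increment, spell out its free It\^o table (the only nonzero brackets being $\textrm{d}\noisefree_{ij}\,\textrm{d}\noisefree_{kl}$, which by freeness of the entry families and the normalization of $\noisefree$ produce, when sandwiched between algebra elements, the value $\delta_{jk}\tau(\cdot)\,\textrm{d}t$ up to the $\tfrac1{\sqrt2}$-and-$\lmss{i}$ combinatorics of the off-diagonal entries), and verify that the self-adjointness of $\noisefree_t$ together with the scalar $\lmss{i}$ gives the claimed sign in $\textrm{d}M_t^\star$. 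Once the free It\^o table for $\noisefree_t$ is correctly recorded, the cancellation $-P\,\textrm{d}t + P\,\textrm{d}t = 0$ is immediate. I would also remark that since both $MM^\star$ and $M^\star M$ equal $I_n$, the entries of $\mfreeunitary(t)$ satisfy the defining relations \eqref{defining_relation} of $\udualgroup$, which is the statement needed in the sequel.
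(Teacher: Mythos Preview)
Your overall strategy---compute $\textrm{d}(MM^\star)$ via the free It\^o product rule and identify the solution---is the same as the paper's, but the execution contains a genuine error in the quadratic--variation term. You claim $\textrm{d}M_t\,\textrm{d}M_t^\star = P(t)\,\textrm{d}t$. This is not what the free It\^o rule gives. For the entrywise free Hermitian Brownian motion $\noisefree$ one has
\[
\textrm{d}\noisefree_{ik}\,X\,\textrm{d}\noisefree_{lj}=\tau(X)\,\delta_{ij}\delta_{kl}\,\textrm{d}t,
\]
so that
\[
\textrm{d}M\,\textrm{d}M^\star=\frac{1}{n}\,\textrm{d}\noisefree\,(MM^\star)\,\textrm{d}\noisefree
=\Big(\frac{1}{n}\sum_k\tau\big((MM^\star)_{kk}\big)\Big)I_n\,\textrm{d}t
=\lmss{tr}(MM^\star)\,I_n\,\textrm{d}t,
\]
a \emph{scalar multiple of the identity}, not $P(t)$ itself. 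The free It\^o correction always traces out the middle factor; the matrix structure of $P$ is lost in the bracket. Consequently the drift does \emph{not} collapse to zero: one gets
\[
\textrm{d}P=\frac{\lmss{i}}{\sqrt n}\big[\textrm{d}\noisefree_t,\,P\big]-P\,\textrm{d}t+\lmss{tr}(P)\,I_n\,\textrm{d}t,
\]
with a nonzero martingale part and a drift $-P+\lmss{tr}(P)I_n$ that is not identically zero a priori. (Your remark that the martingale terms ``will be cancelled by the quadratic variation term'' is also confused: the bracket contributes a finite--variation term and cannot cancel a stochastic integral.)

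This is exactly why the paper's proof proceeds in two steps. First apply $\lmss{tr}$ to the displayed SDE: the commutator vanishes under the trace and the drift becomes $-\lmss{tr}(P)+\lmss{tr}(P)=0$, so $\lmss{tr}(P)\equiv 1$. Second, feed $\lmss{tr}(P)=1$ back into the SDE to obtain $\textrm{d}P=\frac{\lmss{i}}{\sqrt n}[\textrm{d}\noisefree_t,P]+(-P+I_n)\,\textrm{d}t$, a linear free SDE for which $P\equiv I_n$ is manifestly a solution and, by uniqueness, the only one with $P(0)=I_n$. Your write-up should incorporate this trace step; without it the argument does not close.
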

\begin{proof}
The defining relations of the algebra $\udualgroup$ admit the following compact form : $OO^{\star} = O^{\star}O = I_{n}$.
We compute the derivative of $t \mapsto \mfreeunitary\mfreeunitary^{\star}(t)$. Let $t\geq 0$ be a time and $i,j \leq n$ integers.
	\begin{equation*}
		\label{equation_lemma3_1}
		\tag{$\star$}
		\begin{split}
			\textrm{d}\left(\mfreeunitary(t)\mfreeunitary(t)^{\star}\right)(i,j) &= -\left(\textrm{d}\mfreeunitary(t)\mfreeunitary(t)^{\star}\right)(i,j) + \left(\mfreeunitary(t)\textrm{d}\mfreeunitary(t)^{\star}\right)(i,j) \\
			&= -\left(\mfreeunitary(t)\mfreeunitary(t)^{\star}\right)(i,j)\textrm{d}t \\ &\hspace{3.5cm}+\frac{1}{n}\sum_{k,l,q}\textrm{d}\noisefree_{t}(i,k)\mfreeunitary(t)(k,q)\mfreeunitary(t)^{\star}(q,l)\textrm{d}\noisefree_{t}(l,j) \\ &= -\mfreeunitary(t)\mfreeunitary(t)^{\star}  + \lmss{tr}(\mfreeunitary(t)^{\star}\mfreeunitary(t))I_{n}.
		\end{split}
	\end{equation*}
By applying the linear form $\lmss{tr}$ on the left and right hand sides of equation \eqref{equation_lemma3_1},we prove that $\lmss{tr}(\mfreeunitary(t)^{\star}\mfreeunitary(t)) \\= 1$ for all time $t\geq 0$. Inserting this last relation in equation $\eqref{equation_lemma3_1}$ gives
\begin{equation*}
\begin{split}
&\textrm{d}\mfreeunitary(t)\mfreeunitary(t)^{\star} = -\mfreeunitary(t)^{\star}\mfreeunitary(t) + I_{n},~\mfreeunitary_{0} = 1.
\end{split}
\end{equation*}
By using unicity of the solution of equation \eqref{equation_lemma3_1}, we prove that $\mfreeunitary(t)\mfreeunitary^{\star}_{t} = 1$. A similar argument shows that the relation $\mfreeunitary^{\star}_{t}\mfreeunitary(t) = I_{n}$ also holds.
\end{proof}
We use the symbol $\freeunitary$ for the process on the dual Voiculescu group which values on the generators of $\udualgroup$ are given by $\mfreeunitary$. It is not gard to prove, by using equation \eqref{free_unitary_n}, that $\freeunitary$ is actually a free Levy process on $\udualgroup$. We compute the derivative of the non cmmutative-distribution at time $t=0$ of $\freeunitary$. We define an operator $L_{n}$ acting on $\udualgroup$, that will be called the generator of $\freeunitary$, such that
\begin{equation*}
  \left. \frac{d}{dt} \right|_{t=0} \tau \circ \freeunitary(t)(u)=L_{n}(u).
\end{equation*}
We define in the next section what a generator on a bi-algebra is in the next Section and compute an associated Sch\"urmann triple. We will not say that much on what a Schürmann triple is, we only indicate that it is a non-commutative analogue of the famous L\'evy triple that is associated to every classical L\'evy process. As such, Sch\"urmann triples classify laws of non-commutative L\'evy processes and allow for the definition of what Gaussian, Poisson or pure drift quantum processes are. We provide, to put it informally, a combinatorial formula for $L_{n}$ by using the algebra of coloured Brauer diagrams we introduced in the last Section. This formula is important because it will allow for a comparison between the limiting distribution of Brownian motions on unitary groups in high dimension and the free process $\freeunitary$.
We fix an element $u = u_{i_{1},j_{1}}\cdots u_{i_{p},j_{p}} \in \udualgroup$ in the Voiculescu dual group and compute the derivative of $t \mapsto \tau(\freeunitary_{t}(u))$ at time $t=0$. By using the free It\^o formula, we find
\begin{equation}
\label{compgenun}
\begin{split}
&\textrm{d}\left(U_{t}(u^{\varepsilon(1)}_{i_{1}j_{1}}) \cdots U_{t}(u^{\varepsilon(p)}_{i_{p}j_{p}})\right) = \sum_{k=1}^{p}U_{t}(u^{\varepsilon(1)}_{i_{1}j_{1}}) \cdots \textrm{d}(U_{t}(u^{\varepsilon(k)}_{i_{k}j_{k}})) U_{t}(u^{\varepsilon(p)}_{i_{p}j_{p}})\\
&\hspace{5cm}+\sum_{1 \leq k < l \leq p}U_{t}(u^{\varepsilon(1)}_{i_{1}j_{1}}) \cdots \textrm{d}U_{t}(u^{\varepsilon(k)}_{i_{k}j_{k}})\cdots \textrm{d}U_{t}(u_{i_{l}j_{l}}^{\varepsilon(l)})\cdots U_{t}(u^{\varepsilon(p)}_{i_{p}j_{p}})
\end{split}
\end{equation}
We insert in the last equation the stochastic differential equation \ref{free_unitary_n} that is satisfied by the matrix $\mfreeunitary$ and $\mfreeunitary^{\star}$ and apply the trace $\tau$ to both side of the resulting equation, note that:
\begin{equation*}
\textrm{d}\mfreeunitary^{\star}_{t} = \frac{\lmss{-i}}{\sqrt{n}}\textrm{d}\noisefree_{t}\, \mfreeunitary^{\star}_{t} - \frac{1}{2}\mfreeunitary^{\star}_{t}.
\end{equation*}
We obtain for the derivative of $t\mapsto \tau(\freeunitary_{t}(u))$:
\begin{equation}
\label{compgendeux}
\begin{split}
&\textrm{d}\tau\left(U_{t}(u^{\varepsilon(1)}_{i_{1}j_{1}}) \cdots U_{t}(u^{\varepsilon(p)}_{i_{p}j_{p}})\right) = -\frac{p}{2}\tau(U_{t}(u^{\varepsilon(1)}_{i_{1}j_{1}}) \cdots U_{t}(u^{\varepsilon(k)}_{i_{k}j_{k}}) U_{t}(u^{\varepsilon(p)}_{i_{p}j_{p}}))\textrm{dt}\\
&\hspace{4cm}+\sum_{1 \leq k < l \leq p}\tau\left(U_{t}(u^{\varepsilon(1)}_{i_{1}j_{1}}) \cdots \textrm{d}U_{t}(u^{\varepsilon(k)}_{i_{k}j_{k}})\cdots \textrm{d}U_{t}(u_{i_{l}j_{l}}^{\varepsilon(l)})\cdots U_{t}(u^{\varepsilon(p)}_{i_{p}j_{p}})\right).
\end{split}
\end{equation}
We divide the second sum in \eqref{compgenun} according to the values of $(\varepsilon(k), \varepsilon(l))$, $k,l \leq n$. First, if $\varepsilon(k)=\varepsilon(l)$ we obtain:
\begin{equation}
\stepcounter{equation}
\label{unun}
\tag{$\arabic{equation}_{1}$}
\begin{split}
&\tau\left(U_{t}(u^{\varepsilon(1)}_{i_{1}j_{1}}) \cdots \textrm{d}U_{t}(u_{i_{k}j_{k}})\cdots \textrm{d}U_{t}(u_{i_{l}j_{l}}^{})\cdots U_{t}(u^{\varepsilon(p)}_{i_{p}j_{p}})\right)\\ &\hspace{5cm}= -\frac{1}{n}\tau\left(U_{t}(u^{\varepsilon(1)}_{i_{1}j_{1}}) \cdots \tau(U_{t}(u_{i_{l}j_{k}})\cdots )U_{t}(u_{i_{k}j_{l}})\cdots U_{t}(u^{\varepsilon(p)}_{i_{p}j_{p}})\right)
\end{split}
\end{equation}
\begin{equation}
\label{starstar}
\tag{$\arabic{equation}_{2}$}
\begin{split}
&\tau\left(U_{t}(u^{\varepsilon(1)}_{i_{1}j_{1}}) \cdots \textrm{d}U_{t}(u^{\star}_{i_{k}j_{k}})\cdots \textrm{d}U_{t}(u^{\star}_{i_{l}j_{l}})\cdots U_{t}(u^{\varepsilon(p)}_{i_{p}j_{p}})\right)\\ &\hspace{5cm}= -\frac{1}{n}\tau\left(U_{t}(u^{\varepsilon(1)}_{i_{1}j_{1}}) \cdots U_{t}(u^{\star}_{i_{k}j_{l}})\tau(\cdots U_{t}(u^{\star}_{i_{l}j_{k}}))\cdots U_{t}(u^{\varepsilon(p)}_{i_{p}j_{p}})\right)
\end{split}
\end{equation}
Secondly, if $\varepsilon(k) \neq \varepsilon(l)$:
\begin{equation}
\label{unstar}
\tag{$\arabic{equation}_{3}$}
\begin{split}
&\tau\left(U_{t}(u^{\varepsilon(1)}_{i_{1}j_{1}}) \cdots \textrm{d}U_{t}(u^{}_{i_{k}j_{k}})\cdots \textrm{d}U_{t}(u^{\star}_{i_{l}j_{l}})\cdots U_{t}(u^{\varepsilon(p)}_{i_{p}j_{p}})\right)\\ &\hspace{4.5cm}= \frac{1}{n}\tau\left(\delta_{i_{k}j_{l} }U_{t}(u^{\varepsilon(1)}_{i_{1}j_{1}}) \cdots \tau(U_{t}(u_{i_{k}i_{l}})\cdots U_{t}(u^{\star}_{j_{k}j_{l}}))\cdots U_{t}(u^{\varepsilon(p)}_{i_{p}j_{p}})\right)
\end{split}
\end{equation}

\begin{equation}
\label{starstar}
\tag{$\arabic{equation}_{4}$}
\begin{split}
&\tau\left(U_{t}(u^{\varepsilon(1)}_{i_{1}j_{1}}) \cdots \textrm{d}U_{t}(u^{}_{i_{k}j_{k}})\cdots \textrm{d}U_{t}(u^{\star}_{i_{l}j_{l}})\cdots U_{t}(u^{\varepsilon(p)}_{i_{p}j_{p}})\right)\\ &\hspace{4.5cm}= \frac{1}{n}\tau\left(\delta_{j_{l}j_{k} }U_{t}(u^{\varepsilon(1)}_{i_{1}j_{1}}) \cdots U_{t}(u^{\star}_{i_{k}j_{k}})\tau(\cdots) U_{t}(u^{}_{i_{l}i_{k}})\cdots U_{t}(u^{\varepsilon(p)}_{i_{p}j_{p}})\right)
\end{split}
\end{equation}
In the sequel, we use the coloured Brauer algebra $\brauer(\underset{n}{\underbrace{1,\ldots,1}})$ and its representation $\rho_{(1,\ldots,1)}$ to write formulae \eqref{unun}-\eqref{starstar}. We set $c_{p} = (1,\ldots,p)$ and consider $c_{p}$ alternatively as a permutation or as a non-coloured Brauer diagram. The non-coloured diagram $\ncc{b}_{\varepsilon}$ is obtained by twisting $c_{p}$ at positions $i's$ such that $\varepsilon(i)=\star$:
\begin{equation*}
\ncc{b}_{\varepsilon} = \big(\prod_{i:\varepsilon(i)=\star}^{n} \lmss{Tw}_{i}\big)(c_{p}).
\end{equation*}
We colourize $\ncc{b}_{\varepsilon}$ with the colourization $c$ defined by : $c(k) = i_{k},~c(k^{\prime})=j_{k}$ if $\varepsilon(k)=1$ and $c(k)=j_{k},~c(k^{\prime})=i_{k}$ if $\varepsilon(k)=\star$ to obtain a coloured Brauer diagram denoted $\ncc{b}_{\varepsilon}$.
We claim that each of the equations $\eqref{unun}-\eqref{starstar}$ can be put in the following form:
\begin{equation*}
\begin{split}
&\tau\left(U_{t}(u^{\varepsilon(1)}_{i_{1}j_{1}}) \cdots \textrm{d}U_{t}(u_{i_{k}j_{k}}^{\varepsilon(k)})\cdots \textrm{d}U_{t}(u_{i_{l}j_{l}}^{\varepsilon(l)})\cdots U_{t}(u^{\varepsilon(p)}_{i_{p}j_{p}})\right)\\
&\hspace{5cm}=\frac{-1}{n}\left(\tau \otimes \lmss{Tr}^{\otimes n}\right)\left[\rho_{(1\ldots,1)}(\ncc{\tau}_{kl}\circ b_{\varepsilon,\lmss{i},\lmss{j}})\circ\mfreeunitary(t)^{\otimes k}\right]\text{dt},\text{ if } \varepsilon(k)=\varepsilon(l)\\
&\tau\left(U_{t}(u^{\varepsilon(1)}_{i_{1}j_{1}}) \cdots \textrm{d}U_{t}(u^{\varepsilon(k)}_{i_{k}j_{k}})\cdots \textrm{d}U_{t}(u^{\varepsilon(l)}_{i_{l}j_{l}})\cdots U_{t}(u^{\varepsilon(p)}_{i_{p}j_{p}})\right)\\
&\hspace{5cm}=\frac{1}{n}\left(\tau \otimes \lmss{Tr}^{\otimes n}\right)\left[\rho_{(1,\ldots,1)}(\ncc{e}_{kl}\circ b_{\varepsilon,\lmss{i},\lmss{j}})\circ\mfreeunitary(t)^{\otimes k}\right]\text{dt}, \text{ if } \varepsilon(k)\neq\varepsilon(l).
\end{split}
\end{equation*}
A combinatorial formula for the generator of the process $\freeunitary$ follows readily from this last four formulae. In fact, by using equation \eqref{compgendeux} and the characterisation of the sets $\trp{b^{\varepsilon}}$ and $\prp{b^{\varepsilon}}$ we gave in Section \ref{schur_weyl}, we get:

\begin{flushleft}
\resizebox{\hsize}{!}{
\begin{minipage}{\hsize}
\begin{align}
\left.\frac{d}{dt}\right|_{t=0} \tau\left(U_{t}(u^{\varepsilon(1)}_{i_{1}j_{1}}) \cdots U_{t}(u^{\varepsilon(p)}_{i_{p}j_{p}})\right) &= -\frac{p}{2}\delta_{\lmss{i},\lmss{j}}-\frac{1}{n}\sum_{\substack{\ncc{\tau} \in \trp{b_{\varepsilon,\lmss{i},\lmss{j}}}}} \delta_{\Delta}(\ncc{\tau} \circ b_{\varepsilon,\lmss{i},\lmss{j}}) + \frac{1}{n} \sum_{\substack{\ncc{e} \in \prp{b_{\varepsilon,\lmss{i},\lmss{j}}}}}\delta_{\Delta}(\ncc{e}\circ b_{\varepsilon,\lmss{i},\lmss{j}})\nonumber\\
&=\mathcal{L}_{n}(u^{\varepsilon(1)}_{i_{1}j_{1}} \cdots u^{\varepsilon(p)}_{i_{p}j_{p}}),\label{generatorfree}
\end{align}
\end{minipage}}
\end{flushleft}
with $\delta_{\Delta}$ the support function of the set of diagonally coloured Brauer diagrams: $\delta_{\Delta}(b) = 1 \Leftrightarrow c_{b}(i)=c_{b}(i^{\prime})$.

\subsection{Sch\"urmann triple for the higher dimensional free unitary Brownian motion}
\label{subsec:Schurmann}
\par For a detailed introduction to the theory of quantum stochastic calculus and Sch\"urmann triple, the reader is directed to \cite{franz2004theory} and \cite{schurmann}.

\begin{definition}
\label{generator}
	Let $B$ be a unital associative complex free bi-algebra. A \emph{generator} is a complex linear functional $L : B \to\mathbb{C}$ satisfying the properties:
	\begin{enumerate}[\indent 1.]
			\item $L(1_{B}) = 0$,~$L(b^{\star}) = \overline{L(b)}$,~\textrm{for all} $b \in B$,
			\item $L(b^{\star}b) \geq 0$ for all $b \in B$ with $\varepsilon(b) = 0$.
	\end{enumerate}
\end{definition}
Generators show up naturally if differentiating a convolution semi-group of states. In the present work, we are interested in two types of (convolution) semi-groups, which are tensor semi-groups and free semi-groups (see \cite{franz2004theory}):
\begin{equation}
\label{semigroups}
\alpha_{s+t}=\alpha_{s} \,\hat{\otimes}\, \alpha_{t} = (\alpha_{s} \otimes \alpha_{t}) \circ \Delta~\text{(tensor)},~\alpha_{s+t}(\alpha_{s} \,\hat{\sqcup}\,\alpha_{t}) = \left(\alpha_{s}\sqcup\alpha_{t}\right) \circ \Delta~\text{(free)}~,
\end{equation}
and the continuity condition,
\begin{equation*}
  \lim_{s\to 0^{+}} \alpha_{s} = \varepsilon
\end{equation*}
where $s,t \geq 0$ are two times. The word convolution is used to indicate the similitude of the products $\hat{\otimes}$ and $\hat{\free}$ with the usual convolution product of functions on a group.
If $j$ is a free L\'evy process on a bi-algebra $B$, its one dimensional marginals, $\left(\tau \circ j_{t}\right)_{t\geq 0}$ constitute a free semi-group.
Let $B$ be a free bi-algebra. Below we state the Schoenberg correspondence, that relates precisely some type semi-groups to generators. In Proposition \ref{schoenberg}, we use the symbol $\exp_{\star}$ to denote the exponentiation with respect to a convolution product on $B^{\star}$ denoted $\star$, equal either to the tensor convolution product or the free convolution product? The correspondence holds also for what is called the boolean convolution product, obtained by replacing the free product of states in equation \eqref{semigroups} by boolean product of states (see \cite{nico1}).
\begin{proposition}[Schoenberg correspondence]
\label{schoenberg}
Let $\star$ be a convolution product on $B^{\star}$.
\begin{enumerate}
\item Let $\psi:B\to\mathbb{C}$ be a linear functional on $B$, then the series:
\begin{equation*}
\exp_{\star}(\psi)(b) = \sum_{k=1}^{\infty}\frac{\psi^{\star n}}{n!}(b)
\end{equation*}
converges for all $b \in B$.
\item Let $\left(\phi_{t}\right)_{t\geq 0}$ be a convolution semi-groups, with respect to the product $\star$ on $B^{\star}$, then
\begin{equation*}
L(b) \lim_{t\to 0} \frac{1}{t}\left(\phi_{t}(b)-\varepsilon(b) \right) \text{ exists }
\end{equation*}
for all $b\in B$. Furthermore, $\exp_{\star}(tL)(b) = \phi_{t}(b)$ and the two following statements are equivalent:
	\begin{enumerate}[\indent a.]
		\item L is a generator,
		\item $\phi_{t}$ is a state for all $t\geq 0$: $\phi_{t}(bb^{\star}) \geq 0$ and $\phi_{t}(bb^{\star}) = 0 \Leftrightarrow b=0$, $b \in B$.
	\end{enumerate}
\end{enumerate}

\end{proposition}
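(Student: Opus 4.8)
The plan is to treat the two assertions in order, establishing first the convergence of the exponential series and then the equivalence between the generator property and positivity of the semigroup.

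For part (1), the key point is that a free bi-algebra $B$ is graded (or at least filtered) in a way compatible with the coproduct: each generator $o_{ij}, o_{ij}^\star$ and $1_B$ sits in low degree, and $\Delta$ raises the filtration degree in a controlled manner. Concretely, I would fix $b \in B$, write $b$ as a finite sum of words in the generators, and observe that when one computes the iterated coproduct $\Delta^{(n-1)}: B \to B^{\free n}$ and then applies $\psi^{\otimes n}$ (thinking of $\psi^{\star n}$ as $(\psi \otimes \cdots \otimes \psi)\circ \Delta^{(n-1)}$ for the free convolution, and similarly with $\hat\otimes$ for the tensor convolution), only those terms survive in which each of the $n$ tensor legs receives a sub-word of $b$ on which $\psi$ is evaluated. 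Since $\varepsilon$ kills $1_B$-free pieces appropriately, the number of legs that can carry a non-trivial (non-scalar) contribution is bounded by the length $p$ of the word $b$; hence $\psi^{\star n}(b) = 0$ for $n > p$, or at worst is bounded by a fixed constant independent of $n$ times a combinatorial factor growing polynomially in $n$. Either way $\sum_n \psi^{\star n}(b)/n!$ converges absolutely, and in fact the sum is finite for each $b$. This argument is uniform in the choice of convolution product because it only uses the counit relations, not commutativity.

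For part (2), existence of the limit $L(b) = \lim_{t\to 0} t^{-1}(\phi_t(b) - \varepsilon(b))$ and the identity $\exp_\star(tL) = \phi_t$ follow by differentiating the semigroup property at $s = 0$: the map $t \mapsto \phi_t(b)$ satisfies, via \eqref{semigroups}, a linear differential equation $\frac{d}{dt}\phi_t(b) = (\phi_t \star L)(b)$ with initial condition $\varepsilon$, and on each finitely-generated filtered piece of $B$ this is a finite-dimensional linear ODE, so the solution exists, is unique, and is given by the convergent exponential from part (1). For the equivalence (a)$\Leftrightarrow$(b): the implication (b)$\Rightarrow$(a) is immediate by taking $t\to 0$ in $\phi_t(b^\star b)\ge 0$ after subtracting $\varepsilon(b^\star b) = |\varepsilon(b)|^2$ and using $\varepsilon(b)=0$; the first-order term is exactly $2L(b^\star b)$ up to a positive factor, together with the reality condition $L(b^\star) = \overline{L(b)}$ coming from $\phi_t(b^\star) = \overline{\phi_t(b)}$. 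For (a)$\Rightarrow$(b), the standard device is to build, from the generator $L$ satisfying the conditional positivity of Definition \ref{generator}, a Sch\"urmann triple $(\rho, \eta, L)$ on the GNS-type space associated to the positive semi-definite form $(b,c)\mapsto L(b^\star c)$ on $\ker\varepsilon$, and then to realize $\phi_t = \exp_\star(tL)$ as the vacuum expectation of a solution of a quantum stochastic differential equation driven by that triple; positivity of $\phi_t$ is then automatic because it is a state composed with a $\star$-homomorphism.

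The main obstacle I expect is the (a)$\Rightarrow$(b) direction: one must show that conditional positivity of $L$ genuinely propagates to positivity of every $\phi_t$, and for the \emph{free} (and boolean) convolution products this is not the classical Schoenberg theorem but its non-commutative-probability analogue. I would handle it by invoking the Sch\"urmann-triple machinery referenced in \cite{schurmann} and \cite{franz2004theory}: construct the pre-Hilbert space $D = \ker\varepsilon / \{b : L(b^\star b)=0\}$, the cocycle $\eta(b) = $ class of $b - \varepsilon(b)1_B$, and the representation $\rho$ induced by left multiplication, check the cocycle identity $\eta(bc) = \rho(b)\eta(c) + \eta(b)\varepsilon(c)$ and the bi-algebra version of $L(bc) = \varepsilon(b)L(c) + L(b)\varepsilon(c) + \langle \eta(b^\star),\eta(c)\rangle$, and then appeal to the existence theorem for free L\'evy processes with a prescribed Sch\"urmann triple, whose marginals reproduce $\phi_t = \exp_\star(tL)$ and are states by construction. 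The remaining verifications — the reality and normalization conditions in part (1) of Definition \ref{generator}, and the faithfulness clause $\phi_t(bb^\star)=0 \Leftrightarrow b=0$ — are then routine consequences of faithfulness of the trace on the ambient von Neumann algebra carrying the free L\'evy process.
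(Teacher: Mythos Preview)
The paper does not prove this proposition. It is stated as a background result, with the reader directed at the start of Section~\ref{subsec:Schurmann} to \cite{franz2004theory} and \cite{schurmann} for the theory of quantum stochastic calculus and Sch\"urmann triples; Proposition~\ref{schoenberg} is simply quoted from that literature, as is the companion Proposition on the bijection between generators, semi-groups and surjective Sch\"urmann triples. There is therefore no argument in the paper against which to compare your attempt.

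That said, your sketch tracks the standard route taken in those references. Part~(2) is essentially what Sch\"urmann and Franz do: differentiate the semigroup relation to get a linear evolution equation, and for the hard implication (a)$\Rightarrow$(b) build a Sch\"urmann triple from the conditionally positive sesquilinear form $(a,b)\mapsto L(a^{\star}b)$ on $\ker\varepsilon$ via a GNS-type quotient, then realise $\phi_t$ as vacuum expectations of a L\'evy process associated with that triple. One nitpick: in your (b)$\Rightarrow$(a) step the first-order coefficient of $t\mapsto\phi_t(b^{\star}b)$ at $t=0$ is $L(b^{\star}b)$, not $2L(b^{\star}b)$.

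For part~(1), your filtration argument is morally right but loose as written. The claim ``$\psi^{\star n}(b)=0$ for $n$ larger than the word length of $b$'' is only literally true when $\psi(1_B)=0$, which the statement does not assume; and in the free case the coproduct lands in $B\sqcup B$ rather than $B\otimes B$, so the bookkeeping of how many legs carry a non-trivial factor is less transparent than you suggest. The clean argument in the cited references, at least for the tensor convolution, goes through the fundamental theorem on coalgebras (every $b$ lies in a finite-dimensional subcoalgebra $C$, and on $C^{\star}$ convolution is multiplication in a finite-dimensional algebra, where the exponential converges for elementary reasons); for the free and boolean products one uses instead the explicit combinatorial expansion of the iterated free coproduct on a filtered free bi-algebra. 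Either way the conclusion holds, but the details are not as immediate as your sketch implies.
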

In the next definition, we introduce the central object of this section, the Sch\"urmann triple. Let $(D,\langle~\cdot~|~\cdot~ \rangle)$ be a pre-Hilbert space, we denote by $\mathcal{L}(D)$ the vector space of all linear operators on $D$ that have an adjoint defined everywhere on $D$:
\begin{equation*}
\mathcal{L}(D) = \left\{ A:D\to D:\exists A^{\star}:D \to D,~\langle A(v),w \rangle = \langle v, A^{\star}(w)\rangle ,~v,w \in D\right\}.
\end{equation*}
\begin{definition}[Sch\"urmann triple]
\label{schurmanntriple}
 A Sch\"urmann triple on $(B, \Delta, \varepsilon)$ is triple $(\pi, \eta, L)$ with
	\begin{enumerate}[\indent\indent 1.]
		\item a unital $\star$-representation $\pi : \mathcal{B} \rightarrow \mathcal{L}(D)$ on a pre-Hilbert space $D$,
		\item a linear map $\eta : B  \rightarrow D$ verifying
				\begin{equation}
				\label{cocyle1}
				\tag{1CC}
					\eta(ab) = \pi(a)\eta(b) + \eta(a)\varepsilon(b),
				\end{equation}
			\item a generator $L$ such that:
			\begin{equation}
			\label{coboundary}
			\tag{2CB}
				- \langle \eta(a^{\star}), \eta(b)\rangle = \varepsilon(a)L(b) - L(ab) + L(a)\varepsilon(b).
			\end{equation}
	\end{enumerate}
\end{definition}

A map $\eta: B \to D$ satisfying $\eqref{cocyle1}$ is called a $\pi$-$\varepsilon$ cocyle and a map $L:B\to\mathbb{C}$ satisfying condition \eqref{coboundary} is called a $\varepsilon$-$\varepsilon$ coboundary.
A schurman triple is said to be $\emph{surjective}$ if the cocycle $\eta$ is a surjective map.
\begin{proposition}
	With the notation introduced so far, there is a one-to-one correspondence between surjective Sch\"urmann triples, generators and convolution semi-groups $\{ \phi_{t}, t \geq 0 \}$.
\end{proposition}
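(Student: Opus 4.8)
The plan is to decompose the asserted correspondence into two bijections and then glue them: the correspondence between generators and convolution semi-groups $\{\phi_{t}\}_{t\ge 0}$ is precisely the content of the Schoenberg correspondence (Proposition~\ref{schoenberg}), so what remains is to establish a bijection between (isomorphism classes of) surjective Sch\"urmann triples and generators. One direction is free: from a Sch\"urmann triple $(\pi,\eta,L)$ one simply reads off $L$, which by Definition~\ref{schurmanntriple} is a generator. The substance is therefore (i) reconstructing a surjective triple from an arbitrary generator $L$, and (ii) showing that the triple produced is unique up to the obvious intertwining isomorphism.

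For (i), I would run a GNS-type construction on $\ker\varepsilon$. Property~(2) of Definition~\ref{generator} says that the sesquilinear form $(a,b)\mapsto L(a^{\star}b)$ is positive semi-definite on $\ker\varepsilon$; let $N=\{b\in\ker\varepsilon: L(a^{\star}b)=0 \text{ for all } a\in\ker\varepsilon\}$ be its null space, which by Cauchy--Schwarz coincides with $\{b\in\ker\varepsilon: L(b^{\star}b)=0\}$ and is a linear subspace, and set $D:=\ker\varepsilon/N$, a pre-Hilbert space with $\langle[x],[y]\rangle:=L(x^{\star}y)$. Using the decomposition $B=\mathbb{C}1_{B}\oplus\ker\varepsilon$, define $\eta(b):=[\,b-\varepsilon(b)1_{B}\,]$, which is surjective onto $D$ by construction, and define $\pi(a)[\,b'\,]:=[\,ab'\,]$ for $b'\in\ker\varepsilon$; this lands in $\ker\varepsilon$ because $\varepsilon(ab')=\varepsilon(a)\varepsilon(b')=0$.

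Then I would check the three axioms. The cocycle identity~\eqref{cocyle1} is a one-line computation, $\pi(a)\eta(b)+\eta(a)\varepsilon(b)=[\,a(b-\varepsilon(b)1_{B})\,]+\varepsilon(b)[\,a-\varepsilon(a)1_{B}\,]=[\,ab-\varepsilon(a)\varepsilon(b)1_{B}\,]=\eta(ab)$, and the coboundary identity~\eqref{coboundary} follows by expanding $\langle\eta(a^{\star}),\eta(b)\rangle=L\big((a-\varepsilon(a)1_{B})(b-\varepsilon(b)1_{B})\big)=L(ab)-\varepsilon(a)L(b)-L(a)\varepsilon(b)$, using $L(1_{B})=0$ and $\varepsilon(a^{\star})=\overline{\varepsilon(a)}$. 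The step I expect to be the main obstacle is verifying that $\pi$ is well defined and is a unital $\star$-representation into $\mathcal{L}(D)$: for $b'\in N$ one has $a^{\star}ab'\in\ker\varepsilon$, hence $\langle\pi(a)[b'],\pi(a)[b']\rangle=L(b'^{\star}a^{\star}ab')=\langle[b'],[a^{\star}ab']\rangle=0$, so $\pi(a)[b']=0$ by Cauchy--Schwarz and $\pi(a)$ descends to $D$; likewise $\langle\pi(a)[b'],[c']\rangle=L\big(b'^{\star}(a^{\star}c')\big)=\langle[b'],\pi(a^{\star})[c']\rangle$ shows that $\pi(a)$ has the everywhere-defined adjoint $\pi(a^{\star})$. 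Conditional positivity of $L$ enters exactly in these two places.

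For (ii), given two surjective triples $(\pi_{1},\eta_{1},L)$ and $(\pi_{2},\eta_{2},L)$ with the same generator, define $U:D_{1}\to D_{2}$ by $U\eta_{1}(b)=\eta_{2}(b)$. By~\eqref{coboundary} the inner products $\langle\eta_{i}(a^{\star}),\eta_{i}(b)\rangle=L(ab)-\varepsilon(a)L(b)-L(a)\varepsilon(b)$ depend only on $L$, so $U$ is a well-defined isometry; it is onto because $\eta_{2}(B)=D_{2}$, and the cocycle identity forces $U\pi_{1}(a)=\pi_{2}(a)U$ on $D_{1}=\eta_{1}(B)$. Hence the triple attached to $L$ is unique up to this intertwining isomorphism, which together with Proposition~\ref{schoenberg} yields the full one-to-one correspondence between surjective Sch\"urmann triples, generators, and convolution semi-groups; see also \cite{schurmann,franz2004theory}.
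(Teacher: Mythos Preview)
Your argument is correct and is precisely the standard GNS-type construction one finds in the references \cite{schurmann,franz2004theory}. Note, however, that the paper does not supply its own proof of this proposition: it is stated as a known result, with the reader directed to the literature at the start of Section~\ref{subsec:Schurmann}. So there is no ``paper's proof'' to compare against; your write-up simply fills in what the paper omits, and does so along the expected lines (Schoenberg for the generator--semigroup bijection, GNS on $\ker\varepsilon$ for the generator--triple bijection, and the intertwiner built from the coboundary identity for uniqueness).
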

Gaussian and drift generators can be classified using Sch\"urmann triples. In particular, the next definition introduces the notion of Gaussian processes on $\mathcal{O}\langle n \rangle$
\begin{definition} Let $\left( j_{t} \right)_{t \geq 0}$ be a quantum L\'evy process on $\mathcal{O}\langle n \rangle$. Let $(\pi,\eta,L)$ be a Sch\"urmann triple associated with $j$. The process $j$ is said to be \emph{Gaussian} if one of the following equivalent conditions hold:
	\begin{enumerate}[\indent 1.]
		\item For each $a,b,c \in \lmss{Ker}(\varepsilon)$, we have $L(abc) = 0$,
		\item For each $a,b,c \in \lmss{Ker}(\varepsilon)$, we have $L(b^{\star}a^{\star}ab)=0$,
		\item For each $a,b,c \in \mathcal{O}\langle n \rangle$, the following formula holds
			\begin{equation*}
				\begin{split}
					L(abc) &= L(ab)\varepsilon(c) + L(ac)\varepsilon(b) + \varepsilon(a)L(bc) - \varepsilon(a)\varepsilon(b)L(c) \\
					&\phantom{=}- \varepsilon(a)\varepsilon(c)L(b) - L(a)\varepsilon(b)\varepsilon(c)
				\end{split}
		\end{equation*}
	\item The representation $\pi$ is zero on $\lmss{Ker}(\varepsilon)$,
	\item For each $a\in \mathcal{O}\langle n \rangle, \pi(a) = \varepsilon(a)1$
	\item For each $a,b \in \lmss{Ker}(\varepsilon)$, we have $\eta(ab) = 0$
	\item For each $a,b \in \mathcal{O}\langle n \rangle$, $\eta(ab) = \varepsilon(a)\eta(b) + \eta(a)\varepsilon(b)$
	\end{enumerate}
\end{definition}
\begin{proposition}[\cite{ulrich2015construction}]
	\label{ulrich_triple}
	Take $D = \mathcal{M}_{N}(\mathbb{C})$. We define a Sch\"urmann triple for $\freeunitary$ by setting
	\begin{equation}
	\label{defschurmann}
		\begin{split}
		&\eta(u_{ij}) = \varepsilon_{ij},~\eta(u^{\star}_{ij}) = \varepsilon_{ij}, \\
		&\pi(u_{ij})  = \delta_{ij}1, \\
		& \mathcal{L}_{n}(u_{ij})  = -\frac{1}{2}\delta_{ij} = \frac{1}{2} \sum_{r=1}^{n}\langle \eta(u^{\star}_{ij}), \eta(u_{ij}) \rangle
		\end{split}
	\end{equation}
In the last equation, $\langle~ \cdot~,~\cdot~\rangle = \lmss{tr}(~\cdot^{\star}~\cdot~)$
\end{proposition}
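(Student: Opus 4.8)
The proposition is a verification, so the plan is to check that the explicit data $(\pi,\eta)$ of \eqref{defschurmann}, together with the functional $\mathcal{L}_{n}$ already computed in \eqref{generatorfree}, satisfy the three conditions of Definition \ref{schurmanntriple}. Two preliminary observations streamline the work. First, $\mathcal{L}_{n}$ is automatically a generator in the sense of Definition \ref{generator}: $\freeunitary$ is a free L\'evy process on $\udualgroup$, so $\bigl(\tau\circ\freeunitary_{t}\bigr)_{t\ge 0}$ is a free convolution semi-group of states, and Proposition \ref{schoenberg} identifies its derivative at $t=0$ with a generator; thus the third item of Definition \ref{schurmanntriple} only asks that this $\mathcal{L}_{n}$ appear as the $\varepsilon$-$\varepsilon$ coboundary \eqref{coboundary} of the prescribed $\eta$. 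Second, since $\{\eta(u_{ij})\}_{i,j\le n}$ is the matrix-unit basis of $D$, the cocycle $\eta$ is surjective; hence, once the axioms are verified, the one-to-one correspondence between surjective Sch\"urmann triples and generators forces the triple we exhibit to be the one governing $\freeunitary$.

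For the first item, set $\pi=\varepsilon(\,\cdot\,)\,1_{D}$; this extends the prescription $\pi(u_{ij})=\delta_{ij}1$ because the counit is a unital $\star$-character (in particular $\varepsilon$ respects the relations \eqref{defining_relation} and $\varepsilon(b^{\star})=\overline{\varepsilon(b)}$), so $\pi$ is a well-defined unital $\star$-representation into the scalars of $\mathcal{L}(D)$. Observe that this also places the process in the Gaussian class (the representation is scalar on $\ker\varepsilon$), which is consistent with $\freeunitary$ being of free Brownian type.

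For the second item, note that with $\pi$ scalar the cocycle identity \eqref{cocyle1} reads $\eta(ab)=\varepsilon(a)\eta(b)+\eta(a)\varepsilon(b)$, i.e. $\eta$ is an $\varepsilon$-derivation into $D$; such a map is determined by its values on the generators and automatically annihilates $1$ and $(\ker\varepsilon)^{2}$. The one substantive point is that it descends to the quotient $\udualgroup$: applying the $\varepsilon$-derivation rule to the relators $\sum_{k}u_{ik}u^{\star}_{jk}-\delta_{ij}$ and $\sum_{k}u^{\star}_{ki}u_{kj}-\delta_{ij}$ collapses each of them to an identity of the form $\eta(u_{ij})+\eta(u^{\star}_{ji})=0$ (up to a transposition of indices), and one checks that the values prescribed in \eqref{defschurmann} satisfy it; hence $\eta$ is well defined on $\udualgroup$ and is a $\pi$-$\varepsilon$ cocycle.

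It remains to verify the coboundary relation, $-\langle\eta(a^{\star}),\eta(b)\rangle=\varepsilon(a)\mathcal{L}_{n}(b)-\mathcal{L}_{n}(ab)+\mathcal{L}_{n}(a)\varepsilon(b)$. With the cocycle fixed and using the second-order structure of $\mathcal{L}_{n}$, both sides obey the same recursion when $a$ or $b$ is multiplied on either side by a generator, so it suffices to treat $a,b\in\{u_{ij},u^{\star}_{ij}\}$. In each of the four cases the left-hand side is the Kronecker pairing $\langle\varepsilon_{ij},\varepsilon_{kl}\rangle=\mathrm{tr}(\varepsilon_{ij}^{\star}\varepsilon_{kl})$, while the right-hand side is the value of $\mathcal{L}_{n}$ on a product of two generators read off from \eqref{generatorfree}: the transposition-type contributions $\trp{\,\cdot\,}$ there should match the pairings with $\varepsilon(k)=\varepsilon(l)$ and the projection-type contributions $\prp{\,\cdot\,}$ those with $\varepsilon(k)\ne\varepsilon(l)$, whereas for a single generator ($p=1$, no pair to sum over) \eqref{generatorfree} already returns $\mathcal{L}_{n}(u_{ij})=-\tfrac{1}{2}\delta_{ij}$, as in \eqref{defschurmann}. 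I expect this last step — aligning the two families of coloured Brauer diagrams with the two cases of the inner product, keeping track of the signs and the $1/n$ normalisation — to be the only real obstacle; everything else is formal. As a by-product, specialising \eqref{coboundary} to $a=b^{\star}$ with $\varepsilon(b)=0$ yields $\mathcal{L}_{n}(b^{\star}b)=\|\eta(b)\|^{2}\ge 0$, which re-derives the positivity of the generator.
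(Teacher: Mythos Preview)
Your proposal is correct and follows essentially the same route as the paper, with a minor organisational difference. The paper first \emph{defines} a functional $L$ on the free algebra via the coboundary relation \eqref{coboundary} from the prescribed $\eta$, checks that both $\eta$ and $L$ annihilate the unitary relators (so they descend to $\udualgroup$), and then proves by induction on word length that this $L$ coincides with the combinatorial $\mathcal{L}_{n}$ of \eqref{generatorfree}. You instead start from $\mathcal{L}_{n}$, invoke Schoenberg to know it is a generator, and verify the coboundary identity directly. The inductive content is identical: the paper's inductive step expands $\mathcal{L}_{n}(\tilde{w}\,u^{\varepsilon_{p+1}}_{i_{p+1}j_{p+1}})$ using the pair structure of \eqref{generatorfree} and matches the extra terms against $\langle\eta((u^{\varepsilon_{p+1}}_{i_{p+1}j_{p+1}})^{\star}),\eta(\tilde{w})\rangle$ via formula \eqref{formulaeta} for $\eta$ on words---exactly the ``recursion'' and the ``aligning the two families of coloured Brauer diagrams with the two cases of the inner product'' that you flag as the only real obstacle. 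Your framing via Schoenberg is clean and spares you the separate check that $L$ descends to the quotient; the paper's framing makes the Sch\"urmann axioms hold by construction and isolates the comparison $L=\mathcal{L}_{n}$ as the sole task.
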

\begin{proof}
	\par Let $n\geq 1$ an integer. The operator $\mathcal{L}_{n}$ is denoted $L$. First we prove that the three operators $\left(\pi,\eta,L\right)$ defined by their values given in equation \eqref{defschurmann} on the generators of $\mathcal{O}\langle n\rangle$ exist. This is trivial for the representation $\pi$. For $\eta$ and $L$ we have to check that \eqref{defschurmann} is compatible with the defining relations of the algebra $\mathcal{O}\langle n \rangle$. Denote by $\mathcal{F}(n^{2})$ the free algebra with $n^{2}$ generators. Let $\eta:\mathcal{F}(n^{2}) \to D $ be the extension of the values $\eqref{defschurmann}$ by using the cocycle property $\eqref{cocyle1}$. Denote also by $L: \mathcal{F}(n^{2}) \to \mathbb{C}$ the operator extending the values \eqref{defschurmann} by using point \eqref{coboundary} in definition \ref{schurmanntriple}. For the maps $\eta$ and $L$ to descend to the quotient of the free algebras $\mathcal{F}(n^{2})$ by the defining relations of $\mathcal{O}\langle n\rangle$, we have to check
	\begin{equation*}
	\eta(\sum_{r}u^{\star}_{ri}u_{rj}) = 0,~L(\sum_{r}u^{\star}_{ri}u_{rj}) = 0.
	\end{equation*}
First, let $1 \leq i,j,r \leq n$,
	\begin{equation*}
		\begin{split}
			L(u^{\star}_{ri}u_{rj}) &= \langle \eta(u_{ri}),\eta(u_{rj})\rangle + \varepsilon(u^{\star}_{ri})L(u_{rj}) + L(u_{rj}^{\star}) \varepsilon(u_{rj}) \\
			&= \langle \varepsilon(ri), \varepsilon(rj)\rangle + \varepsilon(u^{\star}_{ri})L(u_{rj}) + L(u^{\star}_{ri})\varepsilon(u_{rj}) \\
		&= \frac{1}{n} \delta_{ir}\delta_{rj} -\frac{1}{2}\delta_{ri}\delta_{rj} - \frac{1}{2}\delta_{ri}\delta_{rj}.
		\end{split}
	\end{equation*}
By summing the last equation over $1 \leq r \leq n$, we obtain $L(\sum_{r=1}^{n}u^{\star}_{ri}u_{rj}) = 0$. Also, using property $\eqref{cocyle1}$,
 \begin{equation*}
	 \eta(\sum_{r=1}^{n} u_{ri}^{\star}u_{rj}) = -\sum_{r=1}^{n}\delta_{ri}\varepsilon_{rj} + \sum_{r}\varepsilon_{ir}\delta_{rj} = 0
 \end{equation*}
 By construction, $\left(L,\eta,\pi \right)$ is a Sch\"urmann triple. It is easy to show by induction the following formula for the cocycle $\eta$, $\varepsilon_{i} \in \{1,\star\}$, $1 \leq a_{i},b_{i}\leq n$, $~1 \leq i\leq p$,
	\begin{equation}
	\label{formulaeta}
	\tag{$\star$}
	\eta\left(u_{a_{1},b_{1}}^{\varepsilon_{1}} \cdots u_{a_{p},b_{p}}^{\varepsilon_{p}}\right)= \sum_{k\leq p}\delta_{a_{1},b_{1}} \cdots (-1)^{(\varepsilon_{k} ~ = ~ \star)}\delta_{a_{k}b_{k}} \cdots \delta_{a_{p},b_{p}}
	\end{equation}
On the generators, the linear functional $L$ and $L_{n}$ agree: $L(u_{ij}^{\varepsilon}) = L_{n}(u_{ij}^{\varepsilon})$, $i,j \leq n$. We prove by induction on length of words on generators of $\mathcal{O}\langle n \rangle$, that $L$ and $L_{n}$ agree. Assume that $L$  and $L_{n}$ agree on words of length less than $m$ and let $w$ a word on the alphabet $\{u_{i,j},u^{\star}_{i,j}~ 1 \leq i,j \leq n \}$ of length $m+1$, $w = \tilde{w}u_{i_{m+1}j_{m+1}}^{\varepsilon_{m+1}}$ with $\tilde{w}$ a word of length $m$.
\begin{equation*}
L_{n}(w) = L(\tilde{w})\varepsilon(u_{i_{m+1}j_{m+1}}^{\varepsilon_{m+1}}) -\frac{1}{n}\sum_{\tau_{i,p+1}\in\trp{b^{\varepsilon}_{\lmss{i},\lmss{j}}}} \delta_{\Delta}(\tau_{i,p+1}\circ b^{\varepsilon}_{\lmss{i},\lmss{j}}) + \frac{1}{n} \sum_{e_{i,p+1}\in\prp{b^{\varepsilon}_{\lmss{i},\lmss{j}})}}\delta_{\Delta}(e_{i,p+1} \circ b^{\varepsilon}_{\lmss{i},\lmss{j}}).
\end{equation*}
We compute next the remaining two sums. Let $1 \leq k \leq p$ be an integer, owing to:
\begin{equation*}
\delta_{\Delta}(\tau_{k,p+1} \circ b^{\varepsilon}_{\lmss{i},\lmss{j}}) = \delta_{i_{1},j_{1}}\cdots \delta_{i_{k}j_{p+1}} \cdots \delta_{i_{p+1},j_{k}},~ \delta_{\Delta}(e_{k,p+1} \circ b^{\varepsilon}_{\lmss{i},\lmss{j}}) = \delta_{i_{1},j_{1}}\cdots \delta_{j_{k}j_{p+1}} \cdots \delta_{i_{p+1},i_{k}},
\end{equation*}
we can write the last formula for $L_{n}(w)$ in a more explicit form as:
\begin{equation*}
L_{n}(w)=L(\tilde{w})\varepsilon(u_{i_{m+1}j_{m+1}}^{\varepsilon_{m+1}})+\frac{1}{n}\sum_{k}(-1)^{1+(\varepsilon_{p+1}~ = \varepsilon_{k}} \delta_{\lmss{i}_{1},\lmss{j}_{1}} \cdots
		\left\{
			\begin{array}{cc}
				\varepsilon_{p+1} = \varepsilon_{k} & \delta_{\lmss{i}_{p+1},\lmss{i}_{k}} \delta_{\lmss{j}_{p+1},\lmss{j}_{k}} \\
				\varepsilon_{p+1} \neq \varepsilon_{k} & \delta_{\lmss{i}_{p+1},\lmss{j}_{k}} \delta_{\lmss{j}_{p+1},\lmss{i}_{k}}
			\end{array}
		\right\}
		\cdots \delta_{\lmss{i}_{p},\lmss{j}_{p}}.
\end{equation*}
On the other hand, formula \eqref{formulaeta} implies:
\begin{equation*}
\resizebox{\hsize}{!}{$
	\begin{split}
		\langle \eta(u^{\neg \varepsilon_{p+1}}_{\lmss{i}_{p+1},\lmss{j}_{p+1}}), \eta(u^{\varepsilon_{1}}_{\lmss{i}_{1},\lmss{j}_{1}} \cdots u^{\varepsilon_{p}}_{\lmss{i}_{p},\lmss{j}_{p}} )\rangle &
		= \frac{1}{n}\sum_{k=1}^{p}(-1)^{1+(\varepsilon_{p+1}=\varepsilon_{k})} \delta_{\lmss{i}_{1},\lmss{j}_{1}} \cdots \lmss{Tr}\left(\varepsilon^{\neg \varepsilon_{p+1}}_{\lmss{i}_{p+1},\lmss{j}_{p+1}} \varepsilon^{\varepsilon_{k}}_{\lmss{i}_{k}\lmss{j}_{k}}\right) \cdots \delta_{\lmss{i}_{p},\lmss{j}_{p}}\\
		&=\frac{1}{n}\sum_{k=1}^{p}(-1)^{1+(\varepsilon_{p+1}=\varepsilon_{k})} \delta_{\lmss{i}_{1},\lmss{j}_{1}} \cdots
		\left\{
			\begin{array}{ll}
				\varepsilon_{p+1} = \varepsilon_{k} & \delta_{\lmss{i}_{p+1},\lmss{i}_{k}} \delta_{\lmss{j}_{p+1},\lmss{j}_{k}} \\
				\varepsilon_{p+1} \neq \varepsilon_{k} & \delta_{\lmss{i}_{p+1},\lmss{j}_{k}} \delta_{\lmss{j}_{p+1},\lmss{i}_{k}}
			\end{array}
		\right\}\delta_{\lmss{i}_{p},\lmss{j}_{p}}
	\end{split}$}
\end{equation*}
This achieves the proof of Proposition $\ref{ulrich_triple}$.
\end{proof}
\subsection{Some cumulants of higher dimensional free Brownian motions} In this section we compute some cumulants functions of higher dimensional Brownian motion's distribution. Our main result is contained in Proposition \ref{cumulant}. Computing mixed cumulants of $\{\freeunitary,\freeunitary^{\star}\}$ is rather difficult task, see for example \cite{demni2015star} in which formulae for only two types of mixed cumulants $\{\freeunitary,\freeunitary^{\star} \}$ are proved. We will not address this question for the process $\freeunitary$, although it would be interesting to.
\subsubsection{Free cumulants}
Let $n\geq 1$, we consider the set $\lmss{NC}(\{1,\ldots,n\}) = \lmss{NC}(n)$ of all non-crossing partitions of $\{1,\ldots,n\}$. A generic partition in $\lmss{NC}(n)$ will be denoted $\pi$ (sometimes $\rho$). A notation for $\pi = \{V_{1},\ldots,V_{k}\}$ where $V_{1},\ldots,V_{k}$ are called the blocks of $\pi$.
\par On \lmss{NC}(n), we consider the partial order given by reversed refinement, where for $\pi, \rho \in \lmss{NC}(n)$ we have $\pi \leq \rho$ if and only if every blocks of $\rho$ is a union of blocks of $\pi$. The minimal partition for the order $\leq$ is the non crossing partitions having $n$ blocks (denoted $\mathbf{0}_{n}$ and the maximal element is the partition having only one block (denoted $\mathbf{1}_{n}$).
The M\"obius function on $\lmss{NC}(n)$ will be denoted $\mu$. This function is defined on $\{(\pi,\rho)~|~\pi,\rho \in \lmss{NC}(n), \pi \leq \rho\}$. We will use only the Moebius function restricted to set of pairs $(\mathbf{0}_{n}, \pi)$ with $\pi \in \lmss{NC}(n)$ for which we have
\begin{equation*}
  \mu(\mathbf{0}_{n}, \pi) = \displaystyle\pi_{W \in \pi} (-1)^{\sharp W}C_{\sharp W -1}
\end{equation*}
where for $k \in \mathbb{N}$,
$$
C_{k}=\frac{(2k)!}{k!(k+1)!}
$$
is the $k^{th}$ Catalan number.
\par Let $(A,\phi)$ be a non commutative probability space. The $n^{th}$ moment functional of $(A,\phi)$ is the multilinear functional $\phi_{n}:A^{n}\to\mathbb{C}$ defined by $\phi_{n}(a_{1},\ldots,a_{n})=\phi(a_{1}\cdots a_{n})$, $a_{1},\ldots,a_{n} \in A$.
\par The $n^{rh}$ cumulant functional of $(A,\phi)$ is the multilinear functional $k_{n}:A^{n}\to \mathbb{C}$ defined by
\begin{equation}
  \label{momentcumulant}
  k_{n}(a_{1},\ldots,a_{n})=\sum_{\pi \in \lmss{NC}(n)}\mu(\pi,\mathbb{1}_{n})\cdot \displaystyle\pi_{\{i_{1}<\ldots<i{k}\}\in\pi}\phi_{k}(a_{i_{1}},\ldots,a_{i_{k}})
\end{equation}
We refer to the equation \eqref{momentcumulant} as the \emph{moment-cumulant formula}.
The cumulants functionals satisfy some important properties:
\begin{enumerate}[\indent 1.]
  \item Invariance under cyclic permutations of the entries
    \begin{equation*}
      k_{n}(a_{1},\ldots,a_{n})=k_{n}(a_{m},\ldots a_{n},a_{1},\ldots,a_{1}),~\forall 1 \leq m \leq n\text{ and }a_{1},\ldots,a_{n} \in A,
    \end{equation*}
  \item If $\mathcal{C}\subset A$ is a commutative algebra, then
  \begin{equation*}
    k_{n}(c_{1},\ldots,c_{n})=k_{n}(c_{n},\ldots,c_{2},c_{1}),~\forall n\geq 1\text{ and } c_{1},\ldots,c_{n} \in \mathcal{C}.
  \end{equation*}
  \item $k_{n}(a_{1},\ldots,a_{n})=\overline{k_{n}(a_{n}^{\star},\ldots,a_{1}^{\star})}$.
\end{enumerate}
\subsection{Computations of cumulants of the higher dimensional free Brownian motion}
Let $p,n\geq 1$ two integers. We use the symbol $\mathcal{C}_{2p}$ for the set comprising all sequences of the form $((i_{1},j_{1}),\ldots,(i_{p},j_{p}))$ with $i_{l},j_{l} \in \{1,\ldots,n\}$ and refer to an element of $\mathcal{C}_{2p}$ as a colourization. For each time $t\geq 0$, we denote by $\phi_{t} \in \mathcal{O}\langle n \rangle^{\star}$ the distribution of the process $\freeunitary$. Let $p\geq 1$ an integer, $\pi$ a non-crossing partition in $\lmss{NC}_{p}$ and define $\phi_{t}(\pi): \mathcal{O}\langle n \rangle^{p} \to \mathbb{C}$,  $\phi_{t}(\pi): \mathcal{C}_{2p} \to \mathbb{C}$ by
\begin{equation}
\phi_{t}(\pi)(a_{1},\ldots,a_{p}) = \prod_{b\in \pi}\phi_{t}\Big(\,\overset{\rightarrow}{\prod_{k\in b}}a_{k}\,\Big),~
\phi_{t}(\pi)(\lmss{i},\lmss{j}) = \phi_{t}^{\pi}(u_{i_{1},j_{1}},\ldots,u_{i_{p},j_{p}}).
\end{equation}
The function $\phi(pi)$ taking as argument a colourization is introduced for easing exposition.
The group $\mathfrak{S}_{p}$ of permutations acts on a finite sequence $\lmss{i} \in \{1,\ldots,n\}^{p}$ in a canonical way:
\begin{equation*}
s \cdot \sigma = (s_{\sigma(1)},~\ldots,~s_{\sigma(p)}),~s\in \{1,\ldots,n\}^{p},~\sigma \in \mathfrak{S}_{p}.
\end{equation*}

The following lemma is a downward consequence of equation \eqref{generatorfree} for the derivative of $\phi$.
\begin{lemma}
	With the notation above, for each time $t\geq 0$, non crossing partition $\pi$ and colourization $(\lmss{i},\lmss{j})\in \mathcal{C}_{2p}$,
\begin{equation}
	\label{diff_equation_3}
	\frac{d}{dt}\phi_{t}^{\pi}(\lmss{i}, \lmss{j}) = -\frac{p}{2} \phi^{\pi}_{t}(\lmss{i}, \lmss{j})- \frac{1}{n}\sum_{\tau \in \lmss{T}^{+}(\sigma_{\pi})} \phi_{t}^{\pi_{\sigma_{\pi} \circ \tau}}(\lmss{i} \cdot \tau, \lmss{j}).
\end{equation}
\end{lemma}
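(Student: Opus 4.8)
The plan is to differentiate the moment-type function $\phi_t^\pi(\lmss{i},\lmss{j}) = \phi_t(\pi)(\lmss{i},\lmss{j})$ directly from its definition as a product over the blocks of $\pi$, and then feed in the combinatorial generator formula \eqref{generatorfree} block by block. Since $\phi_t(\pi)(\lmss{i},\lmss{j}) = \prod_{b\in\pi}\phi_t(u_{i_{k_1},j_{k_1}}\cdots u_{i_{k_m},j_{k_m}})$ where $b=\{k_1<\cdots<k_m\}$, the Leibniz rule gives
\begin{equation*}
\frac{d}{dt}\phi_t^\pi(\lmss{i},\lmss{j}) = \sum_{b\in\pi}\Big(\frac{d}{dt}\phi_t\big(\overset{\rightarrow}{\textstyle\prod_{k\in b}}u_{i_k,j_k}\big)\Big)\prod_{b'\neq b}\phi_t\big(\overset{\rightarrow}{\textstyle\prod_{k\in b'}}u_{i_k,j_k}\big).
\end{equation*}
First I would apply \eqref{generatorfree} (with all signs $\varepsilon=1$, i.e. no adjoints, so the twisting is trivial and $b_{\varepsilon,\lmss{i},\lmss{j}}=c_p$ coloured by $(\lmss{i},\lmss{j})$ restricted to $b$) to each factor $\frac{d}{dt}\phi_t(\prod_{k\in b}u_{i_k,j_k})$. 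Because there are no $u^\star$'s, the projection-type sum $\prp{\cdot}$ contributes nothing (those terms arise only from $\varepsilon(k)\neq\varepsilon(l)$ pairs), leaving exactly the drift term $-\frac{|b|}{2}$ and the transposition sum over $\ncc{\tau}\in\trp{c_{|b|}}$, i.e. over transposition-type diagrams composed with the cycle on the block.

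Next I would reorganize the sum over all blocks $b$ and all internal transpositions $\tau$ of $b$ into a single sum over transpositions $\tau\in\lmss{T}^+(\sigma_\pi)$, where $\sigma_\pi$ is the permutation whose cycles are exactly the blocks of $\pi$ (with the cyclic order inherited from $\{1,\dots,p\}$). The key bookkeeping point is: a transposition belonging to $\lmss{T}^+(\sigma_\pi)$ is precisely a transposition inside one block of $\pi$, and composing $\sigma_\pi$ with such a $\tau$ either splits that block in two (giving a new non-crossing partition $\pi_{\sigma_\pi\circ\tau}$ with one more block) — this is the content of the "$+$" decoration. I would then check that the effect on the colourization matches: the transposition $\tau$ at positions in the block swaps the relevant row/column indices, and combining this with the fact that the surviving $\phi_t$ factors are unchanged yields exactly $\phi_t^{\pi_{\sigma_\pi\circ\tau}}(\lmss{i}\cdot\tau,\lmss{j})$ as in \eqref{unun} (the $-\tfrac1n$ prefactor and the reindexing $i_{l}j_{k}$, $i_{k}j_{l}$ there is precisely $\lmss{i}\cdot\tau$). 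Summing the drift terms $\sum_{b}(-|b|/2) = -p/2$ recovers the first term on the right of \eqref{diff_equation_3}.

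The main obstacle — really the only non-routine part — is the matching of the partition combinatorics: verifying carefully that the set $\trp{c_{|b|}}$ of "admissible" transposition diagrams for a single block, summed over all blocks, is in canonical bijection with $\lmss{T}^+(\sigma_\pi)$, and that under this bijection the composed diagram $\ncc{\tau}\circ c_{|b|}$ corresponds to the refined non-crossing partition $\pi_{\sigma_\pi\circ\tau}$ with the correctly permuted colourization. This requires knowing that composing a single cyclic block with an internal transposition always yields two sub-cycles whose supports are again intervals-in-cyclic-order (hence the result is still non-crossing and the "$+$" condition is automatically met), which should follow from the characterization of $\trp{b^\varepsilon}$ recalled in Section \ref{schur_weyl}. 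Everything else — the Leibniz expansion, the vanishing of the $\prp{\cdot}$ contribution in the adjoint-free setting, and the arithmetic of the drift term — is immediate from the already-established equations \eqref{generatorfree}, \eqref{unun}, and the definition of $\phi_t(\pi)$.
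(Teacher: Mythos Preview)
Your approach is correct and is exactly the argument the paper has in mind: the paper offers no proof beyond calling the lemma ``a downward consequence of equation \eqref{generatorfree}'', and your Leibniz-rule expansion over the blocks of $\pi$, followed by the identification of $\bigcup_{b\in\pi}\{\tau_{k,l}:k,l\in b\}$ with $\lmss{T}^{+}(\sigma_{\pi})$ and of the split block with $\pi_{\sigma_{\pi}\circ\tau}$, is precisely how one fills in the details.

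One small wrinkle in your write-up: equation \eqref{generatorfree} is the derivative \emph{at $t=0$}, not at a general time. What you actually need for each block is the general-$t$ identity coming from \eqref{compgendeux} together with \eqref{unun} (with all $\varepsilon=1$), which gives
\[
\frac{d}{dt}\phi_t\big(\textstyle\prod_{k\in b}u_{i_k,j_k}\big)
= -\frac{|b|}{2}\,\phi_t\big(\textstyle\prod_{k\in b}u_{i_k,j_k}\big)
-\frac{1}{n}\sum_{k<l\in b}\phi_t(\cdots)\,\phi_t(\cdots)
\]
with the two factors carrying exactly the index swap $\lmss{i}\mapsto\lmss{i}\cdot\tau_{kl}$. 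You clearly have this in mind since you invoke \eqref{unun} for the reindexing, so this is only a matter of citing the right equation. The combinatorial check that $\pi_{\sigma_{\pi}\circ\tau}$ is again non-crossing is standard (splitting a block $\{k_{1}<\cdots<k_{m}\}$ at $k_{a},k_{c}$ produces the nested pair $\{k_{a+1},\ldots,k_{c}\}$ and its complement in the block), and you have identified it correctly as the only point requiring care.
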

We fix once for all a non-crossing partition $\pi$ and a colourization $(\lmss{i},\lmss{j}) \in \mathcal{C}$. We solve the differential equation \eqref{diff_equation_3}. We introduce the normalized function:
\begin{equation*}
L(\lmss{i},\lmss{j},\pi)(s) = e^{\frac{ps}{2}}\phi_{s}^{\pi}(\lmss{i},\lmss{j}),~s \in \mathbb{R}_{+}.
\end{equation*}
For each integer $k \in \{0,\ldots,p-1\}$, we denote by $P^{k}(\lmss{i}, \lmss{j},\pi) \in \mathbb{C}$ $k^{th}$ coefficient of the Taylor expansion of $\mathbb{R}^{+} \ni s \mapsto L(\lmss{i}, \lmss{j},\pi)(s)$ (we prove in a moment this function is polynomial in its time variable). Owing to formula \eqref{diff_equation_3},
\begin{equation}
\label{recurrencerelation1}
  \begin{split}
	  \frac{d}{ds} L(\lmss{i}, \lmss{j}, \pi)(s) = -\frac{1}{n}\sum_{\tau \in \trp{\sigma_{\pi}}} L(\tau \cdotp \lmss{i}, \lmss{j},\pi_{\sigma_{\pi} \circ \tau})(s),~s\in\mathbb{R}_{+}.
  \end{split}
\end{equation}
Frm the definition of $\phi(\pi,\lmss{i}, \lmss{j})(s)$ as a product over the blocks of $\pi$, we prove that:
\begin{equation}
	L_{\lmss{i},\lmss{j}}^{\pi} (s) = \prod_{V \in \pi} L_{\lmss{i}_{|V},\lmss{j}_{|V}}^{1_{\sharp V}}(s),~s \geq 0,~ (\lmss{i},\lmss{j})\in \mathcal{C}_{p},~ \pi \in \lmss{NC}(p)
	\label{multiplicativeL}
\end{equation}
\par We gave now the argument to prove that $s \mapsto L_{\lmss{i}, \lmss{j}}^{\pi}(s)$ is polynomial. Let $N$ be the operator acting on functions of non-crossing partition and colourizations such that
\begin{equation*}
  \frac{d}{ds}L(\pi,\lmss{i},\lmss{j}) = N(L(s,\cdot,\cdot))(\pi,\lmss{i},\lmss{j}).
\end{equation*}
The operator $N$is a nilpotent operator of order $p$. In fact, if $\mathbf{1}$ is the constant function equals to $1$ on $\lmss{NC}(p) \times \mathcal{C}_{2p}$, then
$
\lmss{L}^{s}(\mathbf{1})((p\ldots,1), \lmss{i}, \lmss{j})
$
is the number of minimal factorisations of the cycle $(p\ldots1)$ of length $s$. Thus $\lmss{L}^{p-1}(\mathbf{1}) \neq 0$ and for all $s \geq p$, $\lmss{L}^{s}(\mathbf{1}) = 0$. Since $L(f) \leq L(\mathbf{1}) \sup(f)$, we have $L^{s}(f) = 0,~\forall s \geq p$. Hence $\lmss{L}$ is nilpotent of order $p$ which implies that the exponential of $\lmss{L}$ is a finite sum and the function $s \mapsto L^{\pi}_{\lmss{i},\lmss{j}}(s)$ is indeed a polynomial of degree stricly less than $p$.
\par Owing to equation \eqref{recurrencerelation1}, the following inductive relationship for the Taylor coefficients holds:
\begin{equation}
	\label{recurelationP1}
	\tag{R}
	\begin{split}
		&P_{\lmss{i}, \lmss{j}}^{\pi, k} = \frac{1}{n}\sum_{\tau \in \lmss{T}^{+}(\sigma)}P_{\tau \cdotp \lmss{i}, \lmss{j}}^{\pi_{\sigma_{\pi} \circ \tau}, k-1},\\
		&P_{\lmss{i}, \lmss{j}}^{\pi, 0}= L_{\lmss{i}, \sigma}(0) = \prod_{i = 1}^{n}\delta_{\lmss{i}_{i}, \lmss{j}_{i}},\quad k\leq p-1,~\pi \in \lmss{NC}_{p},~(\lmss{i},\lmss{j}) \in \mathcal{C}_{p}.
	\end{split}
\end{equation}
In particular, the coefficients $P^{\pi,0}_{\lmss{i},\lmss{j}}$ are independent of the non-crossing partition $\pi$.
Recall that the type of a permutation $\sigma \in \mathfrak{S}_{p}$ is a sequence $\lmss{t}(\sigma) = \left(n_{i}\right)_{1 \leq i \leq p}$ of $p$ integers with $n_{i}$ the number of cycles of $\sigma$ of length $i$, $1 \leq i\leq p$.
The geodesic distance $\lmss{d}(\sigma)$ of a permutation $\sigma \in \mathfrak{S}_{p}$ to the identity $\mathrm{id} \in \mathfrak{S}_{p}$ is the minimal numbers of transpositions needed to write $\sigma$ as a product of transpositions:
\begin{equation*}
\lmss{d}(\sigma) = \min\{\ell \geq 0 : \sigma = \tau_{1}\cdots\tau_{\ell},~\tau_{i} \in \lmss{T}_{p},~ 1 \leq i \leq p\}.
\end{equation*}
The geodesic distance can be computed as $\lmss{d}(\sigma) = p-\sharp(\sigma)$ with $\sharp(\sigma)$ the number of cycles of $\sigma$, $\sigma \in \mathfrak{S}_{p}$.
\begin{lemma}
	\label{lemmaun}
	Let $(\lmss{i},\lmss{j}) \in \mathcal{C}_{2p}$ be a colourization and $\pi \in \lmss{NC}(p)$ a non-crossing partition, it holds that:
  \begin{equation}
	  \label{lemmaunformula}
	  P_{\lmss{i}, \lmss{j}}^{\pi, k} = \frac{1}{n^{k}} \sum_{\tau_{1},\ldots,\tau_{k} \in \lmss{T}^{k}_{+}(\sigma_{\pi})} \delta_{\lmss{i} \cdot \tau_{1} \ldots \tau_{k},\lmss{j}}, \, \quad \forall k \geq 1.
	  \end{equation}
  The set $\lmss{T}_{+}^{k}(\sigma_{\pi})$ is defined by:
  \begin{equation*}
	  \lmss{T}_{+}^{k}(\sigma_{\pi}) = \left\{ \tau_{1},\ldots,\tau_{k} \in \lmss{T}_{p}^{\times k} : \sigma_{\pi} \circ \tau_{1} \circ \cdots \circ \tau_{k} \textrm{ has exactly } k+ \sharp \sigma_{\pi} + 1 \textrm{ cycles}\right\}.
  \end{equation*}
  In particular if the type $t(\sigma_{\pi})$ of $\sigma_{\pi}$ is $(n_{1},\cdots,n_{p})$, then
  \begin{equation*}
	  P_{\lmss{i}, \lmss{j}}^{\pi, \lmss{d}(\sigma_{\pi})} = \left( \frac{p-\sum_{i=1}^{p}n_{i}}{0!^{n_{1}}\ldots(p-1)^{n_{p}}} \prod_{i=1}^{p} i^{n_{i}(i-2)}\right) \delta_{\lmss{i}_{\sigma_{\pi}^{-1}(1)},\, \lmss{j}_{1}} \cdots\, \delta_{\lmss{i}_{\sigma^{-1}_{\pi}(p)},\, \lmss{j}_{p}}.
  \end{equation*}
\end{lemma}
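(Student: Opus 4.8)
The plan is to derive \eqref{lemmaunformula} by induction on $k$ from the recursion \eqref{recurelationP1}, and then to obtain the final identity by specialising to $k=\lmss{d}(\sigma_{\pi})$ and invoking the classical enumeration of minimal transposition factorisations.

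For the induction, the case $k=1$ is exactly \eqref{recurelationP1} together with the initial value $P^{\pi,0}_{\lmss{i},\lmss{j}}=\prod_{i}\delta_{\lmss{i}_{i},\lmss{j}_{i}}$, once one observes that $\lmss{T}^{1}_{+}(\sigma_{\pi})=\lmss{T}^{+}(\sigma_{\pi})$: a transposition $\tau=(a\,b)$ lies in $\lmss{T}^{+}(\sigma_{\pi})$ precisely when $a,b$ belong to a common cycle of $\sigma_{\pi}$, and then $\sigma_{\pi}\circ\tau$ has exactly one more cycle than $\sigma_{\pi}$. For the step I would insert the expression for $P^{\pi_{\sigma_{\pi}\circ\tau},\,k-1}$ into \eqref{recurelationP1}. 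Two elementary facts make this work. First, the $\mathfrak{S}_{p}$-action on colourizations is a \emph{right} action, $(\lmss{s}\cdot\sigma)\cdot\rho=\lmss{s}\cdot(\sigma\rho)$, so the nested Kronecker symbols $\delta_{(\lmss{i}\cdot\tau)\cdot\tau_{2}\cdots\tau_{k},\lmss{j}}$ collapse into $\delta_{\lmss{i}\cdot\tau\tau_{2}\cdots\tau_{k},\lmss{j}}$. Second, the permutation associated with the partition $\pi_{\sigma_{\pi}\circ\tau}$ is $\sigma_{\pi}\circ\tau$ itself; this follows from the fact that $\sigma_{\pi}\le\gamma_{p}:=(1\,2\,\cdots\,p)$ — Biane's identification of $\lmss{NC}(p)$ with the interval $[\mathrm{id},\gamma_{p}]$ of the Cayley graph of $\mathfrak{S}_{p}$ generated by transpositions — together with the observation that a transposition of $\lmss{T}^{+}(\sigma_{\pi})$ splits one cycle of $\sigma_{\pi}$ into two cyclic subintervals, so that $\sigma_{\pi}\circ\tau$ remains $\le\gamma_{p}$ and is therefore again of the form $\sigma_{\rho}$.

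Iterating \eqref{recurelationP1} $k$ times then expresses $P^{\pi,k}_{\lmss{i},\lmss{j}}$ as $n^{-k}$ times a sum of $\delta_{\lmss{i}\cdot\tau_{1}\cdots\tau_{k},\lmss{j}}$ over all $(\tau_{1},\ldots,\tau_{k})$ such that $\tau_{\ell}\in\lmss{T}^{+}(\sigma_{\pi}\circ\tau_{1}\circ\cdots\circ\tau_{\ell-1})$ for every $\ell$, i.e. such that the number of cycles increases by one at each multiplication. The crux is to recognise this index set as $\lmss{T}^{k}_{+}(\sigma_{\pi})$. Since multiplying a permutation of $\mathfrak{S}_{p}$ by a transposition changes the number of cycles by exactly $\pm 1$, the number of cycles of $\sigma_{\pi}\circ\tau_{1}\circ\cdots\circ\tau_{k}$ is at most $\sharp\sigma_{\pi}+k$, with equality if and only if it strictly increases at every step — which is precisely the constraint defining $\lmss{T}^{k}_{+}(\sigma_{\pi})$. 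This short telescoping argument, together with the bookkeeping that ``$\lmss{T}^{+}$ of the running permutation'' is determined by that permutation alone and not by the way it was produced, is the main (if modest) obstacle; everything else is the combinatorial dictionary set up in Section \ref{schur_weyl}.

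For the final displayed identity I would put $k=\lmss{d}(\sigma_{\pi})=p-\sharp\sigma_{\pi}$. Every element $(\tau_{1},\ldots,\tau_{k})$ of $\lmss{T}^{k}_{+}(\sigma_{\pi})$ then satisfies $\sigma_{\pi}\circ\tau_{1}\circ\cdots\circ\tau_{k}=\mathrm{id}$, i.e. $\tau_{1}\cdots\tau_{k}=\sigma_{\pi}^{-1}$, so these tuples are exactly the minimal-length transposition factorisations of $\sigma_{\pi}^{-1}$; in particular $\lmss{i}\cdot\tau_{1}\cdots\tau_{k}=\lmss{i}\cdot\sigma_{\pi}^{-1}$ is the same colourization for all of them, which produces the common factor $\delta_{\lmss{i}_{\sigma_{\pi}^{-1}(1)},\lmss{j}_{1}}\cdots\delta_{\lmss{i}_{\sigma_{\pi}^{-1}(p)},\lmss{j}_{p}}$. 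It remains to count minimal factorisations of a permutation of type $(n_{1},\ldots,n_{p})$: such a factorisation restricts to a minimal factorisation of each cycle — a cycle of length $i$ having $i^{i-2}$ of them by the Dénes/Hurwitz formula — while transpositions originating from distinct cycles may be arranged in the product in any relative order, contributing the shuffle (multinomial) factor $(p-\sum_{i}n_{i})!\,\big/\,\prod_{i}((i-1)!)^{n_{i}}$. Multiplying these contributions and carrying along the $n^{-\lmss{d}(\sigma_{\pi})}$ from \eqref{lemmaunformula} yields the asserted closed form; the enumeration being classical, it can simply be quoted.
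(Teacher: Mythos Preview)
Your argument is correct and follows essentially the same route as the paper: iterate the recursion \eqref{recurelationP1}, identify the resulting index set with $\lmss{T}^{k}_{+}(\sigma_{\pi})$, and for $k=\lmss{d}(\sigma_{\pi})$ invoke D\'enes's count of minimal factorisations together with the shuffle (multinomial) factor. The only difference is one of explicitness: where the paper simply asserts that a $k$-tuple with $\tau_{\ell}\in\lmss{T}^{+}(\sigma_{\pi}\circ\tau_{1}\cdots\tau_{\ell-1})$ for each $\ell$ ``by definition'' lies in $\lmss{T}^{k}_{+}(\sigma_{\pi})$, you supply the telescoping justification (each transposition changes the cycle count by $\pm1$, so reaching the maximum forces $+1$ at every step) and you also spell out, via Biane's identification, why $\sigma_{\pi_{\sigma_{\pi}\circ\tau}}=\sigma_{\pi}\circ\tau$, which the paper takes for granted.
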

Before we prove the last proposition, a simple consequence of the relation \eqref{corollary_1} is the nullity of the Taylor coefficients $P^{k}(\pi,\lmss{i},\lmss{j}$ of order $k$ larger than the geodesic distance of $\sigma_{\pi}$ to the identity in $\mathfrak{S}_{p}$:
$$\forall k \in \mathbb{N}, \quad k > \lmss{d}(\sigma_{\pi}) \Rightarrow P_{\lmss{i}, \lmss{j}}^{\pi, k} = 0.$$
We can be more precise. If $\sigma$ is a non-crossing permutation in $\mathfrak{S}_{p}$, define $\mathfrak{S}(\sigma,i,j) \subset \mathfrak{S}_{p}$ as the set comprising all permutations $\rho$ lying on a geodesic between the identity permutation and $\sigma$ and satisfying $\lmss{i}\cdot \rho=\lmss{j}$. The last proposition implies:
\begin{equation*}
  k > \max\{d(\rho), \rho \in S(\sigma_{\pi},\lmss{i},\lmss{j})\} \implies P^{\pi, k}_{\lmss{i},\lmss{j}} = 0.
\end{equation*}
\begin{proof}
Let $k\geq 1$ be an integer, let $\pi$ a non-crossing partition of size $p\geq 1$, and $(\lmss{i},\lmss{j}) \in \mathcal{C}_{2p}$ a colourization. A simple application of the recurrence relation \eqref{recurelationP1} shows that:
	\begin{equation*}
		P^{\pi,k}_{\lmss{i}, \lmss{j}} = \frac{1}{n^{k}}\sum_{\substack{\tau_{1},\ldots,\tau_{k} \in \lmss{T} \\ \tau_{i} \in \lmss{T}_{+}(\sigma_{\pi} \circ \tau_{1} \ldots \tau_{i-1})}} P^{\sigma_{\pi} \circ \tau_{1} \cdots \tau_{k},0}_{\lmss{i} \cdot \tau_{1} \ldots \tau_{k},~\lmss{j}}.
	\end{equation*}
	The sum runs over $k$-tuple $(\tau_{1},\cdots,\tau_{k})$ such that $\tau_{i} \in \lmss{T}^{+}(\sigma_{\pi} \circ \tau_{1} \cdots \circ \tau_{i-1})$ for all $i \leq k$. By definition such a $k$-tuple of transpositions belongs to $\lmss{T}^{k}_{+}(\sigma_{\pi})$. The first relation is proved.
	A minimal factorisation of a permutation $\pi$ is a tuple $(\tau_{1},\ldots,\tau_{q})$ such that $\pi = \tau_{1} \cdots \tau_{q}$. Such a tuple is of length $p-\sharp{\pi}$. A result of D\'enes (see \cite{denes1959representation}) on the number of minimal factorisations of a cycle of length $s$ assesses that there are $s^{s-2}$ such factorisations. Hence the number of minimal factorisations of a permutation $\sigma$ of type $n_{1},\cdots,n_{p}$ is
 \begin{equation}
   \label{minimalfactor}
 \frac{p-\sum_{i=1}^{p}n_{i}}{0!^{n_{1}} \cdots (p-1)^{n_{p}}}\prod_{i=1}^{p}i^{n_{i}(i-2)}
  \end{equation}
 where the factor $\frac{p-\sum_{i=1}^{p}n_{i}}{0!^{n_{1}} \cdots (p-1)!^{n_{p}}}$ accounts for the number of shuffling of a given minimal factorisations of $\sigma_{\pi}$.
\end{proof}
We use the notation $\lmss{mf}(\sigma)$ for the number of minimal factorisation of a non-crossing partition $\sigma$, see equation \eqref{minimalfactor}. Before going further into the analysis of the sum \eqref{lemmaunformula}, in Lemma \ref{lemmaun}, there are other simple consequences of Lemma \ref{lemmaun}. First, for the coefficient $P_{\lmss{i},\lmss{j}}^{k,\pi}$, the sequences $\lmss{i}$ and $\lmss{j}$ must contain the same number of different colours with same number of occurences:
\begin{equation}
	\label{corollary_1}
	\phi_{t}((\lmss{i},\lmss{j}),\pi) \neq 0 \Rightarrow \left(\forall i \in \{1,\cdots,p\}, \hspace{1mm} \sharp \{k \in \{1,\cdots,p\} : \lmss{i}_{k} = i\} = \sharp \{k \in \{1,\cdots,p\} : \lmss{i}_{k} = i\}\right)
\end{equation}
Given two sequences $\lmss{i}$ and $\lmss{j}$ in $\{1,\ldots,n\}^{p}$, it seems a rather difficult task to decide whether there exists a non-crossing permutation $\sigma$ such that $\sigma(\lmss{i})=\lmss{j}$. Related questions have drawn our interest but we did not succeed to make progress on them:
\begin{enumerate}
  \item Give sufficient conditions on the sequence $\lmss{j}$ such that there exists a non-crossing permutation $\sigma$ satisfying $\lmss{i}\cdot \sigma = \lmss{j}$,
  \item if such a permutation exists, give a way to construct all of them (or at least one), and finally
  \item compute the maximal distance and minimal distance to the identity of such permutations.
\end{enumerate}
This last relation implies in turn that for all pairs of integers $1 \leq i \neq j \leq p$ and all integers $n \geq 1$ that $\phi_{t}(\left(u_{ij}\right)^{n}) = 0$.
One should emphasize that the last relation does not imply $u_{ij} = 0$, because $u_{ij}$ is not self-adjoint and thus $\phi_{t}(u_{ij}\left(u_{ij}\right)^{\star}) \neq \phi_{t}(u_{ij}^{2})$.
Let $\sigma$ be a permutation of $\{1,\cdots n \}$. An other simple consequence of Lemma \ref{lemmaun} is independence of the state $\phi_{t}$ with respect to permutations of blocks:
\begin{equation*}
	P^{\pi, k}_{\sigma \cdot \lmss{i}, \sigma \cdot \lmss{j}} = P^{\pi, k}_{\lmss{i}, \lmss{j}},~ \forall k \geq 0,~ \sigma \in \mathfrak{S}_{p} \text{ and }\phi_{t}(\lmss{i},\lmss{j},\pi) = \phi_{t}(\sigma \cdot \lmss{i}, \sigma \cdot \lmss{j}, \pi).
\end{equation*}
This last property is related to an invariance of the non-commutative distribution of the driving noise $\noisefree$ of equation $\eqref{free_unitary_n}$. In fact, the group of permutations $\mathfrak{S}$ is injected into the group of unitary elements of $\mathcal{A} \otimes \mathcal{M}_{n}(\mathbb{C})$ by setting for the matrix $\left[ \sigma \right]$ corresponding to a permutation $\sigma$, $\left[\sigma\right]_{ij} = \delta_{i,\sigma(j)}$, $1 \leq i,j \leq n$. With this definition, because we chose for the entries of $\noisefree$ circular brownian motion with same covariance:
\begin{equation*}
\left[\sigma \right]\noisefree(t) \left[\sigma \right]^{-1} \overset{\text{nc. dist.}}{=} \noisefree(t),~\text{ for all time } t\geq 0.
\end{equation*}
We reformulate equation $\eqref{lemmaunformula}$ of Lemma \ref{lemmaun} by rewriting the right hand side as a sum over non-crossing partitions. In fact, for a pair of non-crossing partitions $\rho,\pi$ in $\lmss{NC}_{p}$, we denote by $[\rho,\ldots,\pi]$ the set of all non-crossing partitions that are greater than $\rho$ and smaller than $\pi$. It is a simple fact that
\begin{equation*}
	\gamma \in [\hat{0}_{[1,\ldots,p]}, \pi] \Leftrightarrow \exists k > 0, \exists \tau_{1} \cdots \tau_{k} \in \lmss{T}^{+}(\sigma_{\pi}) : \sigma_{\gamma} = \tau_{1}\cdots\tau_{k} \circ \sigma_{\pi},
\end{equation*}
Thus,
\begin{equation}
	\label{formuladeuxtaylor}
	P_{\lmss{i}, \lmss{j}}^{\pi, k} = \frac{1}{n^{k}} \sum_{\gamma \leq \pi} \sum_{\tau_{1},\ldots,\tau_{k} = \gamma } \delta_{\gamma \cdotp \lmss{i}, \lmss{j}} = \frac{1}{n^{k}}\sum_{\gamma \leq \pi} \lmss{mf}(\sigma_{\gamma}) \delta_{\sigma_{\gamma} \cdot \lmss{i}, \lmss{j}}, \quad \forall k \geq 1.
\end{equation}
Let $(u_{t})_{t}$ be a unitary Brownian motion in a tracial algebra $(\mathcal{A}, \tau)$. For each time $t\geq 0$, let $\nu_{t} \in \mathbb{C}[u,u^{\star}]^{\star}$ be the non commutative distribution of $u_{t}$. A formula due to Nica (see for example \cite{demni2015star}) for some of the cumulants $k_{t}^{p}, p\leq 1$ of the distribution $\nu_{t}$ is the following:
\begin{equation*}
	\label{nica_cumulant}
\kappa^{q}_{t}(u,\ldots,u) = e^{-\frac{qt}{2}}\frac{(-qt)^{q-1}}{q!}, \quad q \geq 1,
\end{equation*}
We briefly recall how this formula can be obtained. It can be shown (see \cite{biane1997free}) that:
\begin{equation*}
	\phi_{t}(u_{t}^{p}) = e^{-\frac{pt}{2}} \sum_{k = 0}^{n-1} \frac{(-t)^{k}}{k!} P_{k}.
\end{equation*}
with $P_{k} = \sharp \{\left(\tau_{1},\cdots,\tau_{k}\right) \in \lmss{T}_{p} : \tau_{1} \cdots \tau_{k} \leq (1,\cdots,n),~\sharp \pi_{\tau_{1}\ldots\tau_{k}} = p-k\}$.
According to a result of Denes, there are $\ell^{\ell-2 }$ minimal factorisations of a cycle of length $\ell$. Thus, if the partitions $p_{\pi} \in \lmss{NC}(p, p-k)$ of a non-crossing permutation define a partition $(s_{1},\cdots,s_{p})$ of the integer $p$, taking into account all the shuffles of a minimal factorisation leads to the following formula for the number \lmss{mf}$(\pi)$ of minimal factorisations of $\pi$:
This leads to the desired formula for the cumulants $\kappa^{q}_{t}, \, q \geq 1$. The next proposition is downright implication of equations \eqref{minimalfactor} and \eqref{formuladeuxtaylor}.
\begin{proposition}
	\label{cumulant}
	Let $p\geq 1$ an integer and $t\geq 0$ a time. Denote by $k_{t}^{p}$ the $p^{th}$ cumulant function of the distribution of $\freeunitary(t)$. With the notation introduced so far, if $u_{i_{1},j_{1}}\ldots,u_{i_{p},j_{p}}$ are elements of $\udualgroup$ taken amongst the generators,
\begin{equation*}
	\kappa^{p}_{t}(u_{\lmss{i}_{1}, \lmss{j}_{1}},\ldots, u_{\lmss{i}_{p}, \lmss{j}_{p}}) = e^{-\frac{pt}{2}} \left(\frac{-t}{n}\right)^{p-1} \frac{p^{p-2}}{(p-1)!} \delta_{c_{p} \cdotp \lmss{i}, \lmss{j}}.
\end{equation*}
\end{proposition}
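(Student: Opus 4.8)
The plan is to derive the cumulant from the moments by Möbius inversion over the non-crossing partition lattice, substitute the polynomial form of the moments established in Lemma~\ref{lemmaun}, and then collapse the resulting double sum over $\lmss{NC}(p)$ using the fact that $\mu$ is the inverse of the zeta function of $\lmss{NC}(p)$.

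Concretely, the moment-cumulant formula \eqref{momentcumulant} gives
$\kappa^{p}_{t}(u_{\lmss{i}_{1},\lmss{j}_{1}},\ldots,u_{\lmss{i}_{p},\lmss{j}_{p}})=\sum_{\pi\in\lmss{NC}(p)}\mu(\pi,\mathbf{1}_{p})\,\phi^{\pi}_{t}(\lmss{i},\lmss{j})$.
From the analysis preceding Lemma~\ref{lemmaun}, each $s\mapsto L^{\pi}_{\lmss{i},\lmss{j}}(s)=e^{ps/2}\phi^{\pi}_{s}(\lmss{i},\lmss{j})$ is polynomial of degree $<p$, and unwinding the normalization that makes \eqref{recurrencerelation1} equivalent to \eqref{recurelationP1} (matching the coefficients of $(-s)^{k}/k!$) yields $\phi^{\pi}_{t}(\lmss{i},\lmss{j})=e^{-pt/2}\sum_{k\geq 0}\frac{(-t)^{k}}{k!}P^{\pi,k}_{\lmss{i},\lmss{j}}$. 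Plugging in formula \eqref{formuladeuxtaylor} and using that a partition $\gamma\leq\pi$ contributes to $P^{\pi,k}_{\lmss{i},\lmss{j}}$ only for the single index $k=\lmss{d}(\sigma_{\gamma})$ (since $\lmss{mf}(\sigma_{\gamma})$ counts only the minimal factorizations of $\sigma_{\gamma}$, which have length $\lmss{d}(\sigma_{\gamma})$), the sums over $k$ and over $\gamma$ merge and we obtain
\[
\kappa^{p}_{t}=e^{-pt/2}\sum_{\pi\in\lmss{NC}(p)}\mu(\pi,\mathbf{1}_{p})\sum_{\gamma\leq\pi}\frac{(-t)^{\lmss{d}(\sigma_{\gamma})}}{\lmss{d}(\sigma_{\gamma})!\,n^{\lmss{d}(\sigma_{\gamma})}}\,\lmss{mf}(\sigma_{\gamma})\,\delta_{\sigma_{\gamma}\cdot\lmss{i},\lmss{j}}.
\]

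Now exchange the two summations: since $\gamma\leq\pi$, the right-hand side equals $e^{-pt/2}\sum_{\gamma}\bigl(\sum_{\pi:\,\gamma\leq\pi\leq\mathbf{1}_{p}}\mu(\pi,\mathbf{1}_{p})\bigr)\,\frac{(-t)^{\lmss{d}(\sigma_{\gamma})}}{\lmss{d}(\sigma_{\gamma})!\,n^{\lmss{d}(\sigma_{\gamma})}}\,\lmss{mf}(\sigma_{\gamma})\,\delta_{\sigma_{\gamma}\cdot\lmss{i},\lmss{j}}$, and the inner bracket is $\delta_{\gamma,\mathbf{1}_{p}}$. Hence only $\gamma=\mathbf{1}_{p}$ survives; its permutation is the full cycle $\sigma_{\mathbf{1}_{p}}=c_{p}=(1,\ldots,p)$, so $\lmss{d}(\sigma_{\gamma})=p-1$ and, by the result of D\'enes recalled above in \eqref{minimalfactor}, $\lmss{mf}(c_{p})=p^{p-2}$. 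This gives
\[
\kappa^{p}_{t}(u_{\lmss{i}_{1},\lmss{j}_{1}},\ldots,u_{\lmss{i}_{p},\lmss{j}_{p}})=e^{-pt/2}\,\frac{(-t)^{p-1}}{(p-1)!\,n^{p-1}}\,p^{p-2}\,\delta_{c_{p}\cdot\lmss{i},\lmss{j}}=e^{-\frac{pt}{2}}\Bigl(\frac{-t}{n}\Bigr)^{p-1}\frac{p^{p-2}}{(p-1)!}\,\delta_{c_{p}\cdot\lmss{i},\lmss{j}},
\]
which is the claim. The argument is a reorganization of ingredients already proved, so there is no structural obstacle; the part requiring care is the scalar bookkeeping — verifying that the passage from $P^{\pi,k}_{\lmss{i},\lmss{j}}$ to the Taylor coefficients of $L^{\pi}_{\lmss{i},\lmss{j}}$ produces exactly the sign $(-1)^{p-1}$ and the factorial $(p-1)!$, and that the minimality built into \eqref{formuladeuxtaylor} really forces $k=\lmss{d}(\sigma_{\gamma})$, so that the Möbius collapse leaves a single monomial in $t$. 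As a sanity check, restricting to a one-block colourization must reproduce the scalar unitary Brownian motion cumulant $\kappa^{q}_{t}=e^{-qt/2}(-qt)^{q-1}/q!$ with $t$ rescaled by $1/n$, exactly as recalled before the statement.
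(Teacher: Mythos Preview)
Your argument is correct and is precisely the route the paper intends: the paper offers no detailed proof, declaring the proposition ``a downright implication of equations \eqref{minimalfactor} and \eqref{formuladeuxtaylor}'', and you have supplied the missing Möbius-inversion step that turns those two formulas into the stated cumulant. The only point worth making explicit is the one you flag yourself: in \eqref{formuladeuxtaylor} the summand attached to $\gamma\leq\pi$ genuinely depends on $\gamma$ alone (and not on $\pi$), because the multinomial interleaving factor coming from distributing the $k$ transpositions among the blocks of $\pi$ is exactly absorbed by the ratio $\lmss{d}(\sigma_\gamma)!/\prod_V \lmss{d}(\sigma_{\gamma_V})!$, so that $\prod_V \lmss{mf}(\sigma_{\gamma_V})/\lmss{d}(\sigma_{\gamma_V})!=\lmss{mf}(\sigma_\gamma)/\lmss{d}(\sigma_\gamma)!$; this is what makes the exchange of sums and the collapse $\sum_{\pi\geq\gamma}\mu(\pi,\mathbf{1}_p)=\delta_{\gamma,\mathbf{1}_p}$ legitimate.
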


\section{Coloured Brauer diagrams and Schur--Weyl dualities}
\label{schur_weyl}

\subsection{Coloured Brauer diagrams}
\par In that section, we introduce the set of coloured Brauer diagrams and the algebra they generate. We take the opportunity to make a brief reminder on Brauer diagrams, see \cite{barcelo1999combinatorial} for a detailed review on these combinatorial objects. We strive to motivate all definitions that are introduced. However, we are aware that the combinatoric developed here may seem to be quite raw but is absolutely fundamental for our work.
Let $k\geq 1$ an integer. We use the notation $i^{\prime} = 2k + i$ for $1 \leq i \leq k$ and denote by $\left\{1,\ldots,k,1^{\prime},\ldots,k^{\prime}\right\}$ the interval of integers $\llbracket 1,2k \rrbracket$.
\begin{definition}
A Brauer diagram of size $k$ is a fixed point free involution of the set of cardinal $2k$, $\{1,\ldots,k,1^{\prime},\ldots,k^{\prime}\}$.
\end{definition}
We denote by $\ncc{\Brauer}$ the set of all Brauer diagrams. (The superscript $\bullet$ is used to make clear the difference between Brauer diagrams and the notion of coloured Brauer diagram we introduce below). A Brauer diagram of size $k$ may alternatively be seen as a partition of the set $\{1,\ldots,k,1^{\prime},\ldots,k^{\prime}\}$: two integers $i,j$ are related if and only if $i = \sigma(j)$. Such a partition associated with a Brauer diagram is depicted as follows. We draw first $k$ vertices on a line labelled by the integers in ${1^{\prime},\ldots,k^{\prime}}$ from left to right and $k$ other vertices on an other line under the first one and labelled by the integers in $\{1,\ldots,k \}$. We add strands that connect two integers if one is the other image by the Brauer diagram (in the same block for the associated partition).
In the sequel, we make the identification without mentioning it between a Brauer diagram, a partition which has blocks of cardinal two, and the picture that depicts it.
We perform operations on Brauer diagrams which are naturally defined on the set $\mathcal{P}_{k}$ of all partitions of the set $\{1,\ldots,k,1^{\prime},\ldots,k^{\prime}\}$. These operations are the following ones and are related to the lattice structure of the set $\mathcal{P}_{k}$. Let $p_{1}$ and $p_{2}$ be two partitions. We write $p_{1} \prec p_{2}$ and say that $p_{1}$ is a refinement of $p_{2}$ if each block of $p_{1}$ is included in a block of $p_{2}$. We denote by $p_{1} \vee p_{2}$ the smallest partition which is greater than $p_{1}$ and $p_{2}$:
\begin{equation}
p_{1} \vee p_{2} = \cup_{V\in p_{1}} \cup_{W \in p_{2} : V \cap W \neq \emptyset} V \cup W.
\end{equation}
The greatest partition which is smaller than $p_{1}$ and $p_{2}$ is denoted $p_{1}\wedge p_{2}$:
\begin{equation*}
p_{1} \wedge p_{2} = \cup_{V\in p_{1}, W\in p_{2}} V \cap W.
\end{equation*}
The block number of a partition $p$ is denoted $\lmss{nc}(p)$. Of course, the function $\lmss{nc}$ is constant on the set $\brauerz$ and equal to $k$.
We denote by the symbol $\mathbf{1}_{k}$ the Brauer diagram that is pictured as in Fig. \ref{identity}.
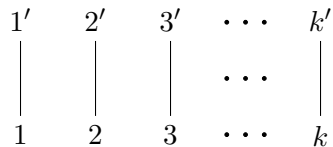
\begin{figure}[!htb]
	\centering
	\resizebox{0.3\linewidth}{!}{
\begin{tikzpicture}[scale = 1.0]
	\draw[black] (-3,0) -- (-3,-1);
	\draw[black] (-2,0) -- (-2,-1);
	\draw[black] (-1,-1) -- (-1,0);

	\node[inner sep=0.5pt,draw,circle,fill,black] at (-0.25,-0.5){};
	\node[inner sep=0.5pt,draw,circle,fill,black] at (0,-0.5){};
	\node[inner sep=0.5pt,draw,circle,fill,black] at (0.25,-0.5){};

	\node[inner sep=0.5pt,draw,circle,fill,black] at (-0.25,-1.25){};
	\node[inner sep=0.5pt,draw,circle,fill,black] at (0,-1.25){};
	\node[inner sep=0.5pt,draw,circle,fill,black] at (0.25,-1.25){};

	\node[inner sep=0.5pt,draw,circle,fill,black] at (-0.25,0.25){};
	\node[inner sep=0.5pt,draw,circle,fill,black] at (0,0.25){};
	\node[inner sep=0.5pt,draw,circle,fill,black] at (0.25,0.25){};

	\draw[black] (1,0) -- (1,-1);

   \node  [above] at ( -3,0) {$1^{\prime}$};
   \node  [above] at ( -2,0) {$2^{\prime}$};
   \node  [above] at ( -1,0) {$3^{\prime}$};
   \node  [above] at ( 1,0) {$k^{\prime}$};

   \node  [below] at ( -3,-1) {$1$};
   \node  [below] at ( -2,-1) {$2$};
   \node  [below] at ( -1,-1) {$3$};
   \node  [below] at ( 1,-1) {$k$};

\end{tikzpicture}
}
\caption{\label{identity}\small The identity element of the algebra of coloured Brauer diagram}
\end{figure}
A $\emph{cycle}$ of a Brauer diagram $\ncc{b}$ is a block of the partition $\ncc{b}\vee \mathbf{1}$.
\par Let $n\geq 1$ an integer and let $\lmss{d}=(d_{1},\ldots,d_{n})$ a finite sequence of positive integers.
\par A \emph{colouring} of $\{1,\ldots,k,1^{\prime},\ldots,k^{\prime}\}$ is a function $c : \{1,\ldots,k,1^{\prime},\ldots,k^{\prime}\} \rightarrow \llbracket 1, n\rrbracket$. We use the symbol $\mathcal{C}_{2k}^{n}$ to denote the set of colourings. The \textit{dimension} function $\lmss{d}_{c}$ associated with a colouration $c$ and the finite sequence $\lmss{d}$ is defined by:
\begin{equation*}
	\begin{array}{lcll}
		\lmss{d}_{c}: &\left\{1,\ldots,k,1^{\prime},\ldots,k^{\prime}\right\} & \longrightarrow & \{d_{1},\ldots,d_{n}\} \\
		&i &\mapsto & d_{c_{i}}
	\end{array}
	.
\end{equation*}
We define the object of interest for this section. For the rest of this section, we fix a dimension function $\lmss{d}$.
\begin{definition}[Coloured Brauer diagrams]
A \textit{coloured Brauer diagram} is a pair $\left(\ncc{b},c\right)$ with $\ncc{b}$ a Brauer diagram and $c$ a colouring which dimension function $\lmss{d}_{c}$ satisfies $\ncc{b} \circ d_{c} = d_{c}$.
\end{definition}
\par A coloured Brauer diagram is conveniently depicted as in Fig. \ref{bicolourbrauer}. The set of all coloured Brauer diagrams is denoted by $\brauerd$. If there is no risk of misunderstanding, we drop the superscript $\lmss{d}$, that indicates the dependence of the set of Brauer diagrams toward the sequence $\lmss{d}$. This sequence is also named \emph{dimension function} in the following.
The set of coloured Brauer diagram $\brauerd$ depends solely on the partition $\lmss{Ker}(\lmss{d})$ of $\llbracket 1,n\rrbracket$ that is the set of all level sets of $\lmss{d}$. We would thus write for a partition $\pi$ of $\llbracket 1,n \rrbracket$ $\mathcal{B}_{k}^{\pi}$ for the set of Brauer diagrams which links are coloured by two integers in the same blocks of $\pi$. If $\pi^{\prime}$ is a partition finer than $\pi$ then $\mathcal{B}_{k}^{\pi^{\prime}} \subset \mathcal{B}_{k}^{\pi}$.

\par Let $\mathbb{K}$ the field of real numbers or the field of complex numbers. We use $\mathbb{K}\left[\Brauer\right]$ for the $\mathbb{K}$ vector space with basis $\brauerd$. If $b$ is a coloured Brauer diagram, $\ncc{b}$ stands for its underlying Brauer diagram and $c_{b}$ is the colouring.
\begin{figure}[!h]
	\centering
	\resizebox{0.23\linewidth}{!}{
\begin{tikzpicture}[scale = 1.0]
	\draw[black] (-2,0) -- (-1,-1);
	\draw[black] (-2,-1) -- (-1,0);
	\draw[black] (-3,0) -- (-3,-1);
	\draw[black] (-0,0) -- (-0,-1);

   \node  [above] at ( -3,0) {$1^{\prime}_{\bcyan}$};
   \node  [above] at ( -2,0) {$2^{\prime}_{\bcyan}$};
   \node  [above] at ( -1,0) {$3^{\prime}_{\bmage}$};
   \node  [above] at ( -0,0) {$4^{\prime}_{\boran}$};

   \node  [below] at ( -3,-1) {$1_{\bmage}$};
   \node  [below] at ( -2,-1) {$2_{\bcyan}$};
   \node  [below] at ( -1,-1) {$3_{\bmage}$};
   \node  [below] at ( -0,-1) {$4_{\boran}$};

\end{tikzpicture}
}
\caption{\small A coloured Brauer diagram with $n=3$, and $d_{\color{cyan} \bullet} = d_{\color{magenta} \bullet} \neq d_{\color{orange} \bullet}$.
\label{bicolourbrauer}}
\end{figure}

\par We define on the vector space $\mathbb{R}\left[ \brauerd \right]$ an algebra structure. Let $b_{1},b_{2} \in \Brauer$ two coloured Brauer diagrams.

\par We begin with reviewing the definition of the composition law on the real span of $\brauerz$. Let $\ncc{b}_{1}$ and $\ncc{b}_{2}$ be two Brauer diagrams. We stack $\ncc{b}_{1}$ over $\ncc{b_{2}}$ to obtain a third diagram that may contain closed connected components. If so, we remove these components to obtain the concatenation $\ncc{b}_{1}\circ \ncc{b}_{2}$ of $\ncc{b}_{1}$ and $\ncc{b}_{2}$. Let $\mathcal{K}(\ncc{b}_{1},\ncc{b}_{2})$ be the number of components that were removed. The product $\ncc{b}_{1}\ncc{b}_{2}$ is defined by the formula:
\begin{equation*}
\ncc{b}_{1} \ncc{b}_{2} = n^{\mathcal{K}(\ncc{b}_{1}, \ncc{b}_{2})}\ncc{b}_{1}\circ \ncc{b}_{2}.
\end{equation*}
The unit is the Brauer diagram $1_{k} = \{\{i,i^{\prime}, i\leq k \}$.
To define the product of $b_{1}$ and $b_{2}$, we define first the composition $b_{1}\circ b_{2}$. We stack $b_{1}$ over $b_{2}$ to obtain a diagram $c$ which contains, eventually, closed connected components. The diagram $c$ contains links that may be coloured with two different colours; if it happens we set the Brauer diagram $b_{1} \circ b_{2}$ to $0$. Otherwise, $b_{1} \circ b_{2}$ is the diagram $c$ with the closed components removed. If $b_{1} \circ b_{1} \neq 0$, for each $d \in \{\lmss{d}\}$, we let $\mathcal{K}_{d}(b_{1},b_{2})$ be the number of closed connected components coloured with an integer $1 \leq i\leq n$ such that $d_{i}=d$ that were removed of $c$ to obtain $b_{1} \circ b_{2}$. Finally, The product $b_{1}b_{2}$ is defined by the formula
\begin{equation*}
	b_{1}b_{2} = \prod_{d\in\lmss{d}}d^{\mathcal{K}_{d}(b_{1},b_{2})} b_{1} \circ b_{2}.
\end{equation*}
Endowed with this composition law, $\mathbb{R}\left[\brauerd\right]$ is an associative complex unital algebra with unit
\begin{equation}
e=\sum_{\substack{c \in \mathcal{C}_{2k}^{n}\\c(i)=c(i^{\prime})}} (\mathbf{1}_{k}, (c,c^{\prime})).
\end{equation}
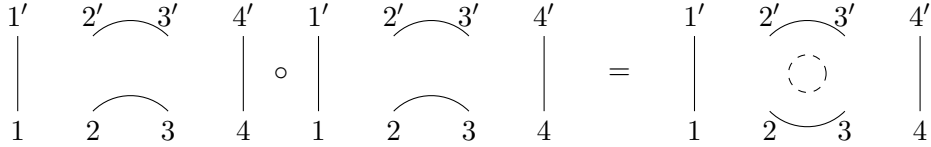
\begin{figure}[!h]
\resizebox{.8\textwidth}{!}{
\begin{tikzpicture}
\begin{scope}
\draw[black] (-3,0) -- (-3,-1);
\draw[black]  (-2,0)  to [bend left = 45](-1,0);
\draw[black] (-2,-1) to [bend left = 45](-1,-1);
\draw[black] (0,0) -- (0,-1);
\node  [above] at ( -3,0) {$1^{\prime}$};
\node  [above] at ( -2,0) {$2^{\prime}$};
\node  [above] at ( -1,0) {$3^{\prime}$};
\node  [above] at ( -0,0) {$4^{\prime}$};
\node  [below] at ( -3,-1) {$1$};
\node  [below] at ( -2,-1) {$2$};
\node  [below] at ( -1,-1) {$3$};
\node  [below] at ( -0,-1) {$4$};
\node at (0.5,-0.5) {$\circ$};
\end{scope}
\begin{scope}[shift={(4,0)}]
\draw[black] (-3,0) -- (-3,-1);
\draw[black] (-2,0) to   [bend left = 45] (-1,0);
\draw[black] (-2,-1) to  [bend left=45](-1,-1);
\draw[black] (0,0) -- (0,-1);
\node  [above] at ( -3,0) {$1^{\prime}$};
\node  [above] at ( -2,0) {$2^{\prime}$};
\node  [above] at ( -1,0) {$3^{\prime}$};
\node  [above] at ( -0,0) {$4^{\prime}$};
\node  [below] at ( -3,-1) {$1$};
\node  [below] at ( -2,-1) {$2$};
\node  [below] at ( -1,-1) {$3$};
\node  [below] at ( -0,-1) {$4$};
\node at (1,-0.5) {=};
\end{scope}
\begin{scope}[shift = {(9,0)}]
\draw[black] (-3,0) -- (-3,-1);
\draw[black] (-2,0) to  [bend left = 45] (-1,0);
\draw[black] (-2,-1) to  [bend right=45](-1,-1);
\draw[dashed]  (-1.5,-0.5) circle (0.25cm);
\draw[black] (0,0) -- (0,-1);
\node  [above] at ( -3,0) {$1^{\prime}$};
\node  [above] at ( -2,0) {$2^{\prime}$};
\node  [above] at ( -1,0) {$3^{\prime}$};
\node  [above] at ( -0,0) {$4^{\prime}$};
\node  [below] at ( -3,-1) {$1$};
\node  [below] at ( -2,-1) {$2$};
\node  [below] at ( -1,-1) {$3$};
\node  [below] at ( -0,-1) {$4$};
\end{scope}
\end{tikzpicture}
}
\caption{Concatenation of two Brauer diagrams.}
\end{figure}
The product we just defined on $\mathbb{R}\left[\brauerd\right]$ is relevant for studying distribution of square blocks extracted from an unitary Brownian motion in high dimension. However, in Section \ref{rectangularextractions} we consider the more general problem of rectangular extractions. To tackle this question, we need a central extension of $\mathcal{R}\left[\brauerd\right]$ that allows us to track the loops, and the dimension of their colourings that are possibly created if two Brauer diagrams are multiplied together. In short, this central extension is constructed by considering diagrams that may have closed connected components.
We should make an intensive use of the following fundamental relation, which is easily proved by a drawing:
\begin{equation}
\lmss{nc}((\ncc{b}_{1}\circ\ncc{b}_{2}) \vee 1) = \lmss{nc}(b_{1}\vee b_{2}) + \mathcal{K}(\ncc{b}_{1},\ncc{b}_{2}).
\end{equation}

If $\ncc{b}$ is a non-coloured Brauer diagram, $C(b)$ is the set of all colourizations of $\ncc{b}$ so as to $(\ncc{b},c)$ is a coloured Brauer diagram which each block is coloured with only one integer. We end this section by defining an injection of the algebra of non-coloured Brauer diagram $\brauer\left(\sum_{i=1}^{n}\lmss{d}(i)\right)$ into the algebra of coloured Brauer diagram $\brauer(\lmss{d})$ that will be used, without mentioning it, in computations,
\begin{equation*}
\begin{array}{cccc}
\Delta:&\brauer(\sum_{i=1}^{n}\lmss{d}(i)) &\to&\brauer(\lmss{d}) \\
& \ncc{b} & \mapsto & \displaystyle\sum_{c \in C(b)}(\ncc{b},c).
\end{array}
\end{equation*}
\subsection{Representation}
\par Let $k\geq 1$ an integer, if $\lmss{i} = \left(i_{j}\right)_{1 \leq j \leq k}$ is a $k$-tuple of integers, we denote by $\lmss{ker}(\lmss{i})$ the partition of $\{1,\ldots,k\}$ equal to the set of all level sets of \lmss{i}. Also, if $\lmss{i}, \lmss{j}$ are two integer sequences of length $k$, $\lmss{ker}(\lmss{i},\lmss{j})$ is the partition equal to the set of all level sets of the function defined on $\{1,\ldots,k,1^{\prime},\ldots,k^{\prime}\}$ equal to $\lmss{i}$ on $\{1,\ldots,k\}$ and $\lmss{j}$ on $\{1^{\prime},\ldots,k^{\prime}\}$.
\par Let $N\geq 1$ an integer. A representation $\ncc{\rho}_{N}$ of the algebra $\brauerz(N)$ is defined by setting:
\begin{equation*}
		\begin{array}{llll}
			\ncc{\rho}_{N}:&\brauerz(N) & \rightarrow & \lmss{End}(\mathbb{R}^{N}) \\
			&\ncc{b} & \rightarrow & \underset{\substack{\lmss{i},\lmss{j} \in \{1,\ldots,p+q\}^{k}, \\ \lmss{ker}(\lmss{i},\lmss{j}) \geq b}}{\sum} E_{i_{1},j_{1}} \otimes \cdots \otimes E_{i_{k},j_{k}}.
		\end{array}
\end{equation*}
If $N \geq 1$ is sufficiently large, it can be shown that $\ncc{\rho}_{N}$ is injective.
\par Let $n\geq 1$ an integer and $\lmss{d}=(d_{1},\ldots,d_{n})$ a sequence of positive integers of length $n$, set $N=d_{1}+\ldots+d_{n}$. A representation $\rho_{\lmss{d}}$ of the algebra $\mathbb{R}\left[\brauerd\right]$ on the $k$-fold tensor product $\left(\mathbb{R}^{N}\right)^{\otimes k}$ is defined by setting:
\begin{equation*}
		\begin{array}{llll}
			\rho_{\lmss{d}}:&\Brauer(\lmss{d}) & \rightarrow & \lmss{End}(\mathbb{R}^{N}) \\
			&\genbrauerz & \mapsto & \underset{\substack{\lmss{i},\lmss{j} \in \{1,\ldots,p+q\}^{k}, \\ \lmss{ker}(\lmss{i},\lmss{j}) \geq b}}{\sum} E^{c(1^{\prime}),c(1)}_{i_{1},j_{1}} \otimes \cdots \otimes E^{c(k^{\prime}),c(k)}_{i_{k},j_{k}}
		\end{array}.
\end{equation*}
With the definition of the injection $\Delta$ we gave in the previous section, simple computations show that $\rho_{\lmss{d}}\circ\Delta=\ncc{\rho}_{N}$ if $N=\sum_{i=1}^{n}d_{i}$.
\par We turn our attention to the definition of three real representations, $\rho^{\mathbb{R}}_{\lmss{d}},\rho^{\mathbb{C}}_{\lmss{d}}$ and $\rho^{\mathbb{H}}_{\lmss{d}}$ that will be used later to define statistics of the unitary Brownian motions.
For the real and complex case, we set $\rho^{\mathbb{R}}_{\lmss{d}} = \rho^{\mathbb{C}}_{\lmss{d}} = \rho_{\lmss{d}}$.
\par The representation $\rho^{\mathbb{C}}_{p,q}$ of the real algebra $\Brauer(\lmss{d})$ defines a representation, denoted by the same symbol, of the complex algebra $\Brauer(\lmss{d})\otimes\mathbb{C}$.
A real linear representation $\rho^{\mathbb{H}}$ of the algebra of Brauer diagrams $\brauerz(-2N)$ on $\left(\mathbb{H}^{n}\right)^{\otimes k}$ is defined in \cite{levy}, equation $(36)$ as a convolution of two representations: the representation $\ncc{\rho}_{N}$ of $\brauerz(N)$ and a representation $\gamma$ of $\brauerz(-2)$ which commutes with $\ncc{\rho}_{N}$.
Let us explain how a representation of the coloured Brauer algebra $\Brauer(-2\lmss{d})$ is defined similarly as a convolution product of two representations.

\par Let $m$ be the multiplication map of endomorphisms in $\lmss{End}(\mathbb{H}^{n})^{\otimes k})$ and let $s,t > 0$ be two positive real numbers. The key observation is the existence of a morphism $\Delta_{st}^{s,t} : \brauerz(st) \rightarrow \brauerz(s) \times \brauerz(t)$ which is the real linear extension of a function defined on the set of Brauer diagrams $\brauerz$ by $\Delta_{st}^{s,t}(b)= b \otimes b$, $b \in \brauerz$.
\par The representation $\rho^{\mathbb{H}}_{N}$ of the algebra $\brauerz(-2N)$ defined by L\'evy in \cite{levy} is the convolution product:
\begin{equation*}
\rho^{\mathbb{H}}_{N} = m \circ \left( \rho^{\mathbb{R}} \otimes \gamma \right) \circ \Delta_{-2N}^{N,-2}.
\end{equation*}
 The definition of a coloured version of the representation $\rho_{N}^{\mathbb{H}}$ is ensured by the existence of coloured version $ \Delta_{-2\lmss{d}}^{\lmss{d},-2}:\Brauer(-2\lmss{d}) \rightarrow \Brauer(\lmss{d}) \times \brauerz(-2)$ of $ \Delta^{N,-2}_{-2N}$, namely, for $b \in \Brauer$:
\begin{equation*}
	\Delta^{\lmss{d},-2}_{-2\lmss{d}}(b) = b \otimes \ncc{b}.
\end{equation*}
There are no difficulties in checking that the map $\Delta^{\lmss{d},-2}_{-2\lmss{d}}$ is a morphism from $\brauer(-2\lmss{d})$ to $\brauer(\lmss{d}) \otimes \brauerz(-2) $.
Finally, the representation $\rho^{\mathbb{H}}_{p,q}$ is defined by the equation:
\begin{equation}
\rho^{\mathbb{H}}_{p,q} = m \circ \left(\rho_{p,q}^{\mathbb{R}} \otimes \gamma \right) \circ \Delta_{-2(p+q)}^{-2p,-2q}.
\end{equation}
\subsection{Orienting and cutting a Brauer diagram}
\par Let $b=\genbrauerz$ a coloured Brauer diagram.
\par To the partition $\ncc{b}$ we associate a graph $\Gamma_{b}$: the vertices are the points $\{1,\ldots,k,1^{\prime},\ldots,k^{\prime}\}$ and the edges are the links of the partition $\ncc{b}$ together with the vertical edges $\{x,x^{\prime}\}$, $x\leq k$. Each of the connected components of this graph is a loop and we pick an orientation of these loops. To that orientation of $\Gamma_{b}$, we associate a function $s : \{1,\ldots,k \} \rightarrow \{-1,1\}$ defined as follows. Let $i \in \{1,\ldots,k\}$ an integer, we set $s(i) = 1$ if the edge that belongs to $b$ which contains $i$ is incoming at $i$ in the chosen orientation of $\Gamma_{b}$ and $-1$ otherwise. Of course an orientation of $\Gamma_{b} $ is completely known through its associated sign function $s$, thus we will in the sequel freely identify these last two objects.
\par We use the notation $b^{s} = (b,s)$ for an oriented Brauer diagram with sign function $s$ and the set of oriented Brauer diagrams is denoted $\mathcal{O}\brauerd$. To each oriented Brauer diagram $(b,s)$ there are two associated permutations $\Sigma_{(b,s)}$ and $\sigma_{(b,s)}$ defined as follows. An oriented Brauer diagram $(b,s)$ is naturally a permutation $\Sigma_{(b,s)}$ of the set $\left\{1,\ldots,k,1^{\prime},\ldots,k^{\prime}\right\}$. The cycles of the permutation $\sigma_{(b,s)}$ are the traces on $\{1,\ldots,k\}$ of the cycles of $\Sigma_{(b,s)}$.
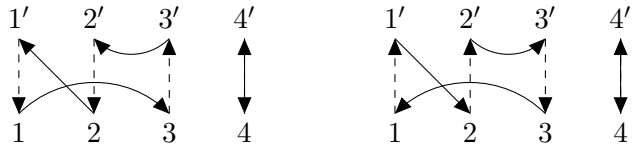
\begin{figure}[h!]
\begin{tikzpicture}
\usetikzlibrary{arrows}
\begin{scope}
	\draw[black,-triangle 45] (-2,-1) -- (-3,-0);
	\draw[black,-triangle 45]  (-1,0)  to [bend left = 45](-2,0);
	\draw[black,-triangle 45] (-3,-1) to [bend left = 45](-1,-1);
	\draw[black] (0,0) -- (0,-1);

	\draw[black, dashed,-triangle 45] (-3,0) --  (-3,-1);
	\draw[black, dashed,-triangle 45]  (-2,0)  -- (-2,-1);
	\draw[black,dashed,-triangle 45] (-1,-1) -- (-1,0);
	\draw[black,dashed,-triangle 45] (0,0) -- (0,-1);
	\draw[black,dashed,-triangle 45] (0,-1) -- (0,0);

   \node  [above] at ( -3,0) {$1^{\prime}$};
   \node  [above] at ( -2,0) {$2^{\prime}$};
   \node  [above] at ( -1,0) {$3^{\prime}$};
   \node  [above] at ( -0,0) {$4^{\prime}$};

   \node  [below] at ( -3,-1) {$1$};
   \node  [below] at ( -2,-1) {$2$};
   \node  [below] at ( -1,-1) {$3$};
   \node  [below] at ( -0,-1) {$4$};

\end{scope}

\begin{scope}[shift={(5,0)}]
	\draw[black,-triangle 45] (-3,-0) -- (-2,-1);
	\draw[black,-triangle 45]  (-2,0)  to [bend right = 45](-1,0);
	\draw[black,-triangle 45] (-1,-1) to [bend right = 45](-3,-1);
	\draw[black] (0,-1) -- (0,0);

	\draw[black, dashed,-triangle 45] (-3,-1) --  (-3,0);
	\draw[black, dashed,-triangle 45]  (-2,-1)  -- (-2,0);
	\draw[black,dashed,-triangle 45] (-1,0) -- (-1,-1);
	\draw[black,dashed,-triangle 45] (0,-1) -- (0,0);
	\draw[black,dashed,-triangle 45] (0,0) -- (0,-1);

   \node  [above] at ( -3,0) {$1^{\prime}$};
   \node  [above] at ( -2,0) {$2^{\prime}$};
   \node  [above] at ( -1,0) {$3^{\prime}$};
   \node  [above] at ( -0,0) {$4^{\prime}$};

   \node  [below] at ( -3,-1) {$1$};
   \node  [below] at ( -2,-1) {$2$};
   \node  [below] at ( -1,-1) {$3$};
   \node  [below] at ( -0,-1) {$4$};

\end{scope}
\end{tikzpicture}
\caption{\label{exorienteddiagram} \small Two orientations of the same Brauer diagram.}
\end{figure}

\par We denote by $\mathcal{O}\brauer$ the set of oriented uncoloured Brauer diagrams. We were not able to endow the real vector space with basis $\mathcal{O}\brauerd$ with an algebra structure that would turn the canonical projection from $\mathbb{R}\left[\mathcal{O}\brauerd\right]$ into a morphism. Also, we introduce Brauer algebras and related for two reasons : to represent quantities that are of interest for us in Section \ref{rectangularextractions} and to define operators that will ease computations. As we should see, these operators act on the Brauer component of a oriented Brauer diagram by multiplication, hence we need, somehow, to associate to an unoriented coloured Brauer diagram and to an oriented Brauer diagram a third Brauer diagram. There is no canonical way in doing that. We may just simply pick a section  $\brauerd \to \mathcal{O}\brauerd$ and use it to give orientation to a Brauer diagram if needed.
Such a section can be defined, for example, by choosing for the orientation of a diagram the sign function that is equal to \emph{one} on the minimum of each cycle.
We prefer, given an oriented Brauer $(b_{1},s)$ diagram and a Brauer diagram $b$, to define a orientation of $b\circ b_{1}$ in the following way. We choose for $s_{b_{1}\circ b_{2}}$ the sign function that is equal to $s_{b_{1}}$ on the minimum of each cycle of $b_{1}\circ b$.
We denote by $b \diamond b_{1}$ the oriented Brauer diagram obtained in this way.
\par The subset of permutations $\mathcal{S}_{k} \subset \Brauer$ is defined as the subset of Brauer diagrams that represent a permutation. In symbols, the Brauer diagram $b_{\sigma}$ associated with a permutation $\sigma$ is equal to $\{\{i,\sigma(i)^{\prime}\},~i\leq k\}$. It is easily seen that a Brauer diagram $b$ is a permutation diagram if and only if any orientation of $b$ is constant on the cycles, meaning that the vertical edges of $\Sigma_{b}$ belonging to the same cycle have the same orientation.
We denote by $\ncc{s}_{b}$ the orientation of a Brauer diagram that is equal to one on the minimum of each cycle of $b$.
\par For $i \in \{1,\ldots,k,1^{\prime},\ldots,k^{\prime}\}$, denote by $i^{k}$ the integer $i+k$ if $i \leq k$ and $i-k$ if $i >k$. The \textit{transposition} of a diagram $b = \left(\ncc{b}, c_{b}\right)$
is the diagram $b^{t} = \left(\ncc{{b^{t}}}, c_{b^{t}} \right)$ defined by the equation
\begin{equation*}
\ncc{b^{t}} = \cup_{l \in \ncc{b}} \{i^{\star}, i \in l\},~c_{b^{t}}(t) = c(i \mathrm{~mod~} k).
\end{equation*}
See Figure \ref{transpositiondiagram}.
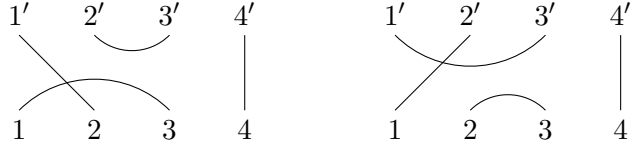
\begin{figure}[!h]
\resizebox{0.55\linewidth}{!}{
\begin{tikzpicture}
\begin{scope}
	\draw[black,] (-2,-1) -- (-3,-0);
	\draw[black,]  (-1,0)  to [bend left = 45](-2,0);
	\draw[black,] (-3,-1) to [bend left = 45](-1,-1);
	\draw[black] (0,0) -- (0,-1);

   \node  [above] at ( -3,0) {$1^{\prime}$};
   \node  [above] at ( -2,0) {$2^{\prime}$};
   \node  [above] at ( -1,0) {$3^{\prime}$};
   \node  [above] at ( -0,0) {$4^{\prime}$};

   \node  [below] at ( -3,-1) {$1$};
   \node  [below] at ( -2,-1) {$2$};
   \node  [below] at ( -1,-1) {$3$};
   \node  [below] at ( -0,-1) {$4$};
\end{scope}
\begin{scope}[shift={(5,0)} ]
	\draw[black,] (-2,0) -- (-3,-1);
	\draw[black,]  (-1,-1)  to [bend right = 45](-2,-1);
	\draw[black,] (-3,0) to [bend right = 45](-1,0);
	\draw[black] (0,-1) -- (0,0);

   \node  [above] at ( -3,0) {$1^{\prime}$};
   \node  [above] at ( -2,0) {$2^{\prime}$};
   \node  [above] at ( -1,0) {$3^{\prime}$};
   \node  [above] at ( -0,0) {$4^{\prime}$};

   \node  [below] at ( -3,-1) {$1$};
   \node  [below] at ( -2,-1) {$2$};
   \node  [below] at ( -1,-1) {$3$};
   \node  [below] at ( -0,-1) {$4$};
\end{scope}
\end{tikzpicture}
}
\caption{Transposition of a non-coloured Brauer diagram.}
\label{transpositiondiagram}
\end{figure}

We define now the twist operators $\twist_{i},~ i \leq k$ that act on (non-coloured) partitions of $\{1,\ldots,k,1^{\prime}$ $,\ldots,k^{\prime}\}$. The set of (non-coloured partitions) of $\{1,\ldots,k,1^{\prime},\ldots,k^{\prime}\}$ is denoted $\mathcal{P}_{k}$.
\begin{definition}[Twist operators]
\label{deftwist}
Let $i \leq k$. The twist operator $\twist_{i}:\mathcal{P}_{k} \to \mathcal{P}_{k}$ is the complex linear extension to $\mathbb{C}\left[\mathcal{P}_{k}\right]$ of the set function defined on $\mathcal{P}_{k}$ by the equation
\begin{equation*}
\twist_{i}(p) = \cup_{l \in p} \{i^{\star},j, j\in l\backslash {i} \}
\end{equation*}
\end{definition}
To put it in words, the twist operator $\twist_{i}$ exchanges the integer $i$ and $i^{\star}$ in their own blocks. The subset of Brauer diagrams is stable by the Twists operators $\twist_{i},~ i\leq k$.
\par Let us recall that the set $\mathcal{P}_{k}$ is a lattice. The minimum $p_{1} \wedge p_{2}$ of two partitions $p_{1}$ and $p_{2}$ is the partition which blocks are the intersection of the blocks of $p_{1}$ and $p_{2}$. The maximum $p_{1} \vee p_{1}$ of two partitions is the partition which blocks are union of blocks of $p_{1}$ and $p_{2}$ that have a non empty intersection.
\begin{lemma}
\label{twistmorphism}
The twist operator is a morphism of the lattice $\left(\mathcal{P}_{k}, \wedge, \vee \right)$. In addition, $\lmss{nc}(p) = \lmss{nc}(\twist(p)), p\in \mathcal{P}_{k}$. In particular, the number of cycles of a Brauer diagram is preserved by twisting.
\end{lemma}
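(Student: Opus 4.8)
The plan is to recognise the twist operator $\twist_i$ (for a fixed $i\le k$) as nothing more than the relabelling of the ground set $\{1,\ldots,k,1^{\prime},\ldots,k^{\prime}\}$ by the transposition $\tau_i$ that exchanges $i$ and $i^{\star}$ (recall $i^{\star}=i^{\prime}=i+k$) and fixes every other point, acting blockwise on partitions: $\twist_i(p)=\{\tau_i(l):l\in p\}$. First I would check this identification directly against Definition \ref{deftwist}: $\twist_i$ interchanges $i$ and $i^{\star}$ within their respective blocks (with the understanding that if $i$ and $i^{\star}$ already lie in a common block, that block is left unchanged), which is exactly the action of $\tau_i$ on blocks. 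As an immediate by-product, $\twist_i$ restricts to a genuine set map $\mathcal{P}_k\to\mathcal{P}_k$, and since $\tau_i$ is a bijection it carries blocks of size $2$ to blocks of size $2$, so the subset of Brauer diagrams is stable (as already asserted in the text).

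Next I would prove the general statement of which the lemma is a special case: for any bijection $\phi$ of a finite set $X$, the induced map $\Phi(p)=\{\phi(l):l\in p\}$ on the partition lattice $\mathcal{P}(X)$ is a lattice automorphism that preserves the number of blocks. The block count is clear, since $l\mapsto\phi(l)$ is a bijection from the blocks of $p$ onto the blocks of $\Phi(p)$, and $\Phi$ is invertible with inverse induced by $\phi^{-1}$. For compatibility with the meet I would use that the blocks of $p_1\wedge p_2$ are the nonempty intersections $V\cap W$ with $V\in p_1$, $W\in p_2$, together with $\phi(V\cap W)=\phi(V)\cap\phi(W)$ (valid because $\phi$ is injective), whence $\Phi(p_1\wedge p_2)=\Phi(p_1)\wedge\Phi(p_2)$. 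For compatibility with the join I would describe $p_1\vee p_2$ as the transitive closure of the relation ``$x$ and $y$ lie in a common block of $p_1$ or of $p_2$'' (the union formula for $\vee$ recalled before the lemma being one step of this closure); a bijection $\phi$ transports this relation, and hence its transitive closure, to the analogous relation built from $\Phi(p_1)$ and $\Phi(p_2)$, so $\Phi(p_1\vee p_2)=\Phi(p_1)\vee\Phi(p_2)$. Specialising to $\phi=\tau_i$ and $\Phi=\twist_i$ settles the first two assertions, in particular $\lmss{nc}(p)=\lmss{nc}(\twist_i(p))$.

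For the last sentence, about the number of cycles of a Brauer diagram, I would recall that a cycle of $b$ is by definition a block of $b\vee\mathbf{1}_k$, where $\mathbf{1}_k=\{\{j,j^{\prime}\}:j\le k\}$. The key observation is that $\twist_i$ fixes $\mathbf{1}_k$: the block $\{i,i^{\star}\}=\{i,i^{\prime}\}$ of $\mathbf{1}_k$ is permuted within itself by $\tau_i$, and every other block of $\mathbf{1}_k$ is fixed pointwise. Hence by the morphism property just established,
\begin{equation*}
\twist_i(b)\vee\mathbf{1}_k=\twist_i(b)\vee\twist_i(\mathbf{1}_k)=\twist_i(b\vee\mathbf{1}_k),
\end{equation*}
and block-count invariance then gives $\lmss{nc}(\twist_i(b)\vee\mathbf{1}_k)=\lmss{nc}(b\vee\mathbf{1}_k)$, i.e. twisting preserves the number of cycles.

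The argument is elementary throughout, so I do not expect a real obstacle; the one place deserving attention is the join step, where one must reason through the transitive-closure characterisation of $p_1\vee p_2$ rather than naively distribute $\phi$ over the non-transitive ``union of overlapping blocks'' description. It is also worth double-checking at the outset that $\twist_i$ preserves the property of being a fixed-point-free pairing, which is immediate once the identification with $\tau_i$-relabelling is in hand.
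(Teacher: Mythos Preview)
Your argument is correct and is genuinely different from the paper's. The paper proceeds by a bare-hands case analysis for the join only: given a block $S$ of $\twist_i(p_1)\vee\twist_i(p_2)$ it defines an explicit candidate block $\tilde S$ (equal to $S$ if $i,i'$ are both in or both out of $S$, and equal to $S$ with $i$ and $i'$ swapped otherwise), and then checks in each subcase that $\tilde S$ satisfies the maximality property characterising blocks of $p_1\vee p_2$. The meet, block-count, and cycle statements are not written out. By contrast, you abstract the situation: $\twist_i$ is the relabelling of the ground set by the transposition $\tau_i=(i\ i^\star)$, and any bijection of a finite set induces a lattice automorphism of its partition lattice preserving the number of blocks. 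This immediately gives compatibility with both $\wedge$ and $\vee$ and the invariance of $\lmss{nc}$, and your observation $\twist_i(\mathbf 1_k)=\mathbf 1_k$ yields the cycle statement cleanly via $\twist_i(b)\vee\mathbf 1_k=\twist_i(b\vee\mathbf 1_k)$. Your route is shorter, handles all three assertions uniformly, and explains \emph{why} the lemma holds; the paper's route is more concrete but covers only the $\vee$-compatibility explicitly and obscures the underlying reason. Your caution about using the transitive-closure description of $\vee$ rather than the one-step union formula is well placed and is exactly what the paper's case split is implicitly doing.
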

\begin{proof}
A simple drawing of the diagrams does the proof. Let's nevertheless do the proof. Let $p_{1},p_{2}$ two partitions. Let $S$ be a block of $\ncc{\lmss{Tw}}_{i}(p) \vee \ncc{\lmss{Tw}}_{i}(q)$. The set $S$ enjoys the maximality property:
\begin{equation}
\label{maximal1}
U \in \ncc{\lmss{Tw}}_{i}(p_{1}) \cup \ncc{\lmss{Tw}}_{i}(p_{2}),~ U \cap S \neq \emptyset \Rightarrow U \subset S.
\end{equation}
\par Define the set $\tilde{S}$ by
\begin{itemize}
\item $\tilde{S} = S$ if $i, i^{\prime} \in S$ or $i,i^{\prime} \not\in S$,
\item $\tilde{S} = S \backslash \{i\} \cup \{i^{\prime}\}$ if $i \in S, i^{\prime} \not\in S$,
\item $\tilde{S} = S \backslash \{i^{\prime}\} \cup \{i\}$ if $i^{\prime} \in S, i \not\in S $.
\end{itemize}
In order to prove that $S$ is a block of the partition $\ncc{\lmss{Tw}}_{i}(p_{1} \vee p_{2})$, we prove that $\tilde{S}$ enjoys the maximality property:
\begin{equation}
\label{maximal}
U \in p_{1} \cup p_{2},~ U \cap \tilde{S} \neq \emptyset \Rightarrow U \subset \tilde{S}.
\end{equation}
\par Assume that $S$ does not contain nor $i$ nor $i^{\prime}$ then $S = \tilde{S}$, $S$ is an union of blocks of $p_{1}$ and $p_{2}$, and further $S \in \ncc{\lmss{Tw}}_{i}(p_{1}\vee p_{2})$.
\par Assume that $i \in S,~i^{\prime} \not\in S$. Then $\tilde{S} = S \backslash \{i\} \cup \{i^{\prime} \}$. Let $U \in p_{1} \cup p_{2}$, $U\cap S \neq \emptyset$. We distinguish four cases:
\begin{itemize}
\item $i\in U, i^{\prime} \not\in U$, then $U \backslash \{i\} \cup \{i^{\prime}\} \cap S \neq \emptyset$, $U \backslash \{i\} \cup \{i^{\prime}\} \in \ncc{\lmss{Tw}}_{i}(p_{1}) \cup \ncc{\lmss{Tw}}_{i}(p_{2})$ implies  $U \backslash \{i\} \cup \{i^{\prime}\} = S$ and then $\tilde{S} = U$.
\item $i^{\prime} \in U,~ i \in U $ then $U\in \ncc{\lmss{Tw}}_{i}(p_{1}) \cup \ncc{\lmss{Tw}}_{i}(p_{2})$ thus $\tilde{S} = U$.
\item $i\not\in U, i^{\prime}\not\in U$ then $U\in \ncc{\lmss{Tw}}_{i}(p_{1}) \cup \ncc{\lmss{Tw}}_{i}(p_{2})$ thus $\tilde{S} = U$.
\item $i \in U,~i^{\prime} \in U$, then $U \backslash \{i^{\prime}\} \cup \{i\} \cap S \neq \emptyset$, $U \backslash \{i^{\prime}\} \cup \{i\} \in \ncc{\lmss{Tw}}_{i}(p_{1}) \cup \ncc{\lmss{Tw}}_{i}(p_{2})$ implies  $U \backslash \{i^{\prime}\} \cup \{i\} = S$ and then $\tilde{S} = U$.
\end{itemize}
We conclude that $\tilde{S}$ has the maximal property \eqref{maximal}, moreover $\tilde{S}$ is an union of blocks of $p_{1}$ and $p_{2}$, it follows that $S\in p_{1} \vee p_{2}$.
\par Assume now that $i \in S$, $i^{\prime} \in S$. Let $U \in p_{1} \cup p_{2}$ such that $U \cap S \neq \emptyset$. We prove that $S$ has the maximality property $\eqref{maximal}$. If $U \in \ncc{\lmss{Tw}}(p_{1}) \cup \ncc{\lmss{Tw}}_{i}(p_{2})$, then $U = S$ so let us assume that $ U \not\in \ncc{\lmss{Tw}}(p_{1}) \cup \ncc{\lmss{Tw}}_{i}(p_{2})$. Either $i$, either $i^{\prime}$ belongs to $U$ but not both, we can make the hypothesis that $i \in U$ and $i^{\prime} \not\in U$. We have $U \backslash \{i\} \cup \{i^{\prime}\} \cap S \neq \emptyset$ and $U\backslash \{i\} \cup \{i^{\prime}\} \in \ncc{\lmss{Tw}}_{i}(p_{1}) \cup \ncc{\lmss{Tw}}_{i}(p_{2})$. It follows that $U\backslash \{i\} \cup \{i^{\prime}\} = S$ and $U = S$. Since $S$ is an union of sets in $p_{1}$ and $p_{2}$, one has $S \in p_{1}\vee p_{2}$.
\end{proof}
\begin{figure}[!h]
\resizebox{0.85\linewidth}{!}{
\begin{tikzpicture}
\begin{scope}
	\draw[black,] (-2,-1) -- (-3,-0);
	\draw[black,]  (-1,0)  to [bend left = 45](-2,0);
	\draw[black,] (-3,-1) to [bend left = 45](-1,-1);
	\draw[black] (0,0) -- (0,-1);

   \node  [above] at ( -3,0) {$1^{\prime}$};
   \node  [above] at ( -2,0) {$2^{\prime}$};
   \node  [above] at ( -1,0) {$3^{\prime}$};
   \node  [above] at ( -0,0) {$4^{\prime}$};

   \node  [below] at ( -3,-1) {$1$};
   \node  [below] at ( -2,-1) {$2$};
   \node  [below] at ( -1,-1) {$3$};
   \node  [below] at ( -0,-1) {$4$};
\end{scope}
\begin{scope}[shift={(5,0)} ]
		\draw[black,] (-2,-1) to [bend right = 45] (-3,-1);
	\draw[black,]  (-1,0)  to [bend left = 45](-2,0);
	\draw[black,] (-3,0) to [](-1,-1);
	\draw[black] (0,0) -- (0,-1);

   \node  [above] at ( -3,0) {$1^{\prime}$};
   \node  [above] at ( -2,0) {$2^{\prime}$};
   \node  [above] at ( -1,0) {$3^{\prime}$};
   \node  [above] at ( -0,0) {$4^{\prime}$};

   \node  [below] at ( -3,-1) {$1$};
   \node  [below] at ( -2,-1) {$2$};
   \node  [below] at ( -1,-1) {$3$};
   \node  [below] at ( -0,-1) {$4$};
\end{scope}

\begin{scope}[shift={(10,0)}]
	\draw[black,] (-2,-1) -- (-3,-0);
	\draw[black,]  (-1,-1)  to [](-2,0);
	\draw[black,] (-3,-1) to [](-1,0);
	\draw[black] (0,0) -- (0,-1);

   \node  [above] at ( -3,0) {$1^{\prime}$};
   \node  [above] at ( -2,0) {$2^{\prime}$};
   \node  [above] at ( -1,0) {$3^{\prime}$};
   \node  [above] at ( -0,0) {$4^{\prime}$};

   \node  [below] at ( -3,-1) {$1$};
   \node  [below] at ( -2,-1) {$2$};
   \node  [below] at ( -1,-1) {$3$};
   \node  [below] at ( -0,-1) {$4$};
\end{scope}
\end{tikzpicture}
}
\caption{The second diagram is the twist at $1$ of the first one. The third diagram is the twist at $2$ of the first one.}
\end{figure}
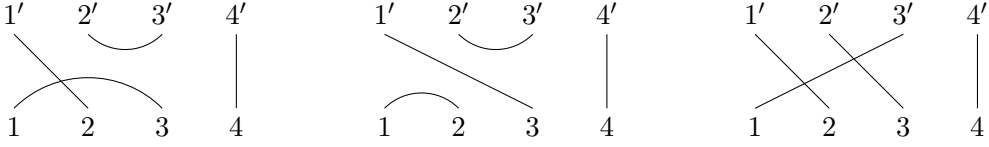

\begin{lemma}
\label{twistorientation}
Let $i\leq k$ an integer. Let $(b,s)$ be an oriented Brauer diagram. For any orientation $u$ of the diagram $\twist_{i}(\ncc{b})$, we have
\begin{equation*}
u(i)u(j) = -u(i)u(j),~ i\neq j,~ i \sim_{\ncc{b}\vee 1},~\textrm{and}~u(i)u(j) = -u(i)u(j),i\neq j,~ i \not\sim_{\ncc{b}\vee 1}j
\end{equation*}
\end{lemma}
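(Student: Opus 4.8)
The plan is to reduce everything to the loop structure of the graph $\Gamma_b$ and to track what a single twist does to it. Recall that $\Gamma_b$ has vertex set $\{1,\dots,k,1',\dots,k'\}$, with edges the links of $\ncc b$ together with the vertical edges $\{x,x'\}$; it is a disjoint union of loops (each vertex has degree two), and an orientation of it is a choice of cyclic traversal direction on each loop, with $s(v)=1$ iff the $\ncc b$-edge at $v$ is incoming at $v$ in that direction.

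\emph{Step 1: the loops are unchanged by the twist.} Every block of $\ncc b\vee\mathbf{1}$ is stable under $x\mapsto x'$, since the vertical edge $\{x,x'\}$ already sits in $\Gamma_b$; hence, as $\twist_i$ only exchanges $i$ and $i'$ between (or within) their blocks, it fixes every such block, so $\twist_i(\ncc b\vee\mathbf{1})=\ncc b\vee\mathbf{1}$, and in particular $\twist_i(\mathbf{1})=\mathbf{1}$. By the lattice-morphism property (Lemma \ref{twistmorphism}), $\twist_i(\ncc b)\vee\mathbf{1}=\twist_i(\ncc b\vee\mathbf{1})=\ncc b\vee\mathbf{1}$. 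So $\Gamma_{\twist_i(\ncc b)}$ and $\Gamma_b$ have the same connected components as vertex sets: "$i\sim_{\ncc b\vee\mathbf{1}}j$" is the same as "$i$ and $j$ lie on a common loop of $\twist_i(\ncc b)$", and $i,i'$ always lie on a common loop.

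\emph{Step 2: the local picture, and transport of orientations.} Set $a=\ncc b(i)$, $c=\ncc b(i')$, and first assume $a\neq i'$ (the case $\{i,i'\}\in\ncc b$ makes $\twist_i$ act trivially on $\ncc b$, and the short loops with $a=c'$, are treated directly). Read cyclically, the loop of $\Gamma_b$ through $i$ is $(\dots,a,i,i',c,\dots)$ — $\ncc b$-edge $\{a,i\}$, vertical edge $\{i,i'\}$, $\ncc b$-edge $\{i',c\}$ — while after twisting, the edges at $i,i'$ become $\{a,i'\},\{i,i'\},\{i,c\}$, so the same vertices form the loop $(\dots,a,i',i,c,\dots)$: the cyclic word is untouched except that $i$ and $i'$ trade places. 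This gives a canonical bijection between orientations of the loops of $\Gamma_b$ and of $\Gamma_{\twist_i(\ncc b)}$ (keep the traversal direction, swapping the roles of $i$ and $i'$). For a matched pair $(s,u)$: at every $v\in\{1,\dots,k\}$ with $v\neq i$ — including $v=a$ or $v=c$, where only the far endpoint of the $\ncc b$-edge moves — the $\ncc b$-edge at $v$, its vertical edge, its place in the loop, and the direction the loop passes through it are all unchanged, so $u(v)=s(v)$; at $v=i$ the $\ncc b$-edge switches from $\{a,i\}$ to $\{i,c\}$, hence is reversed relative to the (preserved) traversal direction, so $u(i)=-s(i)$.

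\emph{Step 3: conclusion.} Any orientation $u$ of $\twist_i(\ncc b)$ comes from some orientation $s$ of $\ncc b$ by this bijection followed by arbitrary per-loop flips; since flipping a whole loop preserves all products $u(v)u(w)$ over vertices of that loop, Step 2 gives $u(i)u(j)=(-s(i))s(j)=-s(i)s(j)$ whenever $i\sim_{\ncc b\vee\mathbf{1}}j$, and, flipping the loop of $j$ in $s$ if necessary, $u(i)u(j)=s(i)s(j)$ whenever $i\not\sim_{\ncc b\vee\mathbf{1}}j$ — the asserted identities. I expect the only real work to be in Step 2: correctly recording the orientation at a vertex when its two incident edges swap which one is the $\ncc b$-edge, and checking the degenerate configurations ($\{i,i'\}\in\ncc b$, $a=c'$, loops of length two) by hand; Steps 1 and 2 reduce the whole statement to that finite, routine verification.
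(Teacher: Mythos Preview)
Your proof is correct and follows essentially the same approach as the paper's: both construct the specific orientation $u$ of $\twist_i(\ncc b)$ given by $u(j)=s(j)$ for $j\neq i$ and $u(i)=-s(i)$, then read off the products. The paper simply asserts this $u$ is an orientation by appealing to ``a simple drawing'' and the observation that twisting at $i$ reverses the vertical edge $\{i,i'\}$; you flesh this out by first invoking the lattice-morphism property of $\twist_i$ (Lemma~\ref{twistmorphism}) to show the cycle partition $\ncc b\vee\mathbf{1}$ is unchanged, and then doing the local cyclic-word analysis $(\dots,a,i,i',c,\dots)\mapsto(\dots,a,i',i,c,\dots)$ explicitly. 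Your argument is more detailed but not genuinely different in strategy.
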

\begin{proof}
We use the notations introduced in Lemma \ref{twistorientation}. For any $k,l \leq k$ integers the products $u(k)u(l)$ does not depend on the orientation we pick to orient the twist at $i$ of the diagram $b$.
Define an orientation $u$ of $\twist(\ncc{b})$ by setting
\begin{equation*}
u(j) = s(j) ~\textrm{if}~ j \neq i,~\textrm{and}~ u(i) = -s(i).
\end{equation*}
To prove that $u$ does indeed define an orientation of $\ncc{b}$, a simple drawing is, once again, sufficient. In fact, twisting at $i$ the diagram $\ncc{b}$ reverses the orientation of the vertical edge that connects the vertices $i$, and $i+k$.

\end{proof}

In the previous proof, we defined an orientation of the twist of a non-coloured Brauer diagram given an orientation of the diagram. This suggests that the twist operator can be lifted to the set of non-coloured oriented Brauer diagrams.

Also, extending the twist operator to the set of coloured Brauer diagrams is straightforward; if a coloured Brauer diagram is twisted at a site $i$, the colours the integers $i$ and $i^{\prime}$ are coloured with are exchanged.
In the following lemma we denote by $\langle \ncc{\lmss{Tw}}_{i},~ i \leq k \rangle$ the algebra generated by the twist operators $\{\twist_{i},~ i\leq k\}$. For an ordered set $\{i_{1} \leq i_{2} \leq \ldots \leq i_{q}\}$, we use sometimes the notation $\twist_{S} = \twist_{s_{1}} \cdots \twist_{s_{q}}$.
\begin{lemma}
\label{twistorbit}
Let $b\in\brauerz$ be an irreducible Brauer diagram ($\lmss{nc}(b))=1$). There are exactly two permutation diagrams in the orbit $\{\twist(b),~\twist \in \langle \twist_{i},~i\leq k\rangle \}$. In addition, two permutations in the orbit $\{\twist(b),~\twist \in \langle \twist_{i},~i\leq k\rangle \}$ are related by transposition.
\end{lemma}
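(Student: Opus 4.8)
The plan is to read both assertions off the graph $\Gamma_{b}$ (vertices $\{1,\dots,k,1^{\prime},\dots,k^{\prime}\}$, edges the links of $\ncc{b}$ together with the vertical edges $\{x,x^{\prime}\}$) and its orientations. Since $b$ is irreducible, $\Gamma_{b}$ is $2$-regular with a single connected component, hence one cycle of length $2k$, which admits exactly two orientations and they are opposite. Walking along an oriented copy of that cycle the edges alternate between links of $\ncc{b}$ and vertical edges, and a vertical edge joins $j$ to $j^{\prime}$; hence the sequence of indices ($\pi(i)=\pi(i^{\prime})=i$) seen around the cycle is, up to rotation, $a_{1}a_{1}a_{2}a_{2}\cdots a_{k}a_{k}$ for some permutation $(a_{1},\dots,a_{k})$ of $(1,\dots,k)$. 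Finally recall the observation stated just before the lemma: an irreducible Brauer diagram is a permutation diagram exactly when its orientation is constant.

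First I would pin down the orbit. By Lemma \ref{twistmorphism} the operators $\twist_{i}$ commute, square to the identity, and preserve $\lmss{nc}(\,\cdot\vee\mathbf{1})$, so the orbit of $b$ is $\{\twist_{S}(b):S\subseteq\{1,\dots,k\}\}$ and consists of irreducible diagrams. By the proof of Lemma \ref{twistorientation}, $\twist_{i}$ reverses exactly the vertical edge $\{i,i^{\prime}\}$ of $\Gamma_{b}$; consequently, writing $s\colon\{1,\dots,k\}\to\{\pm 1\}$ for the sign function of a fixed orientation of $b$, the diagram $\twist_{S}(b)$ carries the sign function obtained from $s$ by flipping the coordinates in $S$ (and nothing else, up to the overall sign coming from the two orientations of the cycle; this overall sign does not affect the discussion below). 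Hence $\twist_{S}(b)$ is a permutation diagram iff $s$ flipped along $S$ is constant, i.e. iff $S=\{j:s(j)=-1\}$ or $S=\{j:s(j)=+1\}$: exactly two subsets, which moreover are complementary. This yields the count (when $k\le 2$ the two diagrams produced below coincide and one gets a single one; the genuine case is $k\ge 3$).

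For the second assertion, let $\twist_{S}(b)=b_{\sigma}$ be one of the two permutation diagrams; as an irreducible permutation diagram, $b_{\sigma}=\{\{i,\sigma(i)^{\prime}\}:i\le k\}$ with $\sigma$ a single $k$-cycle. The complement of $S$ being $\{1,\dots,k\}\setminus S$ and the $\twist_{i}$ being commuting involutions, the other permutation diagram equals $\twist_{\{1,\dots,k\}\setminus S}(b)=\twist_{\{1,\dots,k\}}\big(\twist_{S}(b)\big)=\twist_{\{1,\dots,k\}}(b_{\sigma})$, and a block-by-block computation gives $\twist_{\{1,\dots,k\}}(b_{\sigma})=\{\{i^{\prime},\sigma(i)\}:i\le k\}=\{\{j,\sigma^{-1}(j)^{\prime}\}:j\le k\}=b_{\sigma^{-1}}=(b_{\sigma})^{t}$. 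Thus the two permutation diagrams in the orbit are transposes of one another (equivalently, their underlying permutations are mutually inverse), which is the claimed relation.

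The step I expect to be the main obstacle is the orientation bookkeeping of the second paragraph: one has to check that the elementary move of Lemma \ref{twistorientation} — flip the $i$-th sign, leave the others alone — composes correctly along an arbitrary word $\twist_{S}$ and is compatible with the two-fold ambiguity in orienting a single cycle, so that whether $\twist_{S}(b)$ is a permutation diagram depends on $S$ only through the displayed condition. The rest — the shape of the index word and the block computation identifying the two extreme diagrams with $b_{\sigma}$ and $b_{\sigma^{-1}}$ — is routine.
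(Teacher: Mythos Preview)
Your proof is correct and follows essentially the same approach as the paper: both arguments exploit that an irreducible diagram has exactly two orientations, that $\twist_{i}$ flips the $i$-th sign, and that a Brauer diagram is a permutation diagram precisely when its sign function is constant, forcing $S$ to be one of two complementary subsets. Your treatment is in fact a bit more complete: the paper stops after showing $S=\emptyset$ or $S=\{1,\dots,k\}$, whereas you explicitly compute $\twist_{\{1,\dots,k\}}(b_{\sigma})=b_{\sigma^{-1}}=(b_{\sigma})^{t}$ to establish the transposition claim, and you also flag the degenerate case $k\le 2$ where the two permutations coincide, which the paper glosses over.
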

\begin{proof}
There are exactly two possible orientations for an irreducible Brauer diagram. We pick one and denote it by $s$ (the other is $-s$). Define $\twist_{s} = \displaystyle\prod_{i : s(i) = -1} \twist_{i}$. As in the proof of Lemma \ref{twistorientation}, an orientation $u$ of $\twist_{s}(\ncc{b})$ is defined by setting
\begin{equation*}
u(i) = 1 ~\textrm{if}~ s(i) = -1~\textrm{and}~u(i) = 1 ~\textrm{if}~ s(i) = 1.
\end{equation*}
Hence, $u = 1$ and the diagram $\twist_{s}(\ncc{b})$ is a permutation diagram. Let $\sigma$ an other permutation diagram in the orbit of $\ncc{b}$. There exists a set $S \subset \{1,\ldots,k\}$ such that $\twist_{s}(\ncc{b}) = \left(\prod_{i \in S}\twist_{i}\right)(\sigma)$.
Once again, the diagram $\big(\displaystyle\prod_{i \in S}\twist_{i}\big)(\sigma)$ is oriented by mean of the sign function $v$:
\begin{equation*}
v(i) = -1 ~\textrm{if}~ i \in S,~ v(i) = 1 ~\textrm{if}~ i \not\in S.
\end{equation*}
Unless $S = \{1,\ldots,k\}$, $v$ is not constant. This achieves the proof.
\end{proof}
We defined the twist operators to prove the following Proposition, which is needed in Section \ref{squareextractions} to prove that the free unitary Brownian of dimension $n$ motion is the limit of the process extracting square blocks of a Brownian unitary matrix..
We recall the following formula which is used extensively in Section \ref{squareextractions} and is needed for the proof of the forthcoming proposition:
\begin{equation}
\label{fondform}
\lmss{nc}(\ncc{b}_{1}\circ\ncc{b}_{2} \vee 1) + \mathcal{K}(\ncc{b}_{1},\ncc{b}_{2}) = \lmss{nc}(\ncc{b}_{1} \vee \ncc{b}_{2}).
\end{equation}
\begin{proposition}
\label{projtrmult}
Let $\ncc{b} \in \brauerz$ an irreducible Brauer diagram and $\ncc{e}$ a projector. Then $\lmss{nc}(\ncc{e}\vee \ncc{b})-\lmss{nc}(\ncc{b} \vee 1) \in \{0,1\}$ and $\lmss{nc}(\ncc{e}\vee \ncc{b}) = \lmss{nc}(\ncc{b}\vee 1) + 1$ if and only if $s(i)s(j) = -1$ for any orientation $s$ of $\ncc{b}$.
\end{proposition}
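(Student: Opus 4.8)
\emph{Proof idea.} The plan is to convert everything into a statement about connected components of a graph and then read off the answer from the cyclic structure of $\ncc b$.

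First I would unwind the join. Write $\ncc e$ as the projector supported on the two sites $i\neq j$, so that its links are $\{i,j\}$, $\{i',j'\}$ and $\{m,m'\}$ for $m\notin\{i,j\}$. The blocks of $\ncc e\vee\ncc b$ are then exactly the connected components of the graph $\Gamma'$ on $\{1,\dots,k,1',\dots,k'\}$ whose edge set is the union of the links of $\ncc b$ and of $\ncc e$, so that $\lmss{nc}(\ncc e\vee\ncc b)$ counts these components. Since $\ncc b$ is irreducible, $\lmss{nc}(\ncc b\vee 1)=1$, and it suffices to show that this number is $1$ or $2$ and to identify when it equals $2$.

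Next I would compare $\Gamma'$ with the graph $\Gamma_{\ncc b}$ of Section~\ref{schur_weyl} (links of $\ncc b$ together with all verticals $\{m,m'\}$). By irreducibility $\Gamma_{\ncc b}$ is a single cycle of length $2k$, and along it the edges alternate between links of $\ncc b$ and verticals, since every vertex lies in exactly one edge of each kind. Passing from $\Gamma_{\ncc b}$ to $\Gamma'$ amounts to deleting the two verticals $\{i,i'\}$ and $\{j,j'\}$ and inserting $\{i,j\}$ and $\{i',j'\}$. Deleting two edges of a cycle leaves two vertex-disjoint paths whose four endpoints are exactly $i,i',j,j'$, and re-gluing these paths by the edges $\{i,j\}$ and $\{i',j'\}$ yields either one cycle or two, according to the cyclic order in which $i,i',j,j'$ occur along $\Gamma_{\ncc b}$. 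Because $\{i,i'\}$ and $\{j,j'\}$ are themselves edges of $\Gamma_{\ncc b}$, that cyclic order is, up to rotation and reflection, either $(i,i',j,j')$ or $(i,i',j',j)$: the first reglues into one cycle, so $\lmss{nc}(\ncc e\vee\ncc b)=1$, and the second into two cycles, so $\lmss{nc}(\ncc e\vee\ncc b)=2$. In particular $\lmss{nc}(\ncc e\vee\ncc b)-\lmss{nc}(\ncc b\vee 1)\in\{0,1\}$. (One can also route this through \eqref{fondform} with $\ncc b_1=\ncc e$, $\ncc b_2=\ncc b$, but the direct count seems cleanest.)

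Finally I would match the two cyclic arrangements with the sign function. Fixing an orientation of $\Gamma_{\ncc b}$, the vertical $\{i,i'\}$ is either immediately preceded or immediately followed, along the cycle, by the $\ncc b$-link at $i$, and $s(i)$ records precisely this alternative; likewise for $s(j)$. A short inspection of the local picture shows that the order $(i,i',j,j')$ forces the $\ncc b$-links at $i$ and $j$ onto matching sides, i.e. $s(i)s(j)=+1$, whereas $(i,i',j',j)$ forces $s(i)s(j)=-1$; reversing the orientation flips both $s(i)$ and $s(j)$, so $s(i)s(j)$, and hence the whole dichotomy, does not depend on the chosen orientation, in agreement with the phrase ``for any orientation $s$''. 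Combining with the previous step, $\lmss{nc}(\ncc e\vee\ncc b)=\lmss{nc}(\ncc b\vee 1)+1$ holds exactly when $s(i)s(j)=-1$. The delicate point is the bookkeeping in this last step: pairing each admissible cyclic word with the correct value of $s(i)s(j)$ is where a sign is easily lost, and I would pin it down by working out two small instances explicitly ($k=2$ with $\ncc b$ the transposition, respectively $\ncc b$ the cup-cap) and then invoking that the relevant combinatorial type is determined by the local configuration at $i$ and $j$.
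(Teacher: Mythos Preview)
Your argument is correct and complete. The direct graph-theoretic analysis—deleting the two verticals $\{i,i'\}$, $\{j,j'\}$ from the single cycle $\Gamma_{\ncc b}$ and regluing with $\{i,j\}$, $\{i',j'\}$—cleanly reduces the question to the cyclic order of the four points, and your identification of the two orders with the two values of $s(i)s(j)$ is right (and not as delicate as you fear: once you fix the traversal so that the vertical at $i$ is crossed $i\to i'$, the alternation of $\ncc b$-links and verticals forces $s(i)=1$, and then whether the vertical at $j$ is crossed $j\to j'$ or $j'\to j$ is exactly the dichotomy).

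The paper takes a different route. Rather than working directly on the cycle $\Gamma_{\ncc b}$, it invokes the twist machinery: by Lemma~\ref{twistorbit} there is a product $T$ of twist operators sending $\ncc b$ to a permutation diagram, and by Lemma~\ref{twistmorphism} the twist preserves $\lmss{nc}$ and commutes with $\vee$, so $\lmss{nc}(\ncc e_{ij}\vee\ncc b)=\lmss{nc}(T(\ncc e_{ij})\vee T(\ncc b))$. The point is then that $T(\ncc e_{ij})$ equals $\ncc e_{ij}$ or $\ncc\tau_{ij}$ according to whether $s(i)s(j)=1$ or $-1$, and for a single-cycle \emph{permutation} one checks directly that a projector keeps the cycle count while a transposition splits it. Your approach is more elementary and self-contained, avoiding the twist lemmas entirely; the paper's approach pays off because the twist formalism is reused immediately afterwards (Proposition~\ref{projtrmultblocks} and its corollaries) to compute the actual blocks produced, not just their number.
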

\begin{proof}
First, pick $T \in \langle \ncc{\lmss{Tw}}_{i},~i\leq k \rangle$ such that $T(b)$ is a permutation diagram (see Lemma \ref{twistorbit}). Let $i,j \leq k$ be two integers. Then $\lmss{nc}(\ncc{e}_{ij} \vee b) = \lmss{nc}(T(\ncc{e}_{ij}\vee b)) = \lmss{nc}(T(\ncc{e}_{ij}) \vee T(b))$.
The diagram $T(\ncc{e}_{ij})$ is equal either to the projector $\ncc{e}_{ij}$ if $s(i)s(j) = 1$ either to the transposition $\tau_{ij}$ if $s(i)s(j)=-1$. It is easily checked that no loops nor cycles are created if multiplying an irreducible permutation diagram by a projector, thus from equation \ref{fondform} $\lmss{nc}(\ncc{b}\vee\ncc{e}_{ij}) =  \lmss{nc}(\ncc{b})$ if $s(i)s(j) = 1$. If $s(i)s(j)=-1$, we multiply by a transposition an irreducible permutation (a permutation with only one cycle). A direct calculation shows that $\lmss{nc}\left(\tau_{ij}T(\ncc{b})\right)= \lmss{nc}\left(T(\ncc{b})\right)+1$, hence $\lmss{nc}(\ncc{b} \vee \ncc{e}_{ij}) - \lmss{nc}(\ncc{b}\vee 1) = 1$.
\end{proof}
\begin{proposition}
\label{projtrmultblocks}
Let $k\geq 1$. Denote by $c$ the cycle $(1,\ldots,k)$. Let $S \subset \{1,\ldots,k\}$ a set of integers. Define the Brauer diagram $\ncc{b}$ as $\ncc{b} = \displaystyle\prod_{s \in S}\left(\ncc{\lmss{Tw}}_{s}\right)(c)$. Let $i\neq j$ be two integers, then
\begin{itemize}
\item if $i\in S,j\in S$,~ $(\ncc{\tau}_{ij}\circ \ncc{b}\vee 1 = \{\{1,\ldots,i-1,j,\ldots,k\},\{i,i+1,\ldots,j-1\}\}$,
\item if $i\not\in S,j\not \in S$,~ $(\ncc{\tau}_{ij}\circ \ncc{b})\vee 1 = \{\{1,\ldots,i,j+1,\ldots,k\},\{i+1,i+1,\ldots,j\}\}.$
\end{itemize}
In addition, for any orientation $s$ of $\ncc{b}$ and $u$ of $\tau_{ij}\circ b$, we have
$u(x)u(y) = s(x)s(y)$, $1 \leq x,y \leq k$,~ $x \sim_{\tau_{ij}\circ b} y$.

\begin{itemize}
\item if $i\in S,j \not\in S$,~ $(\ncc{e}_{ij}\circ \ncc{b})\vee 1 = \{\{1,\ldots,i-1,j+1,\ldots,k\},\{i,i+2,\ldots,j\}\}$,
\item if $i\not\in S,j\in S$,~ $(\ncc{e}_{ij}\circ \ncc{b})\vee 1 = \{\{1,\ldots,i-1,j,\ldots,k\},\{i+1,\ldots,j-1\}\}.$
\end{itemize}
In addition, for any orientation $s$ of $\ncc{b}$ and $u$ of $e_{ij}\circ b$, we have
$u(x)u(y) = s(x)s(y)$, $1 \leq x,y \leq k$,~ $x \sim_{e_{ij}\circ b} y$.
\end{proposition}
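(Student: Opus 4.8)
The plan is to treat the statement as an explicit diagram computation and then split into cases according to whether $i$ and $j$ lie in $S$. Writing $c$ for the Brauer diagram of the cycle $(1,\ldots,k)$, i.e.\ the diagram with blocks $\{l,(l+1)'\}$ with indices read modulo $k$, and recalling that twisting at a site $s$ only exchanges the labels $s$ and $s'$ inside the block containing them, one reads off $\ncc{b}=\prod_{s\in S}\ncc{\lmss{Tw}}_{s}(c)$ as the Brauer diagram whose $l$-th block joins $l$ (when $l\notin S$) or $l'$ (when $l\in S$) to $(l+1)'$ (when $l+1\notin S$) or $l+1$ (when $l+1\in S$). By Lemma \ref{twistmorphism}, $\lmss{nc}(\ncc{b}\vee\mathbf{1})=\lmss{nc}(c\vee\mathbf{1})=1$, so $\ncc{b}$ is irreducible, and tracing its unique cycle in $\Gamma_{\ncc{b}}$ exhibits the orientation $s$ of $\ncc{b}$ obtained from the constant orientation of $c$ (cf.\ the proofs of Lemmas \ref{twistorientation}--\ref{twistorbit}); in particular $s(i)s(j)=1$ exactly when both or neither of $i,j$ lie in $S$, which is precisely the range in which $\ncc{\tau}_{ij}$ is the relevant diagram, $\ncc{e}_{ij}$ being relevant when $s(i)s(j)=-1$.

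Next I would reduce the two compositions to a transparent local change of $\ncc{b}$. Since $\ncc{\tau}_{ij}$ is a permutation diagram, stacking it on top of $\ncc{b}$ creates no closed loop and merely relabels the top row of $\ncc{b}$ by the transposition $(i\,j)$; stacking $\ncc{e}_{ij}$ on top of $\ncc{b}$ instead glues the two top vertices $i'$ and $j'$ of $\ncc{b}$ to each other, inserts the cap $\{i',j'\}$, and creates no closed loop unless $\{i',j'\}$ is already a block of $\ncc{b}$ --- by inspection of the description above this happens only when $j=i+1$ and exactly one of $i,j$ lies in $S$, a degenerate sub-case that is then treated directly. In every remaining case $\ncc{\tau}_{ij}\circ\ncc{b}$ and $\ncc{e}_{ij}\circ\ncc{b}$ are obtained from the explicit diagram of $\ncc{b}$ by a modification supported near positions $i$ and $j$.

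The number of blocks is already under control: Proposition \ref{projtrmult} (for $\ncc{e}_{ij}$) and its analogue for transpositions (for $\ncc{\tau}_{ij}$), applied with the orientation $s$ and combined with \eqref{fondform} so that no closed loop intervenes, show that $(\ncc{\tau}_{ij}\circ\ncc{b})\vee\mathbf{1}$ (resp.\ $(\ncc{e}_{ij}\circ\ncc{b})\vee\mathbf{1}$) has exactly two blocks in each of the four cases. It then suffices to trace a single cycle of the modified graph, say from vertex $1$, alternating the modified $\ncc{b}$-edges with the vertical edges $\{l,l'\}$, and to check that its bottom vertices form exactly the first announced block; the second is the complement. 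This is where the four (cyclically shifted) intervals of the statement come out, and where the wrap-around situations ($i=1$, $j=k$, or $j=i+1$ with a block shrinking to a singleton) are verified separately. For the orientation assertion, the key point is that the local modification of the previous step disturbs no cup of $\ncc{b}$ and only permutes its caps by the transposition $(i\,j)$; since $s(i)s(j)=1$ in exactly the cases where $\ncc{\tau}_{ij}$ is used (and an analogous statement holds for $\ncc{e}_{ij}$), the cup and cap conditions that characterise a valid orientation are preserved, so $s$ is again a valid orientation of the composite, whence $u(x)u(y)=s(x)s(y)$ whenever $x$ and $y$ lie in the same cycle of the composite.

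The step I expect to be the real obstacle is not any one of these but their combination: juggling the four cases, the cyclic edge cases, and the orientation bookkeeping at the same time, which forces one to fix once and for all the diagram conventions (which factor sits on top in a composition, the labelling of the middle row, the effect of a single twist on a block and on an orientation) and to apply them without slips. Modulo that discipline, the proposition is a careful but elementary picture-chase.
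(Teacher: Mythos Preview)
Your proposal is correct and follows essentially the same route as the paper: write down the blocks of $\ncc{b}$ explicitly from the twist description, use Proposition~\ref{projtrmult} (together with \eqref{fondform}) to pin down the cycle count as $2$, and then trace one cycle of the composite graph to read off the interval. The paper traces from vertex $i$ rather than from vertex $1$, and it only writes out two of the four cases in detail, but the mechanism is identical.

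One remark on your orientation paragraph: your argument that ``$s$ is still a valid orientation of the composite, hence $u(x)u(y)=s(x)s(y)$ on each cycle'' is the right idea and is more explicit than what the paper records. Just be aware that the phrase ``cup and cap conditions that characterise a valid orientation'' is not a notion the paper defines; if you actually write this out, it is cleaner to phrase it as you do elsewhere --- check that the sign function $s$, viewed on the modified graph $\Gamma_{\tau_{ij}\circ\ncc{b}}$ (resp.\ $\Gamma_{e_{ij}\circ\ncc{b}}$), is still consistent along each edge, which reduces to a local check at positions $i$ and $j$ and uses precisely that $s(i)s(j)=1$ (resp.\ $=-1$) in the respective cases.
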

\begin{proof}
Let $S$ and $b$ be as in Proposition \ref{projtrmult}. As shown in Lemma \ref{twistorientation}, an orientation $s$ of $b$ is defined by setting
\begin{equation*}
s(i) = -1~\textrm{if}~ i \in S,~s(i) = 1 ~\textrm{if}~ i \not\in S.
\end{equation*}
Let $i,j \leq k$ be two integers. Assume that $i\in S$ and $j \in S$.
Easy computations show that
\begin{equation*}
(\tau_{ij} \circ b)(i^{\prime})=b(j),~(\tau_{ij}\circ b )(j') = b(i),~ (\tau_{ij}\circ b) (k) = b(k), k\neq i^{\prime},j^{\prime}.
\end{equation*}
By using Proposition \ref{projtrmult}, we prove that $\lmss{nc}(\tau_{ij} \circ b \vee 1) = 2$. Recall that $b$ is involution of $\{1,\ldots,k,1^{\prime},\ldots,k^{\prime}\}$, for each $i\in \{1,\ldots,k,1^{\prime},\ldots,k^{\prime}\}$, $b(i)$ is the integer that lies in the same block of $b$ as $i$. Recall that $\star(x)$ denotes $x^{\prime}$ if $x\leq k$ and $i-k$ if $x > k$. As shown, the partition $\tau_{ij} \circ b \vee 1$ has two blocks and if $x$ is in one of this block, so is $x^{\star}$.
The block that contains $\{i,i^{\prime}\}$ is equal to the set of alternate products of $b\circ\tau_{ij}$ and $\star$ applied to $i$:
\begin{equation*}
\{i,\star(i),\left((b\circ\tau_{ij})\circ \star\right)(i), \left(\star \circ (b\circ\tau_{ij})\circ \star\right)(i), \left((b\circ \tau_{ij}) \circ \star \circ (b\circ\tau_{ij})\circ \star\right)(i),... \}
\end{equation*}
which is equal to $\{i,i^{\prime},b(j),\star(b(j)),(b\circ \tau_{ij})(\star(b(j)),... \}$. We have $\star(b(j))[k] = j-1$ (from the definition of $b$). Thus if $i \neq j-1$, we have $(\tau_{ij}\circ b)(\star(b(j))) = b(\star(b(j)))$. Continuing in the same manner, we find
\begin{equation*}
\{i,i^{\prime},b(j),\star(b(j)), b(\star(b(j)), \star(b(\star(b(j))),\star(b(\star(b(j)))...i, (\star){i} \} = \{i,i^{\prime},j-1,j^{\prime}-1,j-2,j^{\prime}-2,...,i,i^{\prime}\}.
\end{equation*}
We do the same for the case $i\in S$ and $j \not\in S$, the details are left to the reader.
Assume now that $i \in S$ and $j \not\in S$. The partition $e_{ij}\circ b \vee 1$ has two blocs (this follows from Proposition \ref{projtrmult}).
Since $(e_{ij}\circ b)(i^{\prime}) = j^{\prime}$, the set $\{i^{\prime},j^{\prime}\}$ is contained within a block of $(e_{ij}\circ b)$. Once again to compute the blocks that contains the set $\{i^{\prime},j^{\prime}\}$, we have to compute the set
\begin{equation}
\label{set2}
\{i^{\prime},i,(e_{ij}\circ b)(i), (\star \circ (e_{ij}\circ b))(i),((e_{ij}\circ b) \circ \star \circ (e_{ij}\circ b))(i), \ldots,\}
\end{equation}
We remark that for any integer $x$ in the interval $\llbracket i,j-1 \rrbracket$, $\{(e_{ij} \circ b)(x), \star((e_{ij} \circ b)(x)\} = \{x+1,x^{\prime}+1\}$. We have $(e_{ij}\circ b)(i) [k] = i+1$, thus we find the set \ref{set2} is equal to:
\begin{equation*}
\{i^{\prime},i,i+1, i^{\prime}+1,i^{\prime}+2,i+2,\ldots,... j-1,j-1,j,j^{\prime}\}
 \end{equation*}
The case $i\not\in S$ and $j \in S$ is left to the reader.
\end{proof}
Conjugation of a Brauer diagram by a permutation $\alpha$ results in a Brauer diagram that has the same number of cycles, and
\begin{equation*}
\alpha \circ b \circ \alpha^{-1} = \bigcup_{l\in b} \{\alpha(i),\alpha(j), i,j \in l\}.
\end{equation*}
Hence, $\alpha\circ e_{ij} \circ \alpha^{-1} = e_{\alpha(i),\alpha(j)}$ and $\alpha\circ\tau_{ij}\circ\alpha^{-1} = \tau_{\alpha(i),\alpha(j)}$ with $i,j \leq k$ two integers.
In addition, orientation and conjugation enjoy a remarkable property. For any oriented Brauer diagram $(b,s_{b})$ and permutation $\alpha$, the sign function  $s_{\alpha\circ b \circ \alpha^{-1}} = s_{b} \circ \alpha^{-1}$ defines orientation of $\alpha\circ b \circ \alpha^{-1}$.
\par The twists operators are also equivariant with respect to conjugation action:
\begin{equation*}
\twist_{S} \left(\alpha \circ b \circ \alpha^{-1} \right) = \alpha \circ \twist_{\alpha^{-1}(S)}(b)\circ \alpha^{-1},~ S \subset \{1,\ldots,k\},~\twist_{S} = \prod_{s\in S}\twist_{s},~\alpha \in \mathcal{S}_{k}.
\end{equation*}
Let $S\subset \{1,\ldots,k\}$. Let $\alpha$ a permutation in $\mathcal{S}_{k}$.
The proposition is easily generalised to twists of the cycle $\alpha \circ c \circ \alpha^{-1} = (\alpha(1),\ldots,\alpha(k))$. Define $b=\twist_{S}(\left(\alpha(1),\ldots,\alpha(k) \right))$.
In fact, $(\alpha(1),\ldots,\alpha(k))$ = $ \alpha \circ (1,\ldots,k) \circ \alpha^{-1}$ and
\begin{equation*}
\left(e_{ij} \circ \twist_{S}(\alpha \circ c \circ \alpha^{-1})\right) \vee 1 = \alpha \circ \left(\left(e_{\alpha^{-1}(i),\alpha^{-1}(j)}\circ\twist_{\alpha^{-1}(S)}(c)\right)\vee 1\right)\circ\alpha^{-1}
\end{equation*}
We apply Proposition \ref{projtrmultblocks} to find
\begin{itemize}
\item if $i\in S,j\in S$,~$
(\tau_{ij}\circ b)\vee 1 = \{\{\alpha(1),\ldots,\alpha(\alpha^{-1}(i)-1),j,\alpha(\alpha^{-1}(j+1),\ldots,\alpha(k)\},$ \\ $
\{\alpha^{-1}(i),\alpha(\alpha^{-1}(i)+1),\ldots,\alpha(\alpha^{-1}(j)-1)\}\}$,
\item if $i\not\in S,j\not \in S$,~
$(\tau_{ij}\circ b)\vee 1 = \{\{\alpha(1),\ldots,\alpha(\alpha^{-1}(i)-1),\alpha(\alpha^{-1}(j)+1),\ldots,\alpha(k)\},$
 \\
$\{\alpha(\alpha^{-1}(i)),\alpha(\alpha^{-1}(i)+1),\ldots,\alpha(\alpha^{-1}(j)-1), j\}\}.$
\end{itemize}
In addition, or any orientation $s$ of $\ncc{b}$ and $u$ of $\tau_{ij}\circ b$, we have
$u(x)u(y) = s(x)s(y)$, $1 \leq x,y \leq k$,~ $x \sim_{\tau_{ij}\circ b} y$.
\begin{itemize}
\item if $i\in S,j \not\in S$,~ $
(e_{ij} \circ b) \vee 1 = \{ \{\alpha(1),\ldots, \alpha(\alpha^{-1}(i)-1),\alpha(\alpha^{-1}(j)+1,\ldots,\alpha(k)\}$, \\ $
\{i,\alpha(\alpha^{-1}(i)+1),\ldots,j\}\}$,
\item if $i\not\in S,j\in S$,~ $
(e_{ij}\circ b) \vee 1 = \{\{\alpha(1),\ldots,\alpha(\alpha^{-1}(i)-1),j,\alpha(j+1),\ldots,\alpha(k)\},$\\$\{\alpha(\alpha^{-1}(i)+1),\ldots,\alpha(\alpha^{-1}(j)-1)\}\}.$
\end{itemize}
In the same way, for any orientation $s$ of $\ncc{b}$ and $u$ of $e_{ij}\circ b$, we have
$u(x)u(y) = s(x)s(y)$, $1 \leq x,y \leq k$,~ $x \sim_{e_{ij}\circ b} y$.

\par We have seen in Proposition \ref{projtrmult} that multiplication of an irreducible Brauer diagram by a transposition or a projector produces at most one cycle. Given a non-necessarily irreducible Brauer diagram $\ncc{b}$, the following proposition specifies how many cycles are deleted or created if we multiply $\ncc{b}$ by a transposition or a projector.
\begin{proposition}
\label{mainpropbrauer}
Let $b$ be a non-coloured Brauer diagram. Let $i\neq j \in \{1,\ldots,k\}$ two integers. If $i,j$ do not lie in the same cycle of $\ncc{b}$ ($i\not\sim_{\ncc{b}\vee 1}j$) then $\lmss{nc}(\ncc{e}_{ij}\vee\ncc{b}) = \lmss{nc}(e_{ij}\vee 1)-1$.
If $i$ and $j$ are in the same cycle of $\ncc{b}$, we have for any orientation $s$ of $\ncc{b}$:
\begin{equation}
\label{mainpropequation}
\begin{split}
&\textrm{if}~s(i)s(j) = -1,~\lmss{nc}(\ncc{b}\vee \ncc{e}_{ij}) = \lmss{nc}(\ncc{b}\vee1)+1, \\
&\textrm{if}~s(i)s(j) = 1,~ \lmss{nc}(\ncc{b} \vee \ncc{\tau}_{ij}) = \lmss{nc}(\ncc{b}\vee 1)+1.
\end{split}
\end{equation}
\end{proposition}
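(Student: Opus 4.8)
The plan is to localise everything to the cycle(s) of $\ncc{b}\vee 1$ through $i$ and $j$ and then reduce to Proposition~\ref{projtrmult}. The starting observation is that $\ncc{e}_{ij}$ and $\ncc{\tau}_{ij}$ differ from the identity diagram $1$ only in the edges incident to the four vertices $i,i^{\prime},j,j^{\prime}$: the vertical pair $\{i,i^{\prime}\},\{j,j^{\prime}\}$ is replaced by $\{i,j\},\{i^{\prime},j^{\prime}\}$ in $\ncc{e}_{ij}$ and by $\{i,j^{\prime}\},\{i^{\prime},j\}$ in $\ncc{\tau}_{ij}$. Hence, in passing from $\ncc{b}\vee 1$ to $\ncc{b}\vee\ncc{e}_{ij}$ or $\ncc{b}\vee\ncc{\tau}_{ij}$, every component of $\ncc{b}\vee 1$ disjoint from $\{i,i^{\prime},j,j^{\prime}\}$ is left untouched, and only the component(s) meeting those four vertices change. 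When $i\not\sim_{\ncc{b}\vee 1}j$ there are two such components, the cycles $C_{1}\ni i,i^{\prime}$ and $C_{2}\ni j,j^{\prime}$; deleting $\{i,i^{\prime}\}$ opens $C_{1}$ into a path with ends $i,i^{\prime}$, deleting $\{j,j^{\prime}\}$ opens $C_{2}$ into a path with ends $j,j^{\prime}$, and the two $\ncc{e}_{ij}$-edges $\{i,j\},\{i^{\prime},j^{\prime}\}$ splice those two paths into a single cycle. So $C_{1}$ and $C_{2}$ merge, giving $\lmss{nc}(\ncc{e}_{ij}\vee\ncc{b})=\lmss{nc}(\ncc{b}\vee 1)-1$ as asserted.

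For the case $i\sim_{\ncc{b}\vee 1}j$, let $C$ be their common cycle, so that $i,i^{\prime},j,j^{\prime}\in C$. I would restrict $\ncc{b}$ to the vertex set $V_{C}$ of $C$; since the cycles of $\ncc{b}\vee 1$ are stable under both $\ncc{b}$ and $\star$, this restriction is a fixed-point-free involution of $V_{C}$, and a $\star$-compatible relabelling of $V_{C}$ by $\{1,\dots,m,1^{\prime},\dots,m^{\prime}\}$ turns it into an \emph{irreducible} Brauer diagram $\ncc{b}^{\prime}$, with $i,j$ renamed to distinct $i_{0},j_{0}\le m$ and the orientation data carried across (so $s(i_{0})s(j_{0})=s(i)s(j)$, a quantity independent of the chosen orientation). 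By the localisation above,
\[
\lmss{nc}(\ncc{e}_{ij}\vee\ncc{b})=\lmss{nc}(\ncc{e}_{i_{0}j_{0}}\vee\ncc{b}^{\prime})+\lmss{nc}(\ncc{b}\vee 1)-1,
\]
and the same relation holds with $\ncc{\tau}$ in place of $\ncc{e}$. Applying Proposition~\ref{projtrmult} to the irreducible diagram $\ncc{b}^{\prime}$ gives $\lmss{nc}(\ncc{e}_{i_{0}j_{0}}\vee\ncc{b}^{\prime})=2$ if $s(i)s(j)=-1$ and $=1$ if $s(i)s(j)=1$; combined with the displayed identity this is exactly the first line of \eqref{mainpropequation}.

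For the $\ncc{\tau}_{ij}$ line I would conjugate by the twist $\twist_{i_{0}}$. It interchanges $\ncc{e}_{i_{0}j_{0}}$ and $\ncc{\tau}_{i_{0}j_{0}}$, and it is a lattice automorphism preserving the block count, in particular preserving irreducibility (Lemma~\ref{twistmorphism}); since $\twist_{i_{0}}$ distributes over $\vee$,
\[
\lmss{nc}(\ncc{\tau}_{i_{0}j_{0}}\vee\ncc{b}^{\prime})=\lmss{nc}(\twist_{i_{0}}(\ncc{\tau}_{i_{0}j_{0}})\vee\twist_{i_{0}}(\ncc{b}^{\prime}))=\lmss{nc}(\ncc{e}_{i_{0}j_{0}}\vee\twist_{i_{0}}(\ncc{b}^{\prime})).
\]
Twisting at $i_{0}$ flips exactly the sign at $i_{0}$ of any orientation (Lemma~\ref{twistorientation}), so an orientation $s^{\prime}$ of the irreducible diagram $\twist_{i_{0}}(\ncc{b}^{\prime})$ has $s^{\prime}(i_{0})s^{\prime}(j_{0})=-s(i)s(j)$; Proposition~\ref{projtrmult} then makes the right-hand side equal to $2$ precisely when $s(i)s(j)=1$, i.e.\ $\lmss{nc}(\ncc{\tau}_{ij}\vee\ncc{b})=\lmss{nc}(\ncc{b}\vee 1)+1$ precisely when $s(i)s(j)=1$, which is the second line of \eqref{mainpropequation}.

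The only real work will be the bookkeeping: checking that the join genuinely localises to $V_{C}$, that the relabelling of $V_{C}$ transports orientations and their signs faithfully, and that $\twist_{i_{0}}$ exchanges $\ncc{e}_{i_{0}j_{0}}$ with $\ncc{\tau}_{i_{0}j_{0}}$ while flipping only the sign at $i_{0}$. If chaining through Proposition~\ref{projtrmult} feels too indirect, one can instead argue by a single circle-surgery on $C$: orient $C$, delete the vertical edges $\{i,i^{\prime}\},\{j,j^{\prime}\}$ to break $C$ into two arcs, and note that these arcs join the four loose ends as $\{i,j\},\{i^{\prime},j^{\prime}\}$ precisely when $s(i)s(j)=1$ and as $\{i,j^{\prime}\},\{i^{\prime},j\}$ precisely when $s(i)s(j)=-1$; reconnecting with the $\ncc{e}_{ij}$- or $\ncc{\tau}_{ij}$-edges then either re-merges the arcs (cycle count unchanged) or leaves two cycles (cycle count $+1$), which is \eqref{mainpropequation}. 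This is the same surgery underlying relation \eqref{fondform} and Proposition~\ref{projtrmultblocks}.
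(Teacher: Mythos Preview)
The paper states Proposition~\ref{mainpropbrauer} without proof; it is presented as the general (not necessarily irreducible) version of Proposition~\ref{projtrmult}, and the reader is implicitly expected to deduce it from that proposition together with the twist machinery of Lemmas~\ref{twistmorphism} and~\ref{twistorientation}. Your argument does exactly this: localise to the cycle(s) through $i,j$, restrict to obtain an irreducible diagram, apply Proposition~\ref{projtrmult} for the projector case, and handle the transposition case by conjugating with $\twist_{i_{0}}$ (which swaps $\ncc{e}_{i_{0}j_{0}}\leftrightarrow\ncc{\tau}_{i_{0}j_{0}}$ and flips the sign product). This is correct and is the approach the paper's surrounding lemmas are set up to support; your alternative circle-surgery description is also valid and is essentially the content of Proposition~\ref{projtrmultblocks} stripped of its explicit block labels. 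One cosmetic remark: the statement as printed has $\lmss{nc}(e_{ij}\vee 1)-1$ in the first clause, which is a typo for $\lmss{nc}(\ncc{b}\vee 1)-1$; you silently read it the intended way, which is fine.
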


\subsection{Central extension of the algebra of coloured Brauer diagrams}
As it will appear in Section \ref{rectangularextractions}, it will be necessary to keep track of the colouration of the loops that are created if two Brauer diagrams are multiplied together. In fact, as of now, it is not possible to do so: from the definition of the algebra structure on $\mathcal{B}_{k}(\lmss{d})$, a loop that is produced by multiplication of two diagrams multiply by a positive scalar the concatenation of the two diagrams. It there are least two loops that are created, it not possible to find back the colourizations from this multiplication factor. That is the reason why we introduce a central extension.
\par The central extension $\cbrauer(\lmss{d})$ is, as a vector space, equal to the direct sum of vector spaces $\mathbb{R}\left[\brauerd\right]\oplus \mathbb{R}\left[\{\lmss{o}_{d},d \in \{\lmss{d}\}\} \right]$. The set $\lmss{o} = \{\lmss{o}_{d},d \in \{\lmss{d}\}\}$ of commuting variables is referred to as the set of \emph{loops variables} or \emph{ghost variables}. Two elements $b \oplus P(\lmss{o})$ and $b^{\prime}\oplus Q(\lmss{o})$ in $\brauerd \oplus \mathbb{R}\left[\lmss{o} \right]$ are multiplied as follows:
\begin{equation*}
(b \oplus P(\lmss{o})) \cdot (b^{\prime}\oplus Q(\lmss{o})) = \left(b \circ b^{\prime}, PQ \times \prod_{d} \lmss{o}_{d}^{\mathcal{K}_{d}(b,b^{\prime})}\right).
\end{equation*}
We indexed the loops variable by the set $\{\lmss{d}\}$, we could have equivalently indexed it by the blocks of the partition $\lmss{ker}(\lmss{d})$. In the the sequel, we will mainly deal with operators that are defined on a subalgebra of $ \cbrauer(\lmss{d})$. If $(\alpha_{i})_{i\in\lmss{d}}$ is a multi-index, we denote by $\lmss{o}^{\alpha}_{\{\lmss{d}\}}$ the monomial $\lmss{o}_{d_{1}}^{\alpha_{d_{1}}} \cdots \lmss{o}_{d_{p}}^{\alpha_{d_{p}}}$ if $\{\lmss{d}\} = \{d_{1},\ldots,d_{p}\}$. We set $\cbrauerd = \{b \oplus \lmss{o}_{\{\lmss{d}\}}^{\alpha},~ b \in \brauerd, \alpha \in \mathbb{N}^{\{\lmss{d}\}}\}$. An element of $\cbrauerd$ is named a diagram with loops and is pictured as in Fig. \ref{exextendeddiagrams}.
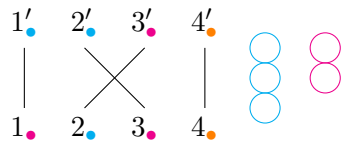
\begin{figure}[!h]
\begin{tikzpicture}[scale = 0.8]
	\draw[black] (-2,0) -- (-1,-1);
	\draw[black] (-2,-1) -- (-1,0);
	\draw[black] (-3,0) -- (-3,-1);
	\draw[black] (-0,0) -- (-0,-1);

   \node  [above] at ( -3,0) {$1^{\prime}_{\bcyan}$};
   \node  [above] at ( -2,0) {$2^{\prime}_{\bcyan}$};
   \node  [above] at ( -1,0) {$3^{\prime}_{\bmage}$};
   \node  [above] at ( -0,0) {$4^{\prime}_{\boran}$};

   \node  [below] at ( -3,-1) {$1_{\bmage}$};
   \node  [below] at ( -2,-1) {$2_{\bcyan}$};
   \node  [below] at ( -1,-1) {$3_{\bmage}$};
   \node  [below] at ( -0,-1) {$4_{\boran}$};

\draw[cyan] (1,-1) circle [radius=0.25cm];
\draw[cyan] (1,-0.5) circle [radius=0.25cm];
\draw[cyan] (1,0) circle [radius=0.25cm];

\draw[magenta] (2,-0.5) circle [radius=0.25cm];
\draw[magenta] (2,0) circle [radius=0.25cm];
\end{tikzpicture}
\caption{\label{exextendeddiagrams} \small A diagram with loops.}
\end{figure}
Let $\pi$ a partition of $\llbracket 1,n \rrbracket$ that is greater than $\lmss{ker}(\lmss{d})$. As mentioned, the set $\brauerd$ of coloured Brauer diagram is injected into the set $\mathcal{B}_{k}^{\pi}$. There is no injection of the vector space
$\mathbb{R}\left[\brauerd\right]\oplus \mathbb{R}\left[\{\lmss{o}_{d},d \in \{\lmss{d}\}\} \right]$ into the vector space $\mathbb{R}\left[\mathcal{B}_{k}^{\pi}\right]\oplus \mathbb{R}\left[\{\lmss{o}_{V},V \in \{\pi\}\} \right]$. The only canonical map from
$\mathbb{R}\left[\{\lmss{o}_{d},d \in \{\lmss{d}\}\} \right]$ to $\mathbb{R}\left[\{\lmss{o}_{V},V \in \{\pi\}\} \right]$ is the projection induced by the change of variable $\lmss{o}_{V} \to \lmss{o}_{W}$, $V \in \lmss{ker}(\lmss{d})$, $W \in \pi$  with $V \subset W$.

The space $\cbrauerd$ projects onto the algebra of coloured Brauer diagrams. The projection $\overset{\circ}{\pi}:\cbrauer(\lmss{d}) \to \brauer(\lmss{d})$ specializes a loop variable $\lmss{o}_{d}$ to the corresponding dimension $d$:
\begin{equation*}
\pi((b,P(\lmss{o}_{d},d\in \{d\}))) = P(d, d\in\{d\})b.
\end{equation*}
We draw in Fig. \ref{exactseq} the short exact sequence.\begin{figure}[!h]
\begin{tikzcd}
0 \arrow{r} & \mathbb{R}\left[\cbrauerd\right] \arrow["\pi"]{r} & \mathbb{R}\left[\brauerd\right] \arrow{r} & 0.
\end{tikzcd}
\caption{\label{exactseq}\small A central extension of the algebra of coloured Brauer diagrams.}
\end{figure}

If $b \in \brauerd$ is a Brauer diagram, we denote by $\overset{\circ}{b}$ the element $(b,1 )$ in the $\cbrauerd$. In this way we define a section from $\brauerd$ to $\cbrauerd$ which is not a algebra morphism.
\par We finish with the definition of the functions that justify alone the introduction of this central extension. In the last section, we gave orientation to Brauer diagrams. We will do the same for Brauer diagrams with loops in a consistent way. We recall that we denote by $\mathcal{O}\brauer(\lmss{d})$ the vector space with basis the set $\mathcal{O}\brauerd$ of all oriented coloured Brauer diagrams. We denote by $\mathcal{O}\cbrauer(\lmss{d})$ the vector space $\mathcal{O}\brauer(\lmss{d}) \oplus \mathbb{R}\left[\lmss{o}_{d}, d\in\{\lmss{d}\} \right]$.
\par Let $d \in \{\lmss{d}\}$ and $\tilde{b} = ((b,s),\prod_{d \in \{d_{N}\}}\lmss{o}_{d}^{n_{d}}) \in \mathcal{O}\cbrauer(\lmss{d})$, the function $\lmss{fnc}_{d}$ counts the number of loops variables $\lmss{o}_{d}$ and the number of cycles of $b$ whose minimum $m$ or minimum $m^{\prime}$ is coloured with the dimension $d$, depending on the orientation of the cycle. More formally:
\begin{equation*}
\begin{split}
\lmss{fnc}_{d}\left(\tilde{b}\right) = n_{d} + \sharp&\left\{\{i_{1} < \ldots <i_{k}\} \in \ncc{b}\vee 1 : d_{c_{b}(i_{1})} = d \text{ and } s(m)=1\right\} \\ &\hspace{2cm}+ \sharp\left\{\{i_{1} < \ldots <i_{k}\} \in \ncc{b}\vee 1 : d_{c_{b}(i^{\prime}_{1})} = d \text{ and } s(m)=-1\right\}.
\end{split}
\end{equation*}
In the last section, given an oriented Brauer diagram $(b_{1},s)$ and a Brauer diagram $b$, we defined the oriented Brauer diagram $b\diamond (b_{1},s)$. This operation can be lifted to $\mathcal{O}\cbrauer(\lmss{d})$:
\begin{equation*}
((b,P)\diamond ((b_{1},s),Q) = \left(b \diamond (b_{1},s), PQ\times \prod_{d\in\{d\}}\lmss{o}_{d}^{\mathcal{K}_{d}(b,b_{1})}\right),~ (b,P)\in \cbrauer(\lmss{d}),~((b_{1},s),Q) \in \mathcal{O}\cbrauer(\lmss{d}).
\end{equation*}
\subsection{Special subsets of coloured Brauer diagrams}
We introduce subsets of Brauer diagrams that will be used in Sections \ref{squareextractions} and \ref{rectangularextractions} to express generators of differential systems satisfied by statistics of the unitary Brownians motions.

\par The first of these sets is the set of non-mixing Brauer diagrams, which we denote $\mathcal{B}_{k,n}$, that is defined as a set of coloured Brauer diagrams which blocks are coloured by a single integer $1 \leq i \leq n$.
\par We denote by $\Delta_{k,n}$ the subset of $\mathcal{B}_{k,n}$ of diagonally coloured Brauer diagrams: diagrams $b$ that have a colouring $c_{b}$ satisfying $c_{b}(i)=c_{b}(i^{\prime})$ for all integers $1 \leq i \leq n$.
\par Let $i,j \leq k-1$. The projector $\ncc{e}_{ij} \in \brauerz$ and the transposition $\ncc{\tau}_{ij}$ are non-coloured Brauer diagrams that are defined by
\begin{equation}
\begin{gathered}
\ncc{e}_{ij} = \left\{ \left\{i,j \right\}, \left\{i^{\prime},j^{\prime}\right\}\} \cup \{\left\{x,x^{\prime}\right\},~ x \neq i,j,~ 1 \leq x \leq k \right\} \\
\ncc{\tau}_{ij} = \left\{ \left\{i,j^{\prime}\right\},\left\{j,i^{\prime}\right\}\} \cup \{\left\{x,x^{\prime}\right\},~x \neq i,j,~1 \leq x \leq k\right\}.
\end{gathered}
\end{equation}
The set of non-coloured Brauer diagrams comprising all non-coloured transpositions, respectively all non-coloured projectors, is denoted $\nccc{\tr}$ and $\nccc{\pr}$.
The subset of coloured Brauer diagrams in $\brauerd$ which non-coloured component is a projector, respectively a transposition, is denoted $\prd$, respectively $\trd$. The set of non-mixing transpositions, respectively projectors is denoted $\lmss{T}_{k,n}$, respectively $\lmss{W}_{k,n}$. We will frequently drop the subscript $n$ if the numbers of colours is clear from the context or the subscript $\lmss{d}$ if the partition is clear from the context. Elements of the set $\tr\cup\pr$ are called \emph{elementary diagrams}.

\par We also define the sets of \textit{exclusive transpositions} $\trexcd$ and the set of \textit{exclusive projectors} $\prexcd$ by setting
\begin{equation*}
	\trexcd = \{(\ncc{\tau}_{ij}, c_{\tau_{ij}}) \in \lmss{T}_{k}: c(i) \neq c(j) \},~ \prexcd = \{ (\ncc{e}_{ij},c_{e_{ij}})\in \lmss{W}_{k} : c(i) \neq c(i^{\prime}) \}.
\end{equation*}
The sets of \textit{diagonal transpositions} and \textit{diagonal projectors} are defined by
\begin{equation*}
	\trdiagd = \{(\ncc{\tau}_{ij}, c) \in \trd : c(i) = c(j) \},~ \prdiagd = \{ (\ncc{e}_{ij},c) \in \prd: c(i) = c(i^{\prime}) \}.
\end{equation*}
In Fig. \ref{colourexc}, we draw examples of elements of the subsets defined above.
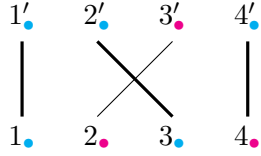
\begin{figure}[!h]
\centering
\begin{tikzpicture}[scale = 1.0]
\begin{scope}
	\draw[black,very thick] (-2,0) -- (-1,-1);
	\draw[black] (-2,-1) -- (-1,0);
	\draw[black,very thick] (-3,0) -- (-3,-1);
	\draw[black,very thick] (0,0) -- (0,-1);

   \node  [above] at ( -3,0) {$1^{\prime}_{\bcyan}$};
   \node  [above] at ( -2,0) {$2^{\prime}_{\bcyan}$};
   \node  [above] at ( -1,0) {$3^{\prime}_{\bmage}$};
   \node  [above] at ( -0,0) {$4^{\prime}_{\bcyan}$};

   \node  [below] at ( -3,-1) {$1_{\bcyan}$};
   \node  [below] at ( -2,-1) {$2_{\bmage}$};
   \node  [below] at ( -1,-1) {$3_{\bcyan}$};
   \node  [below] at ( -0,-1) {$4_{\bmage}$};

\end{scope}
\end{tikzpicture}
\caption{\label{colourexc} \small Example of an exclusive transposition, $n=2$.}
\end{figure}
\begin{figure}[!h]
\begin{tikzpicture}
\begin{scope}[shift={(8.5,0)}]
	\draw[black] (-2,0) -- (-1,-1);
	\draw[black] (-2,-1) -- (-1,0);
	\draw[black] (-3,0) -- (-3,-1);
	\draw[black] (0,0) -- (0,-1);
   \node  [above] at ( -3,0) {$1^{\prime}_{\bcyan}$};
   \node  [above] at ( -2,0) {$2^{\prime}_{\bmage}$};
   \node  [above] at ( -1,0) {$3^{\prime}_{\bmage}$};
   \node  [above] at ( -0,0) {$4^{\prime}_{\bmage}$};
   \node  [below] at ( -3,-1) {$1_{\bcyan}$};
   \node  [below] at ( -2,-1) {$2_{\bmage}$};
   \node  [below] at ( -1,-1) {$3_{\bmage}$};
   \node  [below] at ( -0,-1) {$4_{\bmage}$};
\end{scope}
\end{tikzpicture}
\caption{\label{colourdiag} \small Example of a diagonal transposition, $n=2$.}
\end{figure}
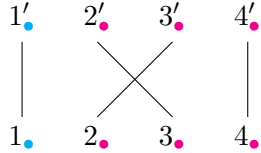

Let $\ncc{b}$ be a non coloured Brauer diagram. The sets $\trpnc{\ncc{b}}$ and $\prpnc{\ncc{b}}$ of elementary non-coloured Brauer diagrams that create a cycle if concatenated with $b$ are defined as
\begin{equation*}
	\begin{split}
		&\trpnc{\ncc{b}} = \{\ncc{\tau} \in \nccc{\tr}:~\lmss{nc}(\ncc{b}\vee\ncc{\tau}) = \lmss{nc}(\ncc{b}\vee 1)+1\}, \\
		&\prpnc{\ncc{b}} = \{\ncc{e} \in \nccc{\pr}:~\lmss{nc}(\ncc{b}\vee\ncc{e}) = \lmss{nc}(\ncc{b}\vee 1)+1\}.
	\end{split}
\end{equation*}
We end this section by defining four more subsets. Let $b \in \Brauer$, we define the subsets
\begin{equation*}
	\begin{split}
		&\trexcp{b} = \{\tau \in \trexc: \tau b \neq 0,~\lmss{nc}(\ncc{b}\vee\ncc{\tau}) = \lmss{nc}(\ncc{b}\vee 1)+1 \}, \\
		&\prexcp{b} = \{ e \in \prexc : eb \neq 0,~\lmss{nc}(\ncc{b}\vee \ncc{e}) = \lmss{nc}(\ncc{b}\vee 1)+1  \}.
	\end{split}
\end{equation*}
and
\begin{equation*}
	\begin{split}
		&\trdiagp{b} = \{\tau \in \trdiag : \tau b \neq 0,~ \lmss{nc}(\ncc{b} \vee \ncc{\tau}) = \lmss{nc}(\ncc{b}\vee 1)+1\}, \\
		&\trdiagp{b} = \{e \in \prdiag : eb \neq 0,~ \lmss{nc}(\ncc{b} \vee \ncc{e}) = \lmss{nc}(\ncc{b}\vee 1)+1 \}.
	\end{split}
\end{equation*}

\subsection{Invariant polynomials and Brauer diagrams}

\par Let $k \geq 1$ and $\lmss{d} = (d_{1},\ldots,d_{n})$ a partition of $N$ and $\{\lmss{d}\} = \{d^{1},\ldots,d^{n}\}$. Let $\mathbb{K}$ be one ot the three division algebras $\mathbb{R}$, $\mathbb{C}$ or $\mathbb{H}$. We introduce the two following compact Lie groups:
\begin{equation*}
U(\lmss{d},\mathbb{K}) = U_{d_{1}}(\mathbb{K}) \times \cdots \times U_{d_{n}}(\mathbb{K}) \text{ and } U^{\sharp}(\lmss{d},\mathbb{K}) = U_{d^{1}}(\mathbb{K}) \times \ldots \times U_{d^{p}}(\mathbb{K}),
\end{equation*}
with $\{d_{1},\ldots,d_{n}\} = \{d^{1} < \ldots < d^{p}\}$. The group $U^{\sharp}(\lmss{d},\mathbb{K})$ is injected into $U(\lmss{d},\mathbb{K})$ using the diagonal injection of each factor $U(d^{i})$ into the product $U(\lmss{d},\mathbb{K})$.

\par An $U(\lmss{d},\mathbb{K})$-invariant polynomial on $\mathcal{M}_{N}(\mathbb{K})^{k}$ is a polynomial function $p:\mathcal{M}_{N}(\mathbb{K})^{k} \rightarrow \mathbb{K}$ and invariant by the diagonal conjugacy action $\lmss{conj}^{k}_{\lmss{d}}$ of the group $U(\lmss{d},\mathbb{K})$ on $\mathcal{M}_{N}(\mathbb{K})^{\times k}$,
\begin{equation*}
	p\left(\lmss{conj}^{k}_{\lmss{d}}(U)\left(A_{1},\ldots,A_{k}\right)\right) = f(UA_{1}U^{-1},\ldots,UA_{k}U^{-1}) = f(A_{1},\ldots,A_{k}),~ \forall U \in U^{\sharp}(\lmss{d},\mathbb{K}).
\end{equation*}
The $U^{\sharp}(\lmss{d},\mathbb{K})$ invariant polynomial functions are defined similarly.
We denote by $P(k,\mathbb{K}, \lmss{d})$ (resp. $P^{\sharp}(k,\mathbb{K},\lmss{d})$) the set of all $U(\lmss{d},\mathbb{K})$ (resp. $U^{\sharp}(\lmss{d},\mathbb{K})$) (real if $\mathbb{K}=\mathbb{R}$ and complex if $\mathbb{K}=\mathbb{C}$ or $\mathbb{H}$) invariant polynomials on $\mathcal{M}_{N}(\mathbb{K})^{\times k}$. The set of invariant tensors $\text{Inv}(k,\mathbb{K},\lmss{d})$ is the set of all elements in $\mathcal{M}_{N}(\mathbb{K})^{\otimes k}$ fixed by the $k$ folded conjugacy action $\lmss{conj}_{\lmss{d}}^{k}$ of $U(\lmss{d},\mathbb{K})$ on $\mathcal{M}_{N}(\mathbb{K})^{\otimes k}$:
\begin{equation*}
	Z \in \lmss{inv}^{k}_{\lmss{d}} \Leftrightarrow \lmss{conj}^{k}_{\lmss{d}}(Z)  = Z.
\end{equation*}
In the sequel, we denote by $\lmss{Tr}_{\mathbb{K}}^{\otimes{k}}$ the $k$ folded tensor product of the matricial trace on $\mathcal{M}_{N}(\mathbb{K})^{\otimes k}$ if $\mathbb{K}=\mathbb{R}$ or $\mathbb{C}$ and $\lmss{Tr}_{\mathbb{H}}^{\otimes k} = \mathcal{R}e(\lmss{Tr})^{\otimes k}$.
The trace $\lmss{Tr}^{\otimes k}$ defines a non degenerate bilinear pairing, $\langle A,B \rangle  = \lmss{Tr}^{\otimes k}(AB)$ for $A,B \in \mathcal{M}_{N}(\mathbb{K})$ which is an $U(\lmss{d}, \mathbb{K})$ invariant polynomial of $(\mathcal{M}_{N}(\mathbb{K})^{\otimes k} \otimes \mathcal{M}_{N}(\mathbb{K})^{\otimes k})^{\star}$. Hence, any $U(\lmss{d},\mathbb{K})$-invariant tensor $Z$ defines an invariant polynomial $f_{Z}$:
\begin{equation}
	\label{inv_tensor_polynomial}
	f_{Z}(A_{1},\ldots,A_{k}) = \langle Z, A_{1} \otimes \ldots \otimes A_{k}\rangle,~ A_{1},\ldots,A_{k} \in \mathcal{M}_{N}(\mathbb{K}).
\end{equation}
We denote by $\mathbb{K}\left[X_{1},\ldots,X_{k},X_{1}^{\star},\ldots,X_{k}^{\star} \right]$ the algebra of polynomials in $k$ non commutating indeterminates. Any invariant polynomial is of the form \ref{inv_tensor_polynomial}. A set of generators for the $U(d,\mathbb{K})$ invariant polynomials with $\mathbb{K} = \mathbb{R}$ or $\mathbb{C}$, can be set equal to
\begin{equation*}
\lmss{Tr}(P(A_{1},\ldots,A_{k})), \text{ with } P \in \mathbb{K}[X_{1},\cdots,X_{k},X_{1}^{\star},\ldots,X_{k}^{\star}]\}
\end{equation*}
with $A^{\star}$ equal to the transpose of $A$ for the case $\mathbb{K}=\mathbb{R}$ and $A^{\star}$ equal to the Hermitian conjugated matrix $A$ for the case $\mathbb{K}=\mathbb{C}$.
For the compact symplectic group $U(N,\mathbb{H}) = Sp(N)$, the complex algebra generated by the set
\begin{equation*}
\lmss{Tr}(P(A,A^{\star})), P \in \mathbb{C}[X_{1},\cdots,X_{k}]\}
\end{equation*}
with $\star$ the quaternionic conjugation being a proper subalgebra of $P(\mathbb{H},k,d)$. Simple results of invariant theory we recall below lead us to the determination of the $U^{\sharp}(\lmss{d},\mathbb{K})$ and $U(\lmss{d},\mathbb{K})$ tensor invariants in $\mathcal{M}_{N}(\mathbb{K})^{\otimes k}$.
\par In fact, let $d \geq 1$ an integer and $\rho^{i}_{d},~ 1 \leq i\leq 2$ two representations of $U(d,\mathbb{K})$ on $\mathbb{K}^{d}$. The space invariants of the sum $\rho^{1}\oplus\rho^{2}$ of the two representations is the sum of the invariants of $\rho_{1}$ and $\rho^{2}$ since
\begin{equation*}
	\forall U \in U(d,\mathbb{K}),~Z_{1},Z_{2} \in \mathbb{K}^{d},~\left(\rho^{1} \oplus \rho^{2}\right)(U)(Z_{1} + Z_{2}) = Z_{1} + Z_{2}, \Leftrightarrow \rho^{1}(U)(Z_{1}) = Z_{1}~\mathrm{and}~\rho^{2}(Z_{2}) = Z_{2}.
\end{equation*}
Hence the space of invariants of $\left(\lmss{nat}_{d} \oplus \lmss{nat}_{d}\right) \otimes \left(\lmss{nat}^{\star}_{d} \oplus \lmss{nat}^{\star}_{d} \right)$ is the sum of four spaces. Each of these spaces is isomorphic to a space of endomorphisms $\lmss{End}(V_{1}, V_{2})$ invariant by $\lmss{nat}_{d} \otimes \lmss{nat}^{\star}_{d}$ with $V_{i},~i \leq 2$ one of the two copies of $\mathbb{K}^{d}$ in $\mathbb{K}^{d} \oplus \mathbb{K}^{d}$. A straightforward generalization of this argument for the $k$-folded action $\left(\lmss{nat}_{d} \oplus \lmss{nat}_{d}\right) \otimes \lmss{nat}_{d}^{\star} \oplus \lmss{nat}_{d}^{\star}$ proves that the polynomials invariants $P(k,\mathbb{K},\lmss{d})$ with $d_{1} = \ldots = d_{n} = d$ admit as a set of generators
\begin{equation*}
	\lmss{Tr}\left(P\left(A^{i_{1}}_{j_{1}},A^{i_{2}}_{j_{2}}, \cdots, A_{j_{p}}^{i_{p}}, \left(A_{j_{1}}^{i_{1}}\right)^{t}, \cdots, \left(A_{j_{p}}^{i_{p}}\right)^{t}\right)\right)
\end{equation*}
with $P$ a non-commutative polynomial and $A^{i}_{j}$ a square block in position $(i,j)$ in the matrix $A$.
For the group $U_{d_{1}} \times U_{d_{2}}$ that acts by $\rho_{1} \times \rho_{2}$ on $\mathbb{K}^{d_{1}}\oplus \mathbb{K}^{d_{2}}$ defined as
\begin{equation*}
	\left(\rho_{1} \times \rho_{2}\right)(U_{1} \times U_{2})(Z_{1} + Z_{2}) = \rho_{1}(U_{1})(Z_{1}) + \rho_{2}(U_{2})(Z_{2})
\end{equation*}
the space of invariant is also the sum of the spaces of invariants for $\rho_{1}$ and $\rho_{2}$. Using the expression
\begin{equation*}
	\lmss{nat}_{d_{1}} \times \lmss{nat}_{d_{2}} = \left(\lmss{nat}_{d_{1}} \times 1\right) \oplus \left(1 \times \lmss{nat}_{d_{2}}\right),
\end{equation*}
we find that the space of tensor invariants for the $k$-folded action $\scalebox{0.9}[1]{$\left( \left(\lmss{nat}_{d_{1}} \times \lmss{nat}_{d_{2}}\right)\otimes \left(\lmss{nat}^{\star}_{d_{1}} \times \lmss{nat}^{\star}_{d_{2}}\right) \right)^{\otimes k}$}$ is a sum of spaces and each term is the space of invariants for a representation of the form
\begin{equation}
	\label{space_inv}
	\lmss{nat}_{d_{i_{1}}} \otimes \lmss{nat}^{\star}_{d_{j_{1}}} \otimes \cdots \otimes \lmss{nat}_{d_{i_{k}}} \otimes \lmss{nat}^{\star}_{d_{j_{k}}},
\end{equation}
where we have written $\lmss{nat}_{d_{1}} = \lmss{nat}_{d_{1}} \times 1$ and $\lmss{nat}_{d_{2}} = 1 \times \lmss{nat}_{d_{2}}$ for brevity. We write a representation $\eqref{space_inv}$ as the tensor products of two representations $a$ and $b$ of respectively $U(d_{1})$ and $U(d_{2})$
\begin{equation*}
	\lmss{nat}_{d_{i_{1}}} \otimes \lmss{nat}_{d_{j_{1}}}^{\star} \otimes \cdots \otimes \lmss{nat}_{d_{i_{k}}} \otimes \lmss{nat}_{d_{j_{k}}}^{\star} = \left(a \otimes b \right),
\end{equation*}
by setting $a$ to be equal to the tensor product $\eqref{space_inv}$ in which $\lmss{nat}_{d_{2}}$ and $\lmss{nat}_{d_{2}}^{\star}$ have been replaced  by the trivial representation. The representation $b$ is defined similarly. The space of invariant of \eqref{space_inv} is the tensor product of the space of invariants of $a$ and $b$. One must have an equal number of the representation $\lmss{nat}_{d_{i}}$ and of its contragredient representation for the space of invariants of the representation $\eqref{space_inv}$ to be non trivial.
Let $A \in \mathcal{M}_{n}$ and $1 \leq i,j\leq n$ two integers. We denote by $A(i,j)$ the block of $A$ of dimension $d_{i}\times d_{j}$ in position $(i,j)$. We denote by $\tilde{A}^{i}_{j}$ the matrix of dimension $(d^{1}+d^{2})\times (d^{1} + d^{2})$ having the block $A(i,j)$ in position $(i,j)$ and the remaining coefficient equal to $0$. A set of generators for the polynomials invariant for the representation $U(d_{1}) \times U(d_{2})$ is given by
\begin{equation}
	\lmss{Tr}\left(P\left( \left(\tilde{A}^{i}_{j},~\left(\tilde{A}^{k}_{l}\right)^{t},~i,j,k,l \leq n\right)\right)\right)
\end{equation}
with $P$ a non commutative polynomial in $2n$ indeterminates.
Having discussed what are the polynomials invariants of $\lmss{nat}_{d_{1}} \times \lmss{nat}_{d_{2}}$ and $\lmss{nat}_{d_{1}}\oplus \lmss{nat}_{d_{2}}$, it is now straightforward to prove that a set of generators for the polynomial invariants of $U(\lmss{d})^{\sharp}$ is given by
\begin{equation*}
	\lmss{Tr}\left(P\left({A}^{i}_{j},~i,j,k,l \leq n \right)\right)
\end{equation*}
with $P$ a polynomial in the non commutative indeterminates $X^{i}_{j}, i,j\leq n$ and ${(X^{t})}^{k}_{l},k,l \leq n$ that satisfies the following condition. A monomial $X$ is in $P$ is for two consecutive indeterminates in $X$, say $X^{i}_{j}$ and $X_{k}^{l}$ in this order, we have $d_{j} = d_{l}$. The same condition holds for two consecutives indeterminates $X^{i}_{j}$ and ${X^{t}}^{l}_{k}$.
\par We state three lemmas that justify the introduction of the algebra of coloured Brauer diagrams.
We use the following convention: for a matrix $A \in \mathcal{M}_{N}(\mathbb{R})$ or in $\mathcal{M}_{N}(\mathbb{C})$, we set the expression $\left[A\right]^{-1}$ equal to the transpose of $A$ and we set $\left[A\right]^{1}$ equal to $A$. If $A$ is a matrix with quaternionic entries, we set $\left[A\right]^{-1}$ equal to $A^{\star}$ (the quaternionic transpose of $A$).
\begin{lemma}[\cite{levy}]
	\label{simple_lemma_levy}
	Let $N \geq 1$ an integer and $A_{1},\ldots,A_{k} \in \mathcal{M}_{n}(\mathbb{R})$. Let $b$ be a non-coloured oriented Brauer diagram, then
	\begin{equation*}
		\lmss{Tr}^{\otimes k}\left((A_{1} \otimes \ldots \otimes A_{k}) \circ \ncc{\rho}_{n}(b)\right) = \prod_{(i_{1},\ldots,i_{k}) \in \sigma_{(b,s)}} \lmss{Tr}\left(\left[A_{i_{1}}\right]^{s(i_{1})} \cdots \left[A_{i_{k}}\right]^{s(i_{k})}\right),
	\end{equation*}
where we have denoted by $\left( i_{1},\ldots,i_{k} \right)$ a cycle of $\sigma_{(b,s)}$.
\end{lemma}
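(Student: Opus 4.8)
The plan is to evaluate the left-hand side directly in coordinates and then to recognise the resulting sum of monomials as the diagrammatically-read product of traces on the right-hand side. First I would unfold the definitions of $\ncc{\rho}_{N}$ and of $\lmss{Tr}^{\otimes k}$. Writing $E_{ab}$ for the matrix units and using $\ncc{\rho}_{N}(b)=\sum_{\lmss{ker}(\lmss{i},\lmss{j})\geq \ncc{b}}E_{i_{1}j_{1}}\otimes\cdots\otimes E_{i_{k}j_{k}}$ together with the one-line identity $\lmss{Tr}(A_{m}E_{i_{m}j_{m}})=(A_{m})_{j_{m},i_{m}}$, one gets
\[
\lmss{Tr}^{\otimes k}\big((A_{1}\otimes\cdots\otimes A_{k})\circ\ncc{\rho}_{N}(b)\big)=\sum_{\lmss{ker}(\lmss{i},\lmss{j})\geq\ncc{b}}\ \prod_{m=1}^{k}(A_{m})_{j_{m},i_{m}}.
\]
The point is that this is a sum over labellings of the vertices $\{1,\ldots,k,1^{\prime},\ldots,k^{\prime}\}$ of the graph $\Gamma_{b}$ — bottom vertex $m$ labelled $i_{m}$, top vertex $m^{\prime}$ labelled $j_{m}$ — constrained to be constant along the links of $\ncc{b}$, and weighted by attaching the matrix $A_{m}$ (through the entry $(A_{m})_{j_{m},i_{m}}$) to the vertical edge $\{m,m^{\prime}\}$ of $\Gamma_{b}$.

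Next I would factor this sum over the connected components of $\Gamma_{b}$. Each such component is a loop alternating links of $\ncc{b}$ and vertical edges, it is exactly a cycle of the permutation $\Sigma_{(b,s)}$, and its bottom vertices, read in the order in which the loop visits them, form a cycle $(i_{1},\ldots,i_{\ell})$ of $\sigma_{(b,s)}$ (with $\ell\geq1$, since every loop of $\Gamma_{b}$ meets at least one vertical edge, hence at least one bottom vertex). Since the labels are constant along the $\ncc{b}$-links, within one component there is exactly one free summation index per link, and summing these out chains the matrices $A_{i_{1}},\ldots,A_{i_{\ell}}$ cyclically; the component therefore contributes a single trace $\lmss{Tr}\big([A_{i_{1}}]^{\varepsilon_{1}}\cdots[A_{i_{\ell}}]^{\varepsilon_{\ell}}\big)$ for suitable $\varepsilon_{j}\in\{-1,1\}$, and the product over all components gives the right-hand side, once one identifies the signs.

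The main obstacle will be pinning down that $\varepsilon_{j}=s(i_{j})$. Traversing a loop of $\Gamma_{b}$ in the orientation $s$, the vertical edge $\{m,m^{\prime}\}$ is crossed either from $m$ to $m^{\prime}$ or from $m^{\prime}$ to $m$; in the first case the entry $(A_{m})_{j_{m},i_{m}}$ is read with input $i_{m}$ and output $j_{m}$, contributing $A_{m}=[A_{m}]^{1}$, and in the second case the roles are reversed, contributing $A_{m}^{t}=[A_{m}]^{-1}$. One then checks against the definition of the sign function of an orientation of $\Gamma_{b}$ that the vertical edge at $m$ is crossed upward precisely when the $\ncc{b}$-link at $m$ is incoming at $m$, i.e. precisely when $s(m)=1$, which is exactly the convention $[A]^{s}$. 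The remaining ambiguities — the cyclic rotation of the word coming from the choice of starting point on the loop, and the simultaneous reversal of orientation on a whole loop — do not change the value, since the trace is cyclic and invariant under transposing the entire product. Finally, the quaternionic variant (with $[\,\cdot\,]^{-1}$ the quaternionic transpose and $\lmss{Tr}_{\mathbb{H}}=\mathcal{R}e(\lmss{Tr})$) follows from the same computation performed coefficientwise, using that $\mathcal{R}e$ is a trace on $\mathbb{H}$ compatible with the quaternionic transpose; this recovers the statement of \cite{levy}.
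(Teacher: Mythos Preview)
Your argument is correct and is exactly the standard coordinate computation one would expect: expand $\ncc{\rho}_{N}(b)$ as a sum over labellings constant on the links of $\ncc{b}$, take the tensor trace to get $\sum\prod_{m}(A_{m})_{j_{m},i_{m}}$, factor over the loops of $\Gamma_{b}$, and read off each loop as a single trace with transposes dictated by the orientation. Your treatment of the sign $s(m)$ is right: with the paper's convention that $s(m)=1$ when the $\ncc{b}$-link is incoming at the bottom vertex $m$, the vertical edge is then outgoing at $m$, so it is traversed $m\to m^{\prime}$ and $A_{m}$ enters untransposed. The residual ambiguities you mention (cyclic rotation and global reversal of a loop) are indeed harmless by the trace identities.

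There is nothing to compare against in the paper: the lemma is quoted from \cite{levy} and no proof is given here; the paper only uses it to deduce the coloured and quaternionic versions that follow. Your write-up actually supplies more than the paper does. One small remark: your closing paragraph on the quaternionic case is a separate statement (the paper's subsequent lemma, again deferred to \cite{levy}) rather than part of this one, and there the bookkeeping is slightly more delicate because $\rho^{\mathbb{H}}$ is defined as a convolution $\rho^{\mathbb{R}}\ast\gamma$ and one has to check that the $\gamma$-factor produces exactly the quaternionic transpose and the $\mathcal{R}e$; but the real case you were asked about is fine as written.
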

 With the coloured version of the representation $\rho_{d}$ we defined in the previous section, the following lemma is a simple consequence of Lemma \ref{simple_lemma_levy}.
\begin{lemma}
\label{brauertrace}
	Let $N \geq 1$ an integer, and $\lmss{d}=(d_{1},\ldots,d_{n})$ a partition of $N$ into $n$ parts. Let $A_{1},\ldots,A_{k} \in \mathcal{M}_{n}(\mathbb{R})$. Let $b$ be a coloured oriented Brauer diagram, then
	\begin{equation*}
	\resizebox{\hsize}{!}{$
		\lmss{Tr}^{\otimes k}\left((A_{1} \otimes \ldots \otimes A_{k}) \circ \rho_{\lmss{d}}((b,s))\right) = \displaystyle\prod_{\substack{(i_{1},\ldots,i_{k}) \\ \in \sigma_{(b,s)}}} \lmss{Tr}\left(\left[A_{i_{1}}(c_{b}(i_{1}),c_{b}(i^{\prime}_{1}) )\right]^{s(i_{1})} \cdots \left[A_{i_{k}}(c_{b}(i_{k}),c_{b}(i^{\prime}_{k})\right]^{s(i_{k})}\right).$}
	\end{equation*}
\end{lemma}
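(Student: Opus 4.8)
The plan is to deduce this coloured statement from its uncoloured ancestor, Lemma~\ref{simple_lemma_levy}, by realising $\rho_{\lmss{d}}$ evaluated on a single coloured diagram as a compression of $\ncc{\rho}_{N}$, and then transporting the compressing projections onto the matrices $A_{1},\dots,A_{k}$ using cyclicity of the trace. Write $\mathbb{R}^{N}=\bigoplus_{a=1}^{n}\mathbb{R}^{d_{a}}$ for the block decomposition attached to $\lmss{d}$, let $\Pi_{a}$ denote the orthogonal projection of $\mathbb{R}^{N}$ onto the $a$-th block, and for $1\leq\ell\leq k$ let $\Pi_{a}^{(\ell)}$ act as $\Pi_{a}$ on the $\ell$-th leg of $(\mathbb{R}^{N})^{\otimes k}$ and as the identity on the other legs. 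Unwinding the definitions of $\ncc{\rho}_{N}$ and of $\rho_{\lmss{d}}$ — and using that $c_{b}$ is constant on each block of $\ncc{b}$, so that the constraint $\lmss{ker}(\lmss{i},\lmss{j})\geq\ncc{b}$ stays meaningful after every index is forced into its prescribed block — one obtains, for $b=(\ncc{b},c_{b})$,
\begin{equation*}
\rho_{\lmss{d}}\bigl((b,s)\bigr)=\left(\bigotimes_{\ell=1}^{k}\Pi_{c_{b}(\ell^{\prime})}^{(\ell)}\right)\circ\ncc{\rho}_{N}(\ncc{b})\circ\left(\bigotimes_{\ell=1}^{k}\Pi_{c_{b}(\ell)}^{(\ell)}\right).
\end{equation*}
This requires no genuine computation: it is a direct inspection of the defining sums, and it refines the identity $\rho_{\lmss{d}}\circ\Delta=\ncc{\rho}_{N}$ recalled above by isolating the $c_{b}$-summand. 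Observe also that neither $\ncc{\rho}_{N}$ nor $\rho_{\lmss{d}}$ sees the orientation $s$, which enters only through $\sigma_{(b,s)}$, and that $\sigma_{(b,s)}=\sigma_{(\ncc{b},s)}$ depends only on the underlying uncoloured oriented diagram.

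Next I would insert this identity on the left-hand side of the asserted formula and use that $\lmss{Tr}^{\otimes k}$ is a trace on $\mathcal{M}_{N}(\mathbb{R})^{\otimes k}\cong\mathcal{M}_{N^{k}}(\mathbb{R})$ to cycle the rightmost projection tensor to the front; absorbing the projections into the $A_{\ell}$'s gives
\begin{equation*}
\lmss{Tr}^{\otimes k}\bigl((A_{1}\otimes\cdots\otimes A_{k})\circ\rho_{\lmss{d}}((b,s))\bigr)=\lmss{Tr}^{\otimes k}\bigl((B_{1}\otimes\cdots\otimes B_{k})\circ\ncc{\rho}_{N}(\ncc{b})\bigr),
\end{equation*}
where $B_{\ell}:=\Pi_{c_{b}(\ell)}A_{\ell}\Pi_{c_{b}(\ell^{\prime})}$ is the matrix obtained from $A_{\ell}$ by keeping only its block in position $(c_{b}(\ell),c_{b}(\ell^{\prime}))$, i.e. $B_{\ell}=A_{\ell}(c_{b}(\ell),c_{b}(\ell^{\prime}))$ re-embedded in $\mathcal{M}_{N}(\mathbb{R})$. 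Applying Lemma~\ref{simple_lemma_levy} to the tuple $(B_{1},\dots,B_{k})$ then produces $\prod_{(i_{1},\dots,i_{m})\in\sigma_{(b,s)}}\lmss{Tr}\bigl([B_{i_{1}}]^{s(i_{1})}\cdots[B_{i_{m}}]^{s(i_{m})}\bigr)$; since each $B_{i_{j}}$ is supported in a single block, the trace of such a product of $N\times N$ matrices equals the trace of the product of the corresponding genuine rectangular blocks $\bigl[A_{i_{j}}(c_{b}(i_{j}),c_{b}(i_{j}^{\prime}))\bigr]^{s(i_{j})}$, which is precisely the claimed factor.

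The single point that deserves care — and the only real obstacle — is to verify that, along each cycle of $\sigma_{(b,s)}$, the source and target blocks of consecutive (possibly transposed) factors actually match, so that the product is a well-defined endomorphism of a single block whose trace makes sense. This is exactly what the compatibility condition $\ncc{b}\circ d_{c_{b}}=d_{c_{b}}$ in the definition of a coloured Brauer diagram, together with the fact that $\sigma_{(b,s)}$ is built as the trace on $\{1,\dots,k\}$ of the oriented loops of the graph $\Gamma_{b}$, is designed to encode: following such an oriented loop, every $\ncc{b}$-edge joins two vertices carrying the same block, while each vertical edge $\{i_{j},i_{j}^{\prime}\}$ switches between the colours $c_{b}(i_{j})$ and $c_{b}(i_{j}^{\prime})$ in the direction dictated by $s(i_{j})$, so that the colours telescope consistently around the loop. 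Once this bookkeeping is settled, the lemma follows from Lemma~\ref{simple_lemma_levy} together with cyclicity of the trace, everything else being a mechanical unravelling of definitions.
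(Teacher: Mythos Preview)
Your reduction to Lemma~\ref{simple_lemma_levy} via the compression identity
\[
\rho_{\lmss{d}}\bigl((b,s)\bigr)=\Bigl(\bigotimes_{\ell}\Pi_{c_{b}(\ell^{\prime})}\Bigr)\circ\ncc{\rho}_{N}(\ncc{b})\circ\Bigl(\bigotimes_{\ell}\Pi_{c_{b}(\ell)}\Bigr)
\]
rests on the assertion that ``$c_{b}$ is constant on each block of $\ncc{b}$''. This is not what the definition of a coloured Brauer diagram says: the condition is $\ncc{b}\circ d_{c_{b}}=d_{c_{b}}$, i.e.\ only the \emph{dimension function} $d_{c_{b}}$ is constant on each link, not the colouring itself (see the example in Fig.~\ref{bicolourbrauer}, where a cyan vertex is linked to a magenta one with $d_{\bcyan}=d_{\bmage}$). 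When two endpoints of a link carry distinct colours $a\neq a'$ with $d_{a}=d_{a'}$, the compression formula collapses. Already for $k=1$, $\ncc{b}=\{\{1,1'\}\}$, $c_{b}(1)=a$, $c_{b}(1')=a'$, one has $\Pi_{a'}\,\ncc{\rho}_{N}(\ncc{b})\,\Pi_{a}=\Pi_{a'}I_{N}\Pi_{a}=0$, whereas the right-hand side of the lemma is $\lmss{Tr}\bigl(A_{1}(a,a')\bigr)$, the trace of an off-diagonal $d_{a}\times d_{a}$ block, which has no reason to vanish. So the identity you feed into Lemma~\ref{simple_lemma_levy} is false in general, and the deduction breaks down precisely for the mixing diagrams (those in $\brauerd\setminus\mathcal{B}_{k,n}$) that the square-extraction sections actually need.

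Your argument is correct verbatim for non-mixing diagrams, and the fix for the general case is not far: replace the block projections by the partial isometries that identify two blocks of equal dimension via their local coordinates (equivalently, work directly with the local-index description of $E^{c(\ell'),c(\ell)}_{i_{\ell},j_{\ell}}$ and rerun the cycle-by-cycle computation of Lemma~\ref{simple_lemma_levy}). The colour-telescoping you describe in your last paragraph then becomes a statement about matching \emph{dimensions} rather than matching \emph{colours}, which is exactly what $\ncc{b}\circ d_{c_{b}}=d_{c_{b}}$ guarantees.
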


We also defined a coloured version of the representation $\rho^{\mathbb{H}}$ defined in \cite{levy} to study large Brownian quaternionic matrices. The following lemma is a straightforward corollary of the lemma $2.6$ in \cite{levy}.
\begin{lemma}
	Let $A_{1}, \ldots, A_{n} \in \mathcal{M}_{n}(\mathbb{K})$, $(b,s)$ an oriented coloured Brauer diagram, then
	\begin{equation*}
	\resizebox{0.95\hsize}{!}{$%
		\mathcal{R}e(\lmss{Tr}^{\otimes k})\left(\left(A_{1} \otimes \cdots \otimes A_{k} \circ \rho^{\mathbb{H}}(b)\right)\right) = \displaystyle\prod_{\substack{(i_{1},\ldots,i_{k}) \\ \in \sigma_{(b,s)}}} \mathcal{R}e\lmss{Tr}\left(\left[A_{i_{1}}(c_{b}(i_{1}),c_{b}(i^{\prime}_{1}) )\right]^{s(i_{1})} \cdots \left[A_{i_{k}}(c_{b}(i_{k}),c_{b}(i^{\prime}_{k})\right]^{s(i_{k})}\right)$}
	\end{equation*}
\end{lemma}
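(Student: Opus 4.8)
The plan is to reduce the statement to Lemma~$2.6$ of \cite{levy} in exactly the way Lemma~\ref{brauertrace} was deduced from Lemma~\ref{simple_lemma_levy}; the one new point is that the colouring is carried along untouched by the convolution defining $\rho^{\mathbb{H}}$. Concretely, I would first recall that, by construction, the coloured quaternionic representation factors through the coloured comultiplication $\Delta^{\lmss{d},-2}_{-2\lmss{d}}$: for a coloured Brauer diagram $b=(\ncc{b},c_{b})$,
\begin{equation*}
\rho^{\mathbb{H}}(b) = m\circ\bigl(\rho_{\lmss{d}}\otimes\gamma\bigr)\bigl(\Delta^{\lmss{d},-2}_{-2\lmss{d}}(b)\bigr) = m\circ\bigl(\rho_{\lmss{d}}(b)\otimes\gamma(\ncc{b})\bigr),
\end{equation*}
since $\Delta^{\lmss{d},-2}_{-2\lmss{d}}(b)=b\otimes\ncc{b}$. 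Hence the colouring $c_{b}$ enters only through the factor $\rho_{\lmss{d}}(b)$, while $\gamma(\ncc{b})$ depends on the underlying uncoloured diagram alone; likewise the orientation $s$, the permutation $\sigma_{(b,s)}$ and its cycle decomposition are functions of $\ncc{b}$ and $s$ only, which is why the right-hand side of the asserted identity is indexed by the cycles of $\sigma_{(b,s)}$, exactly as in the real case.

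Next I would invoke Lévy's Lemma~$2.6$, the uncoloured quaternionic analogue of Lemma~\ref{simple_lemma_levy}: for $B_{1},\dots,B_{k}\in\mathcal{M}_{N}(\mathbb{H})$ and an oriented uncoloured Brauer diagram $(\ncc{b},s)$,
\begin{equation*}
\mathcal{R}e\bigl(\lmss{Tr}^{\otimes k}\bigr)\bigl((B_{1}\otimes\cdots\otimes B_{k})\circ\rho^{\mathbb{H}}(\ncc{b})\bigr) = \prod_{(i_{1},\dots,i_{k})\in\sigma_{(\ncc{b},s)}}\mathcal{R}e\,\lmss{Tr}\bigl(\bigl[B_{i_{1}}\bigr]^{s(i_{1})}\cdots\bigl[B_{i_{k}}\bigr]^{s(i_{k})}\bigr),
\end{equation*}
with $\bigl[\,\cdot\,\bigr]^{1}$ the identity and $\bigl[\,\cdot\,\bigr]^{-1}$ the quaternionic transpose. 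The passage from here to the coloured statement is the same block--extraction computation that turns Lemma~\ref{simple_lemma_levy} into Lemma~\ref{brauertrace}: feeding $A_{1},\dots,A_{k}\in\mathcal{M}_{N}(\mathbb{K})$ into $\rho_{\lmss{d}}(b)$ amounts, on the $\ell$-th tensor leg, to selecting the block $A_{\ell}(c_{b}(\ell),c_{b}(\ell^{\prime}))$, and the index constraint $\lmss{ker}(\lmss{i},\lmss{j})\geq b$ built into $\rho_{\lmss{d}}$ forces, along each cycle $(i_{1},\dots,i_{k})$ of $\sigma_{(b,s)}$, that the sources and targets of the successive blocks match, so that $\bigl[A_{i_{1}}(c_{b}(i_{1}),c_{b}(i_{1}^{\prime}))\bigr]^{s(i_{1})}\cdots\bigl[A_{i_{k}}(c_{b}(i_{k}),c_{b}(i_{k}^{\prime}))\bigr]^{s(i_{k})}$ is a well-defined quaternionic matrix whose $\mathcal{R}e\,\lmss{Tr}$ is the contribution of that cycle. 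Multiplying these contributions over all cycles of $\sigma_{(b,s)}$ gives the displayed identity.

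The only point deserving a little care, which I would spell out, is the compatibility of $\Delta^{\lmss{d},-2}_{-2\lmss{d}}$ with composition of coloured diagrams --- the coloured counterpart of the corresponding statement in \cite{levy}, checked by the same diagrammatic argument. Granting it, the map $m\circ(\rho_{\lmss{d}}\otimes\gamma)$ intertwines composition of coloured diagrams with matrix multiplication because $\gamma$ commutes with $\ncc{\rho}_{N}$, so $\rho^{\mathbb{H}}$ is an algebra morphism and no further computation is required. In short, the lemma is a bookkeeping exercise: the colours live in the $\rho_{\lmss{d}}$-factor, hence in block extraction, and Lemma~$2.6$ of \cite{levy} supplies everything else. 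The step I expect to be slightly delicate to write cleanly is the index-matching along cycles in the coloured setting --- checking that the blocks one multiplies are always composable and that reversing the orientation of a cycle merely replaces each block by its quaternionic transpose, i.e.\ realises the transition between the two possible readings of that cycle.
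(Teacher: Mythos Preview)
Your proposal is correct and follows exactly the approach the paper takes: the paper simply states that the lemma ``is a straightforward corollary of the lemma $2.6$ in \cite{levy}'' without further argument, and you have spelled out precisely how that corollary goes --- via the factorisation $\rho^{\mathbb{H}}(b)=m\circ(\rho_{\lmss{d}}(b)\otimes\gamma(\ncc{b}))$ and the block-extraction mechanism already used to pass from Lemma~\ref{simple_lemma_levy} to Lemma~\ref{brauertrace}. Your write-up is in fact more detailed than the paper's.
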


\section{Square extractions of an unitary Brownian motion}
\label{squareextractions}
\label{square_case}

Let $\mathbb{K}=\mathbb{R}, \mathbb{C}$ or $\mathbb{H}$. If $n,d\geq 1$ are two integers, we denote by $\unitaryfd$ the quantum process on the dual Voiculescu group $\udualgroup$ extracting blocks of size $d\times d$ from a unitary Brownian motion of dimension $nd$:
\begin{equation}
	\begin{array}{cccc}
	\unitaryfd(t):&\udualgroup&\to&\mathcal{M}_{d}(L^{\infty-}(\Omega,\mathcal{F},\mathbb{P})) \\
	&u_{ij}&\mapsto&\munitaryfd(t)(i,j)
\end{array}
\end{equation}
The quantum process $\unitaryfdc$ and $\unitaryfdr$ are seen as valued in the algebraic probability spaces $\mathcal{M}_{N}(\mathbb{R}), \mathbb{E}\otimes \frac{1}{d}\lmss{Tr}$, respectively $\mathcal{M}_{N}(\mathbb{C}),\mathbb{E}\otimes \frac{1}{d}\lmss{Tr}$ while for the quaternionic case, it is mandatory to take the real part of the trace, $\unitaryfdh$ is valued into the algebraic probabilit space $\mathcal{M}_{N}(\mathbb{H}),\mathbb{E}\otimes\frac{1}{d}\mathcal{R}e\circ \lmss{Tr}$.
This section is devoted to the proof of our main theorem stated below.
\begin{theorem}
\label{maintheoremsquare}
	Let $n \geq 1$ an integer. Each of the three processes $\unitaryfd$ $\mathbb{K} = \mathbb{R},\mathbb{C}$ or $\mathbb{H}$, converges in non-commutative distribution as $d\rightarrow +\infty$ to the free $n$-dimensional Brownian motion $\freeunitary$.
\end{theorem}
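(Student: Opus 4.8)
The plan is to adapt L\'evy's method: compute the time derivative of the empirical (averaged, normalised) moments of $\unitaryfd$, rewrite it through the coloured Brauer algebra of Section \ref{schur_weyl}, identify its $d\to\infty$ limit, and recognise that limit as the generator $\mathcal{L}_{n}$ of $\freeunitary$ computed in \eqref{generatorfree}. Fix $n\ge 1$, a word $w=u_{i_{1}j_{1}}^{\varepsilon_{1}}\cdots u_{i_{p}j_{p}}^{\varepsilon_{p}}\in\udualgroup$ and a time $t\ge 0$, and set $m_{d}^{t}(w)=\mathbb{E}\big[\lmss{tr}\big(\unitaryfd(t)(w)\big)\big]$, with $\lmss{tr}=\tfrac1d\lmss{Tr}$ in the real and complex cases and $\lmss{tr}=\tfrac1d\re\circ\lmss{Tr}$ in the quaternionic case. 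By Lemmas \ref{simple_lemma_levy} and \ref{brauertrace} (and its quaternionic analogue) this normalised trace of a product of $d\times d$ blocks is, up to the factor $d^{-p}$ coming from $\lmss{tr}^{\otimes p}$, the pairing of $\mathbb{E}\big[\munitaryfd(t)^{\otimes p}\big]$ with the oriented coloured Brauer diagram $b_{\varepsilon,\lmss{i},\lmss{j}}$ obtained by twisting the cycle $c_{p}=(1,\dots,p)$ at the positions where $\varepsilon=\star$ and colouring it by $\lmss{i},\lmss{j}$, exactly as in the derivation of \eqref{generatorfree}. For $\mathbb{K}=\mathbb{C}$ one uses instead the conjugate-aware tensor formula \eqref{formula_tensor_process_c} and the associated representation; for $\mathbb{K}=\mathbb{H}$ one uses the coloured version $\rho^{\mathbb{H}}$ of L\'evy's convolution representation together with $\re\circ\lmss{Tr}$.

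Next I would differentiate $m_{d}^{t}(w)$ in $t$. Proposition \ref{meantensorlevy} gives $\mathbb{E}\big[\munitaryfd(t)^{\otimes p}\big]=\exp\!\big(pt\tfrac{c_{\mathfrak g}}{2}+t\sum_{i<j}\iota_{ij}(C_{\mathfrak g})\big)$, and the concrete forms of $C_{\mathfrak g}$ and $c_{\mathfrak g}$ recalled before Proposition \ref{meantensorlevy} express $C_{\mathfrak g}$ as $\tfrac1{\beta N}$ times a sum of a transposition term and a projector term (tensored with $\lmss{Re}^{\mathbb K}$, $\lmss{Co}^{\mathbb K}$), and $c_{\mathfrak g}=-1+\tfrac{2-\beta}{\beta N}I_{N}$, with $N=nd$. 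Inserting this, differentiating, and applying $\lmss{tr}^{\otimes p}$ yields a \emph{closed} differential system of the shape
\begin{equation*}
\frac{d}{dt}m_{d}^{t}(w)=-\frac{p}{2}\,m_{d}^{t}(w)+\frac{1}{nd}\sum_{1\le k<l\le p}(\pm)\,\big(\lmss{tr}^{\otimes p}\text{-pairing of }\mathbb{E}[\munitaryfd(t)^{\otimes p}]\text{ with an elementary diagram }\circ\,b_{\varepsilon,\lmss{i},\lmss{j}}\big)+O\!\big(\tfrac1{d^{2}}\big),
\end{equation*}
where the $O(1/d^{2})$ absorbs the $\tfrac{2-\beta}{\beta N}I_{N}$ correction in $c_{\mathfrak g}$, and where each elementary diagram (a coloured transposition or projector $\ncc{\tau}_{kl}$, $\ncc{e}_{kl}$ with the appropriate colours forced by $\lmss{i},\lmss{j}$) composed with $b_{\varepsilon,\lmss{i},\lmss{j}}$ contributes, after $\lmss{tr}^{\otimes p}$, a power $d^{\lmss{nc}(\ncc{b}\vee 1)}$ dictated by the number of loops it produces.

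The crucial step is to extract the leading order as $d\to\infty$. By Propositions \ref{projtrmult}, \ref{projtrmultblocks} and \ref{mainpropbrauer}, composing $b_{\varepsilon,\lmss{i},\lmss{j}}$ with an elementary diagram changes $\lmss{nc}(\ncc{b}\vee 1)$ by at most one, and increases it by exactly one precisely for the diagrams in the sets $\trp{b^{\varepsilon}}$ and $\prp{b^{\varepsilon}}$ introduced in Section \ref{schur_weyl}. Since the extra loop contributes a factor $d$ while the Casimir carries $\tfrac1{nd}$, the terms from $\trp{b^{\varepsilon}}$ and $\prp{b^{\varepsilon}}$ survive at order $\tfrac1n$, and all other terms are $O(1/d^{2})$; the propositions just cited also pin down the colours of the surviving diagrams and the signs $s(i)s(j)=\pm1$ controlling whether a transposition or a projector contributes, so that the $d\to\infty$ limit of the right-hand side is, term by term, the right-hand side of the free generator formula \eqref{generatorfree} (equivalently the ODE \eqref{diff_equation_3} for the free moments). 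These error estimates are uniform in $t$ on every compact interval because $|m_{d}^{t}(\cdot)|$ is bounded (averages of normalised traces of unitaries, polynomially bounded in the word length), so a Gronwall/Duhamel argument on $[0,T]$ extracts a limit $m_{\infty}^{t}(w)$ solving exactly the system of the lemma above \eqref{diff_equation_3}, with initial datum $m_{\infty}^{0}(w)=\varepsilon(w)$.

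Finally I would invoke uniqueness: as shown in Section \ref{freemultbrown}, the operator generating that system is nilpotent (of order $p$), so its unique solution with the given initial condition is the polynomial $\exp(t\mathcal{L}_{n})(w)=\tau(\freeunitary_{t}(w))$. Hence $m_{d}^{t}(w)\to\tau(\freeunitary_{t}(w))$ for every word $w$ and every $t$, which is convergence in non-commutative distribution of $\unitaryfd(t)$ to $\freeunitary(t)$; the multi-time finite-dimensional distributions are handled in the same way (tensoring words attached to different times and using the Markov property of $\munitaryfd$, which leads to the analogous nilpotent system whose limit is the free-L\'evy finite-dimensional law of $\freeunitary$). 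I expect the main obstacle to be the combinatorial bookkeeping of the third paragraph — proving that exactly the coloured elementary diagrams of $\trp{b^{\varepsilon}}$, $\prp{b^{\varepsilon}}$ survive and that, with their colour-$\delta$'s and signs, they reproduce \eqref{generatorfree} exactly — together with carrying this through the two variant representations: the conjugate contributions in the complex case via \eqref{formula_tensor_process_c}, and, in the quaternionic case, the convolution representation $\rho^{\mathbb{H}}$ with $\re\circ\lmss{Tr}$ and the verification that the extra $\brauerz(-2)$ factor and the $\tfrac{2-\beta}{\beta N}$ term contribute nothing in the limit.
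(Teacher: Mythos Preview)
Your treatment of the one-dimensional marginals is essentially the paper's: define Brauer-diagram statistics of $\mathbb{E}[\munitaryfd(t)^{\otimes p}]$, differentiate via Proposition \ref{meantensorlevy}, use Propositions \ref{projtrmult}--\ref{mainpropbrauer} to see that only diagrams in $\trpnc{\ncc{b}}\cup\prpnc{\ncc{b}}$ survive as $d\to\infty$, and match the limiting generator with $\mathcal{L}_{n}$ from \eqref{generatorfree}. Two small bookkeeping points: the correction from $c_{\mathfrak g}$ is $O(1/d)$, not $O(1/d^{2})$, and the non-surviving elementary-diagram terms are $O(1/d)$ when $\lmss{nc}(\ncc{b}\vee\ncc{r})=\lmss{nc}(\ncc{b}\vee 1)$; this does not affect the conclusion.

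The genuine gap is your handling of the \emph{multi-time} marginals. You write that ``tensoring words attached to different times and using the Markov property \ldots\ leads to the analogous nilpotent system whose limit is the free-L\'evy finite-dimensional law of $\freeunitary$''. But the Markov/independent-increments structure of $\munitaryfd$ gives you, in the limit, a system that factorises over \emph{tensor-independent} time blocks; nothing in your argument explains why the limiting joint moments coincide with those produced by \emph{free} independence of the increments of $\freeunitary$. Identifying the limit as the free L\'evy law is precisely the nontrivial step, and your sketch does not supply it.

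The paper closes this gap differently and more cheaply: it does not attempt to analyse a multi-time differential system at all. Instead it observes that the law of each increment $\unitaryfd(s,t)$ is invariant under conjugation by $U(d,\mathbb{K})$, applies the Voiculescu/Collins--\'Sniady asymptotic-freeness theorem (Theorem \ref{collins_sniady}) to the classically independent families of increments, and concludes by induction that $\{\unitaryfd(s_{i},t_{i})\}$ is asymptotically free, hence converges jointly to $\{\freeunitary(s_{i},t_{i})\}$. If you want to avoid Theorem \ref{collins_sniady}, you would need an argument of the type carried out in Section \ref{rectangularextractions} for the rectangular case (where that theorem is unavailable): show directly that mixed cumulants of increments vanish in the limit. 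That is substantially more work than the one-line invocation the paper uses here, and your last paragraph does not contain it.
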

Let $t\geq 0$, $\mathbb{K}$ be one of the three divisions algebra in \ref{maintheoremsquare} and $n\geq 1$ an integer, that stands for the number of blocks the random matrices will be cut into and $d\geq 1$ an other integer that is the dimension of each of these blocks. Set $N=nd$. The proof of \ref{maintheoremsquare} proceeds as follows. First, we focus on the one dimensional marginals. We begin by showing that a suitable set of statistics of the process which comprises its distribution satisfies a differential system. Secondly, the convergence of the generator as the dimension $d \rightarrow +\infty$ of that system is proved and we give a formula for the limit.
Finally, we draw a comparison between the generator of the limit of the process $\unitaryfd$ and the generator of the free $n-$dimensional Brownian motion $\freeunitary$.
Our proof of the convergence of the multidimensional marginals of the process $\unitaryfd$, heavily rely on Theorem \ref{collins_sniady} and conjugation invariance of the law of the Browian motions on unitary groups.
\par This method has already been applied by Levy in \cite{levy} to prove the convergence in non commutative distribution of the unitary Brownian motion, this corresponds to the case $n=1$ in our setting. Introducing of the algebra of coloured Brauer diagrams makes the computations for the case $n=1$ and $n>1$ very similar, which is obviously an argument in favor of the lengthy exposition made in Section \ref{schur_weyl}.
\par Concerning the outline, the complex, real and quaternionic cases are treated separately to prove the convergence of the one dimensional marginals.

\par We use the shorter notation $\rho^{\mathbb{K}}$ for one the three representations of the algebra of coloured Brauer diagrams $\brauer(\underset{n}{\underbrace{(d,\ldots,d)}})$ we defined in the last section. We use the injection $\Delta$ of the algebra $\brauerz(nd)$ of uncoloured Brauer diagrams without making mention of it.

\subsection{Convergence of the one-dimensional marginals, the complex case}
\label{complex}
Let $t \geq 0$, the convergence in distribution of the one dimensional marginals of the quantum process $\unitaryfd$ is implied by the following proposition.
\begin{proposition}
\label{convcomplexsquare}
Let $t \geq 0$, $r\geq 0$ and $\alpha,\beta \in \{1,\ldots,n\}^{r}$. The mixed moments of the family
	\begin{equation*}
\left\{ \munitaryfdc(t)(\alpha_{i},\beta_{i}), \left(\munitaryfdc(t)(\alpha_{i},\beta_{i})\right)^{t}, \overline{\munitaryfdc(t)(\alpha_{i},\beta_{i})}, \left(\munitaryfdc(t)(\alpha_{i},\beta_{i})\right)^{\star}, i \leq r \right\}
	\end{equation*}
in the tracial algebra $\mathcal{M}_{d}(L^{\infty}(\Omega,\mathcal{F},\mathbb{P},\mathbb{C}),\mathbb{E} \otimes \lmss{Tr})$ converge as $d \rightarrow +\infty$.
\end{proposition}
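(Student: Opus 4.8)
The plan is to reduce the statement to the computation of expectations of traces of words in the block-extracted Brownian motion, and then to show these converge by setting up and solving a differential system governed by the coloured Brauer algebra, exactly as in L\'evy's treatment of the case $n=1$. First I would observe that, because the trace is tracial and by the definition of non-commutative distribution, it suffices to prove convergence of quantities of the form
\begin{equation*}
\mathbb{E}\otimes\lmss{Tr}\Big( w^{\otimes}\big(\munitaryfdc(t)(\alpha_{i},\beta_{i})\big)\circ \rho^{\mathbb{C}}(b) \Big)
\end{equation*}
for an arbitrary word $w$ in the letters $x_{1},\bar x_{1}$ (encoding which entries and which of the four operations $A, A^{t}, \bar A, A^{\star}$ appear) and a suitable Brauer diagram $b$ recording the cyclic structure of the trace. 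Using Lemma \ref{brauertrace}, a monomial trace in the extracted blocks is rewritten as a pairing $(\lmss{Tr}_{\mathbb{C}}^{\otimes k})\big((\munitaryfdc(t))^{\otimes k}\circ \rho^{\mathbb{C}}(\ncc{b},c)\big)$ against a coloured Brauer diagram; the colours of $(\ncc b,c)$ are precisely the indices $\alpha_{i},\beta_{i}$.

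Second, I would differentiate in time. Writing $M_{k}(t) = \mathbb{E}\big[ w^{\otimes}(\munitaryfdc(t))\big] \in \mathcal{M}_{N}(\mathbb{C})^{\otimes k}$, the heat-kernel formula \eqref{formula_tensor_process_c} of Proposition \ref{meantensorlevy} gives
\begin{equation*}
\frac{d}{dt} M_{k}(t) = -\frac{k}{2}M_{k}(t) + \sum_{\substack{1\le i,j\le k\\ w_{i}\neq w_{j}}} \iota_{ij}(\lmss{P})\,M_{k}(t) - \sum_{\substack{1\le i,j\le k\\ w_{i}=w_{j}}} \iota_{ij}(\lmss{T})\,M_{k}(t).
\end{equation*}
Pairing with a fixed coloured oriented Brauer diagram and using that $\lmss{P}$ and $\lmss{T}$, under $\rho^{\mathbb{C}}$, act on diagrams as post-composition by an (exclusive or diagonal) projector, respectively transposition, one converts this into a closed differential system on the finite-dimensional space spanned by the relevant coloured Brauer diagrams with loop-count weights. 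The fundamental identity \eqref{fondform} and Propositions \ref{projtrmult}--\ref{mainpropbrauer} control how $\lmss{nc}(\cdot)$ changes under these compositions, and hence which terms come with a positive power of $d$ (from a created loop coloured $d$, normalised by the trace $\tfrac1d\lmss{Tr}$), which come with a negative power, and which are $O(1)$; in the square case all colour blocks have the same dimension $d$, so the scaling is governed solely by the number of created/destroyed cycles.

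Third, I would pass to the limit $d\to\infty$. After the normalisation $\tfrac1d\lmss{Tr}$ on each cycle, a term survives iff composing with the elementary diagram neither creates nor destroys a cycle beyond the combinatorially forced amount; the surviving terms are exactly those indexed by $\ncc\tau\in\trp{\cdot}$ and $\ncc e\in\prp{\cdot}$ with the appropriate (diagonal vs.\ exclusive) colour constraints dictated by $w_{i}=w_{j}$ vs.\ $w_{i}\neq w_{j}$. One checks the $O(d^{-1})$ and $O(d)$ contributions either vanish or cancel (the $O(d)$ ones cancel because a created loop forces a $\delta$ on the colours that makes the coefficient vanish, as in the argument after Lemma \ref{lemmaun}), so the system has a finite limit; since it is a linear ODE with a limiting generator, the solutions converge, uniformly on compact time intervals, to the solution of the limiting system. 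This yields convergence of every mixed moment, hence Proposition \ref{convcomplexsquare}.

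**Main obstacle.** The delicate point is the bookkeeping of cycle creation/destruction under composition with projectors and transpositions in the \emph{coloured} setting, and verifying that the $O(d)$ terms genuinely cancel rather than merely being dominated: this is where Propositions \ref{projtrmult}, \ref{projtrmultblocks} and \ref{mainpropbrauer}, together with the sign-function (orientation) analysis, do the real work, and where the complex case differs from the real and quaternionic ones through the extra word structure $w_{i}=w_{j}$ vs.\ $w_{i}\neq w_{j}$ distinguishing $\lmss{T}$-type from $\lmss{P}$-type corrections. Once the limiting differential system is identified, comparing it with the generator \eqref{generatorfree} of $\freeunitary$ is then a formal matching of the two combinatorial formulae, which is carried out in the proof of Theorem \ref{maintheoremsquare}.
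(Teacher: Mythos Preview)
Your plan is essentially the paper's own argument: define the normalized statistics $\mathbb{m}_{d}^{\mathbb{C}}(w,b,t)=d^{-\lmss{nc}(\ncc{b}\vee\mathbf{1}_{k})}\lmss{Tr}^{\otimes k}\big(\rho^{\mathbb{C}}(b)\circ\mathbb{E}[w^{\otimes}(\munitaryfdc(t))]\big)$, differentiate using \eqref{formula_tensor_process_c} to obtain a finite linear ODE with generator $L_{d}^{\mathbb{C}}$, show $L_{d}^{\mathbb{C}}\to\bar L_{n}$, and conclude by continuity of the matrix exponential.

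One correction to your power counting: there are no $O(d)$ terms that need to cancel, and no colour-$\delta$ mechanism is invoked. After normalization, composition with an elementary diagram $\ncc{r}$ contributes the factor
\[
\frac{1}{nd}\,d^{\,\lmss{nc}(\ncc{b}\vee\ncc{r})-\lmss{nc}(\ncc{b}\vee\mathbf{1}_{k})},
\]
the exponent coming straight from the identity \eqref{fondform}. Proposition \ref{mainpropbrauer} gives $\lmss{nc}(\ncc{b}\vee\ncc{r})-\lmss{nc}(\ncc{b}\vee\mathbf{1}_{k})\in\{-1,0,1\}$, so the overall power of $d$ lies in $\{-2,-1,0\}$. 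The surviving terms are precisely those with exponent $0$, i.e.\ $\ncc{r}\in\trpnc{\ncc{b}}\cup\prpnc{\ncc{b}}$; all other terms are $O(d^{-1})$ and simply vanish in the limit. Your reference to ``the argument after Lemma \ref{lemmaun}'' is misplaced---that passage concerns cumulant formulae, not the present scaling analysis.
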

\label{propcomplex}
\begin{remarque}
	The last proposition does not only imply the convergence of the distribution of $\unitaryfd(t)$. In fact, the convergence stated in Proposition \ref{convcomplexsquare} is much more general and holds also for words on the matrix transpose ${\munitaryfd}^{t}$ and on the complex conjugate $\overline{\munitaryfd}(t)$ (without transposition).
\end{remarque}
Owing to equation \ref{formula_tensor_process_c} of Proposition \ref{meantensorlevy}, mean of polynomials of the matrix $\munitaryfdc(t)$ admits the following combinatorial formula:
\begin{equation}
\label{meancomplex}
	\mathbb{E}\left[w^{\otimes}({\munitaryfdc(t)}) \right] = \exp\left(-\frac{kt}{2} - \frac{1}{N}\sum_{\substack{1 \leq i,j \leq k, \\ w_{i} = w_{j}}} \rho^{\mathbb{C}}(\ncc{\tau}_{ij})  + \frac{1}{N}\sum_{\substack{1 \leq i,j \leq k, \\ w_{i} \neq w_{j}}} \rho^{\mathbb{C}}(\ncc{e}_{ij}) \right),~w\in\bar{\lmss{M}}_{2}(k).
\end{equation}
As explained previously, we are looking for a linear space of statistics that are polynomial functionals in the coefficients of a matrix (and its conjugate), which is manifestly invariant by the generator of the unitary Brownian motion and contains traces of matrix powers. To define such statistics, we use the algebra of coloured  Brauer diagrams defined in Section \ref{schur_weyl}.
The symbol $\overline{\lmss{M}}_{1}$ stands for the free monoids generated by two letters $\{x_{1},\overline{x}_{1}\}$. If $w \in \overline{\lmss{M}}_{1}$ is a word of length $k$ and $A \in \mathcal{M}_{nd}(\mathbb{C})$ a complex matrix, $w^{\otimes}(A)$ stands for the monomial in $\mathcal{M}_{nd}(\mathbb{C})^{\otimes}$ that is obtained by the substitutions rules: $x_{1}\to A, \overline{x}_{1}\to \overline{A}$.
Let $k\geq 1$ an integer. The subset of words in $\overline{M}_{1}$ of length $k$ is denoted $\overline{\lmss{M}}_{1}(k)$. Let $w \in \overline{\lmss{M}}_{1}(k)$ a word and $b$ a (coloured) Brauer diagram in $\brauer$. We set
\begin{equation}
\lmss{m}_{d}^{\mathbb{C}}(w,b,A) = \lmss{Tr}(\rho^{\mathbb{C}}(b) \circ w^{\otimes}(A)),~A \in \mathcal{M}_{N}(\mathbb{C}).
\end{equation}
We are now concerned with the derivative of the normalized statistic $\lmss{m}^{\mathbb{R}}_{s}$ defined by  $$\mathbb{m}_{d}^{\mathbb{C}}\left(w,b,t\right) = d^{-\lmss{nc}(\ncc{b} \vee \mathbf{1}_{k})}\lmss{m}^{\mathbb{C}}_{d}(w,b,\mathbb{E}\left[\munitaryfdc\right]) $$
The statistic $\mathbb{m}_{d}^{\mathbb{C}}$ is extended as a linear function on the space $\mathcal{B}_{k} \otimes \overline{\lmss{M}}_{2}(k) = \brauer(nd) \otimes \mathbb{R}\left[\overline{\lmss{M}}_{2}(k)\right]$. Note that owing to \ref{mainpropbrauer}, the range of $b \to \mathbb{m}_{d}^{\mathbb{C}}(b,t)$ comprises the distribution of $\unitaryfdc(t)$. By using formula \eqref{meancomplex}, we prove the existence of an operator $L^{\mathbb{C}}_{d}: \mathcal{F}\left(\mathcal{B}_{k} \otimes \bar{\lmss{M}}_{2}(k) \right) \rightarrow \mathcal{F}\left(\mathcal{B}_{k} \otimes \bar{\lmss{M}}_{2}(k) \right)$ such that
\begin{equation*}
	\begin{split}
		\frac{d}{dt}\mathbb{m}_{d}^{\mathbb{C}}(b\otimes w,t) &= L_{d}^{\mathbb{C}}\left(\mathbb{m}_{d}^{\mathbb{C}}(t)\right)(b\otimes w),~\mathbb{m}_{d}^{\mathbb{C}}(b \otimes w,0)= \delta_{\Delta_{k}}(b), b\otimes w \in \brauer\times\overline{\lmss{M}}_{2}(k).
	\end{split}
\end{equation*}
In the last formula, we use the notation $\Delta_{k}$ for the support function of the set $\Delta_{k}\subset \mathcal{B}_{k,n}$ of Brauer diagrams that are diagonally coloured: $\brauergen \in \brauer \Leftrightarrow c_{b}(i) =c_{b}(i^{\prime}),~\forall 1 \leq i \leq k$. An explicit expression for the operator $L^{\mathbb{C}}_{d}$ is given by
\begin{equation}
\label{computgenc}
	\begin{split}
		&L^{\mathbb{C}}_{d}(g)(w,b) = \frac{k}{2}g(w,b) -\frac{1}{nd} \sum_{\substack{1 \leq i,j \leq k \\ w_{i} = w_{j}}}d^{\lmss{nc}(\ncc{b} \vee \ncc{\tau}_{ij}) - \lmss{nc}(\ncc{b} \vee 1)}g(w,\ncc{\tau}_{ij}\circ b) \\ &\hspace{7cm}+ \frac{1}{nd} \sum_{\substack{1 \leq i,j \leq k \\ w_{i} \neq w_{j}}}d^{\lmss{nc}(\ncc{b} \vee \ncc{e}_{ij}) - \lmss{nc}(\ncc{b} \vee 1)}g(w,\ncc{e}_{ij}\circ b).
	\end{split}
\end{equation}
The coefficients $d^{\lmss{nc}(\ncc{b}\vee \ncc{r}_{ij})}-\lmss{nc}(\ncc{b}\vee \mathbf{1}_{k})$ are obtained by using the fundamental equality $d^{\lmss{nc}(\ncc{b}\vee \ncc{r}})=\lmss{nc}(r\ncc{b}\vee\mathbf{1}_{k})+ \mathcal{K}_{d}(b,r) = \lmss{nc}(b\vee r)$, with $r$ an elementary diagram and $b$ a coloured Brauer diagram. Note that the symbol $\ncc{r}$ stands for two different objects in the last equation. If acting by multiplication on a coloured Brauer diagram, $\ncc{r}$ is to be seen as an element of $\brauer$ as described in the introduction of the present section. On the other hand, in the expression $\ncc{r} \vee \ncc{b}$,  the symbol $\ncc{r}$ stands for an elementary uncoloured Brauer diagram in $\brauerz$.

Proposition \ref{mainpropbrauer} implies that $\lmss{nc}(\ncc{b} \vee \ncc{\tau}_{ij}) - \lmss{nc}(\ncc{b} \vee 1) \in \{-1,0,1\} $ and $\lmss{nc}(\ncc{b} \vee \ncc{e}_{ij}) - \lmss{nc}(\ncc{b} \vee 1)\in \{-1,0,1\}$. Hence, the two sums in the right hand side of equation \eqref{computgenc}, as $d\rightarrow +\infty$, converge to two sums over elementary diagrams $r$ such that $r\circ b$ has one more cycle than $b$, or $rb$ has a loop (which means $rb = d (\ncc{r}\circ b)$). With the notation introduced in Section \ref{schur_weyl},
\begin{equation}
\label{system_limit_complex}
\begin{split}
L_{d}^{\mathbb{C}}(g)(b\otimes w)&=\frac{k}{2}b\otimes w -\frac{1}{n}\sum_{\substack{1 \leq i<j \leq k \\ w_{i} = w_{j},\\ \ncc{\tau}_{ij} \in \trpnc{\ncc{b}}}} g(\ncc{\tau}_{ij}\circ b \otimes w) + \frac{1}{n} \sum_{\substack{1 \leq i < j \leq k \\ w_{i} \neq w_{j} \\ \ncc{e}_{ij} \in \prpnc{b}}}g(\ncc{e}_{ij}\circ b \otimes w)+o_{d,\infty}(\frac{1}{d}) \\
&= \bar{L}_{n}(g)(b\otimes w) + o_{d,\infty}(\frac{1}{d})
\end{split}
\end{equation}
for any function $g \in \mathcal{F}\left(\brauer \otimes\overline{\lmss{M}}_{2}(k)\right)$. Note that the range of the two sums in the last equation does not exhaust the sets $\trp{b}$ or $\prp{b}$ since the elementary diagrams $\ncc{r}, r \in \{e,\tau\}$ are, by definition, in the range of the injection $\Delta$ of the algebra of uncoloured Brauer diagrams, non mixing. The differential system satisfied by the functional $t\to \mathbb{m}_{d}^{\mathbb{C}}(t)$ is linear and finite dimensional. Its solutions can be expressed as the (matrix) exponential of its generator $L_{d}^{\mathbb{C}}$. Since the matrix exponential is a continuous map, convergence of the generator $L_{d}^{\mathbb{C}}$ implies the convergence of $\mathbb{m}_{d}^{\mathbb{C}}$ to the solution $\overline{\mathbb{m}}_{n}$ of:
\begin{equation*}
\frac{d}{dt}\overline{\mathbb{m}}_{n}(t) = \bar{L}_{n}(\overline{\mathbb{m}}_{n}(t)),~ \overline{\mathbb{m}}_{n}(0) = \delta_{\Delta_{k}}.
\end{equation*}
We end this section by stating the convergence of the one dimensional marginal of the process $\unitaryfd$. In fact, for each word $u$ on the generators of the dual Voiculescu group, there exists a Brauer diagram $b_{u}$ and a word $w_{u}$ such that $\lmss{m}^{\mathbb{C}}_{d}(b_{w}, u_{w},t) = \frac{1}{d}\lmss{Tr}(\unitaryfd(t)(w))$. The point-wise convergence of $\lmss{m}^{\mathbb{C}}_{d}$ implies the convergence of $\frac{1}{d}\lmss{Tr}(\unitaryfd(t)(w))$ as $d\rightarrow +\infty$ . Later, we should exploit a property satisfied by the pair $(b_{u},w_{u})$ to draw a comparison between $\overline{L}_{n}$ and the generator of the free n-dimensional unitary Brownian motion.

\subsection{Convergence of the one-dimensional marginals, the real case.}
\label{real}
Let $t \geq 0$ a time and $k \geq 0$ an integer, that will be the size of the diagrams and length of words that are considered. To treat the real case, we define the real counterpart of the statistics $\mathbb{m}_{d}^{\mathbb{C}}$ that were defined in the previous section. We are more brief in this section to expose the convergence of the one dimensional marginals of $\unitaryfdr$ since the method used for the complex case is applied here too. The representation $\rho^{\mathbb{R}}$ is defined in Section \ref{schur_weyl}.For each real matrix $A \in \mathcal{M}_{N}(\mathbb{R})$, we define the function $\lmss{m}_{d}^{\mathbb{R}}(t)$ on the set of coloured brauer diagram $\brauer$ by the equation
\begin{equation*}
	\lmss{m}_{d}^{\mathbb{R}}(b,t) = \lmss{Tr}\left(\mathbb{E}\left[A^{\otimes k}\right] \circ \rho^{\mathbb{R}}(b)\right),~ b \in \brauer
\end{equation*}
and extend it linearly to $\mathbb{R}\left[\brauer\right]$. The normalized statistics $\mathbb{m}_{d}^{\mathbb{R}}(b,t)$ of the real unitary Brownian diffusion is
$$ \mathbb{m}_{d}^{\mathbb{R}}(b,t) = d^{-\lmss{nc}(\ncc{b} \vee 1)}\lmss{Tr}\left(\mathbb{E}\left[\munitaryfdr(t)^{\otimes k}\right] \circ \rho^{\mathbb{R}}(b)\right),~ b \in \brauer.$$
By using the representation $\rho^{\mathbb{R}}$, the mean $\mathbb{E}\left[\munitaryfdr(t)^{\otimes k}\right]$ can be expressed as:
\begin{equation}
\label{meanreal}
	\mathbb{E} \left[{\munitaryfdr(t)}^{\otimes k} \right]= \exp\left(-\frac{kt(N-1)}{2N}+t\left(-\frac{1}{N}\sum_{\substack{1 \leq i,j \leq k}}\rho^{\mathbb{R}}(\ncc{\tau}_{ij}) + \frac{1}{N} \sum_{ \substack{1 \leq i < j \leq k}} \rho^{\mathbb{R}}(\ncc{e}_{ij}) \right)\right).
\end{equation}
Once again the range of $\mathbb{m}_{d}^{\mathbb{R}}(t)$ comprises the distribution of $\unitaryfdr$. We compute the derivative of $\mathbb{m}_{d}^{\mathbb{R}}$. We apply formula \eqref{meanreal} to get:
\begin{equation*}
\frac{d}{dt} \mathbb{m}_{d}^{\mathbb{R}}(b,t) = L_{d}^{\mathbb{R}}(\mathbb{m}^{\mathbb{R}}_{d}),~\mathbb{m}_{d}^{\mathbb{R}}(0) = \delta_{\Delta_{k}}.
\end{equation*}
where the operator $L_{d}^{\mathbb{R}}$ acting on linear forms $\mathbb{R}\left[\brauer \right]$ is defined by the formula:
\begin{equation*}
\resizebox{\hsize}{!}{$
	L_{d}^{\mathbb{R}}(g)(b)= -\frac{k(N-1)}{2N}g(b) -\frac{1}{nd}\left(\displaystyle\sum_{\ncc{\tau} \in \nccc{\tr}}d^{\lmss{nc}(\ncc{\tau} \vee \ncc{b}) - \lmss{nc}(\ncc{b} \vee 1)}g(\ncc{\tau} \circ b)+\displaystyle\sum_{\ncc{e}\in\nccc{\pr}} d^{\lmss{nc}(\ncc{e} \vee \ncc{b}) - \lmss{nc}(\ncc{b} \vee 1)}g(\ncc{e}\circ b) \right)$}.
\end{equation*}
with $g \in \mathbb{R}\left[\brauer\right]^{\star}$. Again, as for the complex case, the last two sums localize to sums over diagrams in, respectively $\trp{b}$ and $\prp{b}$ if we let $d \rightarrow +\infty$. If we define the operator $L_{n}$ acting on functions of Brauer diagrams, by
\begin{equation}
	\label{system_limit_real}
	L_{n}(g) = -\frac{k}{2}g(b)-\frac{1}{n}\sum_{\substack{\ncc{\tau} \in \trpnc{b}}} g(\ncc{\tau} \circ b) + \frac{1}{n} \sum_{\substack{\ncc{e} \in \prpnc{b}}}g(\ncc{e}\circ b),
\end{equation}
with $g \in \mathbb{R}\left[\brauer\right]^{\star}$, we get $L_{d}^{\mathbb{R}} = L_{n} + o_{d,\infty}(d)$. Again, as for the complex case, the convergence of the generator of the system satisfied by the statistics $\lmss{m}_{d}^{\mathbb{R}}$ implies the convergence of the solution to the function $\mathbb{m}_{n}$ that satisfies the differential system:
\begin{equation*}
\frac{d}{dt}\mathbb{m}_{n}(t) = L_{n}(\mathbb{m}_{n}(t)),~\mathbb{m}_{n}(0) = \delta_{\Delta_{k}}.
\end{equation*}

\subsection{Convergence of the one-dimensional marginals, the quaternionic case}
Let $t \geq 0$ a time. As we did for the complex and real cases, we use the representation $\rho^{\mathbb{H}}$ to define, for any matrix of size $N\times N$ with quaternionic entries, a function on the set of coloured Brauer diagrams $\brauer_{k}$ of size $k$ (we recall that the dimension function used to defined $\brauer_{k}$ is the constant dimension function on $\llbracket 1,\ldots,n\rrbracket$ equal to $d$),
$$
\lmss{m}_{d}^{\mathbb{H}}(A,b) = -2\mathcal{R}e\lmss{Tr}\left(A \circ \rho^{\mathbb{H}}(b)\right),~A\in\mathcal{M}_{N}(\mathbb{H})
$$
Further, we normalize these statistics using a slighlty different normalization factor that was used for the real and the complex case, the main reason being that $\rho^{\mathbb{H}}$ is a representation of the algebra $\brauer(-2N)$:
$$
~\mathbb{m}_{d}^{\mathbb{H}}(t,b) = \frac{1}{(-2d)^{\lmss{nc}(\ncc{b} \vee \mathbf{1}_{k})}} \lmss{m}_{d}^{\mathbb{H}}(b,\mathbb{E}\left[\munitaryfdh\right]).
$$
For the third time, the range of $\mathbb{m}_{d}^{\mathbb{H}}(t)$ comprises the distribution of $\unitaryfdh$. Owing to Proposition \ref{meantensorlevy}, the mean of tensor monomials of $\unitaryfdh(t)$ is expressed as:
\begin{equation}
\label{meanquaternion}
\mathbb{E}\left[\munitaryfdh(t)^{\otimes k} \right] = \exp\left(-\frac{tk}{2}\left(\frac{2N-1}{2N}\right) - \frac{1}{N}\sum_{\ncc{e} \in \nccc{\pr}}\rho^{\mathbb{H}}(\ncc{e}) + \frac{1}{N} \sum_{\ncc{\tau} \in \nccc{\tr}}\rho^{\mathbb{H}}(\ncc{\tau}) \right).
\end{equation}
Similar computations as done for the real and complex case lead to a differential system satisfied by the statistics $\mathbb{m}_{d}^{\mathbb{H}}$. Note that, contrary to the complex case, we do not need an extra parameter (a word in $\lmss{M}_{2}$), even if the conjugation is not trivial on $\mathbb{H}$. Let $b \in \brauer$ a Brauer diagram, there exists an on operator $L_{d}^{\mathbb{H}}$ acting on the space of functions on $\brauer$ such that:
\begin{equation*}
\frac{d}{dt} \mathbb{m}_{d}^{\mathbb{H}}(b,t) = L_{d}^{\mathbb{H}}(\mathbb{m}_{d}^{\mathbb{H}}(t))(b),~ \mathbb{m}_{d}^{\mathbb{H}}(0) = \delta_{\Delta}.
\end{equation*}
The operator $L_{d}^{\mathbb{H}}$ is given by the formula
\begin{equation*}
	\begin{split}
		L_{d}^{\mathbb{H}}(g)(b)&= -\frac{k(2N+1)}{4N}g(b) - \frac{1}{n}\sum_{\ncc{\tau} \in \tr} \left(-2d\right)^{\lmss{nc}(\ncc{b} \vee \ncc{\tau}) - \lmss{nc}(\ncc{b} \vee 1) - 1}g(\ncc{\tau} \circ b) \\
		&\hspace{5cm}+ \frac{1}{n}\sum_{\ncc{e} \in \nccc{\pr}} \left(-2d\right)^{\lmss{nc}(\ncc{b} \vee \ncc{e}) - \lmss{nc}(\ncc{b} \vee 1) - 1}g(\ncc{e}\circ b)
\end{split}
\end{equation*}
where $g\in\mathbb{R}\left[\brauer\right]^{\star}$. As $d$ tends to infinity the generator $L^{\mathbb{H}}_{d}$ converges to $L_{n}$ defined in equation \eqref{system_limit_real}. This is sufficient to prove the convergence of the one dimensional marginals of $\unitaryfdr$. In addition, $\unitaryfdr$ and $\unitaryfdh$ converge in non-commutative distribution to the same limit.
\subsection{Convergence of the one-dimensional marginals: conclusion}
Let $\mathbb{K}$ be one of the three division algebras $\mathbb{R},\mathbb{C}$ or $\mathbb{H}$. In the last section we proved the convergence of the one dimensional marginals of the process $\unitaryfd$. We exhibit differential system the limiting distributions are solution of and saw that limiting non-commutative distributions of the one dimensional marginals of the real and quaternionic processes are equal, we prove now that it also holds for the complex one dimensional marginals.

\par To that end, we define first the notion of compatible pairs in $\brauer \times \overline{\lmss{M}}_{1}(k)$. Let $b \in \brauer$ a Brauer diagram. Recall $\ncc{s}_{b}$ denotes the orientation of $b$ that is positive on the minimum of the cycles of $b$. Define the subset $\mathcal{S}(b)$ of $\{-1,1\}^{k}$ by $S(b) = \{s \in \{-1,1\}^{k} : \ncc{s}_{b}(i)\ncc{s}_{b}(j) = s_{i}s_{j}~\forall~i,j \leq k,~i \sim_{\ncc{b} \vee 1} j \}$ and the set of compatible words and diagrams by $\mathcal{C} = \{(b,w^{s}), b \in \brauer,~s \in \mathcal{S}(b)\}$.
If $w\in \overline{\lmss{M}}_{1}(k)$ is a word, $\overline{w}$ is obtained by substituting $x_{1}$ in place of $\overline{x}_{1}$ and vice-versa. The operator $L_{n}$,respectively $\overline{L}_{n}$, acts on the space of linear forms on $\mathbb{R}\left[\brauer\right]$, respectively on the space of linear forms on $\mathbb{R}\left[\overline{\lmss{M}}_{1}\right] \otimes \mathbb{R}\left[\brauer \right]$. To state the next proposition, we find convenient to consider the dual operators $L^{\star}_{n}$ and $\bar{L}^{\star}_{n}$ acting, respectively, on $\mathbb{R}\left[\brauer\right]$ and $\mathbb{R}\left[\overline{\lmss{M}}_{1}(k) \right] \otimes \mathbb{R}\left[\brauer\right]$.
\begin{lemma}
	\label{lemma_comparison}
	Let $b\in \brauer$ a Brauer diagram and $w \in \lmss{M}_{2}(k)$ a word. For any time $t\geq 0$, one has $\overline{\mathbb{m}}_{n}(b,w,t) = \overline{\mathbb{m}}_{n}(b,\overline{w},t)$. The real vector space generated by tensors of compatible words and diagrams is stable by the action $\overline{L}^{\star}_{n}$. In addition, $\bar{L}^{\star}_{n}(b \otimes w) = 	(L^{\star}_{n}(b) \otimes w)$ for any pair $(b,w) \in \mathcal{C}$.
\end{lemma}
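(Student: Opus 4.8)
The plan is to establish the three assertions in turn, the last two reducing to the cycle-count and sign-product statements of Propositions~\ref{mainpropbrauer} and~\ref{projtrmultblocks}.

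\emph{Symmetry under $w\mapsto\overline{w}$.} Let $\iota$ be the involution of $\brauer\otimes\overline{\lmss{M}}_{1}(k)$ that fixes the diagram and swaps $x_{1}\leftrightarrow\overline{x}_{1}$ throughout the word. I would first note that $\overline{L}_{n}$ of \eqref{system_limit_complex} commutes with $\iota$: its transposition sum ranges over pairs $(i,j)$ with $w_{i}=w_{j}$ and $\ncc{\tau}_{ij}\in\trpnc{\ncc{b}}$, its projector sum over pairs with $w_{i}\neq w_{j}$ and $\ncc{e}_{ij}\in\prpnc{\ncc{b}}$, and both index sets are unchanged when every letter of $w$ is flipped (the diagram being untouched). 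Since the Cauchy datum $\delta_{\Delta_{k}}$ does not depend on the word, both $t\mapsto\overline{\mathbb{m}}_{n}(t)$ and $t\mapsto\overline{\mathbb{m}}_{n}(t)\circ\iota$ solve $\tfrac{d}{dt}h=\overline{L}_{n}(h)$, $h(0)=\delta_{\Delta_{k}}$; uniqueness for this finite linear system gives $\overline{\mathbb{m}}_{n}(b,w,t)=\overline{\mathbb{m}}_{n}(b,\overline{w},t)$.

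\emph{The word conditions are redundant on compatible pairs.} Write $w^{s}$ for the word carrying $x_{1}$ at the positions where $s_{i}=1$ and $\overline{x}_{1}$ where $s_{i}=-1$, so that $w^{s}_{i}=w^{s}_{j}\iff s_{i}=s_{j}$, and fix $(b,w^{s})\in\mathcal{C}$, that is $s\in\mathcal{S}(b)$. By Proposition~\ref{mainpropbrauer}, membership $\ncc{\tau}_{ij}\in\trpnc{\ncc{b}}$ forces $i\sim_{\ncc{b}\vee 1}j$ together with $s'(i)s'(j)=1$ for any orientation $s'$ of $\ncc{b}$, in particular $\ncc{s}_{b}(i)\ncc{s}_{b}(j)=1$; since $i\sim_{\ncc{b}\vee 1}j$, the relation $s\in\mathcal{S}(b)$ then gives $s_{i}s_{j}=\ncc{s}_{b}(i)\ncc{s}_{b}(j)=1$, i.e. $w^{s}_{i}=w^{s}_{j}$. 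Symmetrically $\ncc{e}_{ij}\in\prpnc{\ncc{b}}$ forces $i\sim_{\ncc{b}\vee 1}j$ and $\ncc{s}_{b}(i)\ncc{s}_{b}(j)=-1$, hence $w^{s}_{i}\neq w^{s}_{j}$. Unwinding the definitions of the dual operators, $\overline{L}^{\star}_{n}(b\otimes w^{s})$ is $-\tfrac{k}{2}\,b\otimes w^{s}$ plus $\mp\tfrac{1}{n}$ times the sums of $\ncc{\tau}_{ij}\circ b\otimes w^{s}$ over $\{\ncc{\tau}_{ij}\in\trpnc{\ncc{b}},\,w^{s}_{i}=w^{s}_{j}\}$ and of $\ncc{e}_{ij}\circ b\otimes w^{s}$ over $\{\ncc{e}_{ij}\in\prpnc{\ncc{b}},\,w^{s}_{i}\neq w^{s}_{j}\}$, while $L^{\star}_{n}(b)$ is the analogous expression without the word and without the word-conditions; by the above the word-conditions are automatic, so $\overline{L}^{\star}_{n}(b\otimes w^{s})=\bigl(L^{\star}_{n}(b)\bigr)\otimes w^{s}$ as soon as the right-hand side lies in the span of compatible tensors.

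\emph{Compatibility is preserved; conclusion.} Let $\ncc{\tau}_{ij}\in\trpnc{\ncc{b}}$ and put $b'=\ncc{\tau}_{ij}\circ b$. By Proposition~\ref{projtrmultblocks} the partition $\ncc{b'}\vee 1$ is obtained from $\ncc{b}\vee 1$ by splitting a single cycle in two, so each cycle of $\ncc{b'}\vee 1$ sits inside a cycle of $\ncc{b}\vee 1$, and the same proposition gives $\ncc{s}_{b'}(x)\ncc{s}_{b'}(y)=\ncc{s}_{b}(x)\ncc{s}_{b}(y)$ whenever $x\sim_{\ncc{b'}\vee 1}y$. Combined with $s\in\mathcal{S}(b)$ this yields $\ncc{s}_{b'}(x)\ncc{s}_{b'}(y)=s_{x}s_{y}$ for all $x\sim_{\ncc{b'}\vee 1}y$, i.e. $s\in\mathcal{S}(b')$ and $(\ncc{\tau}_{ij}\circ b,w^{s})\in\mathcal{C}$; the case $\ncc{e}_{ij}\in\prpnc{\ncc{b}}$ is identical. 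Hence the real span of $\{\,b\otimes w^{s}:b\in\brauer,\ s\in\mathcal{S}(b)\,\}$ is stable under $\overline{L}^{\star}_{n}$, and on it $\overline{L}^{\star}_{n}=L^{\star}_{n}\otimes\mathrm{id}$, which together with the previous paragraph proves the remaining two assertions.

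\emph{The main obstacle.} The one genuinely delicate point is that Proposition~\ref{projtrmultblocks} is stated for $b$ a twist of a single $k$-cycle permutation diagram, whereas iterating $\overline{L}^{\star}_{n}$ yields Brauer diagrams with several cycles. I would remove this restriction by localizing: the hypotheses force $i\sim_{\ncc{b}\vee 1}j$, so $\ncc{\tau}_{ij}$ (resp. $\ncc{e}_{ij}$) only alters the single cycle $C$ of $\ncc{b}\vee 1$ containing $i$ and $j$; the sub-diagram of $\ncc{b}$ carried by the vertices of $C$ is an irreducible Brauer diagram, hence lies in the twist-orbit of a single-cycle permutation diagram by Lemma~\ref{twistorbit} (using Lemma~\ref{twistmorphism}), and the conjugation-equivariance of twists and orientations recorded after Proposition~\ref{projtrmultblocks} transports the cycle-count and sign-product identities back to $\ncc{b}$. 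Checking that the diagonal coefficients match and that no spurious terms survive in $L^{\star}_{n}(b)\otimes w^{s}$ is then routine.
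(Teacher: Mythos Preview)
Your proof is correct and follows essentially the same approach as the paper: the invariance under $w\mapsto\overline{w}$ via the symmetry of the generator, the redundancy of the word-conditions on compatible pairs via Proposition~\ref{mainpropbrauer}, and the preservation of compatibility via the sign-product identity of Proposition~\ref{projtrmultblocks}. Your treatment is in fact more careful than the paper's terse argument, notably in flagging and resolving the localization issue (Proposition~\ref{projtrmultblocks} being stated only for twists of a single cycle), which the paper glosses over.
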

\begin{proof}
The first assertion of Lemma \ref{lemma_comparison} is trivial. Let $r \in \prp{b} \cup \trp{b}$, then $\ncc{s}_{r_{ij}\circ b}(i) \\ \ncc{s}_{r_{ij}\circ b}(j) = \ncc{s}_{b}(i)\ncc{s}_{b}(j)$ whenever $i\sim_{\ncc{rb} \vee 1}j$. It follows that $\mathcal{C}$ is stable by $\bar{L}_{n}$. Let $b$ a Brauer diagram. Let $(b,w^{s}) \in \mathcal{C}$ with $s\in S(b)$. From Proposition \ref{mainpropbrauer}, Section \ref{schur_weyl}, $\ncc{e}_{ij} \in \prp{b}$ if and only if $\ncc{s}_{b}(i)\ncc{s}_{b}(j) = -1 = s(i)s(j)$ and $\ncc{\tau}_{ij} \in \trp{b}$ if and only if $\ncc{s}_{b}(i)\ncc{s}_{b}(j) = 1 = s(i)s(j)$. Since $w^{s}_{i} = \overline{w^{s}}(j) \Leftrightarrow s(i)s(j)=-1$ and $w^{s}_{i} = w^{s}(j) \Leftrightarrow s(i)s(j)=1$ we get $\bar{L}^{\star}_{n}(b,w) = (L^{\star}_{n}(b),w)$.
\end{proof}
If $b$ and $w$ are compatible diagrams and word, with $b$ having only one cycle, Lemma \ref{lemma_comparison} implies the equalities:
\begin{equation*}
	\overline{\mathbb{m}}_{n}(b,w,t) = \overline{\mathbb{m}}_{n}(b,\overline{w},t),~\mathrm{and}~ \overline{\mathbb{m}}_{n}(b,w) = \mathbb{m}_{n}(b),
\end{equation*}
if the diagram $b$ has more than one cycle, the word $w$ in the first of the last two inequalities can partially conjugated: we can swap exchange all letters $x_{i}$ and $\overline{x_{i}}$ which positions in the word $w$ are two integers belonging to the same cycle of $b$ and leave the other letters untouched.

Now, to each word $u \in \udualgroup$ is associated a pair $(b_{u},w_{u})$ of compatible word and diagram (we are simply requiring that transposition and conjugation occur both at a time on the matrix $\munitaryfdc(t)$) such that $\frac{1}{d}\mathbb{E}\lmss{Tr}(\unitaryfdc(t)(u)) = \mathbb{m}_{d}^{\mathbb{C}}(b_{u},w_{u},t)$. As the dimension tends to infinity, $\frac{1}{d}\mathbb{E}\left[\lmss{Tr}(\unitaryfdc(t)(u))\right] \rightarrow \overline{\mathbb{m}}_{n}(b_{u},w_{u},t) = \mathbb{m}_{n}(b_{u},t)$.

Now owing to the formula $\eqref{generatorfree}$ for the generator of the pseudo-unitary diffusion, $\mathcal{L}_{n}(u) = \delta_{\Delta_{k}}(L_{n}(b_{u}))$. This last inequalities implies
\begin{equation}
\unitaryfd(t) \overset{\text{dist.nc.}}{\rightarrow} \freeunitary(t).
\end{equation}

\subsection{Convergence of the multidimensional marginals}
To finish the proof of Theorem \ref{maintheoremsquare}, we prove convergence of the multidimensional marginals of $\unitaryfd$ by using Theorem \ref{collins_sniady}, which in turn relies on conjugation invariant property of the process's distribution; for any  unitary matrix in $\mathbb{U}(d,\mathbb{K})$ and words $u_{1},\ldots,u_{q} \in \udualgroup$ in the dual Voiculescu group, the family $\{U\unitaryfd(t_{1})U^{-1},\ldots,U\unitaryfd(t_{q})U^{-1}\}$ has the same non-commutative distribution as the family $\{\unitaryfd(t_{1}),\ldots,\unitaryfd(t_{q})\}$.
\begin{theorem}[Voiculescu; Collins,Sniady]
\label{collins_sniady}
Choose $\mathbb{K} \in \{\mathbb{R}, \mathbb{C},\mathbb{H}\}$. Let $\left(A_{N,1},\ldots,A_{N,n} \right)_{N\geq1}$ and $\left(B_{N,1},\ldots,B_{N,n} \right)$ be two sequences of families of random matrices with coefficients in $\mathbb{K}$. Let $a_{1},\ldots,a_{n}$ and $b_{1},\ldots,b_{n}$ be two families of elements of a non commutative probability space $\left(\mathcal{A},\tau \right)$. Assume that the convergence in non-commutative distribution
	\begin{equation*}
		\left(A_{N,1},\ldots,A_{N,n} \right) \rightarrow (a_{1},\ldots,a_{n}) \textrm{ and } \left(B_{1,N},\ldots,B_{N,n} \right) \rightarrow \left(b_{1},\ldots,b_{n} \right)
	\end{equation*}
hold. Assume also that for all $N$, given a random matrix $U$ distributed according to the Haar measure on $\mathbb{U}(N,\mathbb{K})$ and independent of $\left(A_{N,1},\ldots, A_{N,n},B_{N,1},\ldots,B_{N,n} \right)$, the two families $(A_{N,1},\ldots,\\ $ $A_{N,n}, B_{N,1},\ldots,B_{N,n} )$ and $\left(UA_{N,1}U^{-1}, \ldots, UA_{N,n}U^{-1},B_{N,1},\ldots,B_{N,n} \right)$ have the same distribution. Then the families $\{a_{1},\ldots,a_{n}\}$ and $\{b_{1},\ldots,b_{n}\}$ are free.
\end{theorem}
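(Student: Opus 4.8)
The statement is the asymptotic freeness theorem of Voiculescu (for $\mathbb{K}=\mathbb{C}$) and Collins--Sniady (for $\mathbb{K}=\mathbb{R},\mathbb{H}$); the plan is to recall its proof in the diagrammatic language of Section~\ref{schur_weyl}. Write $\tau_{N}$ for the relevant normalised trace $\mathbb{E}\otimes\tfrac{1}{N}\lmss{Tr}$ (with $\mathcal{R}e$ inserted when $\mathbb{K}=\mathbb{H}$). \emph{Step 1 (randomisation).} By the conjugation-invariance hypothesis, the joint non-commutative distribution of $(A_{N,\cdot},B_{N,\cdot})$ equals that of $(UA_{N,\cdot}U^{-1},B_{N,\cdot})$ for $U$ Haar-distributed on $\mathbb{U}(N,\mathbb{K})$ and independent of the matrices; hence $(a_{\bullet},b_{\bullet})$ is also the limit of $(UA_{N,\cdot}U^{-1},B_{N,\cdot})$, and it suffices to prove that the latter randomised families are asymptotically free. \emph{Step 2 (criterion).} By multilinearity it is enough to show $\lim_{N}\tau_{N}\big[UC_{1}U^{-1}D_{1}\cdots UC_{\ell}U^{-1}D_{\ell}\big]=0$ whenever the $C_{k}$ are monomials in the $A_{N,\cdot}$ and the $D_{k}$ monomials in the $B_{N,\cdot}$ with $\tfrac{1}{N}\lmss{Tr}(C_{k})\to0$ and $\tfrac{1}{N}\lmss{Tr}(D_{k})\to0$; here we have used $p(UA_{N,\cdot}U^{-1})=Up(A_{N,\cdot})U^{-1}$ to pull the conjugation out of each polynomial.

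\emph{Step 3 (Weingarten expansion).} Writing $U^{-1}=U^{\star}$ and expanding the trace over matrix indices, the average over $U$ becomes a pairing of $\mathbb{E}\big[U^{\otimes\ell}\otimes(U^{\star})^{\otimes\ell}\big]$ against the tensor built from $C_{1},D_{1},\ldots,C_{\ell},D_{\ell}$. By the Schur--Weyl duality recalled in Section~\ref{schur_weyl}, this average is a linear combination --- indexed by pairs of uncoloured Brauer diagrams in $\brauerz$ (by pairs of permutations when $\mathbb{K}=\mathbb{C}$) --- of the diagram endomorphisms $\ncc{\rho}_{N}(\cdot)$, with Weingarten coefficients whose large-$N$ asymptotics are $N^{-\ell-\lmss{d}(\cdot)}\big(c(\cdot)+O(N^{-2})\big)$, $\lmss{d}$ the geodesic distance to the identity. \emph{Step 4 (power counting).} Plugging in and using Lemma~\ref{simple_lemma_levy} to evaluate the resulting traces of words in the $C_{k}$ and $D_{k}$, each diagram pair contributes $N$ raised to (number of free index summations) $-\,\ell-\lmss{d}(\cdot)$, times a bounded factor assembled from normalised traces of products of the $C$'s and $D$'s. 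A genus-type count shows this exponent is always $\le0$, and that equality forces the pair to be the unique ``cyclic'' matching adapted to the alternating word $C_{1}D_{1}\cdots C_{\ell}D_{\ell}$; for that matching the bounded factor is exactly $\prod_{k}\tfrac{1}{N}\lmss{Tr}(C_{k})\cdot\prod_{k}\tfrac{1}{N}\lmss{Tr}(D_{k})$, which tends to $0$ by the centering hypothesis. Thus the whole expression tends to $0$, which is the defining identity for freeness of $\{a_{\bullet}\}$ and $\{b_{\bullet}\}$.

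\emph{Main obstacle.} The delicate point is Step~4 in the cases $\mathbb{K}=\mathbb{R},\mathbb{H}$, where the Weingarten calculus is governed by the Brauer algebra and one must show that no leading-order contribution arises from pairing diagrams that are not permutation diagrams (the ``cap--cup'' ones). This is exactly controlled by the results of Section~\ref{schur_weyl}: by Lemma~\ref{twistorbit}, Proposition~\ref{projtrmult} and Proposition~\ref{mainpropbrauer}, composing with a non-permutation elementary diagram can only decrease the number of cycles, so the extra $N^{-1}$ such a diagram carries in its Weingarten weight is never compensated by index summations, and all such terms are strictly subleading. With that in hand, the interleaving bookkeeping between the cycle structure of the pairing and the cyclic structure of the word is identical to the unitary case, and the proof concludes as above.
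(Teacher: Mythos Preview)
The paper does not prove Theorem~\ref{collins_sniady}: it is quoted as an external result (Voiculescu for $\mathbb{K}=\mathbb{C}$, Collins--\'Sniady for $\mathbb{K}=\mathbb{R},\mathbb{H}$) and used as a black box to deduce asymptotic freeness of the increments of $\unitaryfd$. There is thus no in-paper proof to compare against; I can only assess your sketch on its own.

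Steps~1--3 are the standard reduction and are fine. Step~4 contains a genuine error. For $\ell\ge 2$ the leading order $O(1)$ is \emph{not} carried by a single diagram pair, and the factor it produces is \emph{not} $\prod_{k}\lmss{tr}(C_{k})\prod_{k}\lmss{tr}(D_{k})$. Already for $\ell=2$ a direct Weingarten computation gives
\[
\tau_{N}\big[UC_{1}U^{\star}D_{1}UC_{2}U^{\star}D_{2}\big]\longrightarrow
\lmss{tr}(C_{1})\lmss{tr}(C_{2})\lmss{tr}(D_{1}D_{2})
+\lmss{tr}(C_{1}C_{2})\lmss{tr}(D_{1})\lmss{tr}(D_{2})
-\lmss{tr}(C_{1})\lmss{tr}(C_{2})\lmss{tr}(D_{1})\lmss{tr}(D_{2}),
\]
three terms coming from three distinct pairs $(\sigma,\tau)$. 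In general the surviving pairs are exactly those with $|\tau|+|\sigma^{-1}\tau|+|\gamma\sigma^{-1}|=|\gamma|$ (a geodesic condition with $\gamma=(1\,2\,\cdots\,\ell)$), which is a set of cardinality growing with $\ell$, not a singleton. The correct way to finish is to observe that \emph{every} leading monomial contains at least one isolated factor $\lmss{tr}(C_{k})$ or $\lmss{tr}(D_{k})$ --- equivalently, for every non-crossing $\pi\in\lmss{NC}(\ell)$ either $\pi$ or its Kreweras complement has a singleton, since $\sharp\pi+\sharp K(\pi)=\ell+1$ --- so the centring hypothesis kills each term separately.

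Your ``main obstacle'' paragraph also misapplies the results of Section~\ref{schur_weyl}. Proposition~\ref{mainpropbrauer} does \emph{not} say that composing with a projector always decreases the cycle count; on the contrary it says $\ncc{e}_{ij}$ increases it when $i,j$ lie in the same cycle with opposite orientations. Those propositions concern the action of a single elementary diagram on a fixed $b$, which is not the same combinatorial question as bounding the contribution of non-permutation Brauer diagrams in the orthogonal/symplectic Weingarten expansion. The latter requires the genus bound for pair partitions established in Collins--\'Sniady, which is not a consequence of the lemmas you cite.
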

Recall that we endowed the Dual Voiculescu group $\mathcal{O}\langle n \rangle$ with a coproduct $\Delta$ with value in the free product $\mathcal{O}\langle n \rangle$, a counit $\varepsilon$ and an antipode $S$ that makes $\mathcal{O}\langle n \rangle$ into a Zhang's algebra (see \cite{nico1}).
The two parameters family $\left(\unitaryfd(s,t) \right)_{s,t \geq 0}$ of increments of the process $\left(\unitaryfd(t)\right)_{t \geq 0}$ is defined as
$$
\unitaryfd(s,t) = \left(\unitaryfd(t) \sqcup \left(\unitaryfd(s) \circ S \right)\right) \circ \Delta.
$$
and the increments $\left(U^{\langle n \rangle}(s,t) \right)_{0 \leq s \leq t \leq +\infty}$ of the free $n$ dimensional unitary Brownian motion satisfy
\begin{equation*}
	U^{\langle n \rangle}(s,t) = \left( U^{\langle n \rangle}(t) \sqcup \left( U^{\langle n \rangle}(t) \circ S \right)\right)\circ \Delta.
\end{equation*}
The $U(d)$ invariance of the finite dimensional marginals of $\unitaryfd(s,t)$ combined with the following result of asymptotic freeness stated in Theorem \ref{collins_sniady} are the last two ingredients that end the proof of Theorem \ref{maintheoremsquare}. In fact, we show by recurrence that as $d \rightarrow +\infty$, for any tuples $s_{1} < t_{1} \leq s_{2} < t_{2} \cdots s_{p} < t_{p}$, the random variables
\begin{equation*}
	\unitaryfd(s_{1},t_{1}), \ldots,\unitaryfd(s_{p},t_{p}) \text{ are asymptotically free.}
\end{equation*}
 Let $s_{0} < s_{1}$, then $\unitaryfd(s_{0},s_{1})$ has the same non-commutative distribution as $\unitaryfd(s_{1}-s_{0})$. Thus, $\unitaryfd(s_{0},s_{1})$ converges to $U_{n}(s_{0},s_{1})$. Pick a two tuples of time such that $0 < s_{0} < s_{2} < \cdots < s_{p}$. Assume that the family $\{\unitaryfd(s_{0},s_{1}),\ldots,\unitaryfd(s_{p-2},s_{p-1})\}$ converges to $\{U^{\langle n \rangle}(s_{0},s_{1}),$ $\ldots,U^{\langle n \rangle}(s_{p-2},s_{p-1})\}$ in non-commutative distribution. We proved that $\unitaryfd(s_{p-1},s_{p})$ converges to $U^{\langle n \rangle}(s_{p-1},s_{p})$. Besides, the law of $\unitaryfd(s_{p-1},s_{p})$ is invariant by conjugation by any element of $U(d)$ and $\unitaryfd(s_{p-1},s_{p})$ is independent from the random variables $\{\unitaryfd(s_{0},s_{1}), \ldots, \unitaryfd(s_{p-2},s_{p-1})\}$ thus an application of Theorem $\ref{collins_sniady}$ shows that
$$
\{\unitaryfd(s_{0},s_{1}),\ldots,\unitaryfd(s_{p-1},s_{p})\} \underset{d \rightarrow +\infty}{\overset{n.c}{\rightarrow}} \{ \{\freeunitary(s_{0},s_{1}),\ldots,\freeunitary(s_{p-1},s_{p})\}.
$$

\section{Rectangular extractions of an unitary Brownian motion}
\label{rectangularextractions}
{
In that section, we extend the result we proved in the last section stating the convergence
in non-commutative distribution of square blocks extracted from an unitary matrix by allowing these blocks to be rectangular. But first, we briefly expose amalgamated non-commutative probability theory.
}

\subsection{Operator valued probability theory}
In this section, we make an overview of operator valued probability theory. Categorical notions are used without recalling them, for brevity. The reader can refer to the first chapter in which he will find a detailed exposition on Zhang algebras, categorical independance, categorical coproduct and comodule algebras.
\subsubsection{Involutive bimodule Zhang algebras}
\par In the sequel, algebras are complex or real unital algebras. Let $A$ and $B$ two algebras. Let $R$ be a third algebra and assume that $A$ and $B$ are $R$-bi-modules; there exists a left and a right action commuting which each other such that:
\begin{equation*}
(rr^{\prime})a = r(r^{\prime}a),~a(rr^{\prime}) = ar(r^{\prime}),~ 1a=a,~r(ar^{\prime}) = (ra)r^{\prime},~ r \in R,~a \in A
\end{equation*}
In this work, we mainly deal with involutive algebras. An involutive algebra $C$ is endowed with an involutive anti-morphism $\star_{A}: C \to C$ that is linear if $C$ is a real algebra, anti-linear if $C$ is complex. We assume the three algebras $A$, $B$ and $R$ to be involutive and the following compatibility condition between the bi-module structure and the anti-morphisms $\star_{R}$ and $\star_{A}$:
\begin{equation*}
\star_{A}(r\cdot a) = \star_{A}(a) \cdot \star_{R}(r),~\star_{A}(a\cdot r) =  \star_{R}(r)\cdot \star_{A}(a),~r \in R,~a \in A,
\end{equation*}

A significant construction is the amalgamated free product of $A$ and $B$ over $R$, denoted by $A\sqcup_{R} B$. This amalgamated free product turns the category of involutive bi-module algebras into an algebraic category, more on this is explained at the end of the paragraph and in \cite{nico1}.

The amalgamated free product is, to put it in words, the free product of $A$ and $B$ in which we forget from which algebra the letters that belong to $R$ comes from. For instance, if $a\in A$, $b\in B$ and $r \in R$: $a(rb) = (ar)b \in A\sqcup_{R}B$. In symbols, the amalgamated free product is the quotient:
\begin{align*}
\hspace{0.8cm}A\sqcup_{R} B&=(R\oplus \bigoplus_{n\geq 1}T^{n}(A\oplus B))/(ar\otimes r'a'-arr'a', br\otimes r'b'-brr'b', ar\otimes b-a\otimes rb,\\
&\hspace{1.7cm} br\otimes a- b\otimes ra,r1_{A}r'-rr',r1_{B}r'-rr' : a,a'\in A, b,b'\in B,r,r'\in R).
\end{align*}

The free product $A\sqcup_{R} B$ is endowed in a canonical way with a $R$-bi-module structure, since $R \subset A\sqcup_{R}B$ so that $R$ acts by left and right multiplication on $A\sqcup_{R}B$. In addition, the two star morphisms $\star_{A}$ and $\star_{B}$ induce a morphism $\star_{A\sqcup_{R}B}$ on the amalgamated free product $A\sqcup_{R}B$ which makes the diagram Fig.\ref{stellarfreeproduct} commutative.

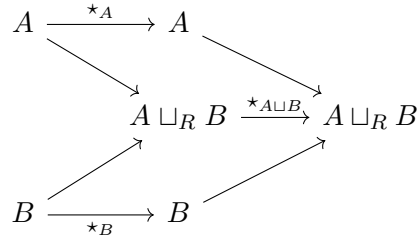
\begin{figure}[!h]
\centering
\begin{tikzcd}
A \arrow{rd} \arrow["\star_{A}"]{r} & A \arrow{rd}  \\
& A \free_{R} B \arrow["\star_{A\sqcup B}"]{r} & A\sqcup_{R}B       \\
B \arrow{ru} \arrow["\star_{B}",swap]{r} & B \arrow{ru}
\end{tikzcd}
\caption{\label{stellarfreeproduct}\small The amalgamated free product as an involutive algebra.}
\end{figure}

The category of involutive bi-modules algebras over $R$ is algebraic, with $R$ as initial object. It is therefore meaningful to define Zhang algebras in this category (see \cite{nico1}) and subsequently the notion of quantum processes in this category. Let us put this discussion into a more formal way by writing the definition of an involutive $R$-bi-module Zhang algebra.

\begin{definition}
Let $R$ be a unital algebra. A involutive $B$-bi-module Zhang algebra is a quadruplet $(H,\Delta,S,\varepsilon)$ with $H$ an involutive unital algebra that is also an involutive $R$-bi-module andv three structural maps:
\begin{equation*}
\Delta: H \to H \free_{R} H,~\varepsilon: H \to B,~S:H \to H.
\end{equation*}
Each of the maps $\Delta,S,\varepsilon$ is a morphism of unital algebra and $R$-bi-module maps. They are subject to the relations:
\begin{enumerate}[\indent 1.]
\item $\Delta(\Delta \free \textrm{id}_{H}) = (\textrm{id}_{H} \free \textrm{id}_{H})\Delta$,
\item $S \free \textrm{id}_{H} = \textrm{id}_{H} \free S = \eta \circ \varepsilon$,
\item $(\varepsilon \free 1)\Delta = (\textrm{id}_{H} \free \varepsilon) \Delta.$
\end{enumerate}
\end{definition}
\par We end this section with a remark on amalgamated tensor product $A \otimes_{R} B$ of two $R$ bimodules. The space $A \otimes_{R} B$ is the quotient:
\begin{equation*}
A \otimes_{R} B = (A \otimes B) / \{ a\otimes (r \cdot b)= (a\cdot r) \otimes b, a \in A, b \in B, r \in R \}
\end{equation*}
The projection in $A\otimes_{R} B$ of a tensor $ a\otimes b \in A\otimes_{R}B$ will be denoted $a\otimes_{R}b$. It is trivial to define a $R$-bi-module structure on $A \otimes_{R} B$, $r\cdot(a\otimes_{R}b)r^{\prime} = (ra)\otimes_{R}(br^{\prime})$.
However, even if $A$ and $B$ are algebras $R$-bimodules, the product $(a_{1}\otimes_{R} b_{1})(a_{2}\otimes_{R}b_{2}) = a_{1}a_{2} \otimes_{R}b_{1}b_{2}$ is, in general, not associative. In fact,
\begin{equation*}
a_{1} (rb) a_{2} = a_{1}a_{2} \otimes_{R} rb \neq a_{1}ra_{2} \otimes_{R} b = (a_{1}r)ba_{2},~a_{1},a_{2} \in A,~b \in B.
\end{equation*}
Since $A\otimes_{R} B$ is not a $R$ bi-module algebra, we cannot define an $R$ bi-module algebra morphism from the amalgamated free product $A\free_{R}B$ to $A\otimes_{R} B$ equal to identity on $A$ and on $B$ as solution of an universal problem. Assume that $R$ is commutative and denote by comBiModAlg$(R)$ the category of commutative $R$-bimodule algebras which left and right module structures are equal. If $A,B \in \text{comBiModAlg}(R)$, the aforementioned issue disappears: the natural product on $A \otimes_{R}B$ turns this space into an $R$-bimodule structure.
\par A probabilistic implication of the last discussion needs to be clarified. Amalgamated tensor independence, as an amalgamated counterpart of classical (from the point of view of non-commutative probability with $R=1$) tensor independence cannot be defined in the category of operator valued probability spaces (see below for the definition of such spaces).

\subsubsection{Operator valued probability spaces, rectangular probability spaces}
Let $R$ be an unital involutive algebra.
If $A$ and $B$ are two involutive $R$ bimodule that are $R$-valued probability space, an $R$-valued random variable from $A$ to $B$ is a morphism of the category of bi-module involutive algebras.
Having introduced the notion of amalgamated Zhang algebra in the last section, its now meaningful to talk about amalgamated quantum processes. However, to define the notion on non-commutative distribution, we need an appropriate definition of expectation on bimodule algebras.
\begin{definition}
Let $A$ a bi-module algebra over $R$. A conditional expectation $E$ on $A$ is a positive involutive $R$-bi-module map $E:A \to R$:
\begin{enumerate}[\indent 1.]
\item $E(bab^{\prime}) = bE(a)b^{\prime}$ ($R$-bi-module map),
\item $E(a^{\star}) = E(a)^{\star}$ (involutive map),
\item $E(aa^{\star}) \geq 0$ (positivity).
\end{enumerate}
\end{definition}

An $R$-valued probability space is the data of a bimodule algebra $A$ and a conditional expectation $E$ on $A$. We introduce as of now the class of $R$ valued probability spaces we are interested in, the rectangular probability spaces. Let $A$ a bi-module algebra and assume from now that $A$ contains a set of complete, mutually orthogonal projectors $\{p_{1},\ldots,p_{n}\}$:
\begin{equation*}
\sum_{p\in \{p_{1},\ldots,p_{n}\}} p = \mathrm{id}_{A},~p^{\star} = p, ~p^{2} = p, ~pq = 0,~ p\neq q \in \{p_{1},\ldots,p_{n}\}.
\end{equation*}
and that $R = \langle\{p_{1},\ldots,p_{n}\}\rangle$, the commutative algebra generated by the projectors. Each element $x$ of $A$ can be written as a matrix, since $x = \sum_{i,j = 1}^{n} p_{i}xp_{j}$, we adopt the notation:
$\lmss{x} = (p_{i}xp_{j})_{i,j \leq n} \in \mathcal{M}_{n}(A)$. We use the terminology \emph{compressed spaces} for the vector spaces $A_{ij} = p_{i}Ap_{j}$, $1 \leq i,j \leq n$. To construct a conditional expectation on $A$, we assume further that each the diagonal compressed algebras $A_{ii},~i\leq n$ is a (usual) probability space and denote by $\phi_{i}$ the expectation on $\mathcal{A}_{ii}$. A conditional expectation $E$ on $A$ is defined by the formula:
\begin{equation*}
E(x) = \sum_{i=1}^{n}\phi_{i}(\lmss{x}_{ii})p_{i},~ x \in A.
\end{equation*}
Having introduced the analogue notion of mean for $R$-valued probability space, we focus now on cumulants. Recall that the set of non-crossing partitions of an interval $\llbracket 1,\ldots,k \rrbracket$ is denoted $\lmss{NC}(k)$. In the sequel, we are handling multilinear bimodule maps over $A$, these maps are defined naturally on tensor products of the algebra $A$ with itself over $R$, for which we use the symbol $A\otimes_{R}\cdots \otimes_{R} A$. By definition: $a\otimes_{R}(ra^{\prime}) = (ar)\otimes_{R} a^{\prime} $, $a,a^{\prime} \in A,~r\in R$.
For $n\geq 1$ an integer, define the map $E_{n}: A^{\otimes_{R}n} \to A$, by $E_{n}(a_{1}\otimes_{R}\cdots \otimes_{R}a_{n}) = E(a_{1}\cdots a_{n})$, $a_{1},\ldots,a_{n} \in A^{\otimes_{R}n}$.
\par Let $\pi \in \lmss{NC}(k)$ a non-crossing partition. The map $E_{\pi}$ from $A\otimes_{R}\cdots\otimes_{R}A$ is defined recursively as follows.
For two blocks $V$ and $W$ of $\pi$, we write $V < W$ if there exists two integers $i,j$ in $W$ such that $V \subset [i,j]$. Denote by $V_{1},\ldots,V_{p}$ blocks of $\pi$ that are maximal for $\pi$. Let $m\leq p$ an integer and write $V_{m} = \{i_{1}^{m} < \cdots < i_{k^{m}}^{m}\}$, the partition $\pi$ restricts to a non-crossing partition $\pi^{m}_{l}$ of the interval $\rrbracket i_{l}, i_{l+1} \llbracket $. The family of functions $(E_{\pi})_{\pi \in \lmss{NC}}$ is defined recursively by the equation
\begin{equation*}
\begin{split}
E_{\pi}(a_{1} \cdots a_{k}) = E_{\sharp V_{1}}\Big(a_{i_{1}^{1}} E_{\pi_{1}^{1}}\left(a_{\rrbracket i_{1}^{1},i_{k^{1}}^{1} \llbracket}\right)&\cdots E_{\pi_{k^{1}-1}^{1}}\left(a_{\rrbracket i_{k^{1}-1}^{1},i^{1}_{k^{1}}\llbracket}\right)a_{i_{k^{1}}^{1}} \Big) \cdots \\ &E_{\sharp V_{p}}\Big(a_{i_{1}^{p}}E_{\pi_{1}^{p}}\left(a_{\rrbracket i_{1}^{p},i_{2}^{p} \llbracket}\right)\cdots E_{\pi_{k^{p}-1}^{p}}\left(a_{\rrbracket i^{p}_{k^{p}-1},i^{p}_{k^{p}}\llbracket}\right) a_{i_{k^{p}}^{p}}\Big)
\end{split}
\end{equation*}
and the initial condition $E_{\emptyset} = 1$. Recall that we use the notation $\mu$ for the M\"oebius function of $\lmss{NC}(k)$. The $R$-valued cumulants $(c_{\pi}:A\otimes_{R}\cdots\otimes_{R}A \to R)_{\pi \in \lmss{NC}(k)}$ are obtained by M\"oebius transformation:
\begin{equation*}
c_{\pi} = \sum_{\gamma \leq \pi} \mu(\gamma,\pi)E_{\gamma},~ E_{\pi} = \sum_{\gamma \leq \pi} c_{\gamma}.
\end{equation*}
As we shall see below, amalgamated freeness (defined below) is most efficiently seen on cumulants, which is the main reason for introducing them. Since cumulants and conditional expectations are obtained from each others by a linear transformation, they share a lot of properties.
\par First, for any non-crossing partition $\pi$, $c_{\pi}$ is a $R$-bi-module map. Secondly, let $k\geq 1$ an integer and $i_{1},\ldots,i_{k}$, $j_{1},\ldots,j_{k}$ two $k$-tuples of integers in $\llbracket 1,n \rrbracket$. The conditional expectation $E_{k}$ is equal to zero on the space $A_{i_{1},j_{1}} \otimes_{R} \cdots \otimes_{R} A_{i_{k},j_{k}}$ if there exists a pair $(i_{l},j_{l+1}),~l\leq n$ such that  $i_{l}\neq j_{l+1}$. It can be proved, by induction, that for any non-crossing partition $\pi$, $E_{\pi}$ evaluates to zero on $A_{i_{1},j_{1}}\otimes_{R} \cdots \otimes_{R} A_{i_{k},j_{k}}$ if there exists a block $\{l_{1},\ldots,l_{p}\} \in \pi$ such that $i_{l_{1}-1}\neq i_{l_{p}}$. This property of the conditional expectation is shared with the cumulants.
\par Finally, $c_{k}(a_{1}\otimes_{R} \cdots \otimes_{R} a_{k}) = 0$ if there exists an integer $i \leq k$ and $r\in R$ such that $a_{i} = r1$. Let us prove this property. To ease the exposition, we assume that $a_{1}=r1$ for some $r\in R$. Define $\tilde{c}_{\pi}(a_{1} \otimes_{R} \cdots \otimes_{R} a_{k}) = c_{\pi}(a_{1}\otimes_{R} \cdots \otimes_{R}a_{k})$ if $\{1\} \in \pi$ and set $\tilde{c}_{\pi}(a_{1}\otimes_{R} \cdots \otimes_{R}a_{k}) = 0$ otherwise. We claim that
\begin{equation*}
E_{\pi}(a_{1}\otimes_{R} \cdots \otimes_{R} a_{k}) = \sum_{\gamma \leq \pi} \tilde{c}_{\gamma}(a_{1}\otimes_{R}\cdots\otimes_{R}a_{k})
\end{equation*}
This last relation, obviously, holds if $\{1\} \in \pi$. Let $V=\{1 < i_{1}\cdots < i_{p}\} \in \pi$ the block of $\pi$ containing $1$. Using $R$ linearity, $E_{\pi}(a_{1}\otimes_{R}\cdots\otimes_{R}a_{k}) = E_{\tilde{\pi}}(a_{1}\otimes_{R}\cdots \otimes_{R}a_{k})$ with $\tilde{p}$ the partition obtained from $\pi$ by splitting the block $V$ of $\pi$ into the two blocks $ \{1\},\{i_{1}<\cdots<i_{p}\}$. Hence,
\begin{equation*}
\begin{split}
E_{\pi}(a_{1}\otimes_{R}\cdots\otimes_{R}a_{k}) &= E_{\tilde{\pi}}(a_{1}\otimes_{R}\cdots\otimes_{R}a_{k}) \\ &=\sum_{\gamma \leq \tilde{\pi}} c_{\gamma}(a_{1}\otimes_{R}\cdots\otimes a_{k}) = \sum_{\gamma \leq \pi} \tilde{c}_{\gamma}(a_{1}\otimes_{R}\cdots_{R}\otimes a_{k}).
\end{split}
\end{equation*}
From which if follows that $\tilde{c}_{\pi}(a_{1}\otimes_{R}\cdots\otimes_{R}a_{k}) = c_{\pi}(a_{1}\otimes_{R}\cdots\otimes_{R}a_{k})$ and finally $\tilde{c}_{k}(a_{1}\otimes_{R}\cdots\otimes_{R}a_{k})=0$ if $k\geq 2$.
\begin{proposition}[see \cite{speicher1998combinatorial}]
\label{nullmixedcumulants}
Let $A$ be an involutive algebra endowed with a bimodule action of an algebra $R$. Let $x_{1}$ and $x_{2}$ two elements in $A$. The two $R$-bimodule algebras $R\left[a\right]$ and $R\left[b\right]$ are free from each other if and only if for all $n\geq 1$ the cumulants $c_{n}(x_{i_{1}},\ldots,x_{i_{n}})$ are null if there exists two integers $1 \leq k,q \leq n$ with $i_{k}\neq i_{q}$.
\end{proposition}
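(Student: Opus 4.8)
The plan is to transpose Speicher's characterisation of freeness by vanishing of mixed cumulants to the $R$-bimodule setting, invoking only the properties of $R$-valued moments and cumulants already recorded above --- multiplicativity of $E_{\pi}$ and $c_{\pi}$ over the blocks of $\pi$, the fact that $c_{\pi}$ vanishes as soon as one of its entries lies in $R\cdot 1$ (for at least two arguments), and $R$-multilinearity --- together with the $R$-valued cumulant--product formula from \cite{speicher1998combinatorial}. Write $A_{1}=R[a]$, $A_{2}=R[b]$; I call the index of the algebra containing an element its \emph{colour}, and call a tuple \emph{mixed} if it is not monochromatic. I also record the combinatorial fact used repeatedly: every non-crossing partition of $\{1,\dots ,n\}$ has a block that is an interval of consecutive integers.

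For the implication ``vanishing of mixed cumulants $\Rightarrow$ freeness'' I would take centred elements $x_{1},\dots ,x_{n}$ (that is, $E(x_{j})=0$) with alternating colours and expand $E(x_{1}\cdots x_{n})=\sum_{\pi\in\lmss{NC}(n)}c_{\pi}(x_{1}\otimes_{R}\cdots\otimes_{R}x_{n})$. Any $\pi$ carrying a non-monochromatic block contributes a mixed cumulant, hence $0$ by hypothesis. If instead every block of $\pi$ is monochromatic, pick an interval block of $\pi$; were it of size $\ge 2$ it would contain two neighbouring positions, which carry different colours by alternation, a contradiction. So $\pi$ has a singleton block $\{i\}$, whose factor is $c_{1}(x_{i})=E(x_{i})=0$, and multiplicativity of $c_{\pi}$ kills the whole term. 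Therefore $E(x_{1}\cdots x_{n})=0$, which is exactly amalgamated freeness of $A_{1}$ and $A_{2}$ over $R$ (an arbitrary word reduces to an alternating centred one by merging consecutive like-coloured letters and recentring by their conditional expectations).

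The converse I would prove by induction on $n$, showing $c_{n}(x_{i_{1}},\dots ,x_{i_{n}})=0$ whenever $(i_{1},\dots ,i_{n})$ is mixed; there is nothing to prove for $n=1$. Using $R$-multilinearity of $c_{n}$ and the identity $c_{n}(\dots ,r1,\dots )=0$ for $n\ge 2$, I may assume all $x_{i_{j}}$ centred. If two neighbouring colours coincide, say $i_{1}=i_{2}$, then $x_{i_{1}}x_{i_{2}}\in A_{i_{1}}$ and the tuple $(x_{i_{1}}x_{i_{2}},x_{i_{3}},\dots ,x_{i_{n}})$ is still mixed (a differently coloured position lies among $3,\dots ,n$), so its cumulant vanishes by the inductive hypothesis; the cumulant--product formula rewrites this same cumulant as $c_{n}(x_{i_{1}},\dots ,x_{i_{n}})$ plus a sum over the two-block non-crossing partitions $\{V,W\}$ with $1\in V$ and $2\in W$, and in each such partition a differently coloured position falls into the block containing $1$ (already of colour $i_{1}$) or into the block containing $2$, making that block mixed of size $<n$, so those terms vanish by induction; hence $c_{n}(x_{i_{1}},\dots ,x_{i_{n}})=0$. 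If no two neighbouring colours coincide then, there being only two algebras, the colours alternate, so $E(x_{i_{1}}\cdots x_{i_{n}})=0$ by freeness; I then separate off the term $\pi=1_{n}$ in $E=\sum_{\pi}c_{\pi}$, which equals $c_{n}(x_{i_{1}},\dots ,x_{i_{n}})$, and observe that every other $\pi$ has all blocks of size $<n$ and contributes $0$ (a mixed block by induction, an all-monochromatic $\pi$ via an interval singleton block giving $E(x_{i})=0$), so $c_{n}(x_{i_{1}},\dots ,x_{i_{n}})=0$.

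The step I expect to be the real work is importing and applying the $R$-valued cumulant--product formula in the non-alternating case, and checking that the nested multiplicativity of $c_{\pi}$ interacts correctly with the amalgamation over $R$; everything else is routine bookkeeping of the bimodule action. Since all of this machinery is available in \cite{speicher1998combinatorial}, it is legitimate in the paper to treat the proposition as cited, but the argument above is the self-contained route I would write out.
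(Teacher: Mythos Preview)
The paper does not prove this proposition; it is simply stated with a reference to \cite{speicher1998combinatorial}, and your argument is exactly the standard one from that source, transported correctly to the $R$-valued setting.

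The one place that deserves a line more than you give it is the point you yourself flag as ``the real work'': in the two-block case $\{V,W\}$ of the product formula, if the \emph{outer} block $V$ is the mixed one while the inner interval block $W$ is monochromatic, the cumulant attached to $V$ is evaluated on arguments of the form $x_{v_{j}}\cdot r$ (or $r\cdot x_{v_{j}}$) with $r=c_{|W|}(\cdots)\in R$, and one must note that such an element still lies in $A_{i_{v_{j}}}=R[x_{i_{v_{j}}}]$ because the latter is an $R$-bimodule; only then does the inductive hypothesis on mixed cumulants of length $<n$ apply to the outer block. The same remark covers the nested evaluation of $c_{\pi}$ for general $\pi\neq 1_{n}$ in the alternating case. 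Once this is made explicit, the proof is complete.
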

\subsubsection{Amalgamated semi-groups and L\'evy processes}
Let $R$ a unital associative algebra. Equivalent notions for tensor semi-groups and free semi-groups on involutive bi-algebras can be defined in the context of amalgamated bi-algebras. Let $\mathcal{C}$ be either the algebraic category of $R$-bimodule algebras, $\left(\text{biModAlg}(R), \free_{R}, R\right)$, either the algebraic category of commutative $R$-bimodule algebras $\left(\text{comBiModAlg}(R),\otimes_{R}, B\right)$ and recall that these two categories are algebraic if endowed, respectively, with the amalgamated free product or the amalamated tensor product.
The notion of bi-algebra in $\text{comBiModAlg}(R)$ (in the case $R$ is commutative) is obtained by replacing the free amalgamated product by the tensor product in the definition of a free amalamated Zhang algebra and removing the antipode $S$ from the set of structural morphism.

Let $(B,\Delta,\varepsilon)$ be an associative bi-algebra in biModAlg$(R)$, and $\alpha:B \to R$, $\beta:B\to R$ two $R$-bimodule linear maps. The free product $\alpha\,\free_{R}\,\beta \in (B\sqcup_{R}B)^{\star}$ of $\alpha$ and $\beta$ is the unique $R$-bimodule map satisfying the condition: for any alternating word $s_{1}s_{2} \cdots s_{m} \in B\free_{R} B$,
\begin{equation*}
(\alpha\free_{R}\beta)(s_{1}s_{2} \cdots s_{m}) = 0,\alpha(s_{i}) = 0 \text{ if } s_{i} \in B_{|1} \text{ or } \beta(s_{i}) = 0 \text{ if } s_{i} \in B_{|2}, 1 \leq i\leq m.
\end{equation*}

On the commutative side, if $B$ is an object of the category comBiModAlg$(R)$, the tensor product of $\alpha$ and $\beta$ is an $R$-bimodule map on the amalgamated tensor product $A\otimes_{R}B$ defined by:
\begin{equation*}
(\alpha \,\hat{\otimes}_{R}\,\beta)(b_{1}\otimes_{R}b_{2}) = \alpha(b_{1})\beta(b_{2}),~b_{1} \otimes b_{2} \in B\otimes_{R} B.
\end{equation*}

The free convolution product $\alpha\,\hat{\free}_{R}\,\beta$ of $\alpha$ and $\beta$ is an $R$-bimodule map on $B$ and is defined by $(\alpha\,\hat{\free}_{R}\,\beta) = \left(\alpha \free_{R} \beta\right)\circ \Delta$. Again, in case $B$ is an object of comBiModAlg$(R)$, we can define the tensor convolution product $\alpha \hat{\otimes} \beta$ of $\alpha$ and $\beta$ by setting: $\alpha \,\hat{\otimes}_{R}\,\beta = (\alpha \otimes_{^R} \beta)\circ \Delta $.

An amalgamated free semi-group is a time parametrized family $(E_{t})_{t\geq 0}$ of $R$-bimodule maps on $B$ satisfying:
\begin{equation*}
E_{t+s} = E_{t}\,\hat{\free}_{R}\,E_{s},~t,s \geq 0.
\end{equation*}
The notion of amalgmated tensor semi-group is obtained be replacing the amalamated free convolution product in the last equation by the tensor amalgamated product.
We now expose the amalmagmated counterpart of free independence. Let $(A,E)$ a $R$ valued probability space. Let $B_{1}$ and $B_{2}$ two $R$-bimodule sub-algebras of $A$. We say that $B_{1}$ and $B_{2}$ are freely independent with amalgamation if:
\begin{equation}
E(b^{1}_{1}b^{1}_{2} ... b^{p}_{1}b^{p}_{2}) = 0,~\text{with } b^{i}_{1}\in B_{1},~b_{2}^{i}\in B_{2}\text{ and, } E(b_{1}^{i}) = E(b_{2}^{i}) = 0,~1\leq i \leq p.
\end{equation}
Working in the category $biModAlg(R)$,we say that two sub-$R$-bimodule algebras $B_{1}$ and $B_{2}$ of commutative operator valued probability space $(A,E)$ are \emph{amalgamated tensor independent} if
\begin{equation*}
  E(b_{1}b_{2})=E(b_{1})E(b_{2}).
\end{equation*}
Let $a \in A$ an element of $A$, we denote by $R[a]$ the $R$ bimodule algebra generated by $a$. By definition $R[a]$ is the set of all linear combinations of monomials in the element $a$ with coefficients in the algebra $R$:
\begin{equation*}
R[a] = \mathbb{K}\left[\{r_{0}a^{n_{1}}r_{1}a^{n_{2}}\cdots a^{n_{p}}r_{p}, r_{0},\ldots,r_{p} \in R,~ p \geq 1\}\right] \subset A.
\end{equation*}
We say that two elements $a \in A$ and $b\in B$ are free with amalgamation over $R$ if $R[a]$ and $R[b]$ are free sub-modules of $A$.
\par The restriction of $E$ to the algebra generated by two mutually free with amalgamation algebras $B_{1}$ and $B_{2}$ is entirely determined by the restriction of $E$ to $B_{1}$ and to $B_{2}$ and is equal to the amalgamated free product of the restrictions of $E$ to these algebras; with $\iota_{1}$ and $\iota_{2}$ the injections of, respectively, $B_{1}$ into $B$ and $B_{2}$ into $B$:
\begin{equation*}
\left( B_{1} \free_{R} B_{2}, E_{|B_{1}}\hat{\free}_{R}E_{|B_{2}} \right) \overset{ \iota_{B_{1}} \freem\, \iota_{B_{2}}}{\longrightarrow} (B,E).
\end{equation*}
We are now in position to give the definition of an \emph{amalgamated free L\'evy process}. Let $(H,\Delta,\varepsilon, S)$ be an $R$-amalgamated free Zhang algebra, that is, a Zhang algebra in $\biMod\Alg(R)$ and $(A,E)$ an $R$-valued probability space. An amalamated free L\'evy process $j = (j_{t})_{t\geq 0}$ is a time parametrized collection of homomorphisms of $\biMod\Alg{R}$ with values in $A$ that satisfy the following three conditions, with $j_{st} = j_{t} \freem j_{s} \circ s$:
\begin{enumerate}[\indent 1.]
\item for all times $s_{1} < t_{1} \leq s_{2} < t_{2} \leq \cdots \leq s_{p} < t_{p}$, $\{j_{s_{1}t_{1}},\ldots,j_{s_{p}t_{p}}\}$ is a mutually free with amalgamation family, meaning that the algebras $j_{s_{1}t_{1}}(H),\ldots,j_{s_{p}t_{p}}(H)$ are mutually free in $A$.
\item For all times $t>s$, the distribution $E\circ j_{st}$ depends only on the difference $t-s$,
\end{enumerate}
In addition, if $R$ is naturally endowed with a norm, we require also the continuity condition:
\begin{equation*}
\lim_{s\to t}j_{st}(h) = \varepsilon(h), h \in H.
\end{equation*}
To be complete, if considering the category comBiModAlg$(R)$, the defintion of a tensor amalgamated L\'evy process is obtained by requiring amalagmated tensor independence of the increments in the last definition. With the notation of the definition, the one dimensional marginals $t\mapsto E\circ j_{t}$ of a free amalgamated L\'evy process is a free semi-group (the same holds if working with tensor amalgamated tensor L\'evy process).
We do not know if an amalgamated version of the Schoenberg correspondance for free or tensor amalgamated L\'evy process holds.

\subsection{Extraction processes and their statistics in high dimensions}
\label{extractionprocesses}
\par In the Section \ref{squareextractions}, we proved the convergence in non-commutative distribution of the process on the dual Voiculescu group $\udualgroup$ that extracts square blocks from a unitary Brownian motion in the limit for which the dimensions of these blcks tend to infinity. A natural extension of this result is investigated here; square blocks are replaced by rectangular ones. Our main results are stated in Theorem \ref{thm1rect} and Theorem \ref{thm2cluster}. Given a partition $\lmss{d}$ of the dimension, we construct for each time $t \geq 0$ two quantum processes on two amalgamated Zhang algebras (defined below in Section \ref{cluster} and \ref{rect}) that extract rectangular blocks in the matrix $\munitaryfd(t)$. For one of these processes, product of blocks, even if the dimensions match, may be equal to zero while it is never the case for the other process. More on this point is explained below.
The method we use in this section for proving the convergence of the one dimensional marginals of these processes is similar to the one used in Section \ref{squareextractions}. However, to prove the convergence of the multi-dimensional marginals, Theorem $\ref{collins_sniady}$ can not be applied.
We fix an integer $n\geq 1$. Let $N \geq 1$ an integer and let $\lmss{d}$ be a partition of $N$ into $n$ parts. The algebra $\mathcal{M}_{N}(\mathbb{R})$ can be endowed with a structure of operator valued probability space. In fact, denote by $\mathcal{D}_{\lmss{d}}$ generated by the projectors:
\begin{equation*}
	p_{i}(k,l) = \left\{
	\begin{array}{ll}
		\delta_{k,l}~& \textrm{ if } k,l \in [d_{1}+\cdots+d_{i-1},~d_{1}+\cdots+d_{i}] \\
		0 & \textrm{ otherwise }
	\end{array}
	\right.,~i\leq n.
\end{equation*}
and define the complex linear form $E_{\lmss{d}}:\mathcal{M}_{N}(\mathbb{R})\to \mathcal{D}_{\lmss{d}}$ by
\begin{equation*}
E_{\lmss{d}}\left[A\right] = \sum_{i=1}^{n}\frac{1}{d_{i}}\lmss{Tr}(p_{i}Ap_{i})p_{i}.
\end{equation*}
The algebra $\mathcal{M}_{N}(\mathbb{R})$ is a $\mathcal{D}_{\lmss{d}}$ bimodule and $E_{\lmss{d}}$ is a positive bimodule map: $(\mathcal{M}_{N}(\mathbb{R}), E_\lmss{d})$ is an operator valued probability space.

If considering matrices with entries in $\mathbb{C}$, the conditional expectation we choose on $\mathcal{M}_{N}(\mathbb{C})$ is the same as for the real case. If we consider matrices with entries in the quaternionic division algebra, we choose for the conditional expectation $E_{\lmss{d}}[A]=\sum_{i=1}^{n}\frac{1}{d_{i}}\mathcal{R}e\lmss{Tr}\left(p_{i}Ap_{i}\right)p_{i}$, $A\in \mathcal{M}_{N}(\mathbb{H})$. For $\mathbb{K}=\mathbb{R},\mathbb{C}$ or $\mathbb{H}$, we denote by $\mathcal{M}_{\lmss{d}}(\mathbb{K})$ the operator valued probability space we just defined. Considering matrices with random entries in a $L^{\infty-}(\Omega,\mathcal{F},\mathbb{P})$, a rectangular probability space amalgamated over $ \mathcal{D}_{\sf d}$ is obtained by taking the mean of the conditional expectation $E_{\lmss{d}}$. Set $\mathbb{E}_{\sf d} = \mathbb{E} \circ E^{\lmss{d}}$. The rectangular probability space $\left(\mathcal{M}_{\sf d},\mathbb{E}_{d},\frac{\sf d}{N}\right)$) is denoted $\left(\mathcal{M}_{\sf d}(L^{\infty-}(\mathbb{K})\right)$.
\par Put $n^{\prime} = \sum_{d \in \lmss{d}} d$. The two sections \ref{cluster} and \ref{rect} are devoted to the definition of the two processes we are interested in and their structure algebras. These two processes depend on a partition $\lmss{d}$. The first one takes its values in $\mathcal{M}_{n^{\prime}}(\mathbb{K})$, extracts blocks of dimensions prescribed by the partition $\lmss{d}$ and puts blocks of same size at the same place into a matrix of dimension $n^{\prime}\times n^{\prime}$. The second one extracts blocks but holds them in place in the matrix, meaning that all the other blocks are set to zero.
\subsubsection{Dimension cluster algebra}
\label{cluster}
Let $N \geq 1$ an integer and pick a partition $\lmss{d}$ of $N$ into $n$ parts. We use the short notation $\{\lmss{d}\}$ for the set $\{d,~d \in \lmss{d}\}$. If $d \in \{\lmss{d}\}$, we denote by $n_{d}$ the number of occurrences of $d$ in $\lmss{d}$ and we set $n^{\prime} = \sum_{d \in \{\lmss{d}\}} d$. We define the first process which asymptotics in high dimensions is the object of study.

Pick two integers $d,d^{\prime} \in \{\lmss{d}\}$. To that pair of dimensions $(d,d^{\prime})$ we associate formal variables $x_{d,d^{\prime}}(i,j),~x^{\star}_{d^{\prime},d}(j,~i)~ i \leq n_{d},~j\leq n_{d^{\prime}}$ and define the matrix $X$ of size $n$ with entries in the free algebra generated by the variables $x_{d,d^{\prime}}(i,j)$ by
\begin{equation}
X(k,l) = x_{d_{k},d_{l}}\left(\sharp\{1\leq i\leq k:d_{i}=d_{k}\}, \sharp\{1 \leq i \leq l:d_{i}=d_{l}\}\right).
\end{equation}
\begin{definition}[Cluster Rectangular Unitary algebra]
	Cluster Rectangular Unitary algebra is the involutive unital associative algebra, denoted $\cluster$, that is generated by all the variables $x_{dd^{\prime}}(i,j)$ and projectors $p_{d},d\in\{\lmss{d}\}$ subject to the relations:
\begin{gather*}
	\label{relationscluster}
XX^{\star} = X^{\star}X = 1,
p_{d}^{\star}=p_{d},~p_{d}p_{d^{\prime}} = \delta_{dd^{\prime}}p_{d},~ \\
p_{d_{1}}x_{d_{2}d_{3}}(i,j)p_{d_{4}} = \delta_{d_{1}d_{2}}\delta_{d_{3}d_{4}}x_{d_{2}d_{3}}(i,j),~p_{d_{1}}x^{\star}_{d_{2}d_{3}}(i,j)p_{d_{4}} = \delta_{d_{1}d_{3}}\delta_{d_{4}d_{2}}x_{d_{2}d_{3}}(i,j).
\end{gather*}
\end{definition}
As a consequence of the set of relations \ref{relationscluster}, the family of projectors $\{p_{d},d\in\{\lmss{d}\}$ for a complete set of projectors:
\begin{equation*}
	\sum_{d\in\{\lmss{d}\}}p_{d}=1 \in \cluster.
\end{equation*}
To a pair of dimensions $(d,d^{\prime})$ we associate the sub-algebra $\cluster_{d,d^{\prime}}$ generated by the set of variables $\{x_{d,d^{\prime}}(i,j), x^{\star}_{d^{\prime},d}(j,i),~ i \leq n_{d},~ j \leq n_{d^{\prime}}\}$. The unital algebra generated by the projectors $\{p_{d}, d \in \{\lmss{d}\}\}$ is denoted $\mathcal{D}_{\lmss{d}}$. Finally, we define a $\mathcal{D}_{\lmss{d}}$-amalgamated Zhang algebra structure on $\cluster$ by setting for the structural morphisms $S,\Delta,\varepsilon$:
\begin{align}
&\Delta: \cluster \to \cluster \sqcup_{\mathcal{D}_{\lmss{d}}} \cluster,~&&\Delta(X) = X_{|1}X_{|2},\nonumber \\ \nonumber
&S:\cluster \to \cluster,~&&S(X) = X^{\star}, \\
&\varepsilon: \cluster \to D_{\lmss{d}},~&&\varepsilon(X) = 1.\nonumber
\end{align}
\par We denote by $\lmss{d}^{\prime}$ the partition of $n^{\prime}$ obtained by sorting $\{\lmss{d}\} = \{d_{1},\ldots,d_{p}\}$ in ascending order. As we should see below, this choice for the partition $d^{\prime}$ is quite arbitrary and is, somehow, maximal.

\par To a matrix $A \in \mathcal{M}_{n}(\mathbb{K})$, we associate a random variable $j_{A} : \cluster\rightarrow \mathcal{M}_{\lmss{d}^{\prime}}(\mathbb{K})$ defined as follows. For $d,d^{\prime} \in \{\lmss{d}\}, i \leq n_{d},j \leq n_{d^{\prime}}$: $j_{A}(x_{d,d^{\prime}}(i,j))$ is the block of dimensions $d \times d^{\prime}$ at position $(i,j)$ in the matrix of size $n_{d}d\times n_{d^{\prime}}d^{\prime}$ obtained by extracting all blocks of size $d \times d^{\prime}$ (and keeping them at their relative position) of $A$.

We make few remarks regarding the way we chose to define the algebra cluster and the random variables $j_{A}^{\lmss{d}}$, with obvious notations. We indexed the generators of $\cluster$ by using, in particular, the set $\{\lmss{d}\}$. It is important to notice that the algebra $\cluster$ depends solely on the kernel of $\lmss{d}$, if another sequence $\lmss{d}_{2}$ of integers of length $n$ has the same kernel as $\lmss{d}$ then $\cluster = \mathcal{C}\mathcal{R}\mathcal{O}\langle \lmss{d}_{2}\rangle$. Only the random variabl $j^{\lmss{d}}_{A}$ depends on the sequence $\lmss{d}$.

If $V = \{i_{1} < \ldots < i_{p}\}$ and $V^{\prime}=\{i^{\prime}_{1} < \ldots < i^{\prime}_{p^{\prime}} \}$ are two blocks of $\lmss{Ker}(\lmss{d})$, we denote by $x_{V,V^{\prime}}(i_{k},i^{\prime}_{l})$ the element $x_{\lmss{d}(V)}(k,l)$, and accordingly $p_{d(V)} = p_{V}$. The constitutive relations of $\cluster$ can thus be written:
\begin{gather}
\label{constitutiverelation2}
XX^{\star} = X^{\star}X = 1, p_{V}p_{V^{\prime}} = \delta_{V,V^{\prime}}p_{V}, p^{\star}_{V}=p_{V} \\
p_{V_{1}}x_{V,V^{\prime}}(i,j)p_{V_{2}}=\delta_{V_{1},V}\delta_{V_{2},V^{\prime}}x_{V,V^{\prime}}(i,j), i \in V,j\in V^{\prime}
\end{gather}
If $\pi$ is a partition of $\llbracket 1, n \rrbracket$, we denote by $\mathcal{C}\mathcal{R}\mathcal{O}\langle \pi \rangle$ the algebra generated by the random variables $\{x_{V,V^{\prime}}(i,j), V,V^{\prime} \in \pi,~i \in V,~j \in V^{\prime}\}$ subject to the relations \eqref{constitutiverelation2}.
\par Let $A$ be a matrix of size $N$ and $\lmss{d}$ a partition of $N$ into $n$ parts which kernel is coarser than $\pi$. Set $p$ equal to the number of blocks of $\pi$ and pick $\sigma$ a permutation of $\mathfrak{S}_{p}$. Using the lexicographic order on the block of $\pi$, we write $\pi = \{V_{1} < \ldots < V_{p} \}$ and define $j^{\lmss{d},\pi,\sigma}_{A}$ the random variable that takes it values in the rectangular probability space $\mathcal{M}_{(d(V_{\sigma(1)}),\ldots d(V_{\sigma(p)}))})$ and defined in the same way as $j^{\lmss{d}}_{A}$. It is clear that the random variable $j^{\lmss{d}}_{A}$ we defined previously corresponds to the choice $\pi=\lmss{Ker}(\lmss{d})$ for some permutation $\sigma$. We settle quite a level of generalities, in the sequel we use only the random varirable $j_{A}^{\pi,\lmss{d},1_{p}}$ for which we use the shorter notation $j^{\pi,\lmss{d}}$. After all all these defintions, we can now define the process which asymptotic asymptotic in high dimensions is studied:
\begin{equation}
U^{\mathbb{K}}_{\{\lmss{d}\},\pi}(t)=j^{\lmss{d},\pi}_{\munitaryfd(t)},~\text{ for all times } t \geq 0.
\end{equation}
\begin{figure}[!htb]
	\includegraphics[scale=0.5]{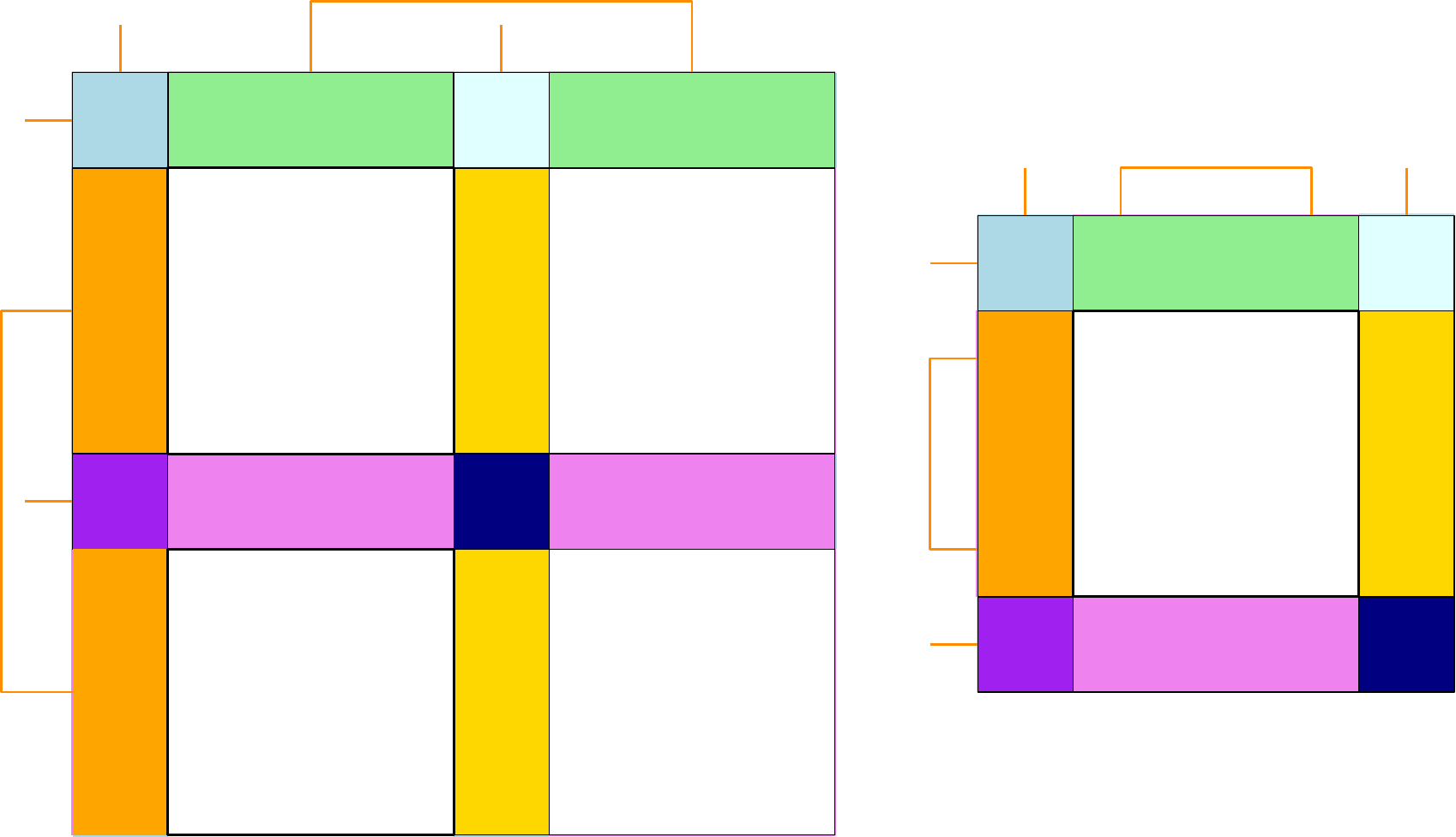}
\caption{\label{rearrangement} This figure pictures the action of the morphism $j_{A}^{\lmss{d},\pi,\id_{4}}$, with $\lmss{d}=(1,3,1,3)$ and $ \pi=\{\{1\},\{2,4\}, \{3\}\}$ on a matrix $A$, drawn on the left hand side of the figure. Blocks of $A$ coloured in the same way are sent by $j_{A}^{\lmss{d},\pi,\id_{4}}$ on the block in the matrix on the right hand side coloured with their common colour.}
\end{figure}
\subsubsection{Rectangular unitary algebra}
\label{rect}
Let $N,n\geq 1$ integers and $\lmss{d}$ a partition of $N$ into $n$ parts. In that section, we define the rectangular unitary algebra $\runitaryalg$ that is nothing more but the dual Voiculescu group augmented with auto-adjoint projectors. Hence, as an unital algebra, $\runitaryalg$ is generated by $n$ auto-adjoint mutually orthogonal idempotent elements $\left(p_{i},  i \leq n \right)$ and an unitary element $u$ subject to the relations:
\begin{equation}
\label{rrelations}
	p_{k}p_{l} = 0,~k\neq l, \quad p_{k}^{\star} = p_{k}, \quad p_{k}^{2} = p_{k}, \quad k,l \leq n, \mathrm{~and~} u^{\star} = u^{-1}.
\end{equation}
Denote by $\mathcal{D}_{n}$ the algebra generated by the projectors $p_{i},~i \leq n$. The algebra $\runitaryalg$ is an involutive $\mathcal{D}_{n}$ bimodule algebra. Let $\pi$ be a partition finer than $\lmss{Ker}(\lmss{d})$. At that point, let us draw comparisons between the algebra $\mathcal{C}\mathcal{R}\mathcal{O}(\pi)$ we introduced in the last section and the rectangular unitary algebra. We claim that there exists a surjective morphism $\phi : \mathcal{C}\mathcal{R}\mathcal{O}(\pi) \rightarrow \runitaryalg$ taking the following values on the generators:
\begin{equation*}
\phi(x_{V,V^{\prime}}(i,j)) =  p_{i}up_{j},~ d,d^{\prime} \in \{\lmss{d}\}, \phi(p_{V})=\sum_{q=1}^{\sharp V}p_{q}.
\end{equation*}
with $i \in V,~j \in V^{\prime}, V,V^{\prime} \in \pi$.
For each time $t\geq 0$, the random variable $\runitaryfd(t) : \runitaryalg \rightarrow L^{\infty-}(\Omega,\mathcal{A},\mathbb{P},\mathbb{R}) \otimes \mathcal{M}_{N}(\mathbb{K})$ that extract blocks from $\munitaryfd$ but hold them in place, is defined by:
\begin{equation}
	\label{definition_process}
		\begin{array}{llll}
			\runitaryfd(t): & \runitaryalg & \rightarrow & \mathcal{M}_{N}\left(L^{\infty}\left(\Omega,\mathcal{F},\mathbb{P},\mathbb{K} \right) \right)\\
		&u & \mapsto & \munitaryfd(t) \\
		&p_{i} &\mapsto & p_{i}
	\end{array}.
\end{equation}
\par The algebra $\runitaryalg$ is an involutive $\mathcal{D}_{n}$-bimodule and is endowed with a Zhang algebra structure. We define three bi-module morphisms $\Delta, \varepsilon$, and $S$ by specifying their values on the generators of $\runitaryalg$ and show that $(\runitaryalg, \Delta,\varepsilon,S)$ is a Zhang algebra.
We claim that there exist involutive algebra morphisms $S$,$\Delta$, $\varepsilon$, defined on the free real algebra generated by $\{u,u^{\star},p_{1},\ldots,p_{n}\}$ satisfying:
\begin{align}
\label{threemorphisms}
&\Delta: \mathbb{R}\left[u,u^{\star},p_{1},\ldots,p_{n} \right] \to \runitaryalg \sqcup_{R} \runitaryalg,~&&\Delta(u) = u_{|1}u_{|2},~\Delta(p_{i}) = p_{i},~ 1\leq i\leq n \nonumber\\
&S:\mathbb{R}\left[u,u^{\star},p_{1},\ldots,p_{n} \right] \to \runitaryalg,~&&S(u) = u^{\star},~S(p_{i}) = p_{i},~1 \leq i \leq n \\
&\varepsilon: \mathbb{R}\left[u,u^{\star},p_{1},\ldots,p_{n} \right] \to \mathcal{B},~&&\varepsilon(u) = 1,~ \varepsilon(p_{i}) = p_{i}, i \leq n \nonumber
\end{align}
The algebra $\runitaryalg$ is a quotient of $\mathbb{R}\left[u,u^{\star},p_{1},\ldots,p_{n} \right]$ by the relations \eqref{rrelations}. Hence, the three maps in $\eqref{threemorphisms}$ descend to morphisms on $\runitaryalg$ if the images of the generator $\{u,u^{\star},p_{1},\ldots,p_{n}\}$ by these maps satisfy the same relations \eqref{rrelations}. This verification shows no difficulties and we omit it for brevity.
With this definition, $\runitaryfd$ is non-commutative process on the Zhang algebra $H$ taking its values in the rectangular probability space $\mathcal{M}_{\lmss{d}}(L^{\infty-}(\Omega,\mathcal{F},\mathbb{P},\mathbb{K}))$.
The next sections are devoted to the investigation of the convergence in non commutative distribution of $U_{d_{N}}^{\mathbb{K}}$ as the dimension $N$ tends to infinity, with $(d_{N})_{N\geq 1}$ a sequence of partitions of $N$ into a fixed number $n$ of parts.
\begin{figure}[!htb]
	\includegraphics[scale=0.5]{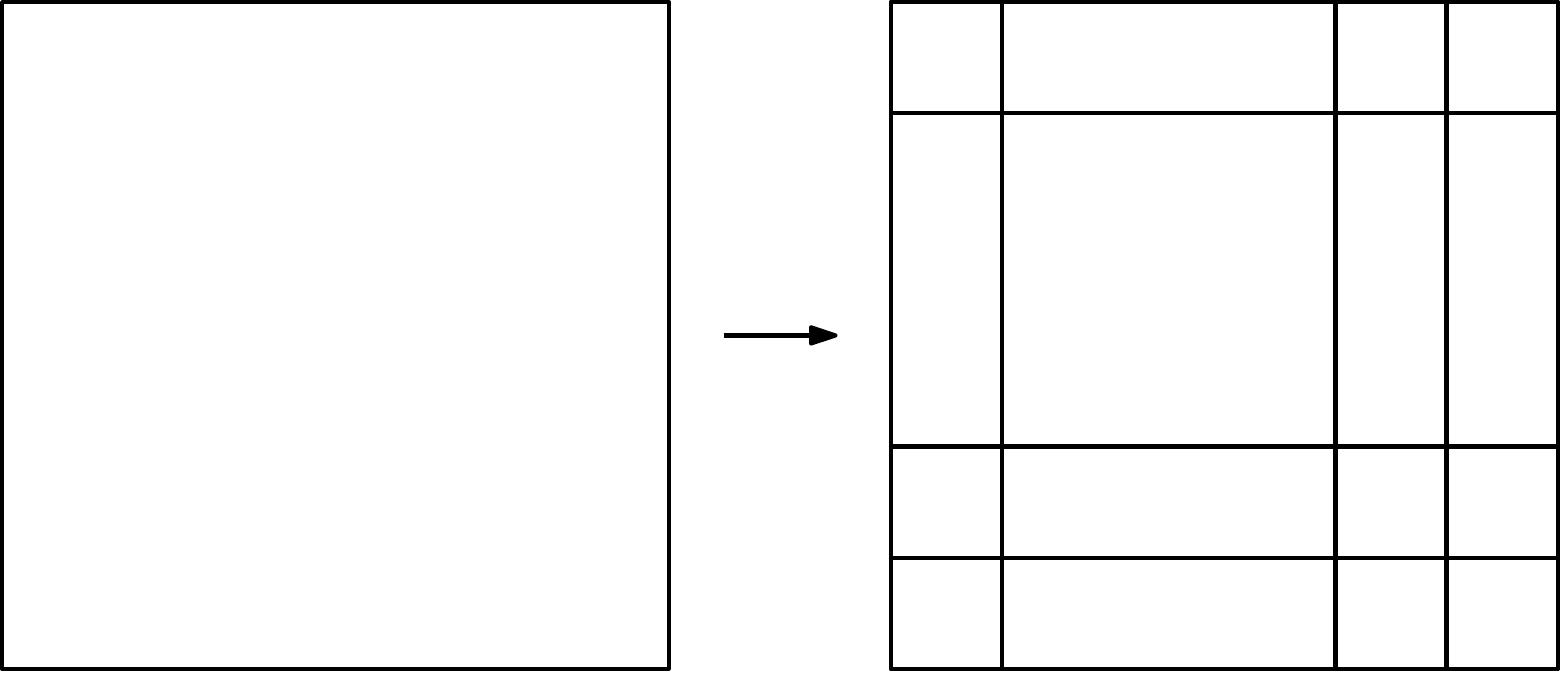}
	\caption{\label{rectpicture} A picture standing for the action of the morphism $j_{A}$ that cuts a matrix $A$(on the left) into $16$ blocks which sizes exhaust the set $\{1,3\}\times\{1,3\}$.}
\end{figure}
\subsubsection{Statistics of the extractions processes}
Let $N \geq 1$ an integer and pick a partition $d_{N}$ into $n$ parts of $N$. The purpose of the forthcoming sections are to prove the convergence in high dimensions, in non-commutative distribution, of the two processes $\rdunitaryfdN$ and $\runitaryfdN$. The method we use has been expounded in Section \ref{squareextractions} to prove the convergence in non-commutative distribution of the process square blocks extraction from an unitary Brownian motion. We recall some features of that method. If $N=nd$ and $d_{N}=(d,\ldots,d)$, we defined  for each time $t\geq 0$ a statistic $\mathbb{m}^{\mathbb{K}}_{d}(t)$ which is a function on the set of Brauer diagrams which range comprises the distribution of $\runitaryfdN$.
We proved next the convergence of this statistic by exhibiting a differential system $\mathbb{m}^{\mathbb{K}}_{d_{N}}$ is solution of. Hence, in order to apply this method, we need first to define the statistic $\lmss{m}^{\mathbb{K}}_{\lmss{d}}$ that is the rectangular counterpart of $\lmss{m}^{\mathbb{K}}_{d}$. After that, to normalize this statistic we use the functions $\lmss{fnc}_{d}$, $d \in \{d_{N}\}$ we introduced in Section \ref{schur_weyl}. To explain why the use of these functions is needed, recall that for a matrix $A \in \mathcal{M}_{N}(\mathbb{R})$ and $(b,s) \in \brauer$ an oriented irreducible Brauer diagram,
\begin{equation}
	\label{cyclic_linear}
	\tag{$\star$}
	\lmss{Tr}^{\otimes k}\left(\rho_{d_{N}}(b) \circ \left(A \otimes \cdots\otimes A\right)\right)
	= \lmss{Tr}^{\otimes k}\left(A\left(c_{b}(i^{\prime}_{1})^{s_{b}(1)}, c_{b}(i_{1})\right) \cdots A\left(c_{b}(i^{\prime}_{k}), c_{b}(i_{k})^{s_{b}(k)} \right) \right)
\end{equation}
with $\sigma_{\ncc{b}} = (i_{1}, \ldots, i_{k})$. Of course,the right hand side of $\eqref{cyclic_linear}$ does only depend on the cyclic order induced by $\sigma_{\ncc{b}}$ and truly independent of the orientation $s$. However, to normalize these quantities (and retrieve the distribution of $\runitaryfdN$) we need first to choose which block sits at front, this amongst to pick a linear order on $(1^{\prime},\ldots,k^{\prime})$. By doing this, we are able to associate a word on the blocks of $A$ to each Brauer diagram, not only a cyclic word.
A second step is to choose the dimension we use to normalize, either the number of lines, either the number of columns of the blocks that sits at front. If this block is transposed, we normalized by the number of columns, and it is not we normalize by the number of lines. By doing this we break an other symmetry of the right hand side of \eqref{cyclic_linear}, which is the invariance by transposition.
Let's draw an example with three blocks and $b = \left((1,2,3), (1,2)(1,1)(1,2)\right)$, one has
\begin{equation*}
	\lmss{Tr}\left(\rho_{\lmss{d}} \circ A \otimes A \otimes A \right) = \lmss{Tr}\left(A^{2}_{1} A^{1}_{1} A^{1}_{2}\right) = \lmss{Tr}\left(A^{1}_{1} A^{1}_{2} A^{2}_{1} \right) = \lmss{Tr}\left(A^{2}_{1}  A^{1}_{1} A^{1}_{2} \right) .
\end{equation*}
For each of three linear orders on $(1,2,3)$, we get the normalizations
\begin{equation*}
	\begin{split}
		& \frac{1}{d_{2}}\lmss{Tr}\left(A^{2}_{1} A^{1}_{1} A^{1}_{2}\right)  ~1<2<3,~ \frac{1}{d_{1}} \lmss{Tr}\left(A^{1}_{1} A^{1}_{2} A^{2}_{1} \right)~1<3<2,~\frac{1}{d_{2}} \lmss{Tr}\left(A^{1}_{1} A^{1}_{2} A^{2}_{1} \right)~3<2<1.
	\end{split}
\end{equation*}
\par The support of a cycle $c$ of a Brauer diagram $\ncc{b} \in \brauerz$ is seen as being endowed with the linear order that is left by putting the minimum, for the natural order, of the support of $c$ out of the cyclic order induced by $c$.
\par After this discussin, the definition of the rectangular extractions' statistic will seem natural for the reader. The function $\lmss{m}^{\mathbb{R}}_{d_{N}}$ on the set of oriented Brauer diagrams $\mathcal{O}\brauer$ and valued in the space of linear forms on $\mathcal{M}_{N}(\mathbb{R})^{\otimes k}$ is defined by, for matrices $A_{1},\ldots,A_{k} \in \mathcal{M}_{N}(\mathbb{K})$, and an oriented Brauer diagram $~(b,s) \in \mathcal{O}\brauer:$
\begin{equation}
\label{defstatreal}
\lmss{m}^{\mathbb{K}}_{d_{N}}((b,s))(A_{1} \otimes \cdots \otimes A_{k}) = \left(\prod_{d\in\{d_{N}\}}d^{-\sf{fnc}_{d}((b,s))} \right)\lmss{Tr}_{\mathbb{K}}(\rho^{\mathbb{R}}_{\sf{d}}(b)\circ A_{1}\otimes \cdots \otimes A_{k})),
\end{equation}
where $\mathbb{K}=\mathbb{R},\mathbb{C}$ or $\mathbb{H}$ and $\lmss{Tr}_{\mathbb{R}}=\lmss{Tr}_{\mathbb{C}}=\lmss{Tr}$, $\lmss{Tr}_{\mathbb{H}}=\mathcal{R}e\circ\lmss{Tr}$.
In the following section, we are making some hypothesis on the sequence of partitions $(d_{N})_{N \geq 1}$ and study the convergence of:
$$\lmss{m}_{d_{N}}^{\mathbb{K}}\big(\mathbb{E}\left[\lmss{m}_{d_{N}}^{\mathbb{K}}(\munitaryfd(s_{1},t_{1})\otimes \cdots \otimes \munitaryfd(s_{q},t_{q})\big)\right]$$ as the dimension $N$ tends to infinity (see \ref{prp:convstat}). To study the aforementioned convergence, we let $q \geq 1$ an integer and pick $\left[\munitaryfd\right]^{(1)},\ldots,\left[\munitaryfd\right]^{(q)}$ independent unitary Brownian motions. We denote by $\lmss{M}_{q}$, respectively $\overline{M}_{q}$ the free monoid generated by $q$ letters $\{x_{1},\ldots,x_{q}\}$, respectively $2q$ letters $\{x_{1},\ldots,x_{q},\bar{x}_{1},\ldots,\bar{x}_{q}\}$ and the identity element $\emptyset$. If $k\geq 1$ is an integer, we denote by $\lmss{M}_{q}(k)$, respectively $\overline{\sf M}_{q}(k)$ the subset of $\lmss{M}_{q}$ , respectively of $\overline{\sf M}_{q}$, comprising all words of length $k$.

If $\mathbb{K} = \mathbb{R}$ or $\mathbb{H}$, we are now defining for a tuple of times $\lmss{t}=(t_{1},\ldots,t_{q})$ , a word $w \in \lmss{M}_{q}(k)$ and an oriented Brauer diagram $(b,s)$:
\begin{equation*}
\begin{split}
\mathbb{m}^{\mathbb{K}}_{d_{N}}((b,s),w,\lmss{t})(u) = \lmss{m}^{\mathbb{K}}_{d_{N}}((b,s)\left(\mathbb{E}\left[w^{\otimes}\left(\left[\munitaryfd(ut_{1})\right]^{(1)}\otimes \cdots \otimes \left[\munitaryfd(ut_{q})\right]^{(q)}\right)\right]\right), u \in [0,1].
\end{split}
\end{equation*}
In the complex case, we need also to take component wise conjugation of the Brownian diffusion in order for its non-commutative distribution to be in the range of the statistic $\mathbb{m}^{\mathbb{C}}(\lmss{t})$, with $w$ a word
in $\overline{\lmss{M}}_{q}(k)$ and an oriented Brauer diagram $(b,s)$, we set:
\begin{equation*}
\begin{split}
\mathbb{m}^{\mathbb{C}}_{d_{N}}((b,s),w,\lmss{t})(u) = \lmss{m}^{\mathbb{C}}_{d_{N}}((b,s))\left(\mathbb{E}\left[w^{\otimes}\left(\left[\munitaryfdc(ut_{1})\right]^{(1)}\otimes \cdots \otimes \left[\munitaryfdc(ut_{q})\right]^{(q)}\right)\right]\right), u \in [0,1].
\end{split}
\end{equation*}
For each time $u \in [0,1]$, the statistic $\mathbb{m}^{\mathbb{K}}_{d_{N}}(\lmss{t})(u)$ extends linearly to the tensor product $\mathbb{R}\left[\mathcal{O}\brauer \right] \otimes \mathbb{R}\left[\lmss{M}_{q}(k)\right]$ (resp. to $\mathbb{R}\left[\mathcal{O}\brauer \right] \otimes \mathbb{R}\left[\overline{\lmss{M}}_{q}(k)\right]$), if $\mathbb{K} = \mathbb{R}$ or $\mathbb{H}$ (resp. if $\mathbb{K}=\mathbb{C}$).
\subsection{Convergence of the extraction processes' statistics}
We assume in this section that as $N$ tends to infinity, the ratio $\frac{d_{N}(i)}{N}$ converges for each integer $1 \leq i \leq n$:
\begin{equation}
\label{assumption}
\tag{$\Delta$}
\frac{d_{N}(i)}{N} \underset{N\to+\infty}{\longrightarrow} r_{i} \in ]0,1],~\text{ for all } 1 \leq i \leq n.
\end{equation}
Recall that we denote by $\lmss{Ker}(d_{N})$ the partition of $\llbracket 1,n \rrbracket$ of all level sets of the function $ \llbracket 1,n\rrbracket\ni i \mapsto \frac{d_{N}(i)}{N}$.
As noticed in Section \ref{schur_weyl}, if two dimensions functions $f$ and $f^{\prime}$ satisfy $\lmss{Ker}(f) = \lmss{Ker}(f^{\prime})$ then $\brauerdf{f} = \brauerdf{f^{\prime}}$. This section is devoted to the proof of the following proposition, which main corollary is Theorem $\ref{thm1rect}$. In the sequel we set $\lmss{r}=(r_{1},\ldots,r_{n})$. The sequence $\lmss{r}$ is a sequence of positive integers, we means that the dimensions of the blocks that are extracted grow linearly compared to the total dimension $N$.
\begin{proposition}
\label{prp:convstat}
\par  As $N \rightarrow +\infty$, for each non-mixing oriented Brauer diagram $(b,s)$ and word $w$ in $\lmss{M}_{q}(k)$, $\mathbb{m}_{d_{N}}^{\mathbb{R}}((b,s),w,\lmss{t})$ and $\mathbb{m}_{d_{N}}^{\mathbb{H}}((b,s),w,\lmss{t})$ converge to the same limit.
\par As $N \rightarrow +\infty$, for each non-mixing Brauer diagram $(b,s)$ and word $w$ in $\overline{\lmss{M}}_{q}(k)$, $\mathbb{m}^{\mathbb{C}}_{d_{N}}((b,s),w,\lmss{t})$ converges.
\par In addition, if we assume that the sequence of Kernels $\left(\lmss{ker}(d_{N})\right)_{N \geq 1}$ is bounded from below by $\lmss{Ker}(d_{1})$, $\lmss{Ker}(d_{1}) \leq \lmss{Ker}(d_{N}),~\forall N \geq 1$, the above convergence is extended to the whole set of Brauer diagrams $\brauerdf{d_{1}}$.
\end{proposition}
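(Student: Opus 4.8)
The plan is to transpose the strategy of Section \ref{squareextractions} to the operator-valued framework. First I would reduce to a family of independent diffusions: by stationarity and independence of the increments of $\munitaryfd$, each increment $\munitaryfd(s_{i},t_{i})$ appearing in the statistic may be replaced by an independent copy $[\munitaryfd(t_{i}-s_{i})]^{(i)}$, so that the mean $\mathbb{E}\left[w^{\otimes}\left(\bigotimes_{i=1}^{q}[\munitaryfd(ut_{i})]^{(i)}\right)\right]$ factorises over the $q$ diffusions. Lévy's formula (Proposition \ref{meantensorlevy}, equations \eqref{formula_tensor_process} and \eqref{formula_tensor_process_c}) then expresses this mean as a product of exponentials, one per diffusion, each acting only on the tensor slots whose letter refers to that diffusion and built from the scalar part of the Casimir together with the terms $\tfrac{1}{N}\iota_{l,l'}(C_{\mathfrak{g}})$. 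Differentiating $u\mapsto\mathbb{m}^{\mathbb{K}}_{d_{N}}(\lmss{t})(u)$ and rewriting, via Lemma \ref{brauertrace}, each $\iota_{l,l'}(C_{\mathfrak{g}})$-contribution — after composition with a non-mixing elementary diagram $\ncc{\tau}_{ll'}$ or $\ncc{e}_{ll'}$, oriented through the operation $\diamond$ of Section \ref{schur_weyl} — as a statistic of the same shape evaluated on another (diagram, word) pair, I obtain a linear operator $L^{\mathbb{K}}_{d_{N}}(\lmss{t})$ on the finite-dimensional space spanned by the pairs consisting of an oriented non-mixing coloured Brauer diagram of size $k$ and a word, such that $\tfrac{d}{du}\mathbb{m}^{\mathbb{K}}_{d_{N}}(\lmss{t})(u)=L^{\mathbb{K}}_{d_{N}}(\lmss{t})(\mathbb{m}^{\mathbb{K}}_{d_{N}}(\lmss{t})(u))$, with initial condition supported on the diagonally coloured diagrams.

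The step I expect to be the main obstacle is the control of the dimension factors generated along the way, which is precisely what the central extension $\cbrauer(\lmss{d})$ and the normalisation by the functions $\lmss{fnc}_{d}$, $d\in\{d_{N}\}$, were introduced for. Composing an elementary diagram with a coloured diagram may create closed components, each carrying a colour hence a dimension; the product of $\cbrauer(\lmss{d})$ multiplies by $\prod_{d}d^{\mathcal{K}_{d}}$, and dividing by $\prod_{d}d^{\lmss{fnc}_{d}}$ exactly compensates this, so that the only surviving powers of a given $d$ are the differences $\lmss{fnc}_{d}(\ncc{r}\circ b)-\lmss{fnc}_{d}(b)$. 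By Proposition \ref{mainpropbrauer}, together with the orientation statements of Proposition \ref{projtrmultblocks}, these differences belong to $\{-1,0,1\}$. Combined with the assumption \eqref{assumption} that $d_{N}(i)/N\to r_{i}\in(0,1]$, this yields the entrywise convergence of $L^{\mathbb{K}}_{d_{N}}(\lmss{t})$ to a limiting operator $\bar{L}^{\mathbb{K}}_{\lmss{r}}(\lmss{t})$ on the same finite-dimensional space: the scalar diagonal term converges to $-\tfrac12$ times the relevant combination of the $t_{i}$ (the $O(1/N)$ correction in $c_{\mathfrak{g}}$ disappearing), the contributions with exponent $0$ or $+1$ persist — weighted by the ratios $r_{i}/r_{j}$ of the dimensions involved — and those with exponent $-1$ vanish. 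Since the differential system is linear and finite-dimensional, $\mathbb{m}^{\mathbb{K}}_{d_{N}}(\lmss{t})$ equals the matrix exponential of its generator, and convergence of the generator forces convergence of $\mathbb{m}^{\mathbb{K}}_{d_{N}}(\lmss{t})$.

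It remains to match the real and quaternionic limits and to handle the complex case. For $\mathbb{K}=\mathbb{H}$ I would use the factorisation $\rho^{\mathbb{H}}_{\lmss{d}}=m\circ(\rho^{\mathbb{R}}_{\lmss{d}}\otimes\gamma)\circ\Delta^{\lmss{d},-2}_{-2\lmss{d}}$ and the normalisation adapted to $\brauer(-2\lmss{d})$, with powers of $-2d$ in place of $d$; exactly as in the quaternionic square case of Section \ref{squareextractions}, the resulting generator differs from the real one only by terms carrying an extra factor $\tfrac{1}{-2d}$ or signs coming from $\gamma$, all of which vanish as $N\to+\infty$, so that $\bar{L}^{\mathbb{H}}_{\lmss{r}}(\lmss{t})=\bar{L}^{\mathbb{R}}_{\lmss{r}}(\lmss{t})$ and the two families of statistics share the same limit. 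For $\mathbb{K}=\mathbb{C}$ the word $w\in\overline{\lmss{M}}_{q}(k)$ records on which slots the (untransposed) conjugation acts, and equation \eqref{formula_tensor_process_c} splits the off-diagonal contributions according to whether the two letters are equal, producing a transposition $\ncc{\tau}_{ll'}$, or distinct, producing a projector $\ncc{e}_{ll'}$; the bounded-exponent argument applies verbatim and gives the convergence of $\mathbb{m}^{\mathbb{C}}_{d_{N}}((b,s),w,\lmss{t})$.

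Finally, to upgrade the statement from non-mixing diagrams to the whole set $\brauerdf{d_{1}}$ under the hypothesis $\lmss{Ker}(d_{1})\leq\lmss{Ker}(d_{N})$ for all $N$: since $\lmss{Ker}(d_{1})$ is then finer than each $\lmss{Ker}(d_{N})$ one has $\brauerdf{d_{1}}\subset\brauerdf{d_{N}}$ for every $N$, so each statistic is defined on all of $\brauerdf{d_{1}}$; moreover composition with an elementary diagram sends $\brauerdf{d_{1}}$ into $\brauerdf{d_{1}}\cup\{0\}$ (it never refines a colouring), hence $L^{\mathbb{K}}_{d_{N}}(\lmss{t})$ preserves the span of the pairs built from $\brauerdf{d_{1}}$, which is still finite-dimensional, and the differential-system argument of the previous paragraphs applies without change on this larger space, yielding the claimed convergence on all of $\brauerdf{d_{1}}$.
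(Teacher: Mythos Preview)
Your proposal is correct and follows essentially the same route as the paper: derive a finite-dimensional linear ODE for the normalised statistics via Proposition~\ref{meantensorlevy}, control the dimension exponents through Proposition~\ref{mainpropbrauer} and the normalisation by $\lmss{fnc}_{d}$, then conclude from the convergence of the generator and continuity of the matrix exponential, treating the quaternionic and complex cases exactly as in Section~\ref{squareextractions}. One minor slip to fix: after absorbing the global $1/N$ prefactor only the contributions with $\lmss{nc}(\ncc{b}\vee\ncc{r})-\lmss{nc}(\ncc{b}\vee 1)=+1$ survive (precisely the elementary diagrams in $\trpnm{b}\cup\prpnm{b}$), not those with exponent $0$ as well---this is what gives the localised sums in the paper's limiting generators \eqref{genlimit} and \eqref{genlimitbar}.
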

Let us explain with more details the hypothesis we made on the sequence of kernels, requiring for each element of this sequence to be larger than $d_{1}$ is the same as cutting a matrix into blocks, letting each block growing while maintaining the dimensions of blocks that were equal initially, equal. The kernel $\lmss{ker}(\lmss{r})$ is, in general, greater than the kernels $\lmss{ker}(d_{N})$, $N\geq 1$. Note that the sequence of algebras $\brauer(d_{N})$ is a sequence of \emph{linearly} isomorphic spaces, and each algebra $\brauer(d_{N})$ is injected canonically into the limit algebra $\brauer(\lmss{r})$.
\par We denote by $\mathbb{m}_{\lmss{r}}(\lmss{t})$ and $\overline{\mathbb{m}}_{\sf r}(\lmss{t})$ the limit of $\mathbb{m}^{\mathbb{R}}_{d_{N}}(\lmss{t})$, respectively, $\mathbb{m}^{\mathbb{C}}_{d_{N}}(\lmss{t})$. In the course of proving Proposition \ref{prp:convstat}, we find two differential systems the functions $\mathbb{m}_{\lmss{r}}(\lmss{t})$ and $\overline{\mathbb{m}}_{\lmss{r}}$ are solutions of. The generators of these systems are denoted $L_{\lmss{r}}$ and $\bar{L}_{\lmss{r}}$ and are defined below as operators acting on $\mathbb{R}\left[\mathcal{O}\brauer \right] \otimes \mathbb{R}\left[\lmss{M}_{q}(k)\right]$ for $L_{\lmss{r}}$ and $\mathbb{R}\left[\mathcal{O}\brauer \right] \otimes \mathbb{R}\left[\overline{\lmss{M}}_{q}(k)\right]$ for $\overline{L}_{\lmss{r}}$.
To provide tractable formulae for these operators, with a slight abuse of notation, we introduce for each positive dimension function $f$, the function $\lmss{f}$ on the set product $\mathcal{O}\cbrauerf{f}\times \mathcal{O}\cbrauerf{f}$ by the equation:
\begin{equation}
\label{brauerdimensionfunction}
\lmss{f}(((b,s),\lmss{o}),((b^{\prime},s^{\prime}),\lmss{o}^{\prime}))= \displaystyle\prod_{d\in \{\lmss{f}\}} d^{\lmss{fnc}_{d}((b,s),\lmss{o}) -\lmss{fnc}_{d}((b^{\prime},s^{\prime}),\lmss{o}^{\prime})},
\end{equation}
with $((b,s),\lmss{o}),((b^{\prime},s^{\prime}),\lmss{o}^{\prime})\in \mathcal{O}\cbrauerf{f}\times \mathcal{O}\cbrauerf{f}$ and remark that $\lmss{f}$ is well defined owing to the positivity of $f$.
We set $$c_{N}^{\mathbb{R}} = -\frac{1}{2}\frac{N-1}{N},~c_{N}^{\mathbb{H}} = -\frac{1}{2}\frac{N-3}{N},~c_{N}^{\mathbb{C}}=-\frac{1}{2}.$$
\par We are now ready to compute the derivatives of the statistics $\mathbb{m}_{d_{N}}^{\mathbb{K}}$. By using formulae \eqref{meancomplex}, \eqref{meanreal} and \eqref{meanquaternion} of Section \ref{squareextractions} for the mean $\mathbb{E}\left[\munitaryfd(t_{i}) \right]$, $i \leq q$ and obtain for each integer $1 \leq i\leq q$ existence of an operator $L_{i,N}^{\mathbb{K}}$ acting on the space $\mathbb{R}\left[\mathcal{O}\brauer \right] \otimes \mathbb{R}\left[\lmss{M}_{q}(k)\right]$ for $\mathbb{K} = \mathbb{R}$ or $\mathbb{H}$ and on $\mathbb{R}\left[\mathcal{O}\brauer \right] \otimes \mathbb{R}\left[\overline{\lmss{M}}_{q}(k)\right]$ if $\mathbb{K}=\mathbb{C}$ such that:
\begin{equation}
\label{formulader}
\frac{d}{du}\mathbb{m}_{d_{N}}^{\mathbb{K}}(\lmss{t})(u) = \sum_{i=1}^{q}\mathbb{m}^{\mathbb{K}}_{d_{N}}(\lmss{t})(u) \circ L^{\mathbb{K}}_{i,N} = \mathbb{m}_{d_{N}}^{\mathbb{K}}(\lmss{t})(u)\circ L^{\mathbb{K}}_{N},\text{ for all } u \in [0,1],
\end{equation}
with $L_{N}^{\mathbb{K}} = \sum_{i=1}^{q}t_{i}L_{i,N}^{\mathbb{K}}$.
\par The two sets of coloured Brauer diagrams $\brauerdf{d_{N}}$ and $\brauerdf{r_{N}}$ are equal, the operators $L_{i,N}^{\mathbb{K}}$ may thus be seen as acting on the linear span of the tensor product of $\mathbb{R}\left[\mathcal{O}\brauerdf{r_{N}}\right]$ with $\mathbb{R}\left[\overline{\lmss{M}}_{q}(k)\mathbb{R}\right]$(or $\mathbb{R}\left[\lmss{M}_{q}(k)\right]$ in the complex case).
For $1 \leq i \leq n$ and a word in $w \in \lmss{M}_{q}$ (resp. in $\overline{\lmss{M}_{q}}$), we denote by $\lmss{n}_{i}(w)$ the number of letters in the word $w$ equal to $x_{i}$ (resp. to $x_{i}$ or $\overline{x}_{i}$). We recall that the sets of non mixing coloured Brauer diagrams which underlying component is a transposition of a projection is denoted, respectively, by $\trnm$ and $\prnm$. Let $b \in \mathcal{O}\brauerdf{d_{N}}$ a Brauer diagram and $w \in \lmss{M}_{q}(k)$, for $\mathbb{K}=\mathbb{R}$ or $\mathbb{H}$,
\begin{align}
	L^{\mathbb{K}}_{i,N}(b\otimes w) &= c_{N}^{\mathbb{K}}\lmss{n}_{i}(w)(b \otimes w) + \sum_{\substack{e_{ij} \in \prnm, \\ w_{i} = w_{j} = x_{i}}} N^{\lmss{nc}(\ncc{b} \vee \ncc{e}_{ij})-\lmss{nc}(\ncc{b}\vee 1)-1}\lmss{\lmss{r}}_{N}\big(\overset{\circ}{e}_{ij}\diamond \overset{\circ}{b},\overset{\circ}{b}\big)(e_{ij}\diamond b \otimes w)\nonumber \\ &\hspace{3cm}-\sum_{\substack{\tau_{ij} \in \trnm, \\ w_{i} = w_{j} = x_{i}}} N^{\lmss{nc}(\ncc{b} \vee \ncc{\tau}_{ij})-\lmss{nc}(\ncc{b}\vee 1)-1}\lmss{\lmss{r}}_{N}\big(\overset{\circ}{\tau}_{ij}\diamond \overset{\circ}{b},\overset{\circ}{b}\big)(\tau_{ij}\diamond b \otimes w)\label{genrh}.
\end{align}
with the finite sequence $\lmss{r}_{N}$ defined by
$
\lmss{r}_{N} = \left(\frac{d_{N}(1)}{N},\ldots,\frac{d_{N}(n)}{N}\right).
$
For the the complex case, we let $w$ be a word in the monoid $\overline{\lmss{M}}_{q}(k)$,
\begin{align}
L^{\mathbb{C}}_{i,N}(b \otimes w) &= c_{N}^{\mathbb{C}}\lmss{n}_{i}(w)(b \otimes w) + \sum_{\substack{e_{ij} \in \prnm, \\ w_{i}, w_{j} \in \{x_{i},\overline{x}_{i}\}}} N^{\lmss{nc}(\ncc{b} \vee \ncc{e}_{ij})-\lmss{nc}(\ncc{b}\vee 1)-1}\lmss{\lmss{r}}_{N}\big(\overset{\circ}{e}_{ij}\diamond \overset{\circ}{b},\overset{\circ}{b}\big)(e_{ij}\diamond b \otimes w) \nonumber\\ &\hspace{3cm}-\sum_{\substack{\tau_{ij} \in \trnm, \\ w_{i} = w_{j} \in \{x_{i},\overline{x}_{i}\}}} N^{\lmss{nc}(\ncc{b} \vee \ncc{\tau}_{ij})-\lmss{nc}(\ncc{b}\vee 1)-1}\lmss{\lmss{r}}_{N}\big(\overset{\circ}{\tau}_{ij}\diamond \overset{\circ}{b},\overset{\circ}{b}\big)(\tau_{ij}\diamond b \otimes w)\label{genc}.
\end{align}
Let us detail the computations for the real case. For an oriented Brauer diagram $(b,s)$, set $D_{N}(b,s) = \prod_{d\in\{{d}_{N}\}}d^{-\sf{fnc}_{d}(b)}$. By definition, we have
\begin{equation}
\label{derstat}
\frac{d}{du}\mathbb{m}_{d_{N}}^{\mathbb{R}}(\lmss{t},w,b)(u) = D_{N}(b,s)\lmss{Tr}\left(\frac{d}{du}\mathbb{E}\left[w^{\otimes}([\munitaryfdr]^{1}(ut_{q})\otimes \cdots \otimes [\munitaryfdr(ut_{N}))]^{q}\right] \circ \rho^{\mathbb{R}}_{\lmss{d}_{N}}(b) \right).
\end{equation}
Owing to formulae in Section \ref{squareextractions} we proved for the mean of tensor monomials of the unitary Brownian diffusion and mutual independence of the family $\left\{[\munitaryfd]^{i},~ 1 \leq i \leq q \right\}$,
\begin{equation*}
\begin{split}
&\frac{d}{du}\mathbb{E}\left[w^{\otimes}([\munitaryfdr]^{1}(ut_{q})\otimes \cdots \otimes [\munitaryfdr(ut_{N}))]^{q}\right] = \mathbb{E}\left[w^{\otimes}([\munitaryfdr]^{1}(ut_{q})\otimes \cdots \otimes [\munitaryfdr(ut_{q}))]^{q}\right] \\
&\hspace{5.5cm} \times \sum_{i=1}^{q}t_{i}\bigg(c_{N}^{\mathbb{R}}\lmss{n}_{i}(w) + \sum_{\substack{e_{ij} \in \prnm, \\ w_{i} = w_{j} = x_{i}}} \frac{1}{N}e_{ij} -\sum_{\substack{\tau_{ij} \in \trnm, \\ w_{i} = w_{j} = x_{i}}}\frac{1}{N}\tau_{ij}\bigg)
\end{split}
\end{equation*}
We insert this last equation into formula \eqref{derstat} to obtain:
\begin{equation*}
\label{step2}
\begin{split}
&\frac{d}{du}\mathbb{m}_{d_{N}}^{\mathbb{R}}(\lmss{t},w,b)(u) = \sum_{i}t_{i}\Big(\lmss{n}_{i}(w)\mathbb{m}^{\mathbb{R}}_{d_{N}}((b,s),w,\lmss{t})(u) \\
&\frac{1}{N}\sum_{\substack{e_{ij} \in \prnm, \\ w_{i} = w_{j} = x_{i}}} \prod_{d}d^{\mathcal{K}_{d}(e_{ij},b)}D_{N}(b,s)\lmss{Tr}\left(\mathbb{E}\left[w^{\otimes}([\munitaryfdr]^{1}(ut_{q})\otimes \cdots \otimes [\munitaryfdr(ut_{q}))]^{q}\right] \circ \rho_{d_{N}}^{\mathbb{R}}(e_{ij}\circ b)\right)\\
&-\frac{1}{N}\sum_{\substack{\tau_{ij} \in \trnm, \\ w_{i} = w_{j} = x_{i}}}\prod_{d}d^{\mathcal{K}_{d}(\tau_{ij},b)}D_{N}((b,s))\lmss{Tr}\left(\mathbb{E}\left[w^{\otimes}([\munitaryfdr]^{1}(ut_{q})\otimes \cdots \otimes [\munitaryfdr(ut_{q}))]^{q}\right] \circ \rho_{d_{N}}^{\mathbb{R}}(\tau_{ij}\circ b)\right)\Big)
\end{split}
\end{equation*}
We orient the coloured Brauer diagrams $e_{ij}\circ b$ and $\tau_{ij} \circ b$ using the $\diamond$ operator. We multiply each terms in the first sum, respectively the second sum of the last equation by the normalization factor $D_{N}(\tau_{ij} \diamond (b,s))$, respectively $D_{N}(e_{ij} \diamond (b,s))$ to get, with $r\in \{e,\tau\}$:
\begin{equation*}
\resizebox{\hsize}{!}{$%
\frac{\prod_{d\in\{d_{N}\}}d^{\mathcal{K}_{d}(r_{ij},b)}D_{N}(b,s)}{ND_{N}(r_{ij}\diamond b)}\mathbb{m}^{\mathbb{R}}_{d_{N}}(r_{ij}\diamond b,w,\lmss{t})=N^{\lmss{nc}(\ncc{b} \vee \ncc{r}_{ij})-\lmss{nc}(\ncc{b}\vee 1)-1}\lmss{\lmss{r}}_{N}\left(\overset{\circ}{r}_{ij}\diamond \overset{\circ}{b},\overset{\circ}{b}\right)f_{d_{N}}(r_{ij}\diamond (b,s),w,\lmss{t}).$%
}
\end{equation*}
\par The functional $\delta_{\Delta_{k}}$ is the indicator function of the set of coloured oriented Brauer diagrams in $\mathcal{O}\brauerdf{d_{1}}$ that are diagonally coloured,
\begin{equation*}
\delta_{\Delta_{k}}((b,s)\otimes w) = \prod_{i=1}^{q}\delta_{c_{b}(i)=c_{b}(i^{\prime})}.
\end{equation*}
 Since the family $\{L_{i,N}^{\mathbb{K}}, i \leq N\}$ is a commuting family of operators, we get the following formula for $\mathbb{m}^{\mathbb{K}}_{d(N)}(\lmss{t})$,
\begin{equation}
\label{eq:stat}
\mathbb{m}_{d(N)}^{\mathbb{K}}(\lmss{t})(1) = \delta_{\Delta_{k}}\circ \displaystyle\prod_{i=1}^{q}e^{t_{i}L_{i,N}^{\mathbb{K}}}=\delta_{\Delta_{k}} \circ  \exp\bigg(\sum_{i=1}^{q}t_{i}L_{i,N}^{\mathbb{K}}\bigg).
\end{equation}
We draw the reader's attention on the fact that the domain of definition of the statistic $\mathbb{m}_{d(N)}^{\mathbb{K}}(\lmss{t})$ and the generators $L^{\mathbb{K}}_{i,N}$ with $i \leq q$ rests on the kernel $\lmss{Ker}(d_{N})$ of the dimension function $d_{N}$. This prevents us to simply let $N$ tends to infinity in the formulae \eqref{genrh} and \eqref{genc} without further assumption on the sequence $d_{N}$. Nevertheless, under the assumption made in Proposition \ref{prp:convstat} on the sequence $\left(d_{N}\right)_{N\geq1}$ the aforementioned issue does not show up; the generators $L_{i,N}^{\mathbb{K}}$ are defined on the real vector space $\mathbb{R}\left[\brauerdf{\sf d_{1}}\right]$ included in all the spaces $\mathbb{R}\left[\brauerdf{\lmss{ker}(d_{N})}\right]$, $N \geq 1$.

\par Let $1 \leq i,j \leq k$ two integers. The quantities $\overset{\circ}{\tau}_{ij}\diamond \overset{\circ}{b}$ and $\overset{\circ}{e}_{ij}\diamond \overset{\circ}{b}$ are computed in the algebra $\brauer(\lmss{r}_{N})$. To make explicit this dependence we write instead for the next few lines $\overset{\circ}{\tau}\diamond_{\lmss{r}_{N}}\overset{\circ}{b}$ and $\overset{\circ}{e}_{ij}\diamond_{\lmss{r}_{N}} \overset{\circ}{b}$. Since the limiting ratios $\lmss{r}_{i}$ are all positive,
\begin{equation*}
\lmss{\lmss{r}}_{N}\left(\overset{\circ}{e}_{ij}\diamond_{\lmss{r}_{N}} \overset{\circ}{b},\overset{\circ}{b}\right)\underset{N \to +\infty}{\longrightarrow} \lmss{r}\left(\overset{\circ}{e}_{ij}\diamond_{\lmss{r}} \overset{\circ}{b},\overset{\circ}{b}\right),~\lmss{\lmss{r}}_{N}\left(\overset{\circ}{\tau}_{ij}\diamond_{\lmss{r}_{N}} \overset{\circ}{b},\overset{\circ}{b}\right)\underset{N \to +\infty}{\longrightarrow} \lmss{r}\left(\overset{\circ}{\tau}_{ij}\diamond_{\lmss{r}} \overset{\circ}{b},\overset{\circ}{b}\right).
\end{equation*}
\par We repeat the discussion we made in Section \ref{squareextractions} in which the convergence of the statistics $\mathbb{m}^{\mathbb{K}}_{d}(t), t \geq 0$ was investigated as the dimension $d$ tends to infinity: the sums over the elementary non-mixing diagrams $r$ in equation \eqref{genrh} and \eqref{genc} localize over the set of diagrams that create a cycle or a loop if multiplied with $b$: $r \in \trp{b}\cup\prp{b}$.
\par We are non convinced that we can let $N$ tends to infinity in equation \eqref{genrh} and \eqref{genc}. For each Brauer diagram in $\brauerdf{d_{1}}$, and word $w$ in $\lmss{M}_{2}$,
\begin{equation*}
L_{i,N}^{\mathbb{R}}(b \otimes w) \underset{N\rightarrow +\infty}{\longrightarrow} L_{i,\lmss{r}}(b \otimes w),~L_{i,N}^{\mathbb{H}}(b \otimes w) \underset{N\rightarrow +\infty}{\longrightarrow} L_{i,\lmss{r}}(b \otimes w),
\end{equation*}
with the generator $L_{i,\lmss{r}}$ defined for $b \in \mathcal{O}\brauerdf{\lmss{r}}$:
\begin{gather}
\label{genlimit}
\raisetag{-40pt}
\resizebox{\hsize}{!}{$
	\begin{split}
		L_{i,\lmss{\lmss{r}}}(b \otimes w)= -\frac{1}{2}\lmss{n}_{i}(w)(b,w) +&\sum_{\substack{e_{ij} \in \prpnm{b} \\ w_{i} = w_{j} = x_{i}}} \lmss{r}(\overset{\circ}{e_{ij}} \diamond \overset{\circ}{b},\overset{\circ}{b})(e_{ij}\diamond b \otimes  w) + \sum_{\substack{\tau_{ij} \in \trpnm{b}, \\ w_{i} = w_{j}=x_{i}}} \lmss{r}(\overset{\circ}{\tau}_{ij}\diamond \overset{\circ}{b},\overset{\circ}{b})(\tau_{ij}\diamond b \otimes w).
	\end{split}$}
\end{gather}
For the complex case, if $w$ is a word in $\overline{\lmss{M}}_{2}$ and $b \in \mathcal{O}\brauerdf{d_{1}}$, if we let $N$ tends to infinity, we obtain the convergence for each integer $1 \leq i\leq n$ of $\overline{L}^{\mathbb{C}}_{i,N}(b,w)$ to $\overline{L}_{i,\lmss{r}}(b \otimes w)$ with $L_{i,\lmss{r}}$ defined by the equation
\begin{gather}
\raisetag{-40pt}
\label{genlimitbar}
\resizebox{\hsize}{!}{$
	\begin{split}
		\overline{L}_{i,\lmss{\lmss{r}}}(b \otimes w) = -\frac{1}{2}\lmss{n}_{i}(w)(b \otimes w) +&\sum_{\substack{e_{ij} \in \prpnm{b} \\ w_{i} \neq w_{j} \\ w_{i},w_{j} \in \{x_{i},\bar{x}_{i}\}}} \lmss{r}(\overset{\circ}{e_{ij}} \diamond \overset{\circ}{b},\overset{\circ}{b})(e_{ij}\diamond b \otimes w) + \sum_{\substack{\tau_{ij} \in \trpnm{b}, \\ w_{i} = w_{j} \\ w_{i},w_{j} \in \{x_{i},\bar{x}_{i}\}}} \lmss{r}(\overset{\circ}{\tau}_{ij}\diamond \overset{\circ}{b},\overset{\circ}{b})(\tau_{ij}\diamond b \otimes w).
	\end{split}%
$}
\end{gather}
Set $L_{\lmss{r}} = \sum_{i=1}^{q}t_{i}L_{i,\lmss{r}}$ and $\overline{L}_{\lmss{r}} = \sum_{i=1}^{q}t_{i}\overline{L}_{i,\lmss{r}}$.
Since the generators $L_{i,N}^{\mathbb{K}}, i \leq q, \mathbb{K}=\mathbb{R},\mathbb{C}$ or $\mathbb{H}$ and $L_{i,\lmss{r}}, \bar{L}_{i,\lmss{r}}$ act on finite dimensional spaces, we can let $N$ tends to infinity in equation $\eqref{eq:stat}$ and get, for $\mathbb{K} = \mathbb{R}$ or $\mathbb{H}$
\begin{equation*}
\mathbb{m}_{d_{N}}^{\mathbb{K}}(b,w,\lmss{t})(1) \underset{N \rightarrow +\infty}{\longrightarrow} \delta_{\Delta_{k}} \circ e^{\sum_{i=1}^{q}t_{i}L_{i,\lmss{r}}(b \otimes w)},~\mathbb{m}_{d_{N}}^{\mathbb{C}}(b \otimes w,\lmss{t})(1) \underset{N \rightarrow +\infty}{\longrightarrow} \delta_{\Delta_{k}} \circ e^{\sum_{i=1}^{q}t_{i}\overline{L}_{i,\lmss{r}}(b\otimes w)}.
\end{equation*}
We want now to empasize an important feature of the generators $L_{\lmss{r}}$ and $\overline{L}_{\lmss{r}}$ that is most easily discussed in the case $q=1$. If $c_{1},\ldots,c_{l}$ are irreductible Brauer diagrams of sizes $k_{1},\ldots,k_{l}$ we denote by $c_{1}\otimes \cdots \otimes c_{l}$ the coloured Brauer diagram of size $k=k_{1}+\cdots+k_{l}$ which cycles are the diagrams $c_{1},\ldots,c_{l}$ (in this order):

\begin{figure}[!htb]
\includegraphics[scale=0.75]{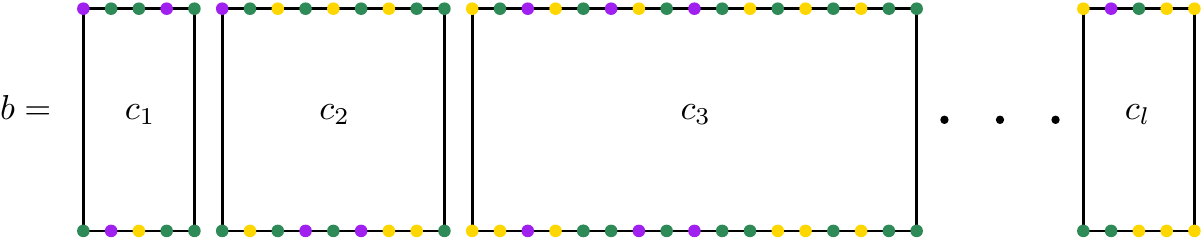}
\end{figure}

Owing to the definition of the operator $L_{\lmss{r}}$, for words $w_{i} \in \lmss{M}_{q}(k_{i})$, $1 \leq i \leq l$,
\begin{equation*}
	L_{\lmss{r}}(b\otimes w) = L_{\lmss{r}}(c_{1}\otimes w_{1}) \otimes \cdots \otimes c_{k}\otimes w_{k} + c_{1} \otimes L_{\lmss{r}}(c_{2}) \otimes \cdots \otimes c_{k} + \cdots c_{1} \otimes c_{2} \otimes \cdots \otimes L_{\lmss{r}}(c_{k}).
\end{equation*}
This last equation implies $\mathbb{m}_{\lmss{r}}(c_{1},w_{1})\cdots \mathbb{m}_{\lmss{r}}(c_{l},w_{l}) = \mathbb{m}(b \otimes w_{1}\cdots w_{l})$. This factorization property extends to general Brauer diagrams:
\begin{equation}
\label{factorization}
	\mathbb{m}_{\lmss{r}}(b,w) = \mathbb{m}_{\lmss{r}}(b_{V_{1}},w_{V_{1}}) \cdots \mathbb{m}_{\lmss{r}}(b_{V_{p}}, w_{V_{p}})
\end{equation}
where $\ncc{b}\vee 1 = \{V_{1}, \ldots, V_{p}\}$, $w_{V}$ is the word $w$ restricted to $V$, $w_{V} = w_{i_{1}}\cdots w_{i_{k}}$ if $V=\{i_{1},\ldots,i_{k}\}$ and $b_{V}$ is the part of $b$ that is contained in $V$. Of course, the same property holds for $\overline{\mathbb{m}}_{\lmss{r}}$.
\subsection{Convergence in high dimensions of the rectangular extraction processes}
\label{convergencesprocesses}
In this section, we prove that Proposition \ref{prp:convstat} implies the convergence in non-commutative distribution of the quantum processes $\runitaryfdN$ and $\runitaryfdN$ toward free L\'evy processes. Let $N,n \geq 1$ be integers and for each $N\geq 1$, $d_{N}$ a partition of $N$ into $n$ parts. In this section, we interpret the convergence proved in Section \ref{extractionprocesses} of the statistics $f^{\mathbb{K}}_{d_{N}}$ as $N$ tends to infinity in term of convergence of the rectangular extractions process $\runitaryfdN$ and $\rdunitaryfdN$ in non-commutative distribution.
\begin{theorem}
\label{thm1rect}
Let $n \geq 1$ an integer. For each integer $N \geq 1$, pick a partition $d_{N}$ of $N$ into $n$ parts. Let $\mathbb{K}$ be one of the three divisions algebras $\mathbb{R}$, $\mathbb{C}$ or $\mathbb{H}$.
Assume that as $N$ tends to infinity, there exists positive real numbers $r_{i} \in ]0,1]$, for $1\leq i\leq n$, such that
\begin{equation*}
\frac{d_{N}(i)}{N} \underset{N\rightarrow +\infty}{\longrightarrow} r_{i},~1 \leq i \leq n.
\end{equation*}
As the dimension $N$ tends to infinity, the non-commutative distribution of $\runitaryfdN$ converges to a $\mathcal{D}_{n}$ amalgamated free semi-group.
\end{theorem}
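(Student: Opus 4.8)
The plan is to read the convergence of statistics established in Proposition \ref{prp:convstat} as convergence of the $\mathcal{D}_{n}$-valued non-commutative distribution of $\runitaryfdN$, and then to upgrade it to the amalgamated free semi-group structure. First I would fix $t\geq 0$ and, for a monomial $u$ in the generators $u,u^{\star},p_{1},\ldots,p_{n}$ of $\runitaryalg$, use the constitutive relations \eqref{rrelations} to bring $u$ to a normal form $p_{i}\,(u^{\varepsilon_{1}}\cdots u^{\varepsilon_{m}})\,p_{j}$. Applying the conditional expectation $\mathbb{E}_{d_{N}}$ to $\runitaryfdN(t)(u)$ then yields $0$ unless $i=j$, and when $i=j$ it equals $\mathbb{m}^{\mathbb{K}}_{d_{N}}\big((b_{u},s_{u}),w_{u},t\big)(1)\,p_{i}$ for a suitable oriented Brauer diagram $b_{u}$ and word $w_{u}$; here the orientation $s_{u}$ and the linear order of the front cycle are chosen so that the normalising dimension is precisely $d_{N}(i)$, as in the discussion preceding \eqref{defstatreal}. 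Proposition \ref{prp:convstat} then gives, for every such $u$, the convergence of $\mathbb{E}_{d_{N}}[\runitaryfdN(t)(u)]$ as $N\to+\infty$, and I would write $\Phi^{\lmss{r}}_{t}\colon\runitaryalg\to\mathcal{D}_{n}$ for the limiting linear map.

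Next I would verify that $\Phi^{\lmss{r}}_{t}$ is a genuine conditional expectation. Each $\mathbb{E}_{d_{N}}$ is a positive $\mathcal{D}_{n}$-bimodule map with $\mathbb{E}_{d_{N}}[1]=1$ (all the algebras $\mathcal{D}_{d_{N}}$ being canonically identified with $\mathcal{D}_{n}$), and each of these properties passes to the pointwise limit; hence $(\runitaryalg,\Phi^{\lmss{r}}_{t})$ is a $\mathcal{D}_{n}$-valued probability space, and $(\Phi^{\lmss{r}}_{t})_{t\geq 0}$ is the non-commutative distribution of a quantum process on the Zhang algebra $\runitaryalg$. The continuity $\lim_{s\to 0^{+}}\Phi^{\lmss{r}}_{s}=\varepsilon$ follows from $\mathbb{m}^{\mathbb{K}}_{d_{N}}(\,\cdot\,)(0)=\delta_{\Delta_{k}}$ together with \eqref{eq:stat}.

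It then remains to establish the amalgamated free semi-group law $\Phi^{\lmss{r}}_{s+t}=\Phi^{\lmss{r}}_{s}\,\hat{\free}_{\mathcal{D}_{n}}\,\Phi^{\lmss{r}}_{t}$. For times $s_{1}<t_{1}\leq\cdots\leq s_{p}<t_{p}$ the increments $\runitaryfdN(s_{i},t_{i})$, defined on $\runitaryalg$ through $S$ and $\Delta$ exactly as in the square case, have at finite $N$ the same law as $\runitaryfdN(t_{i}-s_{i})$, and Proposition \ref{prp:convstat}, which already handles joint statistics of products $\munitaryfd(s_{1},t_{1})\otimes\cdots\otimes\munitaryfd(s_{q},t_{q})$, yields convergence of the joint $\mathcal{D}_{n}$-valued distribution of $\big(\runitaryfdN(s_{1},t_{1}),\ldots,\runitaryfdN(s_{p},t_{p})\big)$. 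To identify this joint limit I would compute its $\mathcal{D}_{n}$-valued cumulants and invoke Proposition \ref{nullmixedcumulants}: the factorisation \eqref{factorization} of the limiting statistics over the cycles of the underlying Brauer diagram, together with the fact that a mixed monomial in distinct increments forces those cycles to split according to which increment they carry, makes every mixed cumulant vanish; equivalently, the cycle-wise derivation action of $L_{\lmss{r}}$ and $\overline{L}_{\lmss{r}}$ recorded in \eqref{genlimit}--\eqref{genlimitbar} annihilates mixed products lying in $\lmss{Ker}(\varepsilon)$. Thus the increments become asymptotically amalgamated-free over $\mathcal{D}_{n}$, and feeding this into the Zhang-algebra reconstruction $\runitaryfdN(s+t)=\big(\runitaryfdN(s)\free\runitaryfdN(s,s+t)\big)\circ\Delta$ gives
\[
\Phi^{\lmss{r}}_{s+t}=\big(\Phi^{\lmss{r}}_{s}\free_{\mathcal{D}_{n}}\Phi^{\lmss{r}}_{t}\big)\circ\Delta=\Phi^{\lmss{r}}_{s}\,\hat{\free}_{\mathcal{D}_{n}}\,\Phi^{\lmss{r}}_{t},
\]
so that $(\Phi^{\lmss{r}}_{t})_{t\geq 0}$ is a $\mathcal{D}_{n}$-amalgamated free semi-group and $\runitaryfdN$ converges to an amalgamated free L\'evy process on $\runitaryalg$, as claimed.

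The main obstacle is that Theorem \ref{collins_sniady} is not available here: the rectangular extraction process is invariant only under conjugation by the block-diagonal group $\mathbb{U}(d_{N},\mathbb{K})=\prod_{i=1}^{n}\mathbb{U}(d_{N}(i),\mathbb{K})$, not by the full unitary group $\mathbb{U}(N,\mathbb{K})$, so the asymptotic amalgamated freeness of the increments cannot be deduced from a conjugation-invariance argument and must be obtained through the direct combinatorial route above. Carrying out the vanishing of the mixed $\mathcal{D}_{n}$-valued cumulants with care---keeping track of the compressed spaces $p_{i}\runitaryalg p_{j}$ and of the orientation and ordering conventions built into the definition of the statistics---is where the real work lies.
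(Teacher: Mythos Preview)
Your overall plan matches the paper's: reduce to Proposition \ref{prp:convstat} for convergence of one-time marginals, then prove asymptotic amalgamated freeness of increments by showing that mixed $\mathcal{D}_{n}$-valued cumulants vanish in the limit, and you correctly note that Theorem \ref{collins_sniady} is unavailable. The gap is in the crucial step, where you write that the factorisation \eqref{factorization} ``together with the fact that a mixed monomial in distinct increments forces those cycles to split'' makes every mixed cumulant vanish. That sentence hides essentially all the work, and the mechanism you describe is not quite the one the paper uses.

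The paper proceeds differently. For each non-crossing partition $\pi\in\lmss{NC}_{k}$ it builds a specific coloured oriented Brauer diagram $b$ by twisting $\sigma_{\pi}$ at the positions where $\varepsilon(i)=\star$ (equations \eqref{eq:brauerdef}--\eqref{eq:brauerdef2}), and first establishes the ``condensation'' asymptotic \eqref{eq:condensation}, which identifies $\mathbb{E}^{\pi}_{d_{N}}$ with $\mathbb{m}^{\mathbb{K}}_{d_{N}}(b,w,\lmss{t})$ up to $o(1)$. Then the limiting statistic $\mathbb{m}_{\lmss{r}}(b,w,\lmss{t})$ is expanded, not via the cycle factorisation \eqref{factorization}, but by writing the exponential of $\sum_{i}t_{i}L_{i,\lmss{r}}$ as a sum over paths of elementary diagrams in the sets
\[
\lmss{R}^{+}_{s}(b,w)=\bigl\{(r_{i_{1}j_{1}},\ldots,r_{i_{s}j_{s}}):\ r_{i_{l}j_{l}}\in\trpnm{\cdot}\cup\prpnm{\cdot},\ w_{i_{l}}=w_{j_{l}}\bigr\},
\]
and grouping these paths according to the endpoint cycle partition $\gamma_{s}$. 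This produces $\mathbb{m}_{\lmss{r}}(b,w,\lmss{t})=\sum_{\gamma\leq\pi}c_{\gamma}(\alpha,w,\varepsilon)$ with coefficients that Lemma \ref{lemmacumulant} shows are \emph{independent of} $\pi$; M\"obius inversion over $\lmss{NC}_{k}$ then identifies $c_{\gamma}$ as the limit of the cumulant $c^{\gamma}_{d_{N}}$. The vanishing of the top coefficient $c_{\mathbf{1}_{k}}$ for mixed words comes from the constraint $w_{i_{l}}=w_{j_{l}}$ built into $\lmss{R}^{+}_{s}(b,w)$: a path can never merge two cycles carrying different increment labels, so $\lmss{R}^{+}_{s}(b,w,\mathbf{1}_{k})=\emptyset$ whenever $w$ contains two distinct letters (second assertion of Lemma \ref{lemmacumulant}).

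So what is missing from your sketch is precisely this bridge between the Brauer-diagram combinatorics and the moment--cumulant formalism: the construction of a diagram for every $\pi$, the asymptotic \eqref{eq:condensation}, the path expansion over $\lmss{R}^{+}_{s}(b,w,\gamma)$, and the $\pi$-independence of the resulting coefficients. The factorisation \eqref{factorization} by itself only tells you that the limiting moment functional is multiplicative over the cycles of a \emph{fixed} diagram; it does not directly give a decomposition of the form $\sum_{\gamma\leq\pi}c_{\gamma}$ with $\pi$-independent summands, which is what is needed to run the M\"obius inversion and isolate the cumulant.
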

Prior to proving Theorem \ref{thm1rect}, we first settle some notations. We use the symbol $\lmss{E}_{\sf r}$ to denote the amalgamated free semi-group which existence stated in Theorem \ref{thm1rect} with $\lmss{r}=(r_{1},\ldots,r_{n})$. Recall that $\lmss{E}_{\sf r}$ is a free amalgamated semi-group means:
\begin{equation*}
\lmss{E}_{\lmss{r}}(t+s)={\sf E}_{\lmss{r}}(t)\freem_{\mathcal{D}_{n}}{ \sf E}_{\lmss{r}}(s).
\end{equation*}
Also a formula for the generator $\mathcal{L}_{\lmss{r}}$ of this semi-group can be read of the formula for the operator $\overline{L}_{1,\lmss{r}}$. Let $x = u_{\alpha_{1},\alpha_{2}}^{\varepsilon(1)}\cdots u_{\alpha_{k-1},\alpha_{k}}^{\varepsilon(k)}$, the Brauer diagram $b$ in the following equation is defined as in equation \eqref{eq:brauerdef} and \eqref{eq:brauerdef2} below,
\begin{gather}
\raisetag{-40pt}
\label{formulagenerator}
\begin{split}
\mathcal{L}_{\lmss{r}}(x)=-\frac{1}{2}k\delta_{\Delta_{k}}(b) +&\sum_{\substack{e_{ij} \in \prpnm{b}}} \lmss{r}(\overset{\circ}{e_{ij}} \diamond \overset{\circ}{b},\overset{\circ}{b})\delta_{\Delta_{k}}(e_{ij}\diamond b) + \sum_{\tau_{ij} \in \trpnm{b}} \lmss{r}(\overset{\circ}{\tau}_{ij}\diamond \overset{\circ}{b},\overset{\circ}{b})\delta_{\Delta_{k}}(\tau_{ij}\diamond b).
\end{split}
\end{gather}
Let $i,j \leq k$. The way we chose to orient $e_{ij}\circ b$ and $\tau_{ij}\circ b$, using the operator $\diamond$ was quite arbitrary. However, the generator $\mathcal{L}_{\lmss{r}}$ does no depend on such a choice, since $\delta_{\Delta_{k}}$ is the support function of diagonally coloured Brauer diagrams.
We fix, once for all, a division algebra $\mathbb{K}$ and to lighten the notation, we drop the symbol $\mathbb{K}$ in the notations introduced so far.
\par We focus on the cases $\mathbb{K}=\mathbb{R}$ or $\mathbb{H}$. In fact, the function $\overline{\mathbb{m}}_{\lmss{r}}$ is equal to the function $\mathbb{m}_{\lmss{r}}$ on the linear span of tensors $(b,s)\otimes \overline{w}$ with $((b,s),\overline{w})$ a pair of compatible word and diagrams meaning that:
\begin{equation*}
\mathbb{m}_{\lmss{r}}((b,s),w) = \lmss{m}((b,s)\otimes \overline{w})
\end{equation*}
for any word $w \in \lmss{M}_{q}(k)$ such that for all integer $1 \leq i\leq k$, $\overline{w}_{i}\in\{w_{i},\overline{w}_{i}\}$.
The joint distribution of the random variables $\left[\munitaryfdc\right]^{(1)},\ldots,\left[\munitaryfdc\right]^{(q)}$ is contained in the range of the statistic $\mathbb{m}^{\mathbb{C}}_{d_{N}}$ restricted to the linear span tensor of compatible words and diagrams, hence the limiting distribution of the process $\runitaryfdNc$ is equal to the limiting distribution of $\runitaryfdNr$ (and equal to the limiting distribution of $\runitaryfdNh$). We continue with a small reminder. In Section \ref{extractionprocesses}, we introduced $\mathcal{M}_{d_{N}}$ as the rectangular probability space $\left(\mathcal{M}_{N}(L^{\infty}(\Omega,\mathcal{F},\mathbb{P},\mathbb{K})),~\mathbb{E}_{d_{N}},\frac{d_{N}}{N} \right)$. For all couple of integers $\alpha,\beta$ in the interval $[1,\ldots,n]$ we denote by $\mathcal{M}_{d_{N}}(\alpha,\beta)$ the compressed space $p_{\alpha}\mathcal{M}_{d_{N}}p_{\beta}$.
\par The conditional expectation $\mathbb{E}_{d_{N}}$ is a $\mathcal{D}_{n}$-bimodule map and for each integer $k\geq 1$, it defines an other $\mathcal{D}_{n}$-bimodule map $\mathbb{E}^{k}_{d_{N}}: \mathcal{M}_{d_{N}}\otimes_{\mathcal{D}_{n}} \cdots \otimes_{\mathcal{D}_{n}}\mathcal{M}_{d_{N}} \to \mathcal{D}_{n}$ by the formula:
\begin{equation*}
\mathbb{E}^{k}_{d_{N}}(M_{1}\otimes \cdots \otimes M_{k}) = \mathbb{E}^{d_{N}}\left(M_{1}\cdots M_{k} \right).
\end{equation*}
To the family of maps  $\left(\mathbb{E}^{k}_{d_{N}}\right)_{k \geq 1}$ is associated a multiplicative functional, also denoted $\mathbb{E}_{d_{N}}$, on the set of non crossing partitions (of any size).
To study asymptotic amalgamated freeness, it is more convenient to work with the cumulant functional $\{c_{\pi}^{d_{N}}: \mathcal{M}_{d_{N}} \otimes_{\mathcal{D}_{n}} \cdots \otimes_{\mathcal{D}_{n}} \mathcal{M}_{d_{N}} \to \mathcal{D}_{n}$, $\pi \in \lmss{NC}_{k}\}$. These cumulant functions are obtained by mean of a M\"oebius transformation, for $r_{1},\ldots,r_{k} \in \mathcal{M}_{d_{N}}$,
\begin{equation*}
\mathbb{E}^{\pi}_{d_{N}}(r_{1}\otimes_{\mathcal{D}_{n}}\cdots \otimes_{\mathcal{D}_{n}} r_{n})= \sum_{\alpha \leq \pi} c_{d_{N}}^{\alpha}(r_{1}\otimes_{\mathcal{D}_{n}} \cdots \otimes_{\mathcal{D}_{n}} r_{k}),~\pi \in \lmss{NC}_{k},
\end{equation*}
or equivalently:
\begin{equation*}
c^{\pi}_{d_{N}}(r_{1} \otimes \cdots \otimes r_{k}) = \sum_{\alpha \leq \pi}\mu(\alpha,\pi)\mathbb{E}^{\alpha}_{d_{N}}(r_{1}\otimes \cdots\otimes r_{k}).
\end{equation*}
Asymptotic freeness of the semi-group $\lmss{E}_{d_{N}}$ is equivalent to asymptotic amalgamated freeness of the increments of the process $\runitaryfd$, which can be checked on the cumulants. Proof of Theorem \ref{thm1rect} is thus divided into two (big) steps:
\begin{enumerate}[\indent 1.]
\item For each time $t \geq 0$ and integer $k\geq 1$, we prove the convergence of $\lmss{E}_{k}^{d_{N}}(t)$, as $N$ tends to infinity.
\item For all times $s_{1} < t_{1} \leq s_{2} < t_{2} \ldots \leq s_{q} < t_{q}$, we prove that the cumulants
$$c_{d_{N}}^{k}\left(\left[\munitaryfd(s_{j_{1}},t_{j_{1}})\right]^{\varepsilon(1)}\otimes_{\mathcal{D}_{n}} \cdots \otimes_{\mathcal{D}_{n}}\left[\munitaryfd(s_{j_{k}},t_{j_{k}})\right]^{\varepsilon(k)}\right)$$
of increments of the process $\runitaryfd$ converges to $0$ for any $k$-tuples $\varepsilon(1),\ldots,\varepsilon(k)\in\{1,\star\}$ if there exists two non equal integers $j_{a} \neq j_{b}$ in the sequence $j_{1},\ldots,j_{k} \in \{1,\ldots,q\}$.
\end{enumerate}
Point $1.$ shows no difficulties, it is a simple corollary of Proposition \ref{prp:convstat} we stated and proved in Section \ref{rectangularextractions}.
\par The second point needs more precisions. Prior to expound the proof of point 2., we sketch it.
With the notations introduced in point $2.$, set $\lmss{t} = (t_{j_{1}}-s_{t_{j_{1}}},\ldots,t_{j_{q}}-s_{j_{q}})$.
\par Let $\alpha_{0},\alpha_{1},\ldots,\alpha_{k}$ a $k$-tuple of integers in $[1,\ldots,n]$ and $\varepsilon(1),\ldots,\varepsilon(k) \in \{1,\star\}$. The first step is about finding a Brauer diagram $b \in \brauer$ and a word $w \in \lmss{M}_{q}(k)$ for which the asymptotic
\begin{equation}
\label{eq:condensation}
\mathbb{m}^{\mathbb{K}}_{d_{N}}(b,w,\lmss{t})p_{\alpha_{0}} - \mathbb{E}^{\pi}_{d_{N}}\left(p_{\alpha_{0}}\left[\munitaryfd(s_{j_{1}},t_{j_{1}})\right]^{\varepsilon(1)}p_{\alpha_{1}} \otimes \cdots \otimes p_{\alpha_{k-1}}\left[\munitaryfd(s_{j_{k}},t_{j_{k}})\right]^{\varepsilon(k)}p_{\alpha_{k}}\right)\underset{N\to +\infty}{\longrightarrow} 0
\end{equation}
holds.
\par Owing to the fact that  $\munitaryfd$ is a L\'evy process, we can substitute to the set of increments of $\munitaryfd$, $\{\munitaryfd(s_{1},t_{1}),\ldots,\munitaryfd(s_{q},t_{q})\}$, a set of independent copies of the process $\runitaryfd$ evaluated at times $t_{1}-s_{1},\ldots,t_{q}-s_{q}$ to compute the cumulants.
This step leans on \textit{condensation} property of $\runitaryfdN$ we exposed in at the end of the last section, see equation \eqref{eq:condensation}. We then write $\mathbb{m}_{\lmss{r}}(b,w,\lmss{t})$ (respectively $\overline{\lmss{m}}_{\lmss{r}}(b,w,\lmss{t})$) as a sum:
\begin{equation}
\label{eq:limitcumulant}
\mathbb{m}_{\lmss{r}}(b,w,\lmss{t}) = \sum_{\gamma \leq \pi} c_{\gamma}(\alpha,w,\lmss{t}).
\end{equation}
To that end we use the differential systems that are satisfied by the limiting statistic $\mathbb{m}_{\lmss{r}}$, and formulae \eqref{genrh}, \eqref{genc} found in Section \ref{extractionprocesses} for the generators to give an explicit formula for the coefficients $c_{\gamma}(\alpha,\beta,\lmss{t})$, $\gamma \in \lmss{NC}_{k}$. From equation \eqref{eq:condensation} and \eqref{eq:limitcumulant}, we infer that:
\begin{equation}
\label{eq:premoebius}
\mathbb{E}^{\pi}_{d_{N}}\left(p_{\alpha_{0}}\left[\munitaryfd(s_{j_{1}},t_{j_{1}})\right]^{\varepsilon(1)}p_{\alpha_{1}} \otimes_{\mathcal{D}_{n}} \cdots \otimes_{\mathcal{D}_{n}}p_{\alpha_{k-1}}\left[\munitaryfd(s_{j_{k}},t_{j_{k}})\right]^{\varepsilon(k)}p_{\alpha_{k}}\right) \underset{N \to +\infty}\longrightarrow \sum_{\gamma \leq \pi} c_{\gamma}(\alpha,\lmss{t}).
\end{equation}
Since this last equality is valid for all non-crossing partitions $\pi$ in $\lmss{NC}_{k}$, we can apply M\"oebius transformation to both side of \eqref{eq:premoebius} and deduce that:
\begin{equation}
  \label{eqn:brauerdiagrampi}
c_{d_{N}}^{k}\left(\left[\munitaryfd(s_{j_{1}},t_{j_{1}})\right]^{\varepsilon(1)} \otimes_{\mathcal{D}_{n}} \cdots \otimes_{\mathcal{D}_{n}}\left[\munitaryfd(s_{j_{k}},t_{j_{k}})\right]^{\varepsilon(k)}\right) \underset{N\to +\infty}{\longrightarrow}c_{k}(\alpha,\beta,\lmss{t}).
\end{equation}
We begin, of course, with the first step. Let $\pi \in \lmss{NC}_{k}$ a non-crossing partition and write $\pi = \{c_{1},\ldots,c_{p}\}$. The linear order on $[1,\ldots,k]$ along with the partition $\pi$ define a permutation $\sigma_{\pi}$ of $[1,\ldots,k]$: the cycles of $\pi$ are the blocks $c_{i}$'s, $i\leq p$, endowed with the natural cyclic order. We define a non-coloured Brauer diagram by:
\begin{equation}
\label{eq:brauerdef}
\ncc{b} = \displaystyle\prod_{\substack{1 \leq i \leq k : \\\varepsilon(i) = \star}} \Twist(\sigma_{\pi}).
\end{equation}
The permutation $\sigma_{b}$ associated with $b$ and defined in Section \ref{schur_weyl} is,  equal to $\sigma_{\pi}$. We now define the colourization of the non-coloured Brauer diagram $\ncc{b}$.
Set $\lmss{i}^{\prime} = (\alpha_{0},\ldots,\alpha_{k-1}),~\lmss{j}^{\prime} = (\alpha_{1},\ldots, \alpha_{k})$. By using $\lmss{i}$, $\lmss{j}$ and $\varepsilon$, we define a colourization $\lmss{i}$ of the bottom line of $\ncc{b}$ and an other one, which we call $\lmss{j}$ of the bottom line of $\ncc{b}$ by setting:
\begin{equation}
\lmss{i}_{i}=\lmss{i}^{\prime}_{i}, \text{ if } \varepsilon_{i}=1,~\lmss{i}_{i}=\lmss{j}^{\prime}_{i} \text{ if } \varepsilon_{i} = \star, \text{ and }\lmss{j}_{i}=\lmss{j}^{\prime}_{i}, \text{ if } \varepsilon_{i}=1,~\lmss{j}_{i}=\lmss{i}^{\prime}_{i} \text{ if } \varepsilon_{i} = \star.
\end{equation}
Finally, define the colourization $c$ in $\{1,\ldots,k,1^{\prime},\ldots,k^{\prime}\}$ by $c(i) = \lmss{j}_{i}$ and $c(i^{\prime}) = \lmss{j}_{i}$ for $1 \leq i \leq k$. We can not affirm that for all $\pi$, the colourization $c$ is an admissible colourization of the non-coloured Brauer diagram $\ncc{b}$ defined by equation \eqref{eq:brauerdef}. Hence, define the element $b$ in $\mathbb{R}\left[\brauer \right]$ as:
\begin{equation}
\label{eq:brauerdef2}
  b = (\ncc{b}, c) \text{ if } c \in C(\ncc{b}) \text{ and } b = 0 \text{ if } c \not\in C(\ncc{b}).
\end{equation}
Let $w$ be the word $w=x_{j_{1}} \cdots x_{j_{k}}$. We now prove that asymptotic \eqref{eq:condensation} holds. We recall first basic properties of the maps $\mathbb{E}^{k}_{d_{N}},~k\geq1$. Let $k\geq 1$ an integer and two finite sequences $\gamma_{0},\ldots,\gamma_{k-1}$ and $\beta_{1},\ldots,\beta_{k}$ of integers in the interval $[1,\ldots,n]$. Since $\mathbb{E}^{k}_{d_{N}}$ is a $\mathcal{D}_{n}$-bi-module map, $\mathbb{E}^{k}_{d_{N}}(a_{1} \otimes_{\mathcal{D}_{n}} \cdots \otimes_{\mathcal{D}_{n}}a_{k})=0$ with $a_{i} \in \mathcal{M}_{d_{N}}(\gamma_{i},\beta_{i})$ if there exists at least one integer $0 \leq i \leq k-1$ such that $\gamma_{i+1} \neq \beta_{i}$ (with the convention $\gamma_{k=1} = \gamma_{0}$) and
$\mathbb{E}_{d_{n}}^{k}(a_{1}\otimes_{\mathcal{D}_{n}}\cdots\otimes_{\mathcal{D}_{n}}) =\mathbb{E}_{d_{n}}^{k}(a_{1}\otimes_{\mathcal{D}_{n}}\cdots\otimes_{\mathcal{D}_{n}})p_{\beta_{0}}$.
Let $\beta_{0},\ldots,\beta_{k} \in [1,\ldots,n]$ a finite sequence of integers. A direct induction proves that for any non-crossing partition $\gamma \in \lmss{NC}_{k}$, the map $\mathbb{E}_{\gamma}^{d_{n}}$ evaluates to zero on all $k$-tuples of elements $(a_{1},\ldots,a_{k})$ with $a_{i}$ in the compressed algebra $\mathcal{M}_{d_{N}}(\beta_{i},\beta_{i+1})$, $0 \leq i\leq k$, if there exists a block $V = \{v_{1} < \ldots < v_{s}\}$ of $\gamma$ such that $\beta_{v_{1}-1} \neq \beta_{v_{k}}$.
Focusing on the tuple $\alpha$ and non-crossing partition $\pi$ we chose, requiring that $\alpha_{v_{1}-1} = \alpha_{v_{k}}$ for all $\{v_{1} < \cdots < v_{k}\}$ of $\pi$ is the same as demanding that the colourization $c$ is in $C(\ncc{b})$, which means $(\ncc{b},c) \in \brauer$.
$\mathbb{E}^{\pi}_{d_{N}}\left(p_{\alpha_{0}}\left[\munitaryfd(s_{j_{1}},t_{j_{1}})\right]^{\varepsilon(1)}p_{\alpha_{1}} \otimes_{\mathcal{D}_{n}} \cdots \otimes_{\mathcal{D}_{n}}p_{\alpha_{k-1}}\left[\munitaryfd(s_{j_{k}},t_{j_{k}})\right]^{\varepsilon(k)}p_{\alpha_{k}}\right) = 0$ for all integers $N \geq 1$ if, with the above notation, $\alpha_{v_{1}-1} \neq \alpha_{v_{k}}$ for at least one block of $\pi$.
We write $\mathbb{m}(b,w,\lmss{t})$ as a sum over non-crossing partitions. It cumbersome to introduce some new notations to give explicit formulas for the coefficients $c_{\gamma}(\alpha,\lmss{t},w)$. First, we introduce
\begin{equation*}
\lmss{R}^{+}_{s}(b) = \{ (r_{i_{1},j_{1}},\ldots,r_{i_{s},j_{s}}) \in (\lmss{R}_{k})^{s} : r_{i_{l},j_{l}} \in \lmss{T}^{+} \cup \lmss{W}^{+}({r_{i_{l+1},j_{l+1}}\circ  \cdots \circ r_{i_{s},j_{s}}\circ b})\}.
\end{equation*}
Let us give a geometric interpretation of the set $\lmss{R}^{+}_{s}(b)$. First, define a graph $\mathcal{G}$ that have as vertices the set $\brauer$ and by considering two Brauer diagrams as adjacent in this graph if one is obtained from this other by concatenation with a transposition or a projector. This graph have loops, since $e \circ e = e$. These loops can be broken if instead of considering as vertices the set $\brauer$, we replace it with the central extension $\overline{\brauer}$ and requiring for two Brauer elements in $\overline{\brauer}$ to be neighbours if one is obtained from the other by multiplication  with a transposition / projection.  For example, we would have $e e = (e, \lmss{o})$ for some loop $\lmss{o}$ that belongs to $\overline{\mathcal{G}}$. A tuple $\lmss{r}$ in $\lmss{R}^{+}_{s}(b)$ is a path in $\overline{\mathcal{G}}$) that starts at $(b,\emptyset)$ a visit successively Brauer elements that have one more loop or cycle comparing to the last one visited. We insist on the fact that the set $\lmss{R}^{+}_{s}(b)$ is not solely determined by the partition $\pi$ but  depends also on the sequence $\varepsilon$ that was use to twist the diagram (this twist are responsible for loops that may be created along a path).
To be more precise, we are interested in a subset of paths in $\lmss{R}_{s}^{+}(b)$ that have increments constrained by the word $w$,
\begin{gather*}
\lmss{R}^{+}_{s}(b,w) = \{(r_{i_{1},j_{1}},\ldots,r_{i_{s},j_{s}}) \in \lmss{R}^{+}_{s}(b) : w_{i_{1}} = w_{j_{1}}, \cdots, w_{i_{s}} = w_{j_{s}} \}.
\end{gather*}
We are now splitting the set $\lmss{R}^{+}_{s}(b,w)$ according to the cycle partition of the end-point of a path, and define for this the function:
\begin{equation*}
\gamma_{s}\left((r_{i_{1},j_{1}},\ldots,r_{i_{s},j_{s}})\right) =\left((\ncc{r}_{i_{1},j_{1}}\circ \ldots \circ \ncc{r}_{i_{s},j_{s}})\vee \mathbf{1}_{k}\right)\cap [1,\ldots,k].
\end{equation*}
For $\beta$ a partition of $[1,\ldots,k]$, set $\lmss{R}^{+}_{s}(b,w,\beta) = \{\gamma_{s} = \beta\} \cap \lmss{R}^{+}_{s}(b,w)$.
\begin{lemma}
\label{lemmacumulant}
Let $b$ a Brauer diagram and denote by $\pi$ the trace of the partition $\ncc{b}\vee 1$ on $[1,\ldots,k]$. Assume that $\pi$ is non crossing, then, for all tuple $\lmss{r} \in \lmss{R}^{+}_{s}(b)$, the partition $\gamma_{s}(\lmss{r})$ is non-crossing and $\gamma_{s}(\lmss{r}) \leq \pi$. In addition for all words $w \in \lmss{M}_{q}$, $\lmss{R}_{s}^{+}(b,w,\alpha) = \emptyset$ if there at least one couple of integers $1 \leq i,j \leq k$ with $i\sim_{\pi}j$ and $w_{i} \neq w_{j}$.
\end{lemma}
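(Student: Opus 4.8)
The plan is to prove both assertions by a single induction on the length $s$ of the path, the inductive step being an analysis of one elementary move, i.e. of the passage from a running diagram $b'$ to $\ncc{r}_{ij}\circ b'$ with $\ncc{r}_{ij}\in\lmss{T}^{+}\cup\lmss{W}^{+}(b')$. Propositions \ref{projtrmult}, \ref{projtrmultblocks} and \ref{mainpropbrauer} provide exactly the description of that single move that is needed.

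For the first assertion, set $b^{(0)}=\ncc{b}$ and $b^{(m)}=\ncc{r}_{i_{s-m+1},j_{s-m+1}}\circ b^{(m-1)}$ for $1\le m\le s$, so that $b^{(s)}$ is the diagram entering the definition of $\gamma_{s}(\lmss{r})$; put $\pi_{m}=\bigl(b^{(m)}\vee\mathbf{1}_{k}\bigr)\cap[1,\ldots,k]$, so that $\pi_{0}=\pi$ and $\pi_{s}=\gamma_{s}(\lmss{r})$. The invariant I would carry along is: $\pi_{m}$ is non-crossing, $\pi_{m}\le\pi_{m-1}$, and each block of $\pi_{m}$, equipped with the cyclic order induced by the cycle of $b^{(m)}$ it is the trace of, is an increasing cyclic sequence in $[1,\ldots,k]$. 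The base case $m=0$ follows from the fact that twisting preserves $\ncc{b}\vee\mathbf{1}$ as a set partition and preserves the cyclic monotonicity of the cycles of $\sigma_{\pi}$ (Lemma \ref{twistmorphism}, Lemma \ref{twistorientation}), so $\pi_{0}=\pi$ has the invariant by hypothesis. For the inductive step, Proposition \ref{mainpropbrauer} forces $i$ and $j$ to lie in the same cycle $V$ of $b^{(m)}$ (otherwise no cycle is created and the move is not admissible), and the move either creates a closed loop — in which case the fundamental identity \eqref{fondform} gives $\pi_{m+1}=\pi_{m}$ and there is nothing to check — or splits $V$; in that case Proposition \ref{projtrmultblocks}, applied to $V$ after conjugating it into the standard shape of a twist of a single cycle, replaces $V$ by two blocks $V_{1},V_{2}$ that are cyclic sub-intervals of $V$, with compatible orientations. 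Since $V$ is a block of the non-crossing partition $\pi_{m}$ and a cyclic interval of $[1,\ldots,k]$ by the invariant, cutting it into two cyclic sub-arcs produces a non-crossing refinement (two elements of $V_{1}$ enclose, in the linear order, only further elements of $V_{1}$, so no crossing with $V_{2}$ or with an untouched block can arise), and each $V_{\ell}$ is again increasing cyclic. Iterating, $\gamma_{s}(\lmss{r})=\pi_{s}$ is non-crossing with $\gamma_{s}(\lmss{r})\le\pi$.

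For the second assertion I would rerun the same path analysis while keeping track of the word $w$. Every elementary move of a tuple in $\lmss{R}^{+}_{s}(b,w)$ obeys $w_{i_{l}}=w_{j_{l}}$; by the explicit description in Proposition \ref{projtrmultblocks}, when such a move splits the running block $V$ it cuts the underlying cycle at the two ``gaps'' lying just before $i_{l}$ and just before $j_{l}$ in the cyclic order of $V$, and these two gaps carry the common letter $w_{i_{l}}=w_{j_{l}}$. Unwinding the tree of splits that builds $\gamma_{s}(\lmss{r})$ inside a fixed block $V_{0}$ of $\pi$, this says one may only ever cut inside $V_{0}$ at $w$-matched pairs of gaps, while the exclusivity/non-vanishing constraints built into $\lmss{T}^{+}$ and $\lmss{W}^{+}$ (the elementary diagram must be exclusive and satisfy $\ncc{r}_{ij}\circ b^{(m)}\neq0$) pin down the colours at the endpoints; together these make any admissible splitting move inside a block of $\pi$ on which $w$ is not constant impossible. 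Hence the existence of a pair $i\sim_{\pi}j$ with $w_{i}\neq w_{j}$ prevents every admissible word-constrained path from ever entering the block of $\pi$ containing $i$ and $j$ in a way compatible with the prescribed endpoint, so $\lmss{R}^{+}_{s}(b,w,\alpha)=\emptyset$.

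The step I expect to be the main obstacle is the word-and-colour bookkeeping in the second assertion: one has to propagate simultaneously (i) the cyclic-interval structure of the blocks from Part 1, (ii) the location of $i_{l},j_{l}$ and the gap-labelling induced by $w$, and (iii) the orientation condition $\ncc{s}_{b^{(m)}}(i_{l})\ncc{s}_{b^{(m)}}(j_{l})=\pm1$ together with the colour exclusivity of the moves, and then extract from them the incompatibility of a single $w$-inhomogeneous block of $\pi$ with completing even one admissible move inside it. Proposition \ref{projtrmultblocks}, with its explicit two new blocks and compatible orientations, is precisely the device that makes (i)--(iii) trackable, and the planarity secured in Part 1 is what keeps the induction finite and combinatorial rather than unwieldy.
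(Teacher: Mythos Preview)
The paper states this lemma without proof, so there is no argument in the text to compare yours against. On the first assertion your inductive scheme is correct and is the natural one: Proposition~\ref{mainpropbrauer} forces each admissible move to act within a single cycle (otherwise $\lmss{nc}$ would drop, not rise), and Proposition~\ref{projtrmultblocks} shows that the resulting split produces two cyclic sub-intervals of that cycle, which preserves non-crossingness and refines the previous partition; the loop case is dispatched by the fundamental identity~\eqref{fondform} exactly as you say.

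The second assertion is where your argument breaks. You conclude that the word and colour constraints ``make any admissible splitting move inside a block of $\pi$ on which $w$ is not constant impossible'', but this is false: the constraint on a move $r_{i_l,j_l}$ is only $w_{i_l}=w_{j_l}$, not constancy of $w$ on the entire ambient block. Concretely, with $k=3$, $b$ the untwisted cycle $(1,2,3)$ so that $\pi=\{\{1,2,3\}\}$, and $w=x_1x_2x_1$, the move $\tau_{1,3}$ lies in $\trpnm{b}$ and satisfies $w_1=w_3$, hence is admissible and splits $\{1,2,3\}$ into $\{\{1\},\{2,3\}\}$ even though $w$ is not constant on $\{1,2,3\}$. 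You also appeal to ``exclusivity/non-vanishing constraints built into $\lmss{T}^{+}$ and $\lmss{W}^{+}$'', but the sets appearing in the definition of $\lmss{R}^{+}_{s}(b)$ and in the generator $L_{i,\lmss{r}}$ are the non-mixing versions $\trpnm{\cdot},\prpnm{\cdot}$, not the exclusive ones $\trexcp{\cdot},\prexcp{\cdot}$; there is no colour-exclusivity available to invoke at this point. Note finally that the literal sentence of Part~2 as printed (with $i\sim_{\pi}j$ and an undeclared $\alpha$) is already problematic at $s=0$, since the empty path always lies in $\lmss{R}^{+}_{0}(b,w,\pi)$ regardless of $w$; what the application just after the lemma actually needs is that $c_{\mathbf{1}_k}$ vanishes for non-constant $w$, and you should redirect your second-half argument toward that target rather than toward the sentence verbatim.
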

In the following, we set $\lmss{t}^{\lmss{r}} = t_{i_{1}} \cdots t_{i_{s}}$. By using formulae proved in the previous section for the generators $L_{i,\lmss{r}}$, we write as a sum over $\lmss{R}^{+}_{s}(b,w)$ the exponential $\exp\left( \sum_{i=1}^{q}t_{i}L_{i,\lmss{r}}\right)$ to obtain the following expression:
\begin{equation*}
\begin{split}
\exp&\left(\sum_{i=1}^{q}t_{i}L_{i,\lmss{r}}\right)(b,w)=e^{\frac{-1}{2}\sum_{i=1}^{q}t_{i}\lmss{n}_{i}(w)}\\ &\times \sum_{s=0}^{\infty} \sum_{\,\lmss{r}\, \in \lmss{R}_{s}^{+}(b,w)}\frac{\lmss{t}^{\lmss{r}}}{s!}\prod_{k=1}^{s}\lmss{r}(r_{i_{k},j_{k}}\diamond \ldots \diamond r_{i_{s},j_{s}},r_{i_{k+1},j_{k+1}}\diamond\ldots\diamond r_{i_{s},j_{s}}\diamond b)\cdot(r_{i_{1},j_{1}}\diamond \ldots \diamond r_{i_{s},j_{s}}\diamond b,w)
\end{split}
\end{equation*}
Next, we split the sum over $\lmss{R}^{+}_{s}(b)$ in the last equation into sums over the level sets of $\gamma_{s}$, each of these sum defines an operator $L_{\lmss{r}}(\beta,\lmss{t})$, $\beta \in \lmss{NC}_{k}$
\begin{equation*}
L_{\lmss{r}}(\beta,\lmss{t})(b,w) = \sum_{s=0}^{\infty}\frac{\lmss{t}^{\lmss{r}}}{s!}\sum_{\,\lmss{r}\, \in \lmss{R}_{s}^{+}(b,w,\alpha)}\prod_{k=1}^{s}\lmss{r}(r_{i_{k},j_{k}}\diamond\ldots\diamond r_{i_{s},j_{s}},r_{i_{k+1},j_{k+1}}\diamond\ldots\diamond r_{i_{s},j_{s}}\diamond b)(r_{i_{1},j_{1}}\diamond \ldots \diamond r_{i_{s},j_{s}}\diamond b,w).
\end{equation*}
Finally, we obtain the following expression for $\mathbb{m}(b,w,\lmss{t})$:
\begin{equation*}
\mathbb{m}(b,w,\lmss{t}) = \delta_{\Delta_{k}}\left(\exp\left(\sum_{i=1}^{q}t_{i}L_{i,\lmss{r}}\right)(b,w)\right) = e^{-\frac{1}{2}\sum_{i=1}^{q}t_{i}\lmss{n}_{i}(w)}\sum_{\beta \leq \pi} \delta_{\Delta_{k}}(L_{\lmss{r}}(\beta,\lmss{t})(b,w)).
\end{equation*}
It remains to show that for each non-crossing partition $\beta$ less than $\pi$, $\delta_{\Delta_{k}}\left(\mathrm{L}_{\lmss{r}}(\beta,\lmss{t})(b,w)\right)$ does not depends on $\pi$. As a matter of fact, the set $\lmss{R}_{s}^{+}(b,w,\beta)$ does not depends on $\pi$ and is determined by $\varepsilon$ and $\beta$. Furthermore, owing to its definition, $\delta_{\Delta_{k}}(b)$ is a function of the colourization $c$ (which lies on $\alpha$).

Set $c_{\beta}(\alpha, w, \varepsilon) = \delta_{\Delta_{k}}\left(L_{i,\lmss{r}}(\beta,\lmss{t})((\mathbf{0}_{k},c),w) \right)$. Lemma $\ref{lemmacumulant}$ implies $c_{\mathbf{1}}(\alpha,w,\varepsilon) = 0$ if the word $w$ contains to different letters.
The method that was used to prove theorem $\ref{thm1rect}$ can be applied verbatim to prove the following theorem, thus the proof is left to the reader.
\begin{theorem}
\label{thm2cluster}
Let $n\geq 1$ an integer and for each integer $N$ greater than one, let $d_{N}$ be a partition of $N$ into $n$ parts.
\par We assume that for all integer $i \leq n$, the ratio $r_{N}(i)=\frac{d_{N}(i)}{N}$ converges as $N$ tends to infinity to a positive value $r_{i}$ less than one. We assume further that the kernel of the partition $d_{1}$ is finer than the kernels $\lmss{Ker}(d_{N})$, $N\geq1$. Let $\mathbb{K}$ be one the three divisions algebras $\mathbb{R},\mathbb{C}$ or $\mathbb{H}$.
\par As the dimension $N$ tends to infinity, the non-commutative distribution of $U^{\mathbb{K}}_{\{d_{N}\},\lmss{ker}(d_{1})}$ converges to a $\mathcal{D}_{d_{1}}$-amalgamated free semi-group.
\end{theorem}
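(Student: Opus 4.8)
The plan is to follow, nearly word for word, the two-step argument that proves Theorem \ref{thm1rect}; the only genuinely new ingredient is a dictionary between the cluster rearrangement $j^{\lmss{d},\lmss{ker}(d_{1})}_{\cdot}$ and the coloured Brauer calculus of Section \ref{schur_weyl}. As there, everything rests on Proposition \ref{prp:convstat}: the statistics $\mathbb{m}^{\mathbb{K}}_{d_{N}}(\lmss{t})$, the differential systems \eqref{genrh}--\eqref{genc} they satisfy, the convergence of their generators $L^{\mathbb{K}}_{i,N}$ to the limiting generators $L_{i,\lmss{r}},\overline{L}_{i,\lmss{r}}$ of \eqref{genlimit}--\eqref{genlimitbar}, and the factorization \eqref{factorization}, are all unchanged. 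What differs is only the ambient probability space: $U^{\mathbb{K}}_{\{d_{N}\},\lmss{ker}(d_{1})}$ is valued in the cluster rectangular probability space, amalgamated over $\mathcal{D}_{d_{1}}$ (one projector per distinct dimension occurring in $d_{1}$) rather than over $\mathcal{D}_{n}$, and the conditional expectation $\mathbb{E}_{d_{N}}$ and $\mathcal{D}_{d_{1}}$-valued cumulants $c^{\pi}_{d_{N}}$ are the ones attached to that space. The assumption $\lmss{Ker}(d_{1})\leq\lmss{Ker}(d_{N})$ for all $N$ plays exactly the role of the corresponding boundedness hypothesis in Section \ref{rectangularextractions}: it ensures that each $L^{\mathbb{K}}_{i,N}$ and each limiting generator acts on the common finite-dimensional space $\mathbb{R}\left[\brauerdf{d_{1}}\right]\subset\mathbb{R}\left[\brauerdf{\lmss{ker}(d_{N})}\right]$, so that the exponential formula \eqref{eq:stat} and its $N\to+\infty$ limit make sense simultaneously for all $N$.

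\emph{Step 1 (one-dimensional marginals).} To a word $x=x_{V_{1},V_{1}'}(i_{1},j_{1})\cdots x_{V_{k},V_{k}'}(i_{k},j_{k})$ in $\mathcal{C}\mathcal{R}\mathcal{O}\langle\lmss{ker}(d_{1})\rangle$ I would attach a coloured Brauer diagram $b_{x}\in\brauerdf{d_{1}}$, whose colours are read off the blocks $V_{l},V_{l}'$, together with a word $w_{x}$, in such a way that up to the $\lmss{fnc}_{d}$-normalization one has $\mathbb{E}_{d_{N}}\circ U^{\mathbb{K}}_{\{d_{N}\},\lmss{ker}(d_{1})}(t)(x)=\mathbb{m}^{\mathbb{K}}_{d_{N}}(b_{x},w_{x},t)$ for every $N$. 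The crucial matching is that admissibility of the colourization ($c_{b_{x}}\in C(\ncc{b}_{x})$) encodes exactly the condition under which the product of the clustered blocks is non-zero, namely that at each junction the two colours carry the same dimension — this is the point where the cluster case genuinely differs from the rectangular case, and where one must track colours of $\lmss{ker}(d_{1})$ rather than the $n$ individual block indices. Proposition \ref{prp:convstat} then gives convergence of $\mathbb{m}^{\mathbb{K}}_{d_{N}}(b_{x},w_{x},t)$ to $\delta_{\Delta_{k}}\circ\exp(tL_{1,\lmss{r}})(b_{x},w_{x})$ (and the analogous statement with $\overline{L}_{1,\lmss{r}}$ in the complex case, using compatibility of word and diagram), and feeding this through the recursive definition of the $\mathcal{D}_{d_{1}}$-valued map $\mathbb{E}_{\pi}$ (with \eqref{factorization} accounting for the multiplicativity over cycles) yields convergence of $\mathbb{E}^{\pi}_{d_{N}}$ on arbitrary tuples of clustered blocks — the cluster analogue of point 1 in the proof of Theorem \ref{thm1rect}.

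\emph{Step 2 (asymptotic amalgamated freeness of the increments).} Here I would reproduce verbatim the argument following \eqref{eq:condensation}. Using that $\munitaryfd$ is a Lévy process, I replace the increments $\munitaryfd(s_{j_{l}},t_{j_{l}})$ by independent copies evaluated at $t_{j_{l}}-s_{j_{l}}$, set $\lmss{t}=(t_{j_{1}}-s_{j_{1}},\ldots)$, and for a tuple $\alpha_{0},\ldots,\alpha_{k}$ of colours, a sign sequence $\varepsilon$, and a non-crossing partition $\pi$, I form the twisted coloured Brauer diagram $b$ of \eqref{eq:brauerdef}--\eqref{eq:brauerdef2} and the word $w=x_{j_{1}}\cdots x_{j_{k}}$. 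The condensation estimate \eqref{eq:condensation} is established as before, using only the $\mathcal{D}_{d_{1}}$-bi-module vanishing properties of $\mathbb{E}^{k}_{d_{N}}$, which hold verbatim in the cluster space. Expanding $\exp(\sum_{i}t_{i}L_{i,\lmss{r}})(b,w)$ over the path sets $\lmss{R}^{+}_{s}(b,w,\beta)$ writes $\mathbb{m}_{\lmss{r}}(b,w,\lmss{t})=\sum_{\beta\leq\pi}c_{\beta}(\alpha,w,\varepsilon)$; since this holds for every $\pi\in\lmss{NC}_{k}$, Möbius inversion over $\lmss{NC}_{k}$ combined with \eqref{eq:condensation} gives that the cumulants $c^{k}_{d_{N}}$ of the increments converge to $c_{k}(\alpha,w,\varepsilon)$, and Lemma \ref{lemmacumulant} yields $c_{\mathbf{1}_{k}}(\alpha,w,\varepsilon)=0$ whenever $w$ contains two distinct letters, i.e. whenever the arguments come from distinct increments. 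By the cumulant characterization of amalgamated freeness (Proposition \ref{nullmixedcumulants}) this is asymptotic free independence with amalgamation over $\mathcal{D}_{d_{1}}$ of the increments; together with Step 1 and the stationarity of the increments of $\munitaryfd$, this identifies the limit of the non-commutative distribution of $U^{\mathbb{K}}_{\{d_{N}\},\lmss{ker}(d_{1})}$ as the one-dimensional marginals of a free amalgamated Lévy process, hence as a $\mathcal{D}_{d_{1}}$-amalgamated free semi-group.

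\emph{Main obstacle.} The only step where the ``verbatim'' claim really has to be checked, rather than invoked, is the translation performed in Step 1: one must show that a (real-part) trace of a product of clustered blocks of $\munitaryfd(t)$ is still computed by $\rho^{\mathbb{K}}_{d_{N}}$ applied to the appropriate coloured diagram with the $\lmss{fnc}_{d}$-normalization, and that the pattern of forced-zero products for the cluster process corresponds precisely to non-admissible colourizations. Concretely this is a compatibility check between the rearrangement map $j^{\lmss{d},\lmss{ker}(d_{1})}$ and Lemma \ref{brauertrace}, taking care that colours now live on the blocks of $\lmss{ker}(d_{1})$; the hypothesis $\lmss{Ker}(d_{1})\leq\lmss{Ker}(d_{N})$ is exactly what makes this dictionary uniform in $N$. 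Once it is set up, no new analytic or combinatorial difficulty arises and Steps 1--2 are word for word those of Theorem \ref{thm1rect}.
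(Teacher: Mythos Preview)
Your proposal is correct and matches the paper's own treatment exactly: the paper states only that ``the method that was used to prove Theorem~\ref{thm1rect} can be applied verbatim to prove the following theorem, thus the proof is left to the reader,'' and you have carried out precisely this verbatim transfer, correctly isolating the one place (the dictionary between $j^{\lmss{d},\lmss{ker}(d_{1})}$ and the coloured Brauer calculus, together with the role of the hypothesis $\lmss{Ker}(d_{1})\leq\lmss{Ker}(d_{N})$) where anything needs to be checked rather than invoked.
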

\bibliographystyle{plain}
\bibliography{biblio}

\begin{thebibliography}{10}

\bibitem{barcelo1999combinatorial}
H{\'e}lene Barcelo and Arun Ram.
\newblock Combinatorial representation theory.
\newblock {\em New perspectives in algebraic combinatorics (Berkeley, CA,
  1996--97)}, 38:23--90, 1999.

\bibitem{biane1997free}
Philippe Biane.
\newblock Free brownian motion, free stochastic calculus and random matrices.
  free probability theory (waterloo, on, 1995), 1--19.
\newblock {\em Fields Inst. Commun}, 12, 1997.

\bibitem{demni2015star}
Nizar Demni, Mathieu Guay-Paquet, and Alexandru Nica.
\newblock Star-cumulants of free unitary brownian motion.
\newblock {\em Advances in Applied Mathematics}, 69:1--45, 2015.

\bibitem{denes1959representation}
J{\'o}zsef D{\'e}nes.
\newblock The representation of a permutation as the product of a minimal
  number of transpositions and its connection with the theory of graphs.
\newblock {\em Publ. Math. Inst. Hungar. Acad. Sci}, 4:63--70, 1959.

\bibitem{franz2004theory}
Uwe Franz.
\newblock The theory of quantum l{\'e}vy processes.
\newblock {\em arXiv preprint math/0407488}, 2004.

\bibitem{nico1}
Nicolas Gilliers.
\newblock Non-commutative gauge theories and zhang algebras.
\newblock {\em arXiv preprint}, 2019.

\bibitem{levy}
Thierry L{\'e}vy.
\newblock The master field on the plane.
\newblock {\em arXiv preprint arXiv:1112.2452}, 2011.

\bibitem{schurmann}
Michael Sch{\"u}rmann.
\newblock {\em White noise on bialgebras}.
\newblock Springer, 2006.

\bibitem{speicher1998combinatorial}
Roland Speicher.
\newblock {\em Combinatorial theory of the free product with amalgamation and
  operator-valued free probability theory}, volume 627.
\newblock American Mathematical Soc., 1998.

\bibitem{ulrich2015construction}
Micha{\"e}l Ulrich.
\newblock Construction of a free l{\'e}vy process as high-dimensional limit of
  a brownian motion on the unitary group.
\newblock {\em Infinite Dimensional Analysis, Quantum Probability and Related
  Topics}, 18(03):1550018, 2015.

\bibitem{voiculescu1991limit}
Dan Voiculescu.
\newblock Limit laws for random matrices and free products.
\newblock {\em Inventiones mathematicae}, 104(1):201--220, 1991.

\end{thebibliography}
\end{document}